\documentclass[11pt,reqno]{amsproc}
\allowdisplaybreaks
\usepackage[table,xcdraw,dvipsnames]{xcolor}
\usepackage{color,soul}
\usepackage{float}
\usepackage{psfrag}
\usepackage{fullpage}
\usepackage{rotating}
\usepackage{stmaryrd}
\usepackage{subfigure}
\usepackage{listings}
\usepackage{enumerate}
\usepackage[small]{caption}
\usepackage{morefloats}
\usepackage{stmaryrd}
\usepackage{mathtools}
\usepackage{cancel}
\usepackage{algorithm}
\usepackage{algorithmicx}
\usepackage{algpseudocode}

\numberwithin{equation}{section}
\usepackage{graphicx}
\DeclareGraphicsExtensions{.eps}
\usepackage{epstopdf}
\usepackage{array,multirow,graphicx}
\usepackage{rotating}
\usepackage{booktabs,tabularx}
\usepackage[font=small,labelfont=bf,tableposition=top]{caption}
\usepackage[utf8x]{inputenc}
\usepackage[english]{babel}
\usepackage{multicol}
\usepackage[flushleft]{threeparttable}
\usepackage[customcolors,shade]{hf-tikz}
\usepackage{tikz}
\usetikzlibrary{calc}
\newcommand*\circled[1]{\tikz[baseline=(char.base)]{
    \node[shape=circle, draw, inner sep=1pt, 
        minimum height={\f@size*1.6},] (char) {\vphantom{WAH1g}#1};}}
\makeatother
\usetikzlibrary{decorations.pathmorphing}
\usepackage[semicolon,square,authoryear]{natbib}
\usepackage[debug=false, colorlinks=true, pdfstartview=FitV, 
linkcolor=blue, citecolor=blue, urlcolor=blue]{hyperref}

\newtheorem{lemma}{Lemma}[section]

\newtheorem{proposition}{Proposition}[section]

\usepackage{makecell, boldline}
\usepackage{url}
\usepackage{soul}
\usepackage{siunitx}
\sisetup{load-configurations = abbreviations,per-mode = symbol}
\usepackage{pifont}
\newcommand{\cmark}{\ding{51}}%
\newcommand{\xmark}{\ding{55}}%


\usepackage{import}
\usepackage{xifthen}
\usepackage{pdfpages}
\usepackage{transparent}

\usepackage{morefloats}
\newlength{\drop}
\definecolor{amethyst}{rgb}{0.6, 0.4, 0.8}
\definecolor{burgundy}{rgb}{0.5, 0.0, 0.13}
\newcommand\bfx{{\bf x}}

\title{\textbf{Maximum-principle-satisfying discontinuous Galerkin methods for incompressible two-phase immiscible flow}}
\author{
    \textbf{M.~S.~Joshaghani},
    \textbf{B.~Riviere}
   and
    \textbf{M.~Sekachev}
    \\
	{\small 
\textbf{Correspondence to:}~\textsf{m.sarraf.j@rice.edu}}}
\keywords{two-phase flow; heterogeneous media; discontinuous Galerkin; Gravity effect;
maximum-principle-satisfying method; local mass conservation}

\newsavebox{\measurebox}
\begin{document}

\date{\today}

\begin{titlepage}
    \drop=0.1\textheight
    \centering
    \vspace*{\baselineskip}
    \rule{\textwidth}{1.6pt}\vspace*{-\baselineskip}\vspace*{2pt}
    \rule{\textwidth}{0.4pt}\\[0.25\baselineskip]
    {\Large \textbf{\color{burgundy}
    Maximum-principle-satisfying discontinuous Galerkin methods for incompressible two-phase immiscible flow }}

    \rule{\textwidth}{0.4pt}\vspace*{-\baselineskip}\vspace{2pt}
    \rule{\textwidth}{1.6pt}\\[0.25\baselineskip]
    \scshape
    An e-print of the paper will be made available on arXiv. \par 
    \vspace*{0.3\baselineskip}
    Authored by \\[0.3\baselineskip]

    {\Large M.~S.~Joshaghani\par}
    {\itshape Postdoctoral Research Associate, Rice University, Houston, Texas 77005 \\ 
    \textbf{phone:} +1-281-781-5331, \textbf{e-mail:} m.sarraf.j@rice.edu}\\[0.25\baselineskip]

    {\Large B.~Riviere\par}
    {\itshape Noah Harding Chair and Professor of Computational and Applied Mathematics  \\
    Rice University, Houston, Texas 77005} \\ 
    {\Large M.~Sekachev\par}
    {\itshape TotalEnergies, Houston Texas 77002}
    \vspace{1cm}
    \centering
		\begin{figure}[h]
			\centering
        \subfigure[DG with no limiters]{
        \includegraphics[clip,scale=0.15,trim=0 0cm 0cm 0]{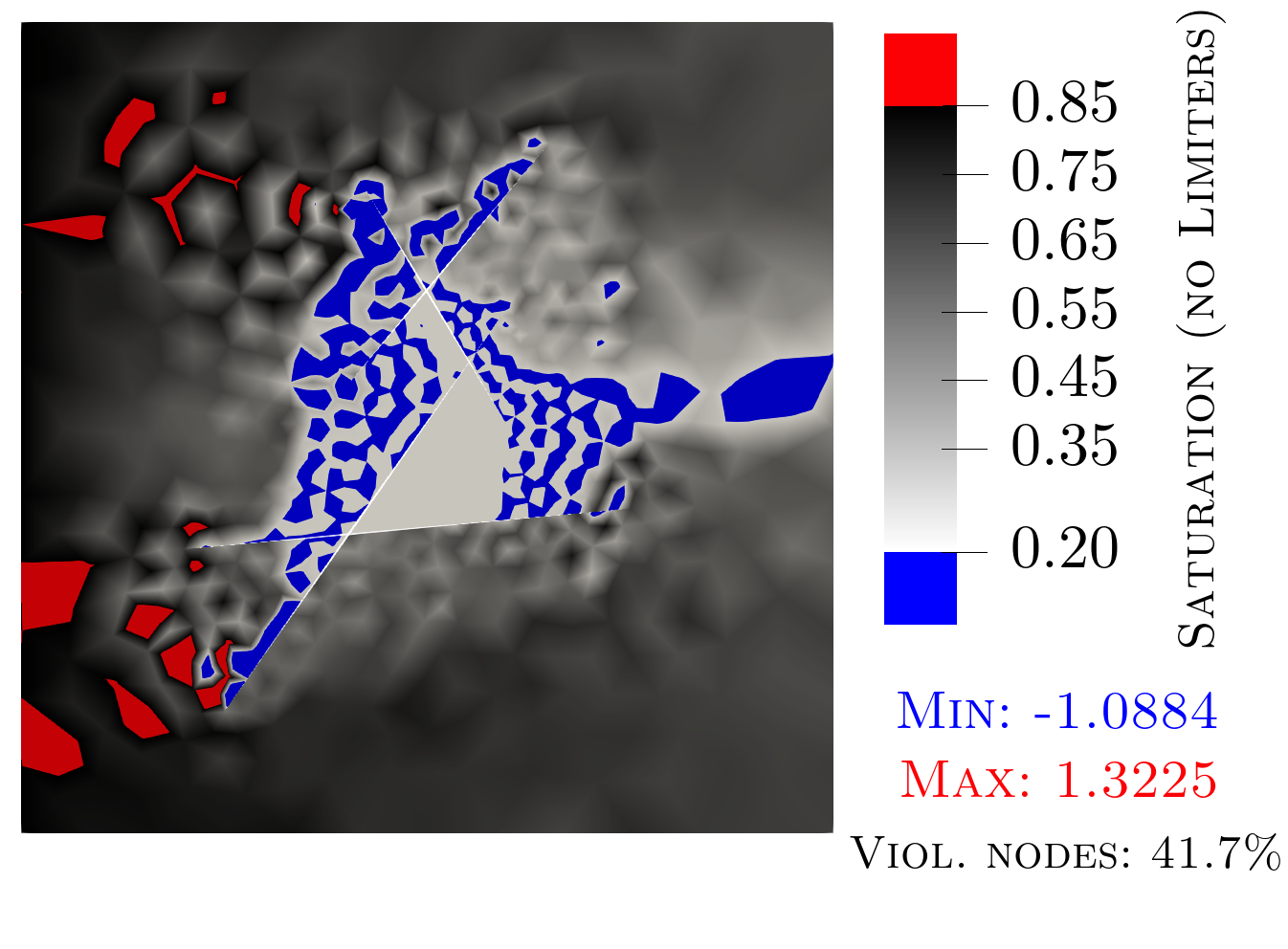}} 
        \hspace{.25cm}
        \subfigure[DG scheme with proposed limiters; $t=1000$ \si{\second}~\label{fig:}]{
        \includegraphics[clip,scale=0.15,trim=0 0cm 0cm 0]{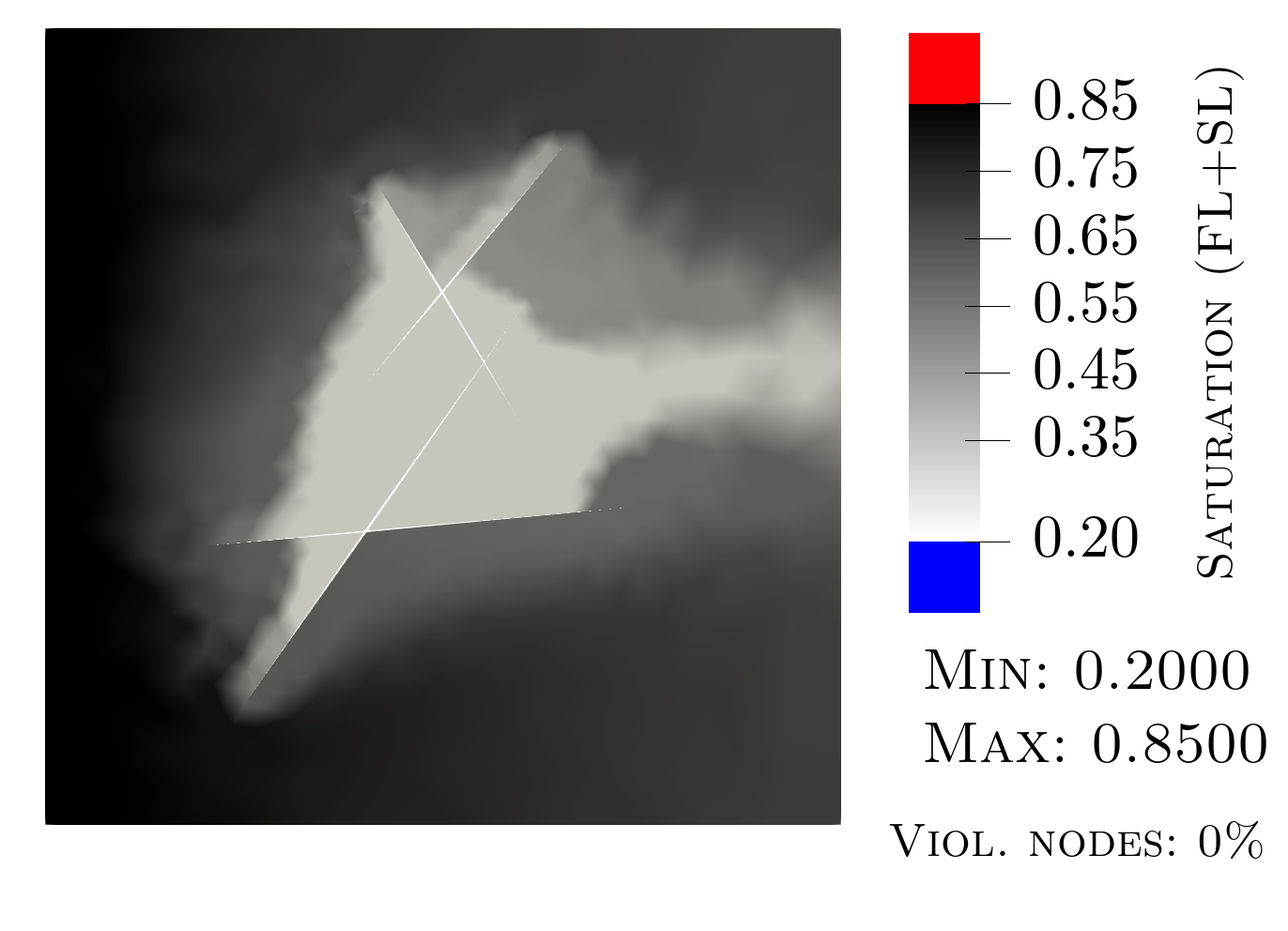}} 
        \vspace{0.25cm}

			\emph{{\small{
            This figure shows the saturation profiles of a pressure-driven flow problem at time $t=1000$
            \si{\second}. The porous medium is homogeneous and contains a thin barrier.
            Solutions are obtained using discontinuous Galerkin (DG) scheme without limiters (left) 
            and with the proposed limiters (right).             
            We observe that DG approximation with no limiter yields noticeable violations,
            while limited DG scheme is capable of providing maximum-principle satisfying results. 
            The physical range for solutions 
            is between $s_{rw}$ and $1-s_{r\ell}$ (between $0.2$ and $0.85$ in this problem); 
            and is shown in grayscale. Values below and above bounds are colored blue and red, 
            respectively.
				      }}
					}
					\captionsetup{labelformat=empty}
				\vspace{-.9cm}
				\caption{}
		\end{figure}
    \vfill
    {\scshape 2021} \\
    {\small Computational Modeling of Porous Media (COMP-M) Group} \par
\end{titlepage}
\setcounter{figure}{0}
\begin{abstract}
This paper proposes a fully implicit numerical scheme for immiscible incompressible two-phase flow in porous media taking into 
account gravity, capillary effects, and heterogeneity. The objective is to develop a fully implicit stable 
discontinuous Galerkin (DG) solver for this system that is accurate, bound-preserving,   
 and locally mass conservative. To achieve this, we augment our DG formulation with post-processing flux and slope limiters.
The proposed framework is applied to several benchmark problems and the discrete solutions
are shown to be accurate, to satisfy the maximum principle and local mass conservation.
\end{abstract}
\maketitle
\section{Introduction}

Multiphase flows in porous media appear in a large number of applications in engineering and sciences, for instance in
the environmental clean up of contaminated subsurface or in the energy production of hydrocarbons from reservoirs. This paper
introduces a numerical method for solving the immiscible two-phase flow equations, that produces bound-preserving discrete saturations. The proposed method utilizes a fully implicit in time stepping scheme, a discontinuous Galerkin in space discretization and 
post-processing flux and slope limiters techniques. The resulting numerical saturation is shown to satisfy a maximum principle theoretically and computationally. 

The numerical literature for immiscible two-phase flow problems is vast (see \cite{peaceman2000fundamentals,azizpetroleum,chen2006computational} and references herein). Suitable numerical methods should be locally mass conservative and should produce bound-preserving discrete saturations. Such methods include finite difference methods and finite volume methods, which are popular methods because of their simplicity and low cost \cite{Michel03,Droniou2014}.  However, finite difference methods are not adapted to unstructured meshes and cell-centered finite volume methods suffer from grid distortion and do not easily handle full anisotropy. The class of interior penalty discontinuous Galerkin methods has been applied to model multiphase flows in porous media for more than fifteen years \cite{KlieberRiviere2006,epshteyn2007fully,EpstheynRiviere2006,ErnMozolevskiSchuh2010,arbogast2013discontinuous,Bastian2014,JameiGhafouri2016} and they have been combined with other locally mass conservative methods like mixed finite element methods in \cite{HoteitFiroozabadi2008,HouChenSunChen2016}. DG methods are locally mass conservative, they do not suffer from grid distortion and they are accurate and robust even in the case of anisotropic heterogeneous media. However, it is well known that the DG approximation of the saturation does not satisfy a maximum principle because of  local overshoots and undershoots in the neighborhood of the saturation front. While the amount of overshoot and undershoot can be reduced by the choice of implicit time stepping, mesh refinement and appropriate penalty parameters, there is no guarantee that they will completely disappear. 
The literature on post-processing techniques to reduce or eliminate the amount of overshoots and undershoots for DG methods in general is significant.  Slope limiters adjust the gradient of the linear approximation in a heuristic way \cite{Burbeau2001,Hoteit2004,Krivodonova2007,Krivodonova2004,kuzmin2010vertex,kuzmin2013}.  Recently, flux limiters related to flux-corrected transport algorithms, were introduced for DG discretizations of conservation laws \cite{frank2019bound,kuzmin2012flux}.

The main contribution of this paper is the formulation of bound-preserving numerical method for the incompressible two-phase flow problems. 
Upwind fluxes are employed for the interior penalty discontinuous Galerkin discretization in space. 
We solve several benchmark problems to investigate the performance of the method and particularly the impact of the limiting techniques on local mass conservation.  The numerical method respects maximum principle by limiting the saturation profile to physical upper- and lower-bounds. The violation of maximum principle for the discontinuous approximation of the saturation has been an open problem over the last decade. Our proposed scheme guarantees that the saturation remains bounded in the physical range. In addition, we observe that the monotonicity of the saturation is significantly improved compared to the case of no limiters. Saturation fronts are sharp with 
minimal numerical diffusion. 
We present several numerical results that show overshoots and undershoots have been eliminated.  We verify that the local mass conservation property is also satisfied. We consider cases where flow is driven by boundary conditions and cases where flow is driven by injection and production wells. In the former case, a theoretical proof of the maximum principle is given.

The content of the paper is as follows. Section~\ref{Sec:S1_GEs} describes the mathematical equations.  The primary unknowns are the wetting phase pressure and saturation. Section~\ref{Sec:Numerical_method} contains the fully implicit numerical scheme, with the construction and analysis of the flux limiters, and the review of slope limiters used in this work.  Several numerical results, including benchmark problems and convergence tests, are given in Section~\ref{sec:S4_NR}. Conclusions follow.

%
\section{GOVERNING EQUATIONS}
\label{Sec:S1_GEs}

The incompressible two-phase flow in a porous medium $\Omega\subset\mathbb{R}^2$  over a time interval $[0,T]$,
is modeled by a system of mass balance equations for each phase, coupled with closure relations.
%
\begin{alignat}{1}
&\frac{\partial}{\partial t} (\phi  S_\alpha) 
-\nabla \cdot (\lambda_\alpha K (\nabla P_\alpha -   \rho_\alpha\mathbf{g})) = q_\alpha,
\quad \alpha=\ell,w,\\
&S_{\ell}+S_w = 1,\label{eq:sumsat}\\
&P_c = P_{\ell}-P_{w}.\label{eq:pcdef}
\end{alignat}
where $P_w$ (resp. $P_\ell$)  is the wetting phase (resp. non-wetting phase) pressure and 
$S_w$ (resp. $S_\ell$) is the wetting phase (resp. non-wetting phase)  saturation. The source/sink functions are denoted
by $q_\alpha$, the phase mobility coefficient by $\lambda_\alpha$ and the capillary pressure, $P_c$. 
The phase mobilities are ratios of the relative permeabilities, $k_{r\alpha}$, to the phase viscosities, $\mu_{\alpha}$. 
Relative permeabilities and capillary pressure are given functions of the wetting phase saturations \citep{brooks1964hydraulic}.
\begin{align}
    \lambda_{\alpha}(S_w) = \frac{k_{r\alpha}(S_w)}{\mu_{\alpha}}, \quad \alpha= w,\ell.
\end{align}
The other coefficients are the porosity $\phi$,
the absolute permeability $K$, and the gravity vector $\mathbf{g}$. Using \eqref{eq:sumsat} and \eqref{eq:pcdef}, and choosing
for primary unknowns the wetting phase pressure and saturation $(P,S)=(P_w,S_w)$, the system of equations reduces to:
\begin{alignat}{2}
    \label{Eqn:BoM_p}
	&\frac{\partial}{\partial t} \big(\phi (1-S)\big)
    -\nabla \cdot \Big( \lambda_\ell(S) K \big(\nabla P + {\color{black}\nabla P_c(S)} -  \rho_{\ell}\mathbf{g}\big) \Big) 
    = q_\ell, 
    &\qquad \mathrm{in} \; \Omega \times (0,T),
    \\
    \label{Eqn:BoM_s}
	&
	\frac{\partial}{\partial t} \big(\phi S\big)
    -\nabla \cdot \Big( \lambda_w(S) K \big(\nabla P - \rho_{w}\mathbf{g}\big) \Big) 
    = q_w, 
    &\qquad \mathrm{in} \; \Omega \times (0,T).
\end{alignat}
Let the boundary of the domain be divided into different disjoint sets
\[
\partial\Omega = \Gamma^{\mathrm{D},p}\cup\Gamma^{\mathrm{N},p}=\Gamma^{\mathrm{D},s}\cup\Gamma^{\mathrm{N},s}\cup\Gamma^{\mathrm{out},s}.
\]
Dirichlet and  Neumann boundary conditions are imposed on parts of the boundary:
\begin{alignat}{2}
	& P = g^p,
	&& \quad  \mathrm{on} \; \Gamma^{\mathrm{D},p}\times (0,T), \\
	& S = g^s, 
	&& \quad \mathrm{on} \; \Gamma^{\mathrm{D},s}\times (0,T), \\
    &  \lambda_{\ell}(S) K \big(\nabla P + \nabla P_c(S) -  \rho_{\ell}\mathbf{g} \big) \cdot \mathbf{n} = j^p,
	&& \quad\mathrm{on} \; \Gamma^{\mathrm{N},p}\times (0,T), \\
    &  \lambda_w(S)  K \big(\nabla P - \rho_{w}\mathbf{g}\big) \cdot \mathbf{n} = j^s,
	&&\quad \mathrm{on} \; \Gamma^{\mathrm{N},s}\times (0,T). 
    \label{eqn:bcs}
\end{alignat}
The boundary $\Gamma^{\mathrm{out},s}$ is referred to as a free boundary because no data is prescribed on that boundary. This means
that the surface integrals on this boundary are evaluated in terms of the unknowns.  This particular treatment of the outflow boundary has been highlighted in the works of \citep{papanastasiou1992new,griffiths1997no}.
In the case of pure homogeneous Neumann boundary conditions ($\Gamma^{\mathrm{N},s}=\Gamma^{\mathrm{N},p} = \partial\Omega)$ and $j^p=j^s=0$), the flow is driven by injection/production wells (source/sink functions) that depend on the wetting phase saturation as follows:
\[
q_\alpha(S) = f_{\alpha}(s_{\mathrm{in}})\bar{q}-f_{\alpha}(S)\underline{q}, \quad \alpha=\ell,w.
\]
The functions $\bar{q}$ and $\underline{q}$ correspond to the injection and production well rates and $s_\mathrm{in}$ is the prescribed wetting phase saturation at the injection wells. The fractional flow functions, $f_\alpha$, are the ratios of the phase mobility to the total mobility, $f_\alpha = \lambda_\alpha/(\lambda_\ell+\lambda_w)$.

Finally the model problem is completed by an initial condition on the saturation: $S=s_0$.


%
\section{Numerical method}
\label{Sec:Numerical_method}
The domain $\Omega$ is decomposed into a non-degenerate partition $\mathcal{E}_h=\{E\}_E$
consisting of $N_h$ triangular or rectangular elements of maximum diameter $h$.   
Let $\Gamma_h$ denote the set of all edges and $\Gamma_h^{i}$ denote the set of interior edges.
For any $e \in \Gamma_h^{i}$, fix a unit normal vector $\mathbf{n}_e$ and denote by $E^+$ and $E^-$ 
 the elements that share the edge $e$ such that the vector
$\mathbf{n}_e$ is  directed from $E^+$ to $E^-$.
We define the jump and average of a scalar function $\xi$ on $e$ as follows:
\begin{align}
    [\xi] = \xi\vert_{E^+}-\xi\vert_{E^-},\quad
    \{\xi\} = \frac{1}{2}\left( \xi\vert_{E^+}+\xi\vert_{E^-}\right). 
\end{align}
By convention, if $e$ is adjacent to $\partial \Omega$, then the jump and average of $\xi$ on $e$ coincide 
with the trace of $\xi$ on $e$ and the normal vector $\mathbf{n}_e$ coincides with the outward normal $\mathbf{n}$.
Let $\mathbb{P}_1(E)$ be the space of linear polynomials  on an element $E$. The discontinuous finite
element space of order one is:
\begin{align}
    \mathcal{D}(\mathcal{E}_h) = \left\{\xi\in L^2(\Omega): \xi\vert_E\in\mathbb{P}_1(E), \,  \forall E \in \mathcal{E}_h\right\}.
\end{align}
The time interval $T$ is divided into $N_\tau$ equal subintervals of length $\tau$. Let $P_{n}$ and $S_{n}$ be the numerical 
solutions at time $t_n$. The proposed discontinuous Galerkin scheme for equations~\eqref{Eqn:BoM_p}--\eqref{eqn:bcs} reads: 
Given $(P_{n},S_{n})\in \mathcal{D}(\mathcal{E}_{h}) \times \mathcal{D}(\mathcal{E}_{h})$, find
$(P_{n+1},S_{n+1})\in \mathcal{D}(\mathcal{E}_{h}) \times \mathcal{D}(\mathcal{E}_{h})$ such that: \\
\begin{align}
    &
    \frac{1}{\tau} \int_\Omega \phi (1-S_{n+1})  \xi
  +
  \sum_{E\in\mathcal{E}_h}\int_E 
  \lambda_{\ell}(S_{n+1})K \big(\nabla P_{n+1}
      +\nabla P_c(S_{n+1})
  - \rho_{\ell}\mathbf{g}\big) \cdot \nabla \xi \nonumber\\
  &-
  \sum_{e\in\Gamma_h^{i}} \int_e
          (\lambda_{\ell}(S_{n+1}) )^{\uparrow\mathbf{v}_\ell^n}
          \{ K\big(\nabla  P _{n+1} +\nabla P_c(S_{n+1})- \rho_{\ell}\mathbf{g}\big) \cdot {\mathbf{n}}_e \} [\xi] \nonumber\\
  &-
  \sum_{e\in\Gamma^{\mathrm{D},p}} \int_e
          \lambda_{\ell}(S_{n+1}) K
          {\big(\nabla P_{n+1}+\nabla P_c(S_{n+1}) - \rho_{\ell}\mathbf{g}\big) }\cdot {\mathbf{n}}_e \, \xi  
   + \sum_{e\in\Gamma_h \cup\Gamma^{\mathrm{D},p}}  \frac{\sigma}{h} \int_e [P_{n+1}]\, [\xi]
  %
\nonumber\\
  & =  \int_\Omega q_{\ell}(S_n) \xi  
 +\frac{1}{\tau} \int_\Omega \phi (1-S_{n})  \xi
  + \frac{\sigma}{h} \int_{\Gamma^{\mathrm{D},p}}  g^p  \xi 
 +  \int_{\Gamma^{\mathrm{N},p}}   j^{p}\xi,
 \quad \forall \xi \in \mathcal{D}(\mathcal{E}_h),
    \label{Eqn_disc1}
\end{align}
\begin{align}
	&  
    \frac{1}{\tau} \int_\Omega \phi S_{n+1}  \xi
    + \sum_{E\in\mathcal{E}_h}\int_E 
    \lambda_{w}(S_{n+1})K \big(\nabla P_{n+1} - \rho_{w}\mathbf{g}\big) \cdot \nabla \xi \nonumber\\
  & - \sum_{e\in\Gamma_h^{i}} \int_e (\lambda_{w}(S_{n+1}) )^{\uparrow\mathbf{v}_w^n}
          \{ K \big(\nabla P_{n+1} -  \rho_{w}\mathbf{g}\big)  \cdot {\mathbf{n}_e}
   \} [\xi] 
  - \sum_{e\in \Gamma^{\mathrm{D},s}} \int_e \lambda_{w}(g^s) K \big(\nabla P_{n+1} - \rho_{w}\mathbf{g}\big)\cdot{\mathbf{n}}  \, \xi   \nonumber\\
  &-
  \sum_{e\in \Gamma^{\mathrm{out}}} \int_e \lambda_{w}(S_{n+1})K
  \big(\nabla P_{n+1} - \rho_{w}\mathbf{g}\big)  \cdot {\mathbf{n}}_e \; \xi   
  + \sum_{e\in\Gamma_h^i\cup \Gamma^{\mathrm{D},s}}  \frac{\sigma}{h} \int_e [S_{n+1}]\, [\xi] \nonumber\\
  &  = \int_\Omega q_{w}(S_n) \xi  
	+\frac{1}{\tau} \int_\Omega \phi S_n \xi
    + \frac{\sigma}{h} \int_{\Gamma^{\mathrm{D},s}}  g^s  \xi 
    + \int_{\Gamma^{\mathrm{N},s}}  j^s \xi,
    \qquad \forall \xi \in \mathcal{D}(\mathcal{E}_h).
    \label{Eqn_disc2}
\end{align}
The penalty parameter $\sigma$ is constant on the interior edges and its value is chosen $10$ times larger on the Dirichlet boundaries.
The quantities $(\cdot)^{\uparrow\mathbf{v}_\ell^n}$ and $(\cdot)^{\uparrow\mathbf{v}_w^n}$ denote
the upwind values with respect to the vector functions $\mathbf{v}_\ell^n$ and $\mathbf{v}_w^n$ that are scaled quantities of the phase velocities. They depend on the pressure and saturation evaluated at the previous time $t_n$: 
\[
    \label{Eqn:Velocities}
\mathbf{v}_w^n  =  -K\big(\nabla P_{n} -  \rho_{w}\mathbf{g}\big),  \quad
\mathbf{v}_\ell^n  =  -K\big(\nabla P_{n} +\nabla P_c(S_{n}) -  \rho_{\ell}\mathbf{g}\big)  
\]
The definition of the upwind operator with respect to a generic discontinuous vector field $\mathbf{v}$ is:
\[
\forall e  = \partial E^+\cap \partial E^-, \quad
\xi^{\uparrow\mathbf{v}}|_e = \left\{
\begin{array}{c}
\xi\vert_{E^+}, \quad\mbox{if} \quad \{\mathbf{v}\}\cdot\mathbf{n}_e >0,\\
\xi\vert_{E^-}, \quad\mbox{if} \quad \{\mathbf{v}\}\cdot\mathbf{n}_e \leq 0.
\end{array}
\right.
\]
At the initial time, the discrete saturation is the $L^2$ projection of the initial condition.
\[
\int_\Omega S_0 v = \int_\Omega s_0 v, \quad \forall v\in\mathcal{D}(\mathcal{E}_h).
\]
At each time step, we solve \eqref{Eqn_disc1}-\eqref{Eqn_disc2} together with a Newton solver, followed
by flux and slope limiters (see Algorithm~\ref{alg}). 
Figure~\ref{Fig:Scheme} is a schematic that describes the actions of both flux and slope limiters on the discrete saturation.  The next
two sections describe these limiters in detail and contain a proof that the resulting saturation is bound-preserving.
\begin{algorithm}
\caption[Scheme]{DG+FL+SL method}\label{alg}
\begin{algorithmic}
\State Compute initial saturation $S_0$
\For{$n=0,\dots, (N_\tau-1)$}
\State Solve \eqref{Eqn_disc1}-\eqref{Eqn_disc2} with Newton's method
\State Apply flux limiter: $S_{n+1}^\mathrm{FL} = \mathcal{L}_\mathrm{flux}(S_{n+1})$
\State Apply slope limiter to $S_{n+1} = \mathcal{L}_\mathrm{slope}(S_{n+1}^\mathrm{FL})$
\EndFor
\end{algorithmic}
\end{algorithm}
\subsection{Flux limiter}
\label{Sub:Flux_limiter}
The flux limiter will enforce that the element-wise average of the saturation satisfies the desired physical bounds. 
We assume that the saturation at the previous time step, $t_n$,  satisfies:
\begin{align}
    s_\ast \le S_n(\mathbf{x}) \le s^\ast, \quad \forall \mathbf{x} \in \Omega.
\label{eq:physbounds}
\end{align}
for some constants $0\leq s_\ast\leq s^\ast\leq 1$. 
The flux limiting is applied to each element $E$ given the element-wise average of the saturation at the previous and current time steps and
given a flux function defined on each face $e\subset\partial E$. First we compute the element-wise
average at time $t_n$ and $t_{n+1}$:
\[
\overline{S_{i}}|_E = \overline{S_{i,E}}, \quad
\overline{S_{i,E}}= \frac{1}{|E|}\int_{E} S_{i}, 
\quad \forall E\in\mathcal{E}_h, \quad i=n,n+1.
\]
Next, for a fixed element $E$, let $\mathbf{n}_E$ be the unit outward normal vector to $E$. We 
define the flux function $\mathcal{H}_{n+1}|_E = \mathcal{H}_{n+1,E}$ as follows:
   \begin{align}
           \forall e = \partial E\cap \partial E', \quad \mathcal{H}_{n+1,E}(e) &=
          - \int_e (\lambda_{w}(S_{n+1}) )^{\uparrow\mathbf{v}_w^n}
          \{ K \big(\nabla P_{n+1} -\rho_{w}  \mathbf{g}\big) \cdot  {\mathbf{n}}_{E} \} \nonumber\\
          & + \frac{\sigma}{h} \int_e (S_{n+1}|_E-S_{n+1}|_{E'}) \\
           \forall e \in\partial E \cap \Gamma^{\mathrm{D},s}, \quad \mathcal{H}_{n+1,E}(e)& =
          - \int_e \lambda_{w}(g^s) K \big(\nabla P_{n+1} -\rho_{w}  \mathbf{g}\big) \cdot {\mathbf{n}}_{E} 
           + \frac{\sigma}{h} \int_e (S_{n+1} - g^s),\\
           \forall e \in\partial E \cap \Gamma^{\mathrm{N},s}, \quad \mathcal{H}_{n+1,E}(e) &= \int_e j^s,\\
           \forall e \in\partial E \cap \Gamma^{\mathrm{out}}, \quad \mathcal{H}_{n+1,E}(e) &= \int_e \lambda_w(S_{n+1}) K (\nabla P_{n+1}-\rho_w \mathbf{g})\cdot\mathbf{n}_E.
\end{align}
 For an interior face $e$ of the element $E$, the quantity $\mathcal{H}_{n+1,E}(e)$ measures the net mass flux 
across $e$ into the neighboring element $E'$ that also shares the face $e$. We note that: 
\[\mathcal{H}_{n+1,E}(e) = - \mathcal{H}_{n+1,E'}(e).
\]
After application of the flux limiter operator, the limited saturation has a possibly different cell-average:
\begin{equation}
S_{n+1}^{\mathrm{FL}} = \mathcal{L}_\mathrm{flux}(S_{n+1}), \quad
S_{n+1}^\mathrm{FL}(\bfx) =  S_{n+1}(\bfx)-\overline{S_{n+1,E}}+\bar{S}^{\mathrm{FL}}_{n+1}|_E, \quad \forall \bfx \in E.
\label{eq:updatecellaverage}
\end{equation}
The new cell-average of the saturation is obtained by an iterative process, that takes for input the cell average at the previous time
step and the flux function:
\[
\bar{S}^{\mathrm{FL}}_{n+1} = \mathcal{L}_\mathrm{avg}(\overline{S_{n}},\mathcal{H}_{n+1}).
\]
Before showing that the limited saturation satisfies \eqref{eq:physbounds}, we describe the algorithm for the 
operator $\mathcal{L}_\mathrm{avg}$.

\subsubsection{The algorithm for $\mathcal{L}_\mathrm{avg}$}

For a fixed element $E$, we  denote by $\mathcal{N}_E$ the set of elements that include $E$ and all neighboring elements $E'$ that share a face $e$ with $E$. 
The algorithm constructs a sequence of flux functions and element-wise averages for $E$ and its neighbors $E'$.   While the construction
of the element-wise averages are local to $E$ and its neighbors $E'$, the stopping criterion is global to ensure bound-preserving solutions.
We first initialize the sequences with the input arguments:
\[
\bar{S}_{\tilde{E}}^{(0)} =\overline{S_{n,\tilde{E}}},\quad \mathcal{H}_{\tilde{E}}^{(0)} = \mathcal{H}_{n+1,\tilde{E}},
\quad \forall \tilde{E} \in \mathcal{N}_E.
\]
Next, for $k\geq 1$, we have the following steps:
\begin{itemize}
\item[Step~1] Compute inflow and outflow fluxes:
        \begin{align}
        P_{\tilde{E}}^+ = \tau \sum_{e \in \partial \tilde{E}}\max(0,-\mathcal{H}_{\tilde{E}}^{(k-1)}(e)),
        \quad
        P_{\tilde{E}}^- = \tau \sum_{e \in \partial \tilde{E}}\min(0,-\mathcal{H}_{\tilde{E}}^{(k-1)}(e)), \quad \forall \tilde{E} \in \mathcal{N}_E.
        \end{align}
\item[Step~2] Compute admissible upper and lower bounds for all $\tilde{E}\in \mathcal{N}_E$:
        \begin{align}\label{eq:defQplus}
            Q_{\tilde{E}}^+ =& |\tilde{E}|\Big(\phi s^\ast  -\phi\bar{S}_{\tilde{E}}^{(k-1)}
                - \gamma_{1k} \tau \big(f_w(s_\mathrm{in})\bar{q}_{\tilde{E}} -  f_w(\bar{S}_{\tilde{E}}^{(k-1)})
                \underline{q}_{\tilde{E}}\big)  \Big),\\
            Q_{\tilde{E}}^- =& |\tilde{E}|\Big(\phi s_\ast
            -\phi\bar{S}_{\tilde{E}}^{(k-1)} - \gamma_{1k} \tau
                \big(
                 f_w(s_\mathrm{in})\bar{q}_{\tilde{E}} - f_w(\bar{S}_{\tilde{E}}^{(k-1)})
                \underline{q}_{\tilde{E}}\big).
             \Big)\label{eq:defQminus}
        \end{align}
The scalar factor $\gamma_{1k}$ is equal to $1$ if $k=1$ and $0$ otherwise.
The injection and production well rates, restricted to any element $\tilde{E}$, are denoted by $\bar{q}_{\tilde{E}}$ and $\underline{q}_{\tilde{E}}$ respectively.
They are assumed to be piecewise constant fields; otherwise we take the element-wise average of the flow rates.
\item[Step~3]   Compute limiting factors $\alpha_E^{(k-1)}(e)$ for all faces $e \subset \partial E$.
If $e$ is an interior face such that $e=\partial E\cap\partial E'$:
\[
\alpha_E^{(k-1)}(e) = \left\{
\begin{array}{cc}
\min\left( \min(1,Q^+_E/P^+_E), \min(1,Q^-_{E'}/P^-_{E'}) \right) & \mbox{if} \quad \mathcal{H}_E^{(k-1)}(e) < 0, \\
\min\left( \min(1,Q^-_E/P^-_E), \min(1,Q^+_{E'}/P^+_{E'}) \right) & \mbox{if} \quad \mathcal{H}_E^{(k-1)}(e) > 0.
\end{array}
\right.
\]
If $e$ is a boundary face:
\[
\alpha_E^{(k-1)}(e) = \left\{
\begin{array}{cc}
\min(1,Q^+_E/P^+_E) & \mbox{if} \quad \mathcal{H}_E^{(k-1)}(e) < 0, \\
\min(1,Q^-_E/P^-_E) & \mbox{if} \quad \mathcal{H}_E^{(k-1)}(e) > 0.
\end{array}
\right.
\]
\item[Step~4]  Update $\bar{S}_{E}^{(k)}$ and $\mathcal{H}_E^{(k)}$ as follows:
                  \begin{align}
                      \bar{S}_{E}^{(k)}
                      &=
                      \bar{S}_{E}^{(k-1)}
                      -
                      \frac{\tau}{\phi |E|} \sum_{e\subset\partial E}\alpha_E^{(k-1)}(e) \mathcal{H}_E^{(k-1)}(e)
                       + \frac{\gamma_{1k}\, \tau}{\phi}
                  \Big(  f_w(s_\mathrm{in}) \bar{q}_E  -  f_w(\bar{S}_{E}^{(k-1)})\underline{q}_E\Big),\label{eq:updateFL}\\
      \mathcal{H}_E^{(k)}(e) &= (1-\alpha_E^{(k-1)}(e))\, \mathcal{H}_E^{(k-1)}(e),
      \quad \forall e \subset \partial E.
          \end{align}
\item[Step~5.] Define a global stopping criterion\\
If  $\left(\max_{E\in\mathcal{E}_h}  \vert \mathcal{H}_E^{(k)}\vert
<\epsilon_1\right)$ or $\left(\max_{E\in\mathcal{E}_h}  \vert \mathcal{H}_E^{(k)}-\mathcal{H}_E^{(k-1)}\vert
<\epsilon_2\right)$ for $k\geq 2$\\
\hspace*{0.5cm} return $\bar{S}^{\mathrm{FL}}_{n+1}|_E = \bar{S}_E^{(k)}$.\\
Else\\
\hspace*{0.5cm} set $k\leftarrow k+1$ and go to Step~1.
\end{itemize}

\subsubsection{Bound-preserving solutions}

In this section, we show that the solution $S_{n+1}^\mathrm{FL}$ obtained in \eqref{eq:updatecellaverage} has a cell-average that is bound-preserving for the case where flow is driven by boundary conditions only (no wells).  Clearly, it suffices to show that $\bar{S}_{n+1}^{\mathrm{FL}}$ is bound-preserving. This is done in two steps. First, we show that each iterate in the flux-limiter algorithm is bound preserving. Second, we show that the stopping criterion is reached for some value $k_0$.
\begin{lemma}\label{thm:flux:boundedness}
Let $E$ be a mesh element and let $(\bar{S}_{E}^{(k)})_k$ be the sequence obtained in the algorithm $\mathcal{L}_\mathrm{flux}$.
Assume that the iterate $\bar{S}_{E}^{(k-1)}$ belongs to the interval $[s_\ast,s^\ast]$. Then the next iterate
$\bar{S}_{E}^{(k)}$ also belongs to the interval $[s_\ast,s^\ast]$.
\end{lemma}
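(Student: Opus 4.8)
The plan is to work directly with the update formula \eqref{eq:updateFL} for $\bar{S}_E^{(k)}$. Since the hypothesis excludes wells, we have $\bar{q}_E=\underline{q}_E=0$, so the source term in \eqref{eq:updateFL} vanishes and the iterate reduces to
\[
\bar{S}_E^{(k)} = \bar{S}_E^{(k-1)} - \frac{\tau}{\phi|E|}\sum_{e\subset\partial E}\alpha_E^{(k-1)}(e)\,\mathcal{H}_E^{(k-1)}(e).
\]
I would first record the sign information that follows from the assumption $s_\ast\le\bar{S}_E^{(k-1)}\le s^\ast$: from \eqref{eq:defQplus}--\eqref{eq:defQminus} with the well terms removed, $Q_E^+=|E|\phi(s^\ast-\bar{S}_E^{(k-1)})\ge 0$ and $Q_E^-=|E|\phi(s_\ast-\bar{S}_E^{(k-1)})\le 0$, while by their definitions $P_E^+\ge 0$ and $P_E^-\le 0$. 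Consequently each limiting factor $\alpha_E^{(k-1)}(e)$ lies in $[0,1]$, and for an inflow face it additionally satisfies $\alpha_E^{(k-1)}(e)\le Q_E^+/P_E^+$, whereas for an outflow face $\alpha_E^{(k-1)}(e)\le Q_E^-/P_E^-$.

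Next I would establish the two bounds separately, splitting the sum over $\partial E$ into inflow faces ($\mathcal{H}_E^{(k-1)}(e)<0$) and outflow faces ($\mathcal{H}_E^{(k-1)}(e)>0$). For the upper bound $\bar{S}_E^{(k)}\le s^\ast$, observe that the outflow contribution $-\frac{\tau}{\phi|E|}\sum_{\text{outflow}}\alpha_E^{(k-1)}(e)\mathcal{H}_E^{(k-1)}(e)$ is non-positive and may be discarded; for the inflow faces I would use $\alpha_E^{(k-1)}(e)\le Q_E^+/P_E^+$ together with the identity $\sum_{\text{inflow}}(-\mathcal{H}_E^{(k-1)}(e))=P_E^+/\tau$ to obtain $\frac{\tau}{\phi|E|}\sum_{\text{inflow}}\alpha_E^{(k-1)}(e)\,(-\mathcal{H}_E^{(k-1)}(e))\le Q_E^+/(\phi|E|)=s^\ast-\bar{S}_E^{(k-1)}$, which yields $\bar{S}_E^{(k)}\le s^\ast$. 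The lower bound $\bar{S}_E^{(k)}\ge s_\ast$ follows by the mirror-image argument: discard the non-negative inflow contribution and bound the outflow faces using $\alpha_E^{(k-1)}(e)\le Q_E^-/P_E^-$ and $\sum_{\text{outflow}}\mathcal{H}_E^{(k-1)}(e)=-P_E^-/\tau$, giving $\bar{S}_E^{(k)}\ge\bar{S}_E^{(k-1)}+Q_E^-/(\phi|E|)=s_\ast$.

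The main technical point to handle carefully is the degenerate case $P_E^+=0$ or $P_E^-=0$, which renders the ratios $Q_E^\pm/P_E^\pm$ ill-defined. I would dispatch these by noting that $P_E^+=0$ means there are no inflow faces, so the inflow sum is empty and the upper bound is immediate from $\bar{S}_E^{(k)}\le\bar{S}_E^{(k-1)}\le s^\ast$ (and symmetrically when $P_E^-=0$); equivalently, one adopts the convention that the corresponding $\min(1,\cdot)$ term equals $1$. Beyond this bookkeeping, the argument is an elementary sign-tracking and telescoping estimate, so the only real care needed is to keep the orientation of $\mathcal{H}_E^{(k-1)}(e)$ (the outward-normal net flux) consistent across the definitions of $P_E^\pm$, the case split in Step~3, and the two estimates.
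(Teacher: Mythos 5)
Your proof is correct and follows essentially the same route as the paper's: the paper's use of the inequality $x \le \max(0,x)$ applied to \eqref{eq:updateFL} is exactly your inflow/outflow split with the outflow contribution discarded, after which both arguments bound $\alpha_E^{(k-1)}(e)$ by $Q_E^+/P_E^+$ on inflow faces, identify the inflow sum with $P_E^+/\tau$, and conclude via $Q_E^+/(\phi|E|)=s^\ast-\bar{S}_E^{(k-1)}$, with the mirror argument (via $x\ge\min(0,x)$) for the lower bound. The only substantive addition is your explicit handling of the degenerate case $P_E^\pm=0$, a detail the paper's proof leaves implicit.
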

\begin{proof}
Let us check the upper bound: $\bar{S}^{(k)}_{E} \leq s^\ast$. 
Since for an interior face $e$, we have: $\alpha_E^{(k-1)}(e) = \alpha_{E'}^{(k-1)}(e)$, 
it is easy to check by induction on $k$ that $\mathcal{H}_E^{(k)} = -\mathcal{H}_{E'}^{(k)}$. 
Since the iterate $\bar{S}^{(k-1)}_{E}$ belongs to the interval $[s_\ast,\,s^\ast]$, it then follows by its definition that
$\alpha_{E}^{(k-1)}(e)\geq0$ for all $ e\subset\partial{E}$.  We then apply the inequality $x\leq \max(0,x)$ to \eqref{eq:updateFL}
to obtain:
\begin{align}
\bar{S}_{E}^{(k)}\leq &\bar{S}_{E}^{(k-1)}+ \frac{\tau}{\phi |E|} 
\sum_{e\subset\partial E}\alpha_E^{(k-1)}(e) \max(0,-\mathcal{H}_E^{(k-1)}(e))\nonumber\\
\leq & \bar{S}_{E}^{(k-1)}+ \frac{\tau}{\phi |E|} 
\sum_{e\subset\partial E} \frac{Q_E^+}{P_E^+} \max(0,-\mathcal{H}_E^{(k-1)}(e))\nonumber\\
\leq & \bar{S}_{E}^{(k-1)}+ \frac{1}{\phi |E|} Q_E^+.
\end{align}
Therefore with the definition of $Q_E^+$ we have
\[
\bar{S}_{E}^{(k)} \leq \bar{S}_{E}^{(k-1)} + (s^\ast-\bar{S}_{E}^{(k-1)}) = s^\ast.
\]
The proof for the lower bound $\bar{S}^{(k)}_{E} \geq S_\ast$ follows a similar argument, after applying the identity $x\geq \min(0,x)$
to \eqref{eq:updateFL}.
\end{proof}

\begin{lemma}\label{thm:flux:average}
Assume that the cell averages $\overline{S_{n,E}}$ at time $t_n$ belong to the interval $[s_\ast,s^\ast]$ for all elements $E$. 
Then we have
\[
s_\ast \leq \bar{S}_{n+1}^\mathrm{FL}|_E \leq s^\ast, \quad \forall E\in\mathcal{E}_h.
\]
\end{lemma}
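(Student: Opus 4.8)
The plan is to reduce the statement to a single global induction on the iteration index $k$ of the algorithm $\mathcal{L}_\mathrm{avg}$, using the one-step result of Lemma~\ref{thm:flux:boundedness} as the inductive engine. First I would recall that the returned value is $\bar{S}_{n+1}^\mathrm{FL}|_E = \bar{S}_E^{(k_0)}$, where $k_0$ is the index at which the global stopping criterion of Step~5 is first met. Consequently it suffices to prove that every iterate $\bar{S}_E^{(k)}$ lies in $[s_\ast,s^\ast]$, for every element $E\in\mathcal{E}_h$ and every $k\geq 0$; the asserted bound on $\bar{S}_{n+1}^\mathrm{FL}|_E$ is then just the special case $k=k_0$.

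The base case $k=0$ is immediate: by the initialization $\bar{S}_E^{(0)} = \overline{S_{n,E}}$, which belongs to $[s_\ast,s^\ast]$ for all $E$ by the hypothesis of the lemma. For the inductive step I would state the induction hypothesis \emph{globally}, namely that $\bar{S}_E^{(k-1)}\in[s_\ast,s^\ast]$ holds simultaneously for all $E\in\mathcal{E}_h$, rather than element by element. This global formulation is the crucial point: the limiting factor $\alpha_E^{(k-1)}(e)$ on an interior face $e=\partial E\cap\partial E'$ involves the admissible bounds of the \emph{neighbor} $E'$ through the ratios $Q_{E'}^-/P_{E'}^-$ and $Q_{E'}^+/P_{E'}^+$, so the nonnegativity $\alpha_E^{(k-1)}(e)\geq 0$ invoked in the proof of Lemma~\ref{thm:flux:boundedness} requires not only $\bar{S}_E^{(k-1)}$ but also $\bar{S}_{E'}^{(k-1)}$ to lie in $[s_\ast,s^\ast]$ (recall $P_E^+\geq 0$ and $P_E^-\leq 0$ always, being sums of $\max(0,\cdot)$ and $\min(0,\cdot)$, so the signs of the $\alpha$-ratios are controlled by the signs of $Q_{E}^+$ and $Q_{E'}^-$, which in turn are fixed by boundedness of the respective averages). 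With the global hypothesis in hand, Lemma~\ref{thm:flux:boundedness} applies to each element $E$ and yields $\bar{S}_E^{(k)}\in[s_\ast,s^\ast]$, closing the induction.

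Finally I would conclude that, since no iterate ever leaves $[s_\ast,s^\ast]$, the returned cell average $\bar{S}_{n+1}^\mathrm{FL}|_E=\bar{S}_E^{(k_0)}$ satisfies the claimed bounds for every element, which is exactly the assertion of the lemma. I expect the only genuine subtlety to be precisely the need to run the induction globally over all elements at once, so that the neighbor bounds entering the $\alpha_E^{(k-1)}(e)$ factors are available at each step; once that is set up, the argument is a direct appeal to Lemma~\ref{thm:flux:boundedness}. Note that this gives the bound for whatever finite index $k_0$ the algorithm returns, so that well-posedness of the output --- i.e.\ that the stopping criterion of Step~5 is eventually met --- is a separate matter (the announced second step) that is not needed for boundedness itself.
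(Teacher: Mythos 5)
Your global induction is sound as far as it goes, and your observation that the induction must be run over all elements simultaneously is a genuine refinement: on an interior face $e=\partial E\cap\partial E'$ the factor $\alpha_E^{(k-1)}(e)$ involves the ratios $Q^{\pm}_{E'}/P^{\pm}_{E'}$ of the neighbor, so the nonnegativity of the limiting factors invoked in Lemma~\ref{thm:flux:boundedness} does require $\bar{S}_{E'}^{(k-1)}\in[s_\ast,s^\ast]$ as well --- a point the paper passes over silently. However, your proposal has a genuine gap: you explicitly set aside whether the stopping criterion of Step~5 is ever met, and that is not a ``separate matter'' --- it is the main content of this lemma. The quantity $\bar{S}_{n+1}^{\mathrm{FL}}|_E$ is defined only as the value $\bar{S}_E^{(k_0)}$ returned when the algorithm halts; if no finite $k_0$ satisfies the criterion, the left-hand side of the asserted inequality does not exist and nothing has been proved. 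In the paper's announced two-step outline, the first step (boundedness of each iterate) is Lemma~\ref{thm:flux:boundedness}, and the second step (the stopping criterion is reached for some $k_0$) is precisely the proof of the present lemma; your argument reproves a sharpened form of the first step and omits the second.

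The missing piece is short, and in fact it interlocks with your induction rather than being independent of it. Since every iterate lies in $[s_\ast,s^\ast]$, every limiting factor satisfies $\alpha_E^{(k-1)}(e)\in[0,1]$, so the update $\mathcal{H}_E^{(k)}(e)=(1-\alpha_E^{(k-1)}(e))\,\mathcal{H}_E^{(k-1)}(e)$ gives $\vert\mathcal{H}_E^{(k)}(e)\vert\leq\vert\mathcal{H}_E^{(k-1)}(e)\vert$ face by face, hence
\[
\max_{E\in\mathcal{E}_h}\max_{e\subset\partial E}\vert\mathcal{H}_E^{(k)}(e)\vert\leq
\max_{E\in\mathcal{E}_h}\max_{e\subset\partial E}\vert\mathcal{H}_E^{(k-1)}(e)\vert.
\]
Each sequence $\bigl(\mathcal{H}_E^{(k)}(e)\bigr)_k$ is of fixed sign and nonincreasing in magnitude, hence convergent, so $\vert\mathcal{H}_E^{(k)}-\mathcal{H}_E^{(k-1)}\vert\to 0$ and one of the two conditions in Step~5 is met at some finite $k_0$. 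Only then does your concluding sentence refer to a well-defined object, and only then is the lemma proved. Note the circular dependence you cannot escape: boundedness of the iterates is what guarantees $\alpha_E^{(k-1)}(e)\in[0,1]$, which drives the termination argument, which in turn makes the output whose boundedness you assert exist --- so the two steps must be carried out together, not one deferred.
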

\begin{proof}
With Lemma~\ref{thm:flux:boundedness}, it suffices to show that the sequence $(\mathcal{H}_E^{(k)})$ converge as $k$ tends to infinity, for all $E$ in $\mathcal{E}_h$. Since $\alpha_E^{(k-1)}$ belongs to $[0,1]$, it is easy to show by induction on $k$ that
\[
\max_{E\in\mathcal{E}_h} \max_{e\subset\partial E} \vert \mathcal{H}_E^{(k)} \vert \leq
\max_{E\in\mathcal{E}_h} \max_{e\subset\partial E} \vert \mathcal{H}_E^{(k-1)}\vert.
\]
This implies convergence of $(\mathcal{H}_E^{(k)})$ for all elements $E$, so that there exists $k_0$ such that
\[
\left(\max_{E\in\mathcal{E}_h}  \vert \mathcal{H}_E^{(k_0)}\vert
<\epsilon_1\right), \mbox{ or } \left(\max_{E\in\mathcal{E}_h}  \vert \mathcal{H}_E^{(k_0)}-\mathcal{H}_E^{(k_0-1)}\vert
<\epsilon_2\right).
\]
Since $\bar{S}_{n+1}^\mathrm{FL}|_E=\bar{S}_E^{k_0}$, we conclude the proof.
\end{proof}

\subsection{Slope limiter}
\label{Sub:Slope_limiter}
The slope limiter operator, denoted by $\mathcal{L}_\mathrm{slope}$, is applied to the discrete 
saturation $S_{n+1}^\mathrm{FL}$ at each time step. 
The element-wise mean values of the saturation are left unchanged by this procedure.
%
There is a variety of slope limiters available in 
the literature. For convenience, we choose a vertex-based slope limiter that is well suited for  piecewise
linear polynomials \cite{kuzmin2010vertex} and that consists of two steps. 
\begin{enumerate}[(i)]
    \item We first mark the elements in which the maximum principle is not satisfied 
        (i.e., $S_{n+1}^\mathrm{FL}(\mathbf{x}) > s^\ast$ or $S_{n+1}^\mathrm{FL}(\mathbf{x}) < s_\ast$).
        We will apply the slope limiter on these marked elements only.
    \item By a Taylor expansion around the centroid $\mathbf{c}_E$ of element $E$, the linear saturation takes the form
        \begin{align}
            S_{n+1}^\mathrm{FL} \vert_E(\mathbf{x}) = \bar{S}_{n+1,E}^\mathrm{FL} + \nabla S_{n+1}^\mathrm{FL}
            \cdot (\mathbf{x}-\mathbf{c}_E), \quad \forall \mathbf{x}\in E.
        \end{align}
        For the marked elements, a slope limiter replaces the local solution $S_{n+1}^\mathrm{FL}|_E$ by 
        the following linear constrained reconstruction  
        \begin{align}
            S_{n+1}(\mathbf{x}) = \bar{S}_{n+1,E}^\mathrm{FL}+\beta_{E} \nabla S_{n+1}^\mathrm{FL}
            \cdot(\mathbf{x}-\mathbf{c}_E), \quad \forall \mathbf{x}\in E.
\label{eq:construction}
        \end{align}
Let $\mathbf{v}_{E,i}$ denote the $i^{th}$ vertex of element $E$. We determine the maximum
admissible slope for the constrained reconstruction by choosing values $\beta_E\in[0,1]$ such that
boundedness of $S_{n+1}$ is satisfied at all vertices of $E$:
\begin{align}
    S_*^{E,i} \le S_{n+1}(\mathbf{v}_{E,i}) \le S^*_{E,i},
\end{align}
where $S_*^{E,i}$ and $S^*_{E,i}$ are 
defined as maximum and minimum 
means values of the saturation over all the elements (including $E$) that contain the vertex 
$\mathbf{v}_{E,i}\in E$.
\begin{align}
    S_*^{E,i}=\min_{E'\in\mathcal{E}_h|\mathbf{v}_{E,i}\in E'} \bar{S}_{n+1,E'}^\mathrm{FL},  \quad
    S^*_{E,i}=\max_{E'\in\mathcal{E}_h|\mathbf{v}_{E,i}\in E'} \bar{S}_{n+1,E'}^\mathrm{FL}. 
\end{align}
The bounds of the saturation at all vertices are guaranteed if the correction factor $\beta_E$
is chosen as:
\begin{align}
     \beta_E = \min_i
    \begin{cases}
        \frac{S^*_{E,i}-\bar{S}_{n+1,E}^\mathrm{FL}}{ S_{n+1}^\mathrm{FL}(\mathbf{v}_{E,i})-\bar{S}_{n+1,E}^\mathrm{FL}} \quad 
        &\mbox{if} \qquad  S_{n+1}^\mathrm{FL}(\mathbf{v}_{E,i})  > S^*_{E,i},\\
        1 \quad
        &\mbox{if} \qquad S_*^{E,i} \le S_{n+1}^\mathrm{FL}(\mathbf{v}_{E,i}) \le S^*_{E,i},\\
        \frac{S_*^{E,i}-\bar{S}_{n+1,E}^\mathrm{FL}}{S_{n+1}^\mathrm{FL}(\mathbf{v}_{E,i}) -\bar{S}_{n+1,E}^\mathrm{FL}}
        &\mbox{if} \qquad S_{n+1}^\mathrm{FL}(\mathbf{v}_{E,i})  < S_*^{E,i}.
\end{cases}
\end{align}
\end{enumerate}

Using all the previous results, we obtain that the discrete saturation is bound-preserving.
\begin{proposition}
Let $(S_{n+1})_n$ be the sequence of discrete saturations defined by Algorithm~\ref{alg}.  Assume that the initial saturation is bounded below and above by $s_\ast$ and $s^\ast$ respectively. Then, we have
\begin{equation}\label{eq:satbounds}
s_\ast \leq S_{n+1}(\mathbf{x}) \leq s^\ast, \quad \forall \mathbf{x}\in\Omega.
\end{equation}
\end{proposition}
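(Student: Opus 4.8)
The plan is to argue by induction on the time index $n$, propagating the \emph{cell-average} bound rather than the pointwise bound. The reason is structural: the flux limiter controls only element-wise averages (Lemmas~\ref{thm:flux:boundedness} and \ref{thm:flux:average}), and it is exactly these averages, not the pointwise values, that are reinjected into the next time step through $\mathcal{L}_\mathrm{avg}(\overline{S_n},\mathcal{H}_{n+1})$. The pointwise estimate \eqref{eq:satbounds} will then be recovered separately at each step from the slope limiter, which converts a cell-average bound into a pointwise bound while leaving the averages untouched.

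For the base case I would check that $\overline{S_{0,E}}\in[s_\ast,s^\ast]$ for every $E$. Testing the $L^2$-projection identity $\int_\Omega S_0 v=\int_\Omega s_0 v$ with $v=\mathbf{1}_E$, which is admissible since constants lie in $\mathcal{D}(\mathcal{E}_h)$, shows that the average of $S_0$ on $E$ equals that of $s_0$, hence lies in $[s_\ast,s^\ast]$ by assumption; note that $S_0$ itself need not be pointwise bounded, but only its averages enter the recursion. For the inductive step, assume $\overline{S_{n,E}}\in[s_\ast,s^\ast]$ for all $E$. Lemma~\ref{thm:flux:average} yields $s_\ast\le\bar{S}_{n+1}^\mathrm{FL}|_E\le s^\ast$, and since $\mathcal{L}_\mathrm{slope}$ preserves element-wise means, $\overline{S_{n+1,E}}=\bar{S}_{n+1}^\mathrm{FL}|_E\in[s_\ast,s^\ast]$, closing the induction on averages.

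It remains to upgrade the average bound on $S_{n+1}^\mathrm{FL}$ to the pointwise bound on $S_{n+1}$. On an element not marked in step~(i) of the slope limiter, $S_{n+1}=S_{n+1}^\mathrm{FL}$ already lies in $[s_\ast,s^\ast]$ by the marking criterion. On a marked element $E$, each vertex bound $S_*^{E,i}$ and $S^*_{E,i}$ is a minimum, respectively maximum, of cell-averages $\bar{S}_{n+1,E'}^\mathrm{FL}$ that all belong to $[s_\ast,s^\ast]$, so $s_\ast\le S_*^{E,i}$ and $S^*_{E,i}\le s^\ast$. Using that $\bar{S}_{n+1,E}^\mathrm{FL}$ itself lies in $[S_*^{E,i},S^*_{E,i}]$ (because $E$ contributes to the min and the max) and that the reconstruction \eqref{eq:construction} is affine, a short case analysis on the three branches defining $\beta_E\in[0,1]$ shows that the single scalar $\beta_E$ simultaneously forces $S_*^{E,i}\le S_{n+1}(\mathbf{v}_{E,i})\le S^*_{E,i}$, hence $s_\ast\le S_{n+1}(\mathbf{v}_{E,i})\le s^\ast$, at every vertex. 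Since $S_{n+1}|_E\in\mathbb{P}_1(E)$ is affine and $E$ is a convex polytope, it attains its extrema at the vertices, so the pointwise bound \eqref{eq:satbounds} follows.

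The main obstacle is conceptual rather than computational: one must recognize that the induction cannot be carried on the pointwise bound itself, since the flux limiter controls only averages and the slope-limited iterate is never fed back into the scheme. The correct invariant to propagate is therefore the cell-average bound, with the pointwise statement extracted afresh at each time level. The only genuinely technical verification is that a single $\beta_E$ enforces the bounds at all vertices at once, which hinges on $\bar{S}_{n+1,E}^\mathrm{FL}\in[S_*^{E,i},S^*_{E,i}]$ together with the affineness of the reconstruction on the convex element $E$.
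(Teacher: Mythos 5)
Your proof is correct and takes essentially the approach the paper intends: the paper leaves this proposition's proof implicit (``using all the previous results''), meaning precisely an induction over time steps in which Lemma~\ref{thm:flux:average} propagates the cell-average bound through the flux limiter and the vertex-based slope limiter upgrades that bound to the pointwise estimate \eqref{eq:satbounds} while preserving the averages. Your write-up supplies the details the paper omits --- the $L^2$-projection base case, the observation that only cell averages (not pointwise values) are fed back into the recursion, and the verification that the single factor $\beta_E$ enforces all vertex bounds because $\bar{S}_{n+1,E}^\mathrm{FL}$ lies in $[S_*^{E,i},S^*_{E,i}]$ --- but the underlying argument is the same.
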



\begin{figure}[htpb]
    \centering
    \includegraphics[width=0.8\linewidth]{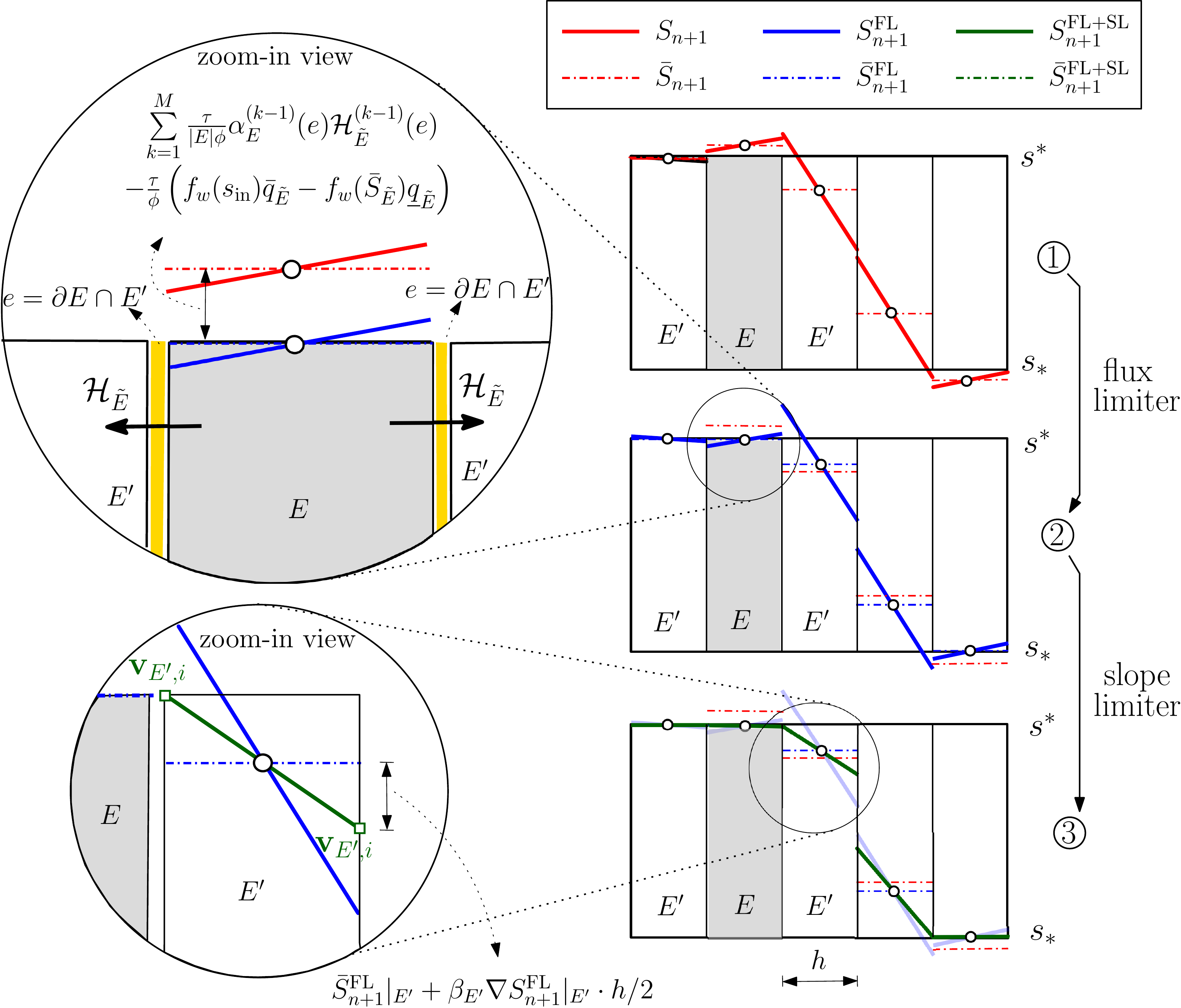}
    \caption{\textsf{Schematic of flux and slope limiters to achieve a pointwise bound-preserving DG solution:}
        Unlimited DG solution (figure \textcircled{\raisebox{-0.9pt}{1}}) are found to violate the upper
        bound $s^*$ and lower bound  $s_*$. 
        After implementing the flux limiter in a post-processing step, the local 
        average values shown in broken blue lines are bounded (figure \textcircled{\raisebox{-0.9pt}{2}}).
        The non-physical fluxes are limited by using the limiting factor $\alpha_{E}^{(k-1)}(e) \in[0,1]$ at the 
        discrete level and performing multiple correction cycles (see section \ref{Sub:Flux_limiter}).
        Note that this procedure is mass conservative and we observe that the decrease of the average value
        on element $E$ brings about an increase on the average value of neighboring elements $E'$. 
        We subsequently apply a slope limiting procedure to produce pointwise bound-preserving solutions
        (see section \ref{Sub:Slope_limiter}). The correction factor $\beta_{E}\in[0,1]$ at each element vertex 
        $\mathbf{v}_{E,i}$ takes the average values of neighboring elements as local bounds and determines 
        the maximum admissible slope (figure \textcircled{\raisebox{-0.9pt}{3}}).
        Note that the local average values are left unchanged during the slope limiting.
    \label{Fig:Scheme}}
\end{figure}

\subsection{Computer implementation and solvers}%
\label{sub:computer_implementation_and_solvers}
We implement the proposed computational framework using the finite element capabilities in Firedrake Project 
\citep{rathgeber2016firedrake,McRae2016,Homolya2016,Homolya2017,homolya2017a} with GNU compilers.
Firedrake is built upon several scientific packages and can employ various computing tools across either
CPUs or GPUs. Software dependencies can be accessed at \citep{zenodo/COFFEE:2020,zenodo/fiat:2021,zenodo/FInAT:2021,zenodo/petsc:2021,zenodo/PyOP2:2021,zenodo/tsfc:2021,zenodo/ufl:2021}. The structured meshes are generated
internally on top of DMPlex grid format \citep{KnepleyKarpeev09} and unstructured meshes are imported from GMSH \citep{geuzaine2009gmsh}.

We utilize the MPI-based PETSc library \citep{petsc-user-ref,petsc-web-page,Dalcin2011} as the linear 
algebra back-end to solve nonlinear equations \eqref{Eqn_disc1}-\eqref{Eqn_disc2}. 
We use Newton's method with (damped) step line search technique \citep{crisfield1979faster} and set 
the relative convergence tolerance to $10^{-6}$. For the inner linear solve at each Newton iteration, 
we rely on the MUMPS direct solver \citep{MUMPS:1,MUMPS:2} with relative pivoting threshold of $0.01$.
MUMPS uses several efficient preordering algorithms to permute the columns of matrix and thereby minimize 
the fill-in (number of nonzeros in the factorization) in the LU factorization.
At each time step, following the Newton solver convergence, we apply flux and slope limiters. 
Implementation of the flux limiter algorithm discussed in section \ref{Sub:Flux_limiter} 
is provided in the module \textsf{FluxLimiter} along with an auxiliary flux wrapper module named 
\textsf{Hsign}. Global stopping criteria for all problem sets are taken as $\epsilon_1=\epsilon_2= 10^{-6}$.
As for the slope limiter, we use the native \textsf{VertexBasedLimiter} module embedded in the 
Firedrake project.
All simulations are conducted on a single socket Intel i5-8257U node by utilizing a single MPI process. 

Codes used to perform experiments in this paper are publicly available at 
\citet{Sarraf2021Github} repository.
Firedrake and its component may be obtained from {https://www.firedrakeproject.org/}.
For reproducibility, we also cite archives of the exact software versions used to produce results in this paper.
All major Firedrake components have been archived on \citet{zenodo/firedrake:2021}. 
This record collates DOIs for the components and can be installed following the instructions at
https://www.firedrakeproject.org/download.html. 


%
\section{NUMERICAL RESULTS}
\label{sec:S4_NR}
In this section, several numerical experiments are carried out in following order:
(i) We first validate our proposed method on two benchmark problems: one-dimensional Buckley-Leverett 
problem and two-dimensional Buckley-Leverett problem with gravity. Further, we investigate the convergence 
rates by using method of manufactured solutions and verify that the flux limiter preserves accuracy.
(ii)
We then perform various pressure-driven flow problems on structured and unstructured meshes, 
to study the efficacy of limiters on capturing high-accuracy wetting phase saturation profiles.
(iii) The robustness of the scheme in the presence of injection and production wells is assessed 
using the quarter five-spot problem, with homogeneous and discontinuous highly varying permeability 
fields.
For both pressure-driven flow problem and quarter five-spot problems, we examine the element-wise mass 
balance property associated with the limiters and highlight the capability of the saturation in satisfying 
the maximum-principle.
(iv) Finally, we study the influence of 
gravity on the flows by testing our scheme with three different gravity numbers.

For all problems, we assume the following parameters unless otherwise specified:
\begin{align*}
    \centering
    &\rho_w = 1000 \mbox{~kg/m\textsuperscript{3}}, \quad
    \rho_{\ell} = 850\mbox{~kg/m\textsuperscript{3}}, \nonumber\\
    &\phi = 0.2, \quad
    s_{rw} = 0.2, \quad
    s_{r\ell} = 0.15,\quad s_0 = 0.2, \quad P_0 = 10^{6} \mbox{~Pa}. 
\end{align*}
The residual saturations imply the physical lower and upper bounds for the saturation:
\[
s_\ast = s_{rw} = 0.2, \quad s^\ast = 1-s_{r\ell} =  0.85.
\]

\subsection{Verification}
\subsubsection{One-dimensional Buckley-Leverett problem}%
\label{Sub:1D_BL}
The original Buckley-Leverett transport equation introduced in 1942 \citep{buckley1942mechanism}, 
also known as the frontal-advance equation, is a well-known non-linear hyperbolic equation 
for the description of one-dimensional immiscible displacement in a linear reservoir. Because the problem has a semi-analytical solution, it
is widely used to validate numerical methods for two-phase flows in porous media.
Since capillary pressure and gravity are neglected, the 
total velocity of the phases $\mathbf{u}_t=(u_t,v_t)^T$ can be written as:
 \begin{align}
     \label{Eqn:Total_vel}
     \mathbf{u}_t = -(\lambda_w+\lambda_{\ell})K\nabla P.
 \end{align}
By substituting $\nabla P$ from \eqref{Eqn:Total_vel}  into equation~\eqref{Eqn:BoM_s} and ignoring
source/sink terms, we obtain the general form of the Buckley-Leverett equation.
\begin{align}
    \label{Eqn:1DBL_general}
    \frac{\partial}{\partial t}(\phi S)- 
    \nabla \cdot \mathbf{f}_{BL} = 
    0 ,\quad \mbox{in} \quad \Omega \times (0,T).
\end{align}
The convection flux $\mathbf{f}_{BL}=(F(S),G(S))^T$ reduces in one-dimension to:
\begin{align}
    \label{Eqn:1DBL}
    F(S) =  \frac{\lambda_w(S){u}_t}{\lambda_w(S)+\lambda_{\ell}(S)}, \mbox{ and} \quad 
    G(S) = 0.
\end{align}
The relative permeabilities are chosen as: 
\begin{align}
    \label{Eqn:Rel-perm}
    k_{rw}(S) = S^4,~k_{r\ell}(S) = (1-S)^2(1-S^2).  
\end{align}
We take an interval domain $\Omega = [0,300]$ \si{\meter} with uniform mesh, and we fix the following parameters: 
\[
u_t=3\times10^{-7} \mbox{\si{\meter\per\second}}, \, \mu_w=\mu_{\ell}=1 \mbox{\si{\pascal\cdot\second}}, \,
 s_0 = 0.1, \, h=12 \mbox{\si{\meter}}, \, \tau=22.2 \mbox{days}.
\]
The Dirichlet boundary condition of $g^s=0.85$ is weakly prescribed at the left boundary $x=0$.
We assume outflow boundary at $x=300$. 
This setup gives rise to the classical Buckley-Leverett profile, which consists of a shock wave 
immediately followed by a rarefaction wave. Both lower and higher DG approximation of solution without any 
external bound-preserving mechanism do not respect maximum principle \citep{dawson2004compatible,zhang2011maximum}. 
Here, we employ the first-order implicit DG formulation with our proposed limiter scheme to discretize equations
\eqref{Eqn:1DBL_general}--\eqref{Eqn:1DBL} in 
space, and backward Euler scheme is utilized in time. The DG penalty parameter is set to $\sigma=10^{-6}$ and
the flux $\mathbf{f}_{BL}$ is approximated with a first-order upwind method 
\citep{fambri2020discontinuous,zhang2018runge} as it provides good results in conjunction with proposed 
limiters. 
To implement the flux limiter, the following flux functional $\mathcal{H}_E(e)$ is adopted on each face
$e\subset\partial{E}$:  
\begin{align}
    \forall e = \partial E \cap \partial E', 
    \;\; &\mathcal{H}_{n+1,E}(e) = \int_e \{ \mathbf{f}_{BL}(S_{n+1}) \}\cdot \mathbf{n}_E +
    \frac{1}{2} \int_e \left\vert \frac{d\mathbf{f}_{BL}(S_{n})}{dS}\cdot\mathbf{n}_E\right\vert
    \Big(S_{n+1}|_E-S_{n+1}|_{E'}\Big),\nonumber\\
    \forall e \in \partial E \cap \Gamma^{\mathrm{D},s},\;\;
    &\mathcal{H}_{n+1,E}(e)=\int_e \mathbf{f}_{BL}(g^s)\cdot\mathbf{n}_E,\nonumber\\
    \forall e \in \partial E \cap \Gamma^{\mathrm{out},s},\;\;
    &\mathcal{H}_{n+1,E}(e)=\int_e \mathbf{f}_{BL}(S_{n+1})\cdot \mathbf{n}_{E} \nonumber.
\end{align}
The final simulation time is $T=800$ days, and the saturation profile is depicted in Figure~\ref{Fig:1DBL}
for $t=400$ and $t=800$ days. We performed a four-step mesh refinement study and linearly refined $\tau$ 
at each step. We observe that the location of the front obtained from the proposed numerical scheme
is in good agreement with the location of the front for the analytical solution even for the case of coarse mesh and as we proceed with
refinement, the discrete solution converges to the analytical solution. To calculate the semi-analytical solution of 
Buckley-Leverett equation (i.e., the position of the saturation front), we resorted to Welge graphical 
method \citep{welge1952simplified}. Figure~\ref{Fig:1DBL} also provides a zoom-in view at the location of 
front for $t=800$ days for better visualization.
As expected, the numerical saturation remains within physical bounds and no undershoots and 
overshoots are observed.
The choice of implicit time marching algorithm is shown to have no erroneous smearing effect on 
the saturation front.
\begin{figure}[]
    \centering
    \includegraphics[width=0.9\linewidth]{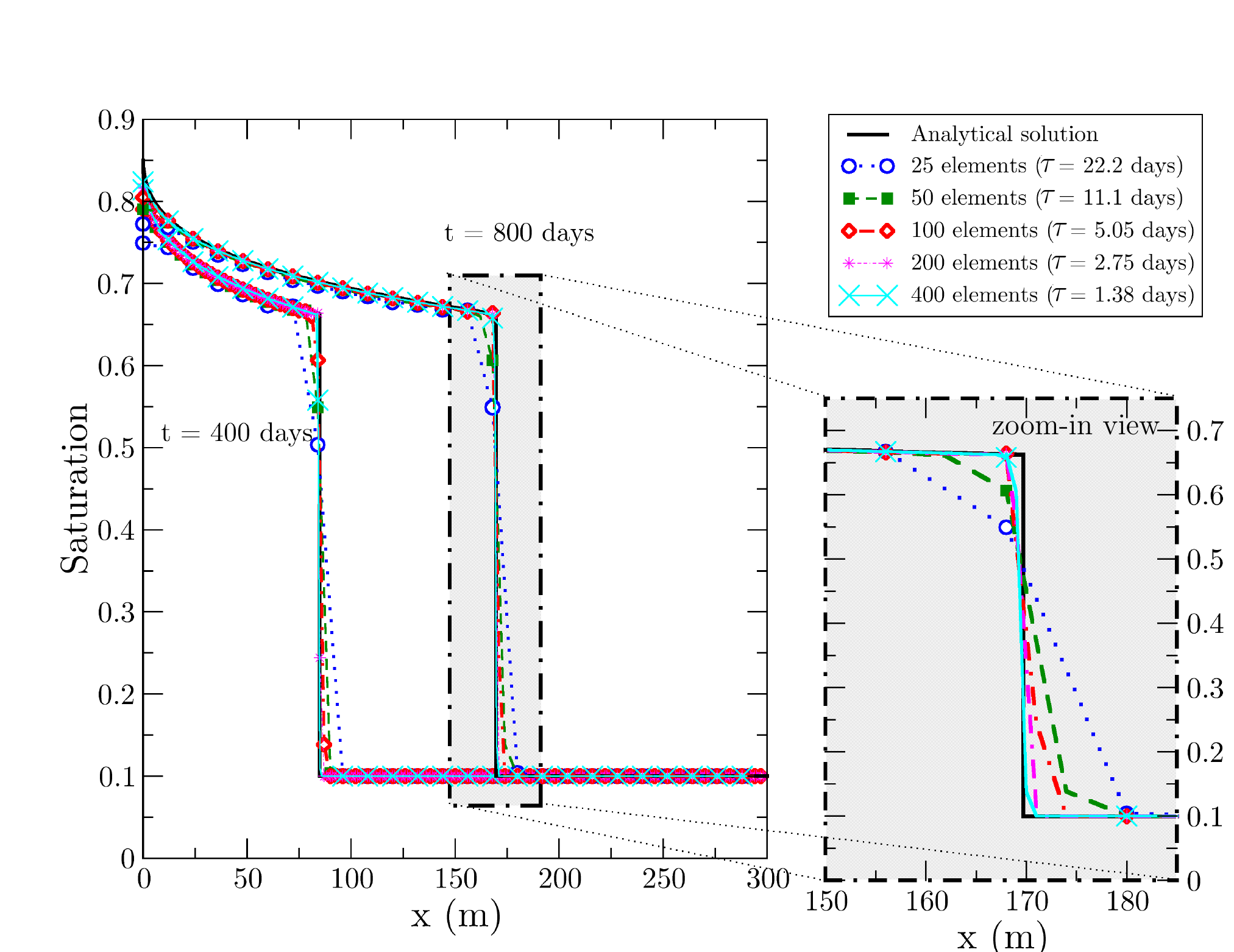}
    \caption{\textsf{One-dimensional Buckley-Leverett problem:}
    This figure shows the saturation profiles obtained from the limited DG scheme at two different time-steps
    $t=400$ days and  $t=800$ days.
    As we refine the mesh, the approximation converges to the analytical solution. 
    The numerical solution is satisfactory with respect to maximum principle as
    no undershoots and overshoots are observed.
\label{Fig:1DBL}}
\end{figure}

\subsubsection{Buckley-Leverett problem with gravity}%
\label{Sub:2D_BL}
In this numerical experiment, we study the effect of proposed limiters in the two-dimensional Buckley-Leverett 
equation that incorporates gravitational effects along the y-axis 
\citep{zhang2002adaptive,christov2008new,de2007construction}. 
Consider equation~\eqref{Eqn:1DBL_general} with the following non-convex flux functions in the x- and y- 
directions:
\begin{align}
    \label{Eqn:2DBL}
    F(S) = \frac{\lambda_w(S)u_t}{(\lambda_w(S)+\lambda_{\ell}(S))}, \quad 
    G(S) = \frac{F(S)v_t}{u_t}(1-5\lambda_{\ell}).
\end{align}
This benchmark problem was solved by finite element method 
combined with operator-splitting method in \citep{karlsen1998corrected}. For comparison purpose, hence,
we take $u_t=v_t=1$ \si{\meter\per\second}, $\lambda_w(S)=S^2$, $\lambda_{\ell}(S)=(1-S^2)$ and  $\phi=1$. 
We solve \eqref{Eqn:1DBL_general} and \eqref{Eqn:2DBL} on the square domain $[0,3]^2$ \si{\meter\squared} with structured 
triangular mesh of size $h=0.03$ \si{\meter} subject
to the initial condition:
\begin{align}
    \label{Eqn:2D_BL_in}
    s_0(x,y) = 
    \begin{cases}
        1,\quad \mbox{for} \, (x-1.5)^2 + (y-1.5)^2 <  0.5 \\
        0,\quad \mbox{otherwise.}
    \end{cases}
\end{align}
Finally, we impose no flow condition $\mathbf{u_t}\cdot \mathbf{n}=0$ everywhere on the boundary
$\partial \Omega$.
Similar to the one-dimensional problem, we use backward Euler time marching and discretize the problem with 
implicit DG formulation (with $\sigma=0.1$) augmented with the proposed flux and slope limiters scheme. 
Herein, the flux limiter functional $\mathcal{H}_{n+1,E}(e)$ on the interior edges is the same as that of the 
one-dimensional Buckly-Leverett and on all exterior edges 
it  is set to $0$.
The simulation runs to $T=0.5$ \si{\second} with $440$ time steps.
In Figure \ref{Fig:2D_BL_main}, we show the numerical results at the final time obtained from the implicit DG
formulation without limiters (see Figure \ref{Fig:2DBL_a}) and with limiters (see Figure \ref{Fig:2DBL_b}). We 
compare the results with the reference solution. Evidently,
DG scheme with no limiters produces an oscillatory solution that results in strong violations with respect to 
maximum principle. However, the application of limiters give rise to bound-preserving solution 
(i.e., $0\leq S_{n+1} \leq 1$); and undershoots and overshoots are eliminated completely. 
This result does not exhibit extra numerical diffusion and is consistent with the reference solution shown in 
Figure \ref{Fig:2DBL_c}.

\begin{figure}
    \subfigure[DG approximation\label{Fig:2DBL_a}]{
        \includegraphics[clip,scale=0.125,trim=0 0cm 0cm 0]{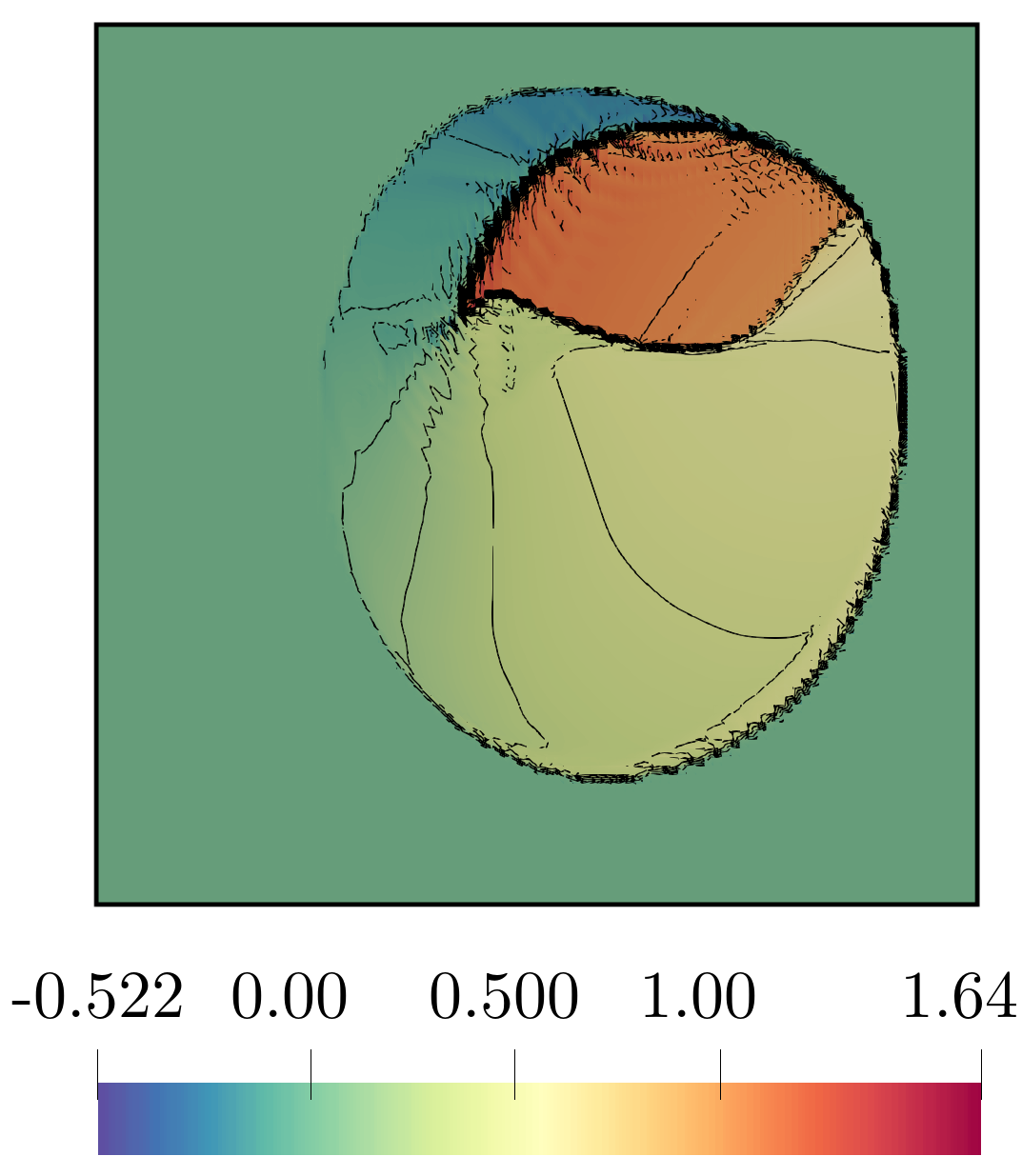}} 
        \hspace{0.1cm}
    \subfigure[DG approximation with flux and slope limiters\label{Fig:2DBL_b}]{
        \includegraphics[clip,scale=0.125,trim=0 0cm 0cm 0]{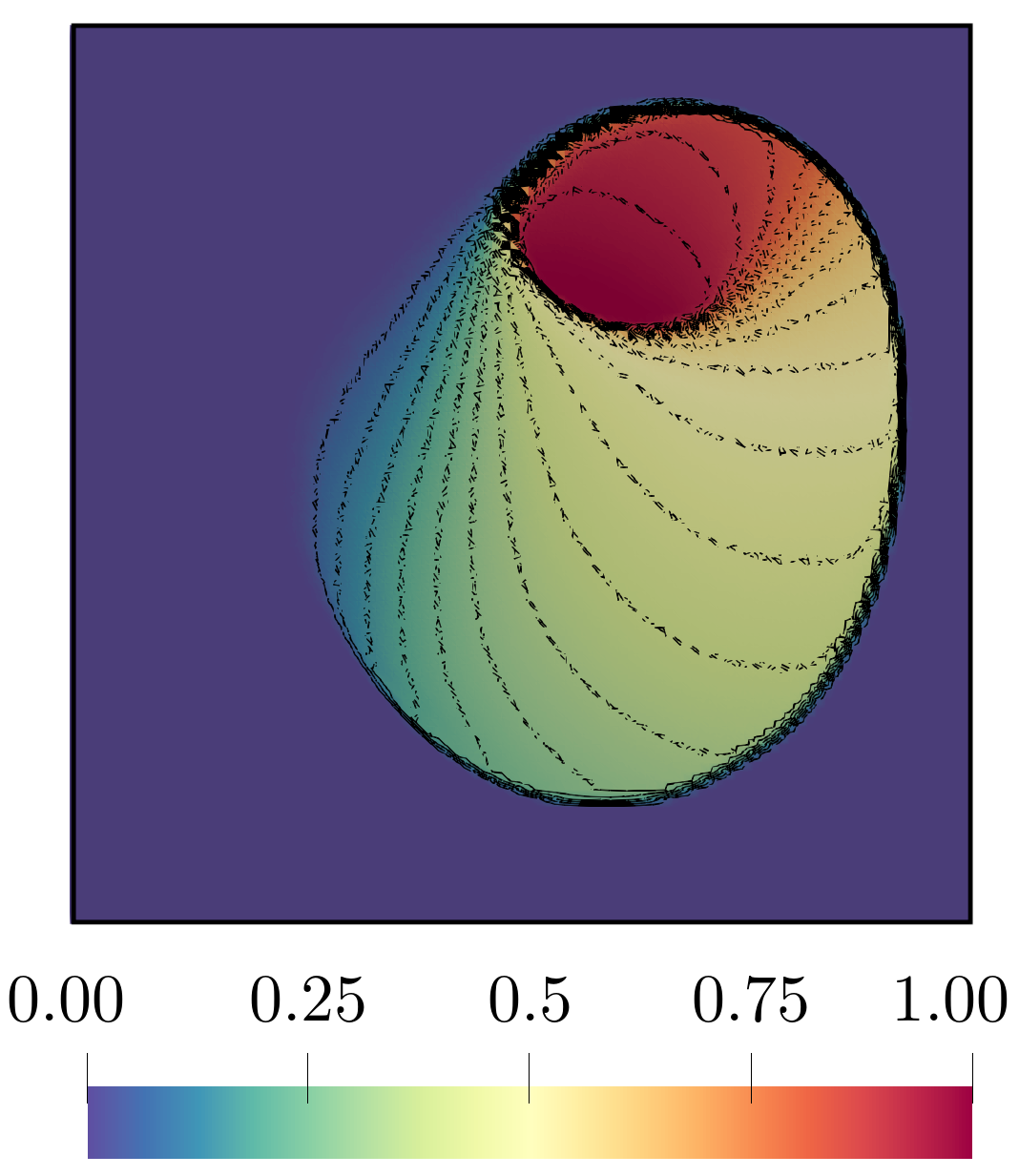}} 
        \hspace{0.15cm}
        \subfigure[Finite element with corrected operator splitting \citep{karlsen1998corrected}\label{Fig:2DBL_c}]{
        \includegraphics[clip,scale=0.16,trim=0cm 0.5cm 0cm 0cm]{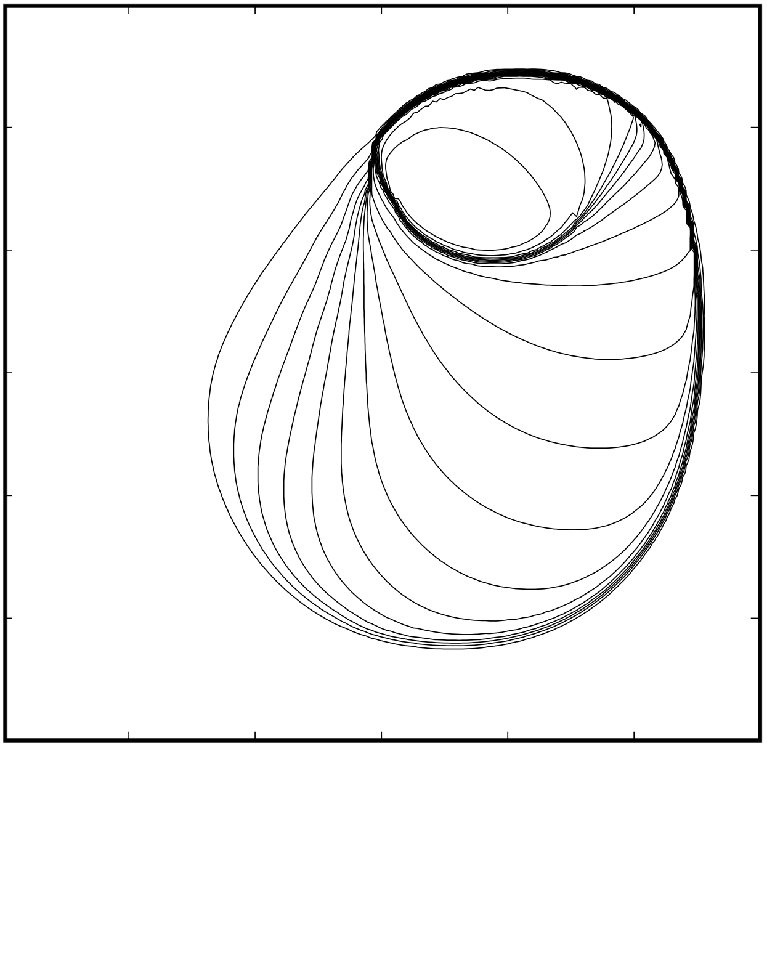}} 
        \caption{\textsf{Two-dimensional Buckley-Leverett problem with gravity subject to the initial
            condition \eqref{Eqn:2D_BL_in}:}
            This figure shows saturation approximations and contour plots 
            at $t=\SI{0.5}{\second}$. Solutions are obtained using the DG scheme without limiter (left) and with the proposed 
            the flux and slope limiters (middle); and are compared with a benchmark solution on fine mesh 
            (right).
            DG method without any bound-preserving mechanism fails to predict the correct profile and
            non-physical values are generated (i.e., $S_{n+1}\in[-0.52,+1.64]$). 
            However, the DG solution post-processed with limiters is accurate and  satisfies the
            maximum-principle.
        \label{Fig:2D_BL_main}}
\end{figure}
\subsubsection{Convergence study}
\label{subsec:Convergence}

We carry out an $h$-convergence study on two-dimensional structured triangular meshes 
in order to verify convergence properties of our limiter scheme.
The computational domain $\Omega$ is the unit square and the exact solutions are:
\begin{subequations}
    \label{Eqn:analytical}
\begin{align}
    \label{Eqn:analytical_sat}
&s(x,y,t) = 0.4+0.4xy+0.2\cos(t+x),\\
    \label{Eqn:analytical_pres}
&p(x,y,t) = 2+x^2y-y^2+x^2\sin(y+t)-\frac13 \cos(t) + \frac13 \cos(t+1) -\frac{11}{6}.
\end{align}
\end{subequations}
Through the method of manufactured solutions, we replace the source/sink terms (i.~e.,~wells flow rates) 
of equations \eqref{Eqn:BoM_p}--\eqref{Eqn:BoM_s} by body force terms obtained from 
manufactured solutions. 
Dirichlet boundary conditions are applied on $\partial\Omega$ on both saturation and 
pressure fields. The input parameters are:
\[
    \phi=0.2, \, K=\SI{1}{\meter\squared}, \, \mu_w=\mu_\ell=\SI{1}{\pascal\cdot\second},\,
    s_{rw}=s_{r\ell}=0, \,  k_{rw}(S)=S^2, \, k_{r\ell}(S)=(1-S)^2.
\]
The capillary pressure is based on Brooks-Corey model:
\begin{align}
    \label{Eqn:CapillaryPres}
    P_c(S)= 
    \begin{cases}
        p_d S^{\frac{-1}{\theta}} \quad \mbox{if} \; S>R\\
        p_d R^{\frac{-1}{\theta}}-\frac{p_d}{\theta}R^{-1-\frac{1}{\theta}}(S-R)\quad \mbox{otherwise},
    \end{cases}
\end{align}
where the entry pressure is set to $p_d=50$ \si{\pascal}, inhomogeneity characterization parameter is set to $\theta=2$, 
and linearization tolerance is set to $R=0.05$.
The convergence properties are computed by using a time step $\tau$ set to $h^2$.
We note that the admissible global bounds for the flux limiter algorithm are updated throughout the simulation.
In other words, at every time step, $s_*$ and $s^*$ bounds are determined by the maximum and minimum of the exact solution
\eqref{Eqn:analytical_sat}, respectively.
When using the limiters no upper and lower bound violations are observed in the discrete solution.
Table~\ref{Tab:Convergence_normal} shows the errors in $L^2$ and $H^1$ norms evaluated at $T=\SI{1}{\second}$
and the corresponding convergence rates for saturation and pressure. We compare rates for four cases:
(i) no limiters (DG), (ii) with both flux and slope limiters (DG+FL+SL), (iii) with only 
slope limiter (DG+SL), and (iv) with only flux limiter (DG+FL).
For both unknowns, DG returns expected optimal convergence rates of $2$ in the $L^2$ norm
and $1$ in the $H^1$ norm. 
However, we observe that applying both limiters results in suboptimal rates.
Cases (iii) and (iv) indicate that the application of flux limiters only preserve optimal rates whereas the application
of slope limiters only yields a decline in the convergence rates. 
The slope limiter scheme taken from~\cite{kuzmin2010vertex} 
is completely independent of the proposed flux limiter in Section~\ref{Sec:Numerical_method}. 
Designing a slope limiter that produces optimal rates remains a challenge. 
\begin{table}[]
    \caption{Errors in $L^2$ and $H^1$ norms and convergence rates, for $\tau = h^2$ and $T=1$ \si{\second}. 
Note that flux limiter algorithm preserves the  accuracy of the DG discretization. However,
slope limiter slightly degrades the rates of convergence.
\label{Tab:Convergence_normal}}
\centering
\resizebox{\textwidth}{!}{%
\begin{tabular}{lc@{\hskip 0.3in}cc@{\hskip 0.25in}cc@{\hskip 0.25in}cc@{\hskip 0.25in}cc}
\Xhline{2\arrayrulewidth}
 &
  $h$ &
  $||S_n-S(T)||_{L^2(\Omega)}$ &
  Rate &
  $||P_n-P(T)||_{L^2(\Omega)}$&
  Rate &
  $||S_n-S(T)||_{H^1(\Omega)}$ &
  Rate &
  $||P_n-P(T)||_{H^1(\Omega)}$&
  Rate \\ \hline
\\[-0.9em]
 &
  1/2 &
  $9.454\times10^{-4}$ &
  \cellcolor[HTML]{EFEFEF}$-$ &
  $7.607\times10^{-3}$ &
  \cellcolor[HTML]{EFEFEF}$-$ &
  $7.480\times10^{-3}$ &
  \cellcolor[HTML]{EFEFEF}$-$ &
  $7.325\times10^{-2}$ &
  \cellcolor[HTML]{EFEFEF}$-$ \\
 &
  1/4 &
  $5.373\times10^{-4}$ &
  \cellcolor[HTML]{EFEFEF}0.82 &
  $2.999\times10^{-3}$ &
  \cellcolor[HTML]{EFEFEF}1.34&
  $4.319\times10^{-3}$ &
  \cellcolor[HTML]{EFEFEF}0.79&
  $4.248\times10^{-2}$ &
  \cellcolor[HTML]{EFEFEF}0.79 \\
 &
  1/8 &
  $1.732\times10^{-4}$ &
  \cellcolor[HTML]{EFEFEF}1.63&
  $8.690\times10^{-4}$ &
  \cellcolor[HTML]{EFEFEF}1.79 &
  $1.836\times10^{-3}$ &
  \cellcolor[HTML]{EFEFEF}1.23&
  $2.322\times10^{-2}$ &
  \cellcolor[HTML]{EFEFEF}0.87\\
 &
  1/16 &
  $4.633\times10^{-5}$ &
  \cellcolor[HTML]{EFEFEF}1.90&
  $2.303\times10^{-4}$ &
  \cellcolor[HTML]{EFEFEF}1.92 &
  $8.374\times10^{-4}$ &
  \cellcolor[HTML]{EFEFEF}1.13&
  $1.245\times10^{-2}$ &
  \cellcolor[HTML]{EFEFEF}0.90 \\
\multirow{-5}{*}{{i.~DG}} &
  1/32 &
  \cellcolor[HTML]{EFEFEF}$1.176\times10^{-5}$ &
  \cellcolor[HTML]{C0C0C0}1.98 &
  \cellcolor[HTML]{EFEFEF}$5.912\times10^{-5}$ &
  \cellcolor[HTML]{C0C0C0}1.96&
  \cellcolor[HTML]{EFEFEF}$4.103\times10^{-4}$ &
  \cellcolor[HTML]{C0C0C0}1.03&
  \cellcolor[HTML]{EFEFEF}$6.492\times10^{-3}$ &
  \cellcolor[HTML]{C0C0C0}0.94 \\
\\[-0.9em]
  \hline
\\[-0.9em]
 &
  1/2 &
  $1.720\times10^{+0}$ &
  \cellcolor[HTML]{EFEFEF}$-$ &
  $3.180\times10^{-2}$ &
  \cellcolor[HTML]{EFEFEF}$-$ &
  $1.740\times10^{+0}$ &
  \cellcolor[HTML]{EFEFEF}$-$ &
  $2.110\times10^{-1}$ &
  \cellcolor[HTML]{EFEFEF}$-$ \\
 &
  1/4 &
  $7.620\times10^{-3}$ &
  \cellcolor[HTML]{EFEFEF}7.82 &
  $2.870\times10^{-3}$ &
  \cellcolor[HTML]{EFEFEF}3.47 &
  $1.270\times10^{-1}$ &
  \cellcolor[HTML]{EFEFEF}3.77&
  $4.210\times10^{-2}$ &
  \cellcolor[HTML]{EFEFEF}2.32\\
 &
  1/8 &
  $2.650\times10^{-3}$ &
  \cellcolor[HTML]{EFEFEF}1.53 &
  $8.130\times10^{-4}$ &
  \cellcolor[HTML]{EFEFEF}1.82 &
  $8.260\times10^{-2}$ &
  \cellcolor[HTML]{EFEFEF}0.63 &
  $2.320\times10^{-2}$ &
  \cellcolor[HTML]{EFEFEF}0.86\\
 &
  1/16 &
  $9.190\times10^{-4}$ &
  \cellcolor[HTML]{EFEFEF}1.53 &
  $2.130\times10^{-4}$ &
  \cellcolor[HTML]{EFEFEF}1.93 &
  $5.540\times10^{-2}$ &
  \cellcolor[HTML]{EFEFEF}0.58 &
  $1.250\times10^{-2}$ &
  \cellcolor[HTML]{EFEFEF}0.89\\
\multirow{-5}{*}{{ii.~DG+FL+SL}} &
  1/32 &
  \cellcolor[HTML]{EFEFEF}$3.260\times10^{-4}$ &
  \cellcolor[HTML]{C0C0C0}1.50 &
  \cellcolor[HTML]{EFEFEF}$6.030\times10^{-5}$ &
  \cellcolor[HTML]{C0C0C0}1.82&
  \cellcolor[HTML]{EFEFEF}$3.920\times10^{-2}$ &
  \cellcolor[HTML]{C0C0C0}0.50 &
  \cellcolor[HTML]{EFEFEF}$6.550\times10^{-3}$ &
  \cellcolor[HTML]{C0C0C0}0.93\\
\\[-0.9em]
  \hline
\\[-0.9em]
 &
  1/2 &
  $2.570\times10^{-2}$ &
  \cellcolor[HTML]{EFEFEF}$-$ &
  $7.330\times10^{-3}$ &
  \cellcolor[HTML]{EFEFEF}$-$ &
  $2.300\times10^{-1}$ &
  \cellcolor[HTML]{EFEFEF}$-$ &
  $7.160\times10^{-2}$ &
  \cellcolor[HTML]{EFEFEF}$-$ \\
 &
  1/4 &
  $7.620\times10^{-3}$ &
  \cellcolor[HTML]{EFEFEF}1.75&
  $2.870\times10^{-3}$ &
  \cellcolor[HTML]{EFEFEF}1.35&
  $1.270\times10^{-1}$ &
  \cellcolor[HTML]{EFEFEF}0.85 &
  $4.210\times10^{-2}$ & \cellcolor[HTML]{EFEFEF}0.77 \\
 &
  1/8 &
  $2.650\times10^{-3}$ &
  \cellcolor[HTML]{EFEFEF}1.53 &
  $8.130\times10^{-4}$ &
  \cellcolor[HTML]{EFEFEF}1.82 &
  $8.260\times10^{-2}$ &
  \cellcolor[HTML]{EFEFEF}0.63 &
  $2.320\times10^{-2}$ &
  \cellcolor[HTML]{EFEFEF}0.86\\
 &
  1/16 &
  $9.190\times10^{-4}$ &
  \cellcolor[HTML]{EFEFEF}1.53 &
  $2.130\times10^{-4}$ &
  \cellcolor[HTML]{EFEFEF}1.93 &
  $5.540\times10^{-2}$ &
  \cellcolor[HTML]{EFEFEF}0.58 &
  $1.250\times10^{-2}$ & \cellcolor[HTML]{EFEFEF}0.89\\
\multirow{-5}{*}{{iii.~DG+SL}} &
  1/32 &
  \cellcolor[HTML]{EFEFEF}$3.260\times10^{-4}$ &
  \cellcolor[HTML]{C0C0C0}1.50 &
  \cellcolor[HTML]{EFEFEF}$6.030\times10^{-5}$ &
  \cellcolor[HTML]{C0C0C0}1.82&
  \cellcolor[HTML]{EFEFEF}$3.920\times10^{-2}$ &
  \cellcolor[HTML]{C0C0C0}0.50 &
  \cellcolor[HTML]{EFEFEF}$6.550\times10^{-3}$ &
  \cellcolor[HTML]{C0C0C0}0.93\\ 
\\[-0.9em]
  \hline
\\[-0.9em]
 &
  1/2 &
  $1.720\times10^{+0}$ &
  \cellcolor[HTML]{EFEFEF}$-$ &
  $3.210\times10^{-2}$ &
  \cellcolor[HTML]{EFEFEF}$-$ &
  $1.720\times10^{+0}$ &
  \cellcolor[HTML]{EFEFEF}$-$ &
  $2.120\times10^{-1}$ &
  \cellcolor[HTML]{EFEFEF}$-$ \\
 &
  1/4 &
  $5.370\times10^{-4}$ &
  \cellcolor[HTML]{EFEFEF}11.64&
  $3.000\times10^{-3}$ &
  \cellcolor[HTML]{EFEFEF}3.42 &
  $4.320\times10^{-3}$ &
  \cellcolor[HTML]{EFEFEF}8.64 &
  $4.250\times10^{-2}$ &
  \cellcolor[HTML]{EFEFEF}2.32\\
 &
  1/8 &
  $1.730\times10^{-4}$ &
  \cellcolor[HTML]{EFEFEF}1.63&
  $8.690\times10^{-4}$ &
  \cellcolor[HTML]{EFEFEF}1.79 &
  $1.840\times10^{-3}$ &
  \cellcolor[HTML]{EFEFEF}1.23&
  $2.320\times10^{-2}$ &
  \cellcolor[HTML]{EFEFEF}0.87\\
 &
  1/16 &
  $4.630\times10^{-5}$ &
  \cellcolor[HTML]{EFEFEF}1.90&
  $2.300\times10^{-4}$ &
  \cellcolor[HTML]{EFEFEF}1.92 &
  $8.370\times10^{-4}$ &
  \cellcolor[HTML]{EFEFEF}1.13&
  $1.250\times10^{-2}$ &
  \cellcolor[HTML]{EFEFEF}0.90 \\
\multirow{-5}{*}{{iv.~DG+FL}} &
  1/32 &
  \cellcolor[HTML]{EFEFEF}$1.180\times10^{-5}$ &
  \cellcolor[HTML]{C0C0C0}1.98 &
  \cellcolor[HTML]{EFEFEF}$5.910\times10^{-5}$ &
  \cellcolor[HTML]{C0C0C0}1.96&
  \cellcolor[HTML]{EFEFEF}$4.100\times10^{-4}$ &
  \cellcolor[HTML]{C0C0C0}1.03 &
  \cellcolor[HTML]{EFEFEF}$6.490\times10^{-3}$ &
  \cellcolor[HTML]{C0C0C0}0.94 \\
\\[-0.9em]
\Xhline{2\arrayrulewidth}
\end{tabular}%
}
\end{table}
We show in Table~\ref{Tab:Convergence_avg} the errors in the $L^2$ norm of the cell average for the saturation and the
corresponding convergence rates. Optimal rate of $2$ is obtained for either DG or DG+FL+SL. 
This result  reiterates that the flux limiter does not reduce the accuracy and the slope limiter does not impact
the rates since it does not alter the element-wise averages.
%

\begin{table}[]
    \caption{Errors and rates for the cell average values of saturation $\bar{S}$. 
The time step $\tau$ is set to $h^2$ and $L^2$ norms are computed at the final time $T=1$.
     DG approximation with limiters return optimal convergence rate with respect to average values.
    \label{Tab:Convergence_avg}}
\centering
\resizebox{7cm}{!}{%
\begin{tabular}{cc@{\hskip 0.3in}cc}
\Xhline{2\arrayrulewidth}
\\[-0.95em]
 & $h$    & \multicolumn{1}{c}{$||\bar{S}_{n}-\bar{S}(T)||_{L^2(\Omega)}$} & \multicolumn{1}{c}{Rate}      
 \\ \\[-0.95em]\hline\\[-0.9em]
 & 1/2  & $5.900\times10^{-4}$                  & \cellcolor[HTML]{EFEFEF}$-$    \\
 & 1/4  & $4.839\times10^{-4}$                  & \cellcolor[HTML]{EFEFEF}0.286  \\
 & 1/8  & $1.661\times10^{-4}$                  & \cellcolor[HTML]{EFEFEF}1.543  \\
 & 1/16 & $4.506\times10^{-5}$                  & \cellcolor[HTML]{EFEFEF}1.882  \\
    \multirow{-5}{*}{{DG}}       & 1/32 & \cellcolor[HTML]{EFEFEF}$1.148\times10^{-5}$ & \cellcolor[HTML]{C0C0C0}1.973 
    \\  \\[-0.9em]\hline \\[-0.9em]
                                    & 1/2  & $1.721\times10^{+0}$                  & \cellcolor[HTML]{EFEFEF}$-$    \\
                                    & 1/4  & $4.861\times10^{-4}$                  & \cellcolor[HTML]{EFEFEF}11.790 \\
                                    & 1/8  & $1.658\times10^{-4}$                  & \cellcolor[HTML]{EFEFEF}1.552  \\
                                    & 1/16 & $4.467\times10^{-5}$                  & \cellcolor[HTML]{EFEFEF}1.892  \\
    \multirow{-5}{*}{{DG+FL+SL}} & 1/32 & \cellcolor[HTML]{EFEFEF}$1.142\times10^{-5}$ & \cellcolor[HTML]{C0C0C0}1.967 \\[-0.9em]\\ 
\Xhline{2\arrayrulewidth}
\end{tabular}%
}
\end{table}

\subsection{Two-dimensional pressure-driven flow}%
\label{sub:two_dimensional_patch_test}
We take a computational domain of $\Omega=[0,100]^2$ m\textsuperscript{2} with zero gravity field for all
problems in this section.
The wetting phase is injected along the left boundary and the non-wetting phase is pushed out 
through the right boundary. Dirichlet boundary conditions are:  $P=3\times10^6$ \si{\pascal} and $S=0.85$ on 
$\{0\}\times(0,100)$; and $P=10^{6}$ \si{\pascal} on $\{100\}\times(0,100)$ \si{\meter}. 
Outflow boundary condition is chosen for saturation on the right boundary and remaining boundaries are set as 
no-flow ($j^s=j^p=0$). The pictorial descriptions of the pressure-driven flow problem are provided in Figure \ref{Fig:2Dpatch_BVP}.

\subsubsection{Homogeneous domain}
\label{sub:2Dpatch_homogen}
A homogeneous test problem with constant permeability of $K=10^{-8}$ \si{\meter\squared} is examined here, 
with similar setup and parameters as in the work of \citet{epshteyn2007fully}. 
Relative permeabilities and capillary pressure are defined in equations \eqref{Eqn:Rel-perm} and 
\eqref{Eqn:CapillaryPres}, respectively, with entry pressure $p_d = 1000$ \si{\pascal}, $\theta=2$ and
$R=0.05$.  
The viscosities are $\mu_w= 10^{-3}$ \si{\pascal\cdot\second} and $\mu_{\ell}=\;$\SI{e-2}{\pascal\cdot\second}.
\begin{figure}
    \subfigure[Boundary condition for pressure~\label{Fig:BC_pressure}]{
        \includegraphics[clip,scale=0.25,trim=0 0cm 0cm
    0]{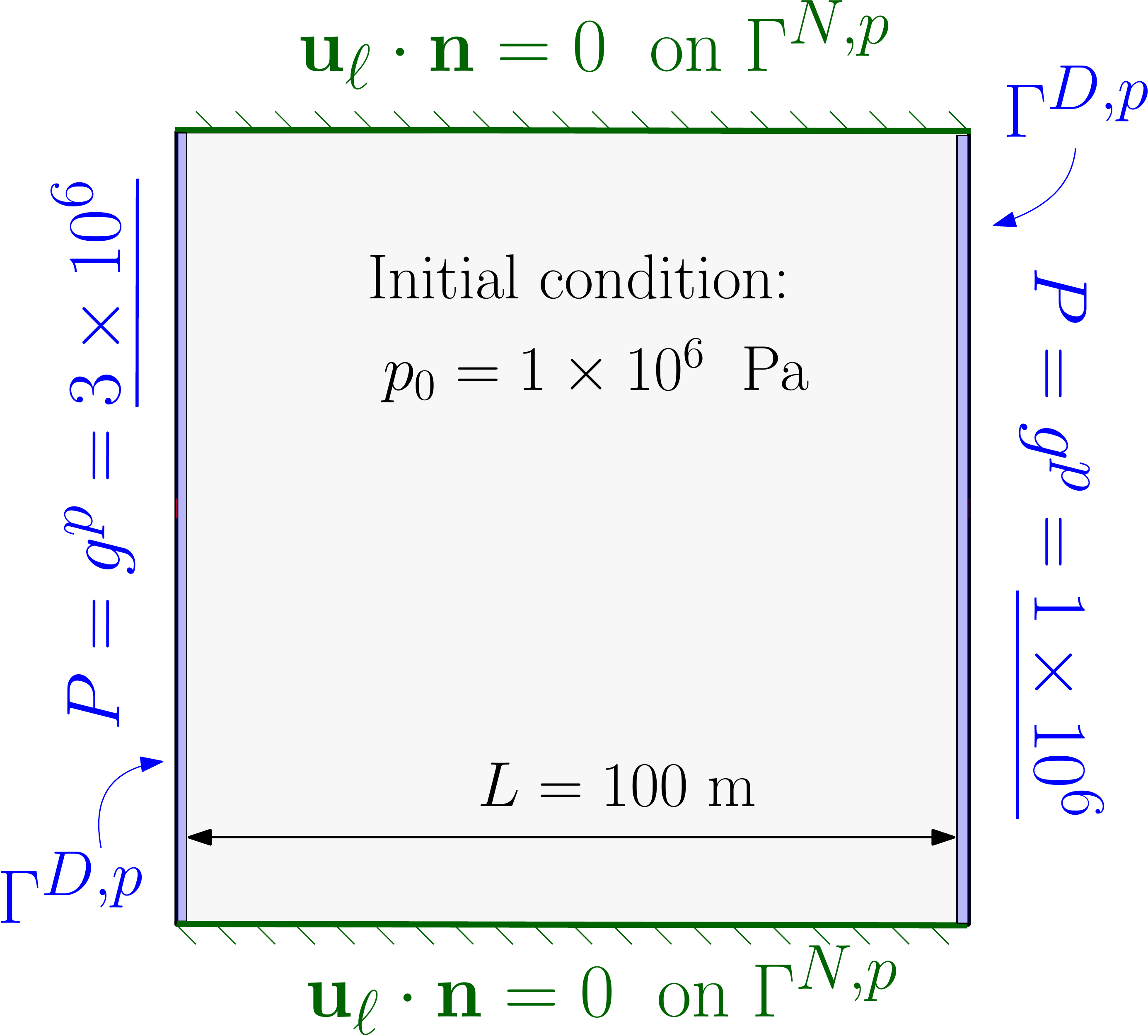}}
        \hspace{.45cm}
    \subfigure[Boundary condition for saturation~\label{fig:BC_saturation}]{
        \includegraphics[clip,scale=0.25,trim=0 0cm 0cm 0]{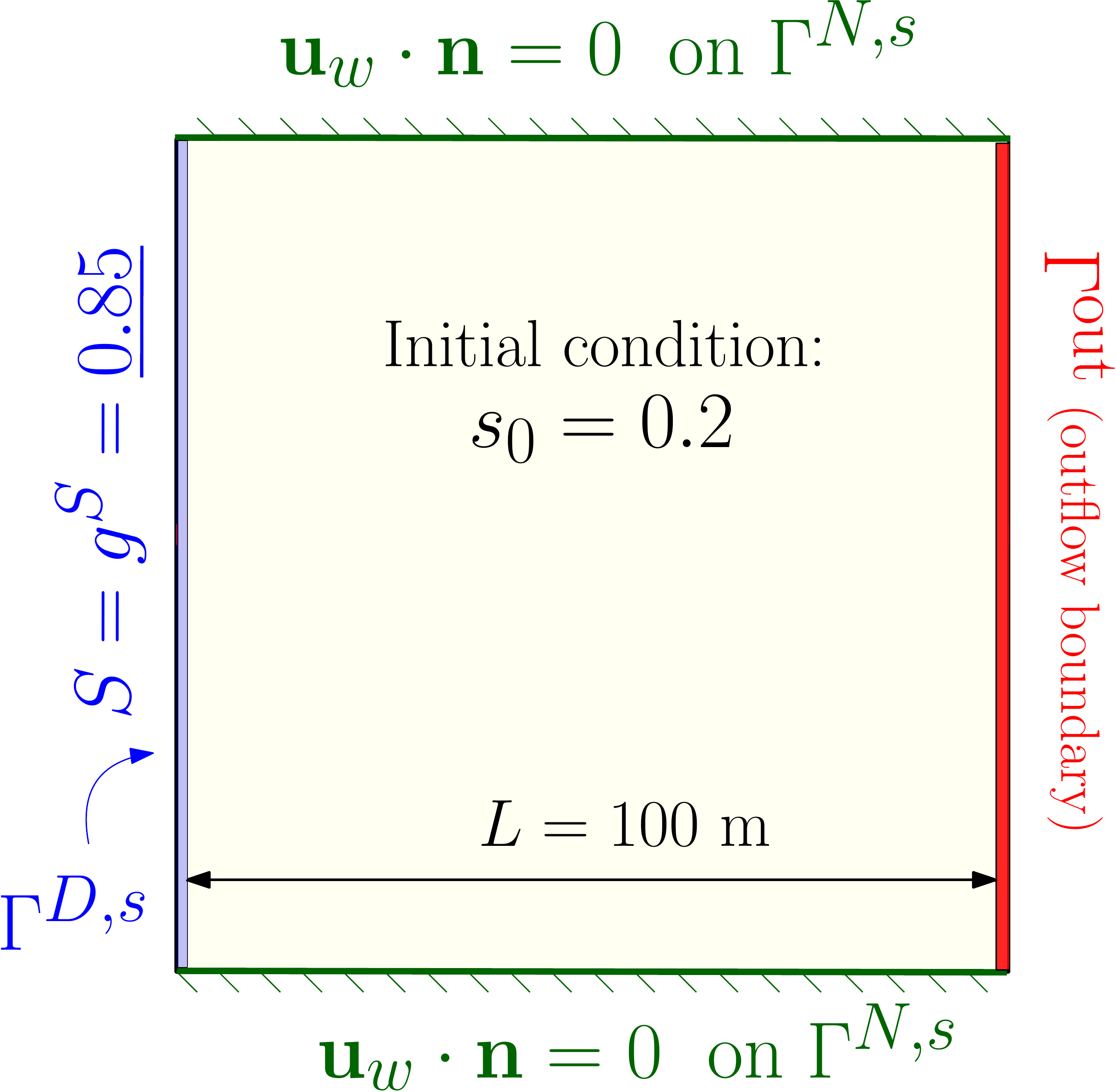}} \\
        \caption{\textsf{Two-dimensional pressure-driven flow problem:}
            This figure provides a pictorial description of the computational domain and boundary value 
            problem.
        \label{Fig:2Dpatch_BVP}}
\end{figure}
Two quadrilateral meshes are considered: 
(i) a uniform mesh with size of $h=1.25$ m and 
(ii) a non-uniform mesh with $256$ elements and with size of $h_{\mathrm{bnd}}=1.25$ m at the left
boundary and $h=6.583$ m for the rest of domain (see Figure \ref{Fig:Q4_mesh}).  
It is known that slope limiters by design flatten steep slopes near discontinuities 
(e.g., at left-most elements when simulation starts).
Using a mesh with increased density at the (left) boundary  reduces the effect of overflattening 
on the accuracy of solutions \citep{may2013two,giuliani2018analysis}.
\begin{figure}
    \subfigure[Quadrilateral mesh \label{Fig:Q4_mesh}]{
        \includegraphics[clip,scale=0.18,trim=0 0cm 0cm
        0]{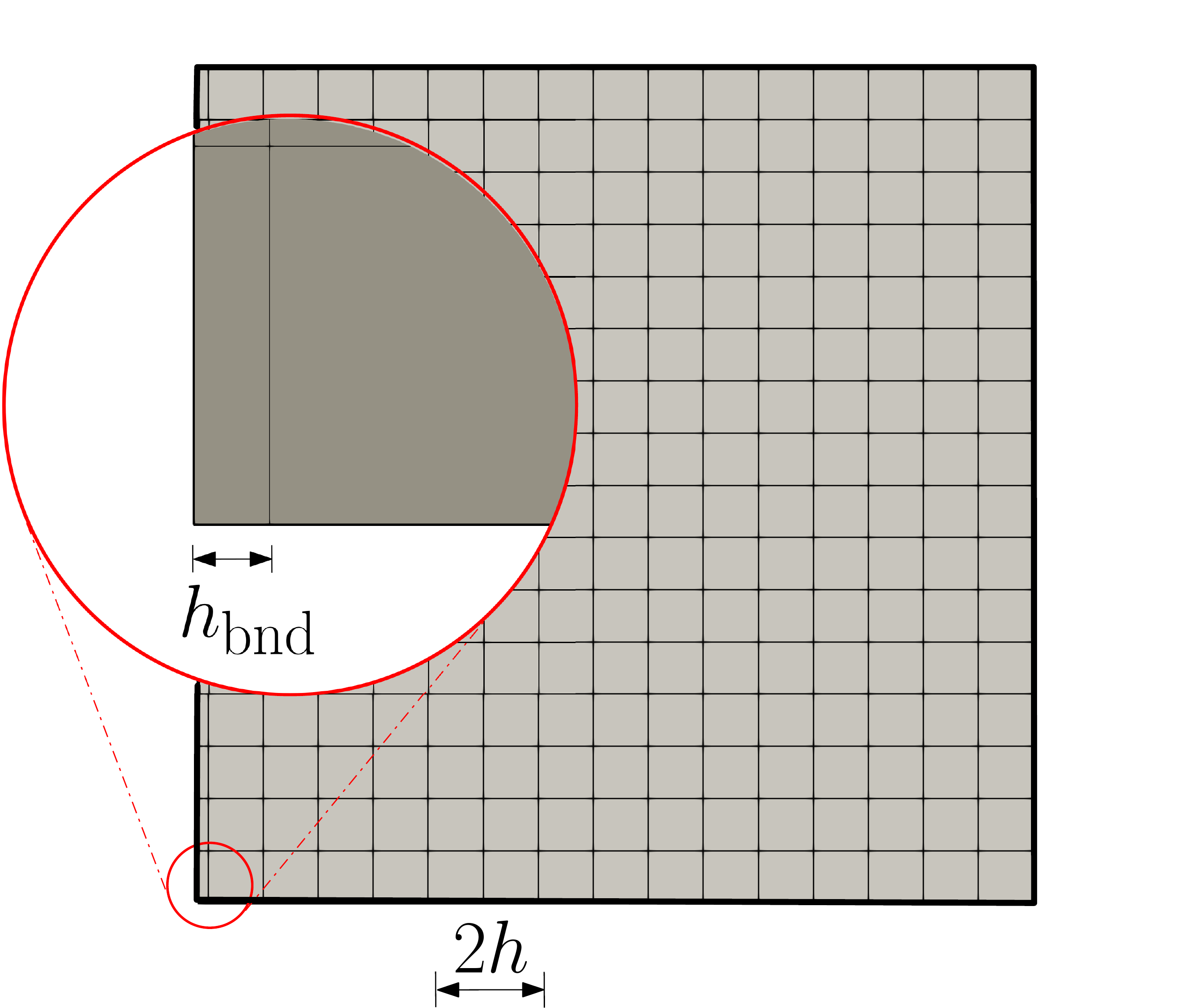}}
        \hspace{.25cm}
    \subfigure[Crossed triangle mesh \label{Fig:Crossed_mesh}]{
        \includegraphics[clip,scale=0.18,trim=0 0cm 0cm 0]{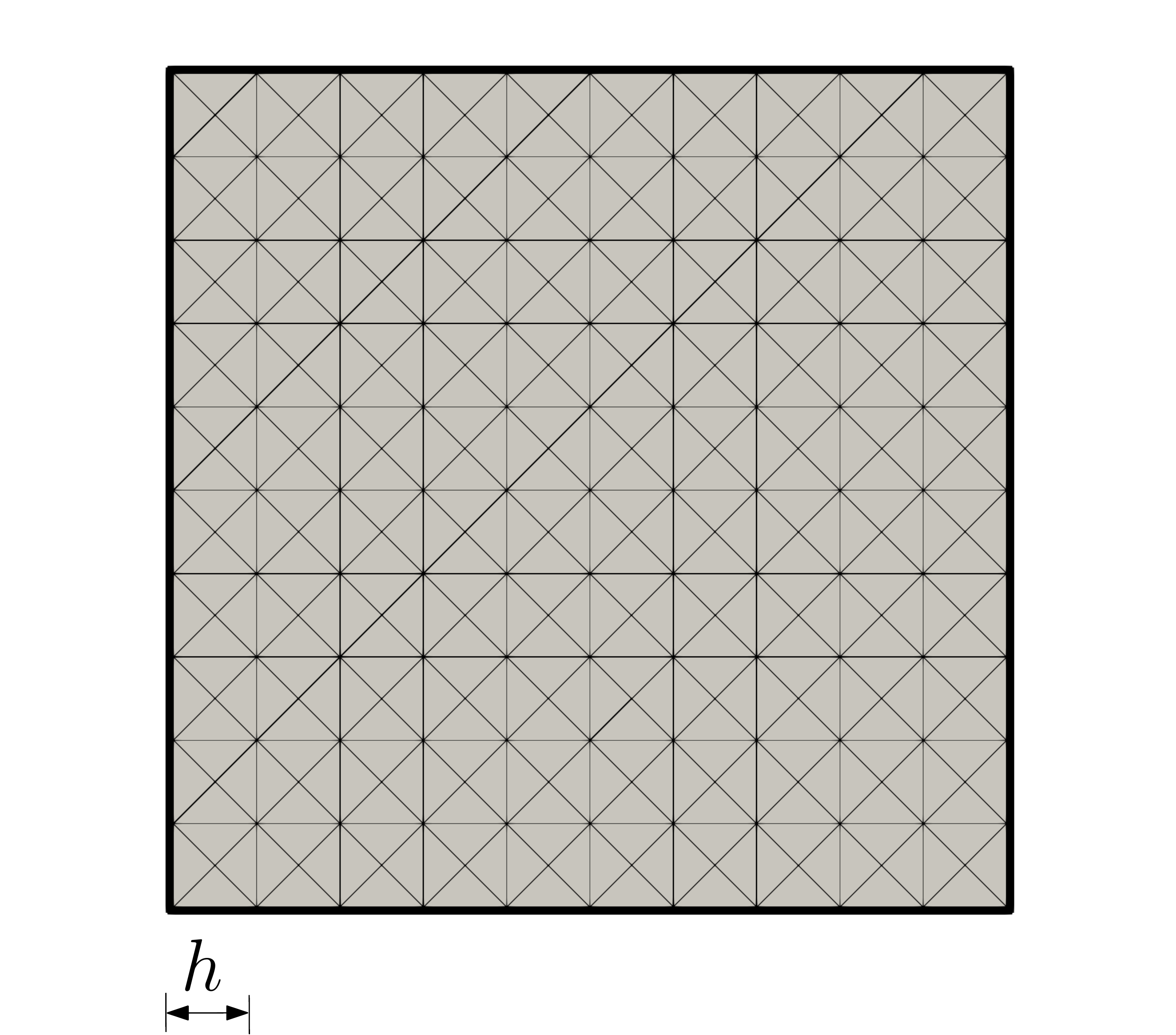}} 
        \hspace{.25cm}
    \subfigure[Mesh for thin-barrier problem \label{Fig:Barrier_mesh}]{
        \includegraphics[clip,scale=0.16,trim=0 0cm 0cm 0]{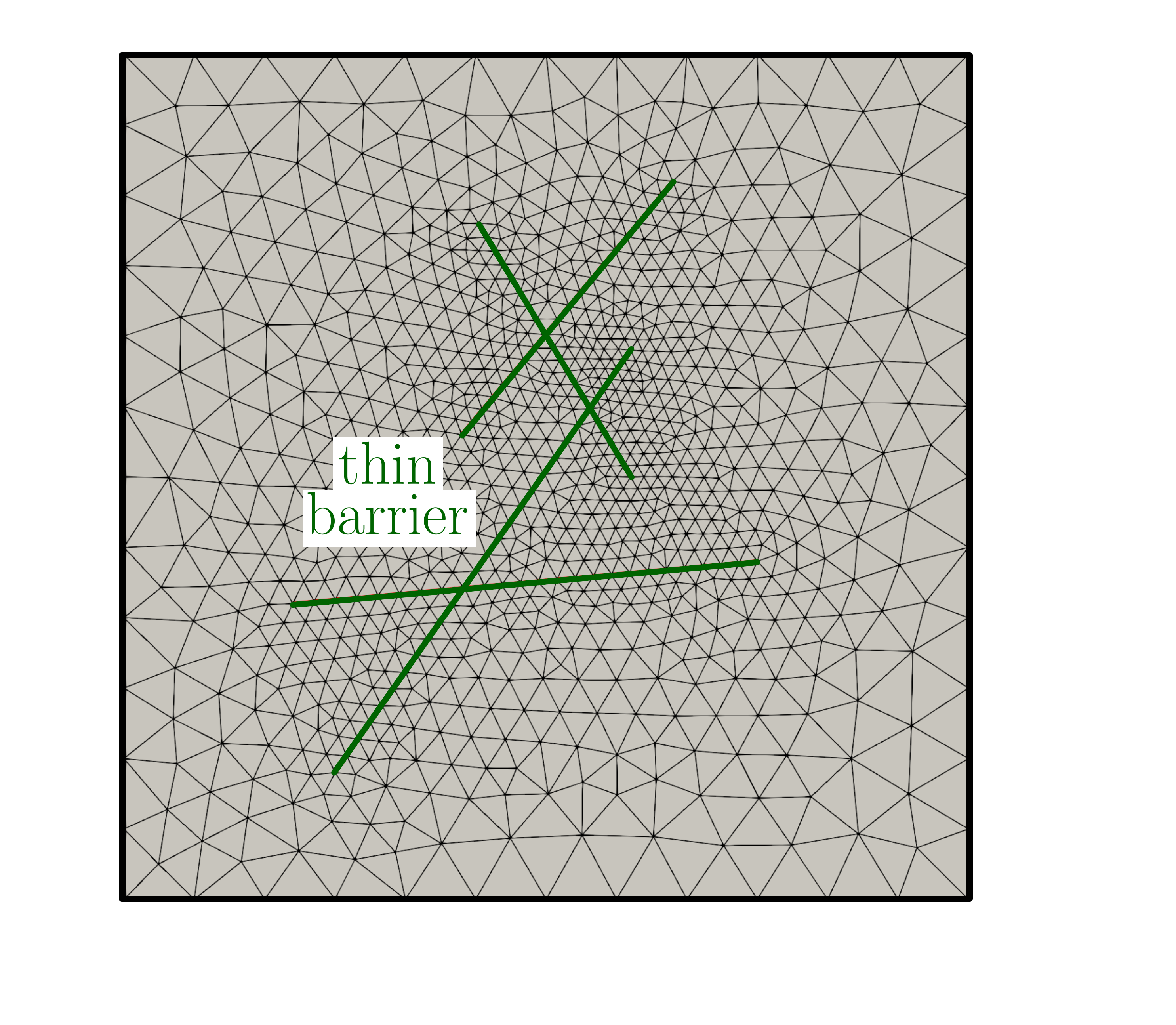}} 
        \caption{\textsf{Two-dimensional pressure-driven flow problem:}
            This figure shows the typical meshes employed in our numerical simulations.
        \label{Fig:2Dpatch_meshes}}
\end{figure}
%
The time step is chosen as $\tau = 0.2$ \si{\second}, the final time is $T=300$ \si{\second}, and the penalty parameter is $\sigma = 100$.
%
We compare our numerical  solutions with a reference unlimited solution obtained from the fully implicit DG
formulation developed by \citet{epshteyn2007fully} on a quadrilateral mesh with $256$ elements.
The saturation and pressure profiles obtained with our proposed scheme, 
along the line $y=50$ \si{\meter} are illustrated in Figures \ref{Fig:2Dpatch_sat} and \ref{Fig:2Dpatch_pres}.
Numerical solutions, compared to reference solution, are accurate and in good agreement with respect to 
front location.
As expected, the finer mesh tracks the saturation front with more accuracy.
It is also evident that our limiting scheme successfully yields pointwise bound-preserving and monotone solutions.
However, the reference solution unsurprisingly 
violates undershoot bound (about 4\% right after the saturation front) and produces an oscillatory saturation 
profile. 
\begin{figure}
    \subfigure[Saturation profile \label{Fig:2Dpatch_sat}]{
        \includegraphics[clip,scale=0.33,trim=0 1.2cm 7cm 0]{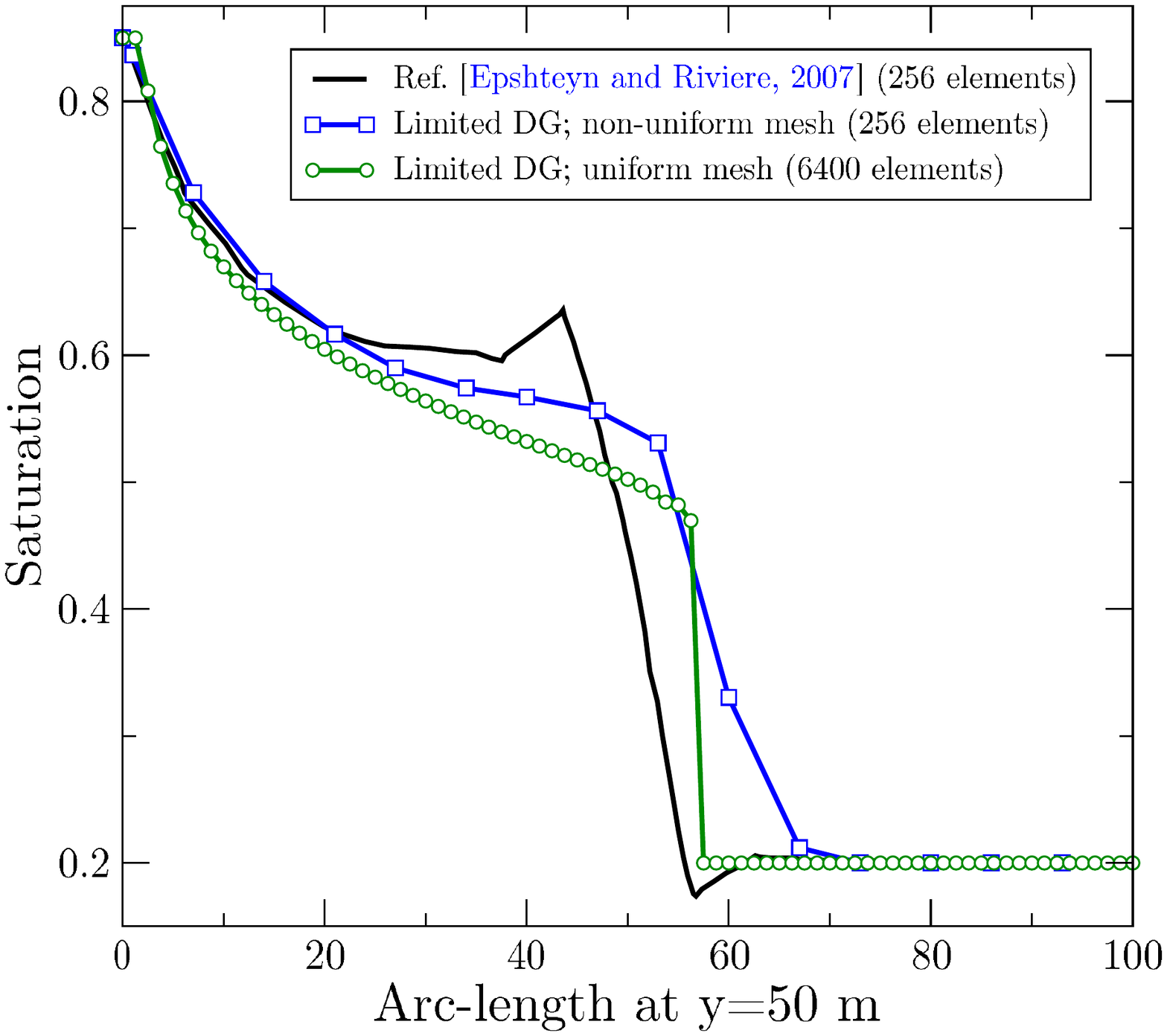}} 
        \hspace{.05cm}
    \subfigure[Pressure profile \label{Fig:2Dpatch_pres}]{
        \includegraphics[clip,scale=0.32,trim=0.25cm 1.2cm 0cm
    0cm]{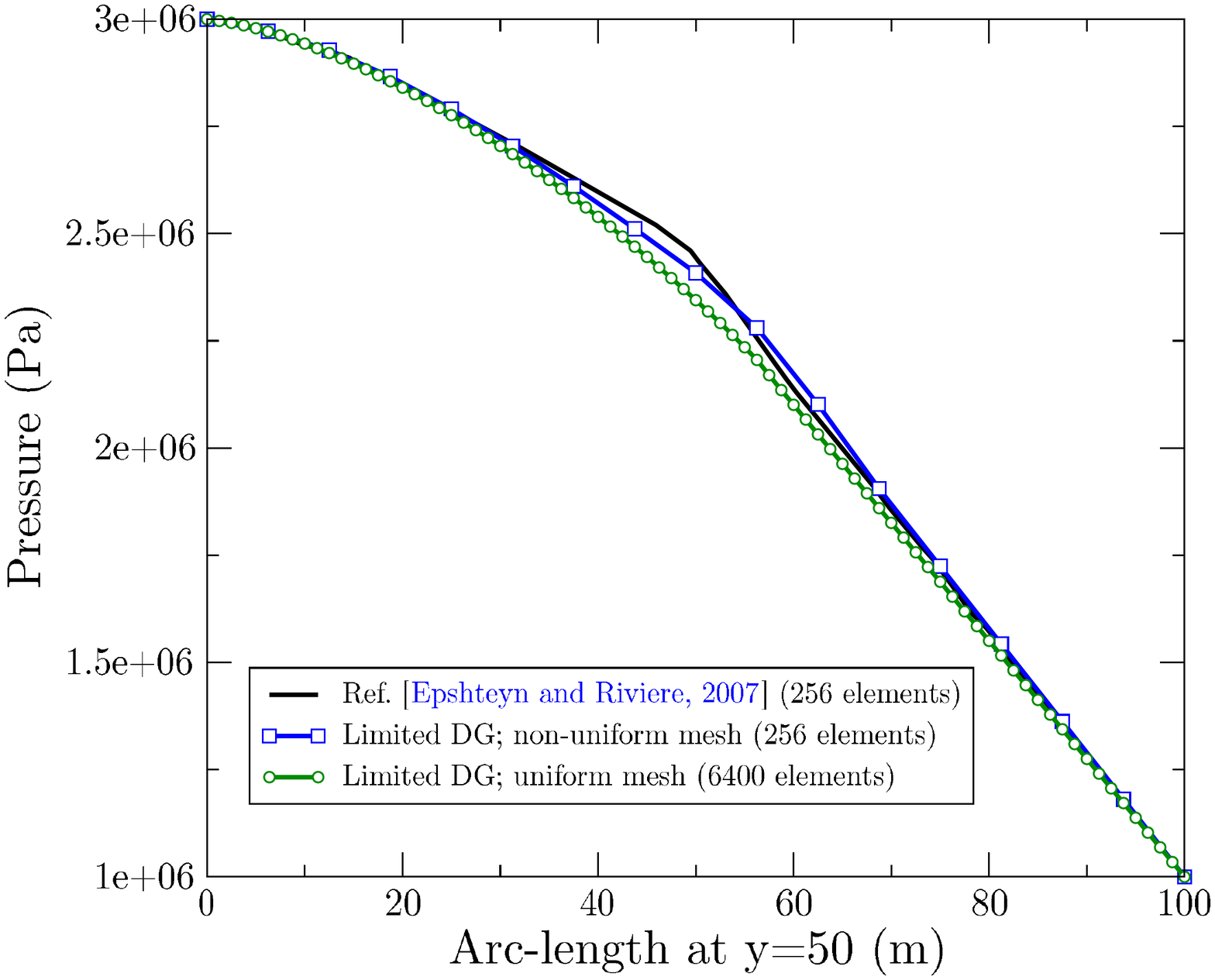}}
        \caption{\textsf{Two-dimensional pressure-driven flow in homogeneous domain:}
            This figure exhibits the saturation and pressure profiles obtained
            from limited DG approximations (with
            $\mathbb{P}=1$ and $\sigma=100$). Solutions on uniform and non-uniform meshes 
            are plotted along the line $y=50$ \si{\meter} at $t=300$ \si{\second} 
            and are compared with a reference DG solution.
            Regardless of the mesh size, limiters completely suppress unphysical overshoots and undershoots
            and accurately predict the location of saturation front
            which is in good agreement with that of the reference solution. 
            As expected, the uniform finer mesh gives rise to a sharper front.  
            On the other hand, the reference solution is not equipped with any bound-preserving mechanism 
            and thus does not enjoy maximum principle and lower bound violations ($S<0.2$) and non-monotone 
            behavior are captured.
        \label{Fig:2Dpatch_sat_pres}}
\end{figure}

%
To better understand the efficacy of the proposed limiting algorithm (i.e., DG+SL+FL), 
and distinguish it from the vertex-based slope limiter of \citet{kuzmin2010vertex} (i.e., DG+SL), 
we solve the problem again (with same parameters as before) on a crossed structured mesh (shown in 
figure \ref{Fig:Crossed_mesh}) for total duration of $T=450$ \si{\second}.
The initial size of $h=10$ \si{\meter} is chosen for this analysis and four-step refinement is performed.
Table \ref{Tab:2D_patch_compare} reports the performance of limiters and compare them
with respect to bound-preserving properties, local mass balance violations, and monotonocity.
We observe that mesh refinement reduces maximum undershoots of unlimited DG from $47.61$ \% to $29.21$ \% and 
maximum overshoots to less than $0.1$ \% but does not eliminate violations. The application of slope limiter to DG
eliminates undershoots at all time steps and significantly reduces maximum overshoots to $4.79$ \% 
for the coarsest mesh and to $3.32$ \% for the finest mesh. 
It can be seen that DG+SL falls short to satisfy maximum principle even under excessive mesh refinement. 
Further, it should be noted that both DG and DG+SL approximations fail to obtain monotone solutions near 
the saturation front. This means that they are susceptible to local spurious oscillations near the front 
even when the global bounds are not violated.
However, approximations under DG+FL+SL enjoy pointwise maximum principle and the saturation field remains 
monotone over the entire domain, independently of the mesh size.
\begin{table}[]
    \caption{This table shows the efficacy of the limiters when applied to the pressure-driven flow problem 
        with homogeneous domain. Simulations are carried out for the duration of $450$ \si{\second} on crossed 
        mesh (see Figure \ref{Fig:Crossed_mesh}) for different mesh-sizes.
        In this table, max $\mathcal{M}$ denotes the maximum magnitude value of local mass balance error observed for all 
        time steps.
    \label{Tab:2D_patch_compare}}
\centering
\resizebox{\textwidth}{!}{%
    \begin{tabular}{cc@{\hskip 0.4in}cc@{\hskip 0.4in}cc@{\hskip 0.4in}c@{\hskip 0.3in}c}
\Xhline{2\arrayrulewidth}
\begin{tabular}[c]{@{}c@{}}Mesh-size\\ (m)\end{tabular} &
  Algorithm 
   &
  \begin{tabular}[c]{@{}c@{}}max\\ Undershoot\end{tabular} &
  \begin{tabular}[c]{@{}c@{}}max\\ Undershoot (\%)\end{tabular} &
  \begin{tabular}[c]{@{}c@{}}max\\ Overshoot\end{tabular} &
  \begin{tabular}[c]{@{}c@{}}max\\ Overshoot (\%)\end{tabular} &
  \begin{tabular}[c]{@{}c@{}}max \\ $\mathcal{M}$ \end{tabular}&
  Monotonocity \\ \hline
  \\[-.9em]
 &
  DG &
  {\color[HTML]{000000} {\color{red}-0.109}} &
  \cellcolor[HTML]{EFEFEF}{\color[HTML]{000000} {\color{red}47.61} } &
  {\color[HTML]{000000} {\color{red}0.854}} &
  \cellcolor[HTML]{EFEFEF}{\color[HTML]{000000} {\color{red}0.56} } &
  $1.37\times10^{-15}$ &
  \xmark \\
 &
  DG+SL &
  {\color[HTML]{000000} {\color{red}0.169}} &
  \cellcolor[HTML]{EFEFEF}{\color[HTML]{000000} {\color{red}4.79} } &
  {\color[HTML]{000000} 0.85} &
  \cellcolor[HTML]{EFEFEF}{\color[HTML]{000000} 0} &
  $2.23\times10^{-9}$ &
  \xmark   \\
\multirow{-3}{*}{$h=10$} &
  DG+FL+SL &
  {\color[HTML]{000000} 0.2} &
  \cellcolor[HTML]{EFEFEF}{\color[HTML]{000000} 0} &
  {\color[HTML]{000000} 0.85} &
  \cellcolor[HTML]{EFEFEF}{\color[HTML]{000000} 0} &
   $7.66\times10^{-12}$&
   \cmark  \\ \\[-0.95em]\hline
  \\[-.95em]
 &
  DG &
  {\color{red}-0.093} &
  \cellcolor[HTML]{EFEFEF}{\color{red}45.13} &
  {\color{red}0.852} &
  \cellcolor[HTML]{EFEFEF}{\color{red}0.28} &
  $5.41\times10^{-15}$ &
  \xmark   \\
 &
  DG+SL &
  {\color{red}0.169} &
  \cellcolor[HTML]{EFEFEF}{\color{red}4.78} &
  0.85 &
  \cellcolor[HTML]{EFEFEF}0 &
  $3.24\times10^{-9}$ &
  \xmark   \\
\multirow{-3}{*}{$h=5$} &
  DG+FL+SL &
  0.2 &
  \cellcolor[HTML]{EFEFEF}0 &
  0.85 &
  \cellcolor[HTML]{EFEFEF}0 &
  $7.03\times10^{-11}$ &
  \cmark   \\
  \\[-0.95em]\hline
  \\[-.95em]
 &
  DG &
  {\color{red}-0.059} &
  \cellcolor[HTML]{EFEFEF} {\color{red}40} &
  {\color{red}0.851}&
  \cellcolor[HTML]{EFEFEF}{\color{red}0.136} &
  $2.19\times10^{-14}$ &
  \xmark   \\
 &
  DG+SL &
  {\color{red}0.172}&
  \cellcolor[HTML]{EFEFEF}{\color{red}4.29} &
  0.85 &
  \cellcolor[HTML]{EFEFEF}0 &
  $2.57\times10^{-8}$ &
  \xmark   \\
\multirow{-3}{*}{$h=2.5$} &
  DG+FL+SL &
  0.2 &
  \cellcolor[HTML]{EFEFEF}0 &
  0.85 &
  \cellcolor[HTML]{EFEFEF}0 &
  $2.22\times10^{-14}$ &
  \cmark    \\ 
  \\[-0.9 em]\hline\\[-0.9 em]
 &
  DG &
  {\color{red}0.010} &
  \cellcolor[HTML]{EFEFEF}{\color{red}29.21} &
  {\color{red}0.8502} &
  \cellcolor[HTML]{EFEFEF}{\color{red}0.07} &
  $9.78\times10^{-14}$ &
  \xmark   \\
 &
  DG+SL &
  {\color{red}0.178} &
  \cellcolor[HTML]{EFEFEF}{\color{red}3.32} &
  0.85 &
  \cellcolor[HTML]{EFEFEF}0 &
  $2.57\times10^{-7}$&
  \xmark   \\
\multirow{-3}{*}{$h=1.25$} &
  DG+FL+SL &
  0.2 &
  \cellcolor[HTML]{EFEFEF}0 &
  0.85 &
  \cellcolor[HTML]{EFEFEF}0 &
  $1.08\times10^{-13}$ &
  \cmark    \\ 
  \\[-0.95em]
\Xhline{2\arrayrulewidth}
\end{tabular}%
}
\end{table}

As shown in Table~\ref{Tab:Solver_performance}, only $3$ to $4$ Newton's iterations are needed at each time 
step for convergence of either limited DG or unlimited DG approximation. 
This means that the limiters do not have a significant effect on the number of solver iterations.  
However, as we refine the mesh, flux limiter algorithm $\mathcal{L}_\mathrm{avg}$ requires more iterations to converge.
\begin{table}[]
    \caption{Nonlinear Newton iterations and flux limiter iterations per time step.  {\label{Tab:Solver_performance}}}
\centering
\resizebox{0.7\textwidth}{!}{%
\begin{tabular}{cc|cc}
\hline
\multirow{2}{*}{\begin{tabular}[c]{@{}c@{}}Mesh-size\\ (m)\end{tabular}} & DG                  & \multicolumn{2}{c}{DG+FL+SL}            \\ \cline{2-4} 
& Newton's iter. & Newton's iter. & flux limiter iter. \\ \hline
$10$   & $3-4$ & $3-4$ & $1-25$  \\
$5$    & $3-4$ & $3-4$ & $1-55$  \\
$2.5$  & $3-4$ & $3-4$ & $1-116$ \\
$1.25$ & $3-4$ & $3-4$ & $1-238$ \\ \hline
\end{tabular}%
}
\end{table}
\subsubsection{Local mass balance}
DG methods are known for their local mass conservation properties. 
\citep{riviere2008discontinuous,joshaghani2019stabilized}.
In this section,  we investigate the effect of the proposed limiters on altering local mass conservation properties.
Upon applying element-wise averages and choosing unit test function in  \eqref{Eqn_disc2}, we obtain 
the local mass conservation of an element $E\in \mathcal{E}_h$ at time $t_n$:
\begin{align}
    \label{Eqn:incomp_BoM}
    \mathcal{M}(E)=
    \frac{\phi(\bar{S}_{n+1}\vert_E-\bar{S}_{n}\vert_E)}{\tau}
    + \frac{1}{|E|} \sum_{e \subset \partial E}  
    \mathcal{H}_{n+1,E}(e)
     -
     \left( f_w(s_{\mathrm{in}}) \bar{q}_E - \overline{f_w(S_{n}\vert_{E})} \underline{q}_E \right).
\end{align}
We compute the magnitude of mass balance error for the pressure-driven flow problem discussed in the previous section. 
Table \ref{Tab:2D_patch_compare} contains the value of maximum error observed throughout the simulation. 
Evidently, DG+SL scheme is slightly worse than other two schemes with respect to errors, which is consistent 
for all mesh-sizes. However, values are all very small and below than the solver tolerance.
In Figure \ref{Fig:2Dpatch_LMB}, the values of $\mathcal{M}(E)$ are displayed at $t=450$ \si{\second} on a crossed mesh of size
$h=2.5$ m for three cases of DG, DG+SL, and DG+FL+SL. One can see that applying slope limiter
(without flux limiter) instigates an erroneous patch (shown with dark brown color in Figure
\ref{Fig:2Dpatch_LMB_DG+SL}).
It is also clear that the proposed numerical scheme (i.e., DG+FL+SL) is locally mass conservative and 
slightly outperforms DG+SL scheme.
\begin{figure}
    \subfigure[DG with no limiter]{
        \includegraphics[clip,scale=0.13,trim=0 0cm 0cm 0]{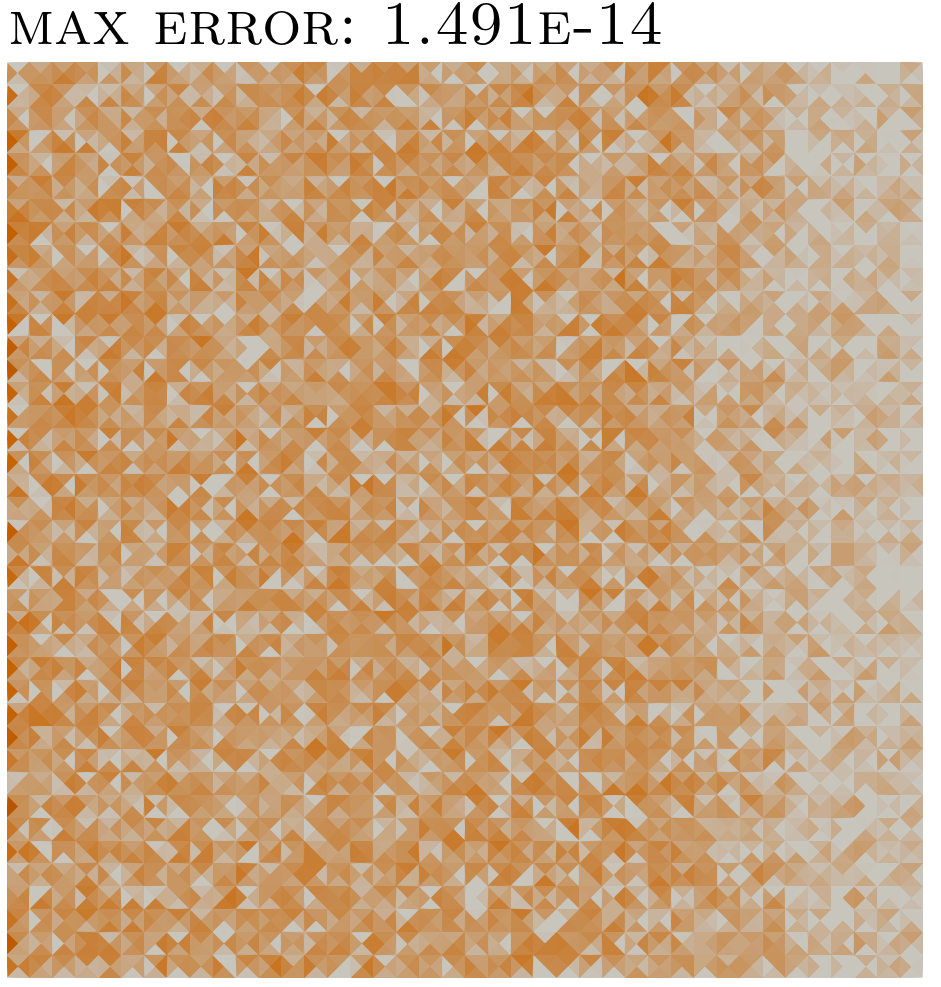}}
        \hspace{0.1cm}
        \subfigure[DG+SL \label{Fig:2Dpatch_LMB_DG+SL}]{
        \includegraphics[clip,scale=0.13,trim=0 0cm 0cm 0]{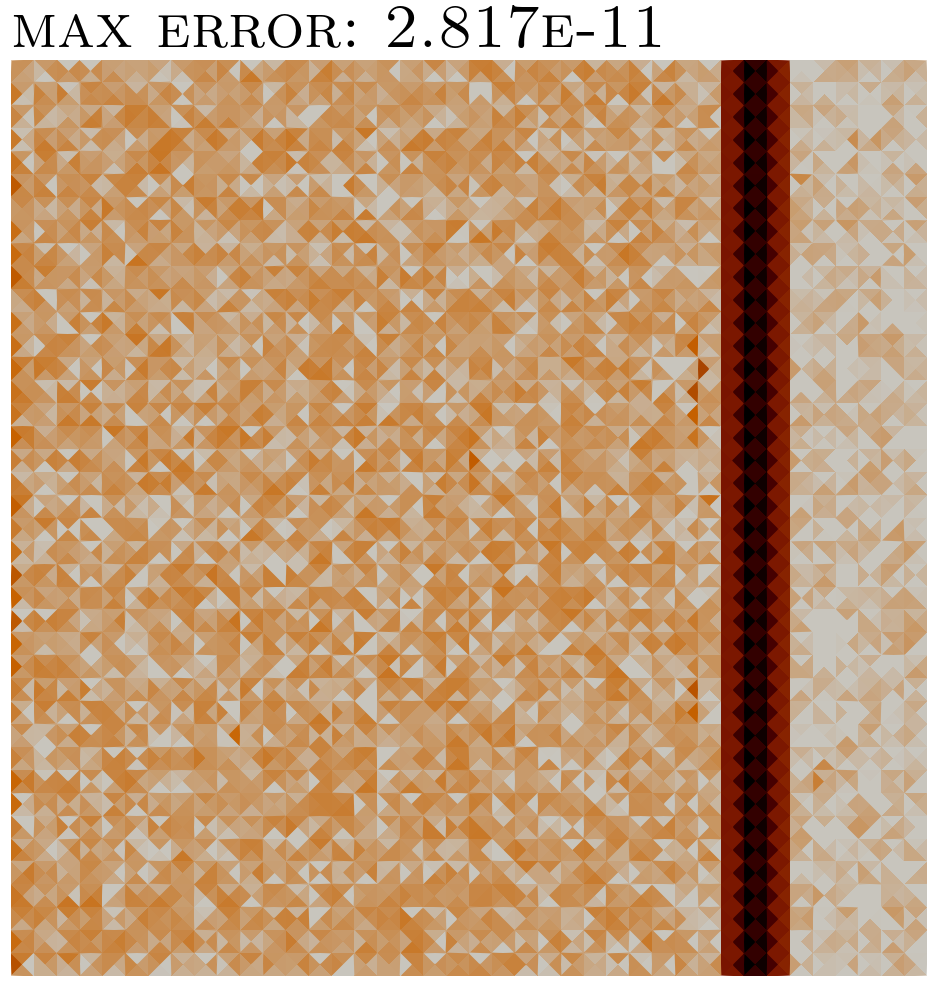}} 
        \hspace{0.1cm}
    \subfigure[DG+FL+SL]{
        \includegraphics[clip,scale=0.13,trim=0 0cm 0cm 0]{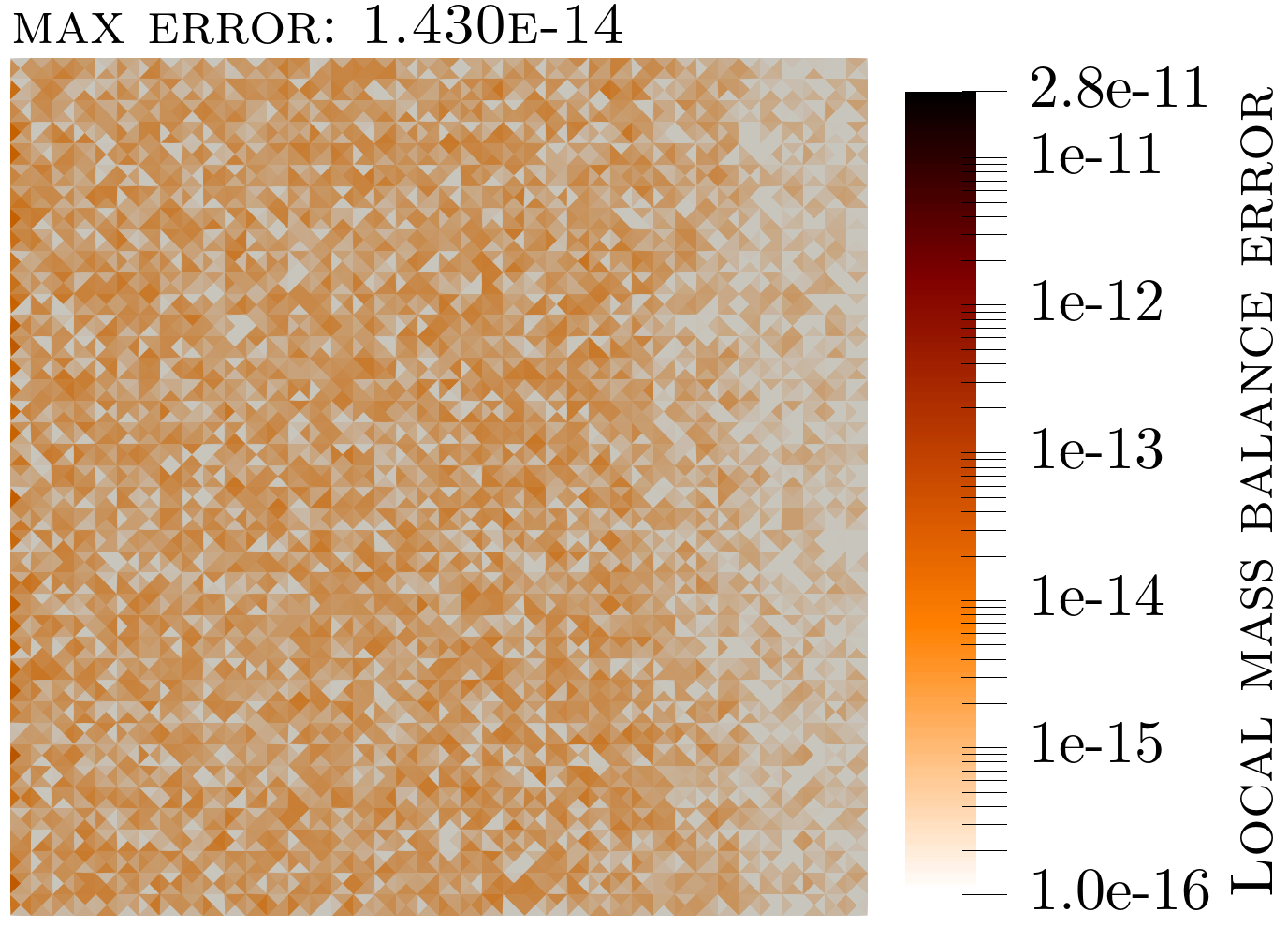}} 
        \caption{\textsf{Local mass balance conservation for pressure-driven flow problem:}
                This figure shows the local mass conservation properties of the limited and unlimited DG
                approximations on a homogeneous domain with $h=2.5$ \si{\meter} at $t=450$ \si{\second}. 
                DG+SL near the front induces slight increase in mass balance error but,
                overall errors remain small for all three cases.
        \label{Fig:2Dpatch_LMB}}
\end{figure}

\subsubsection{Domain with thin barrier}
In this example, the porous medium contains a thin barrier and it is partitioned into 
an unstructured triangular mesh (see Figure \ref{Fig:Barrier_mesh}). Total time is set to $T=4500$ \si{\second} and the time step is $t=0.5$ \si{\second}. 
Additionally, noflow boundary conditions are imposed on the barrier edges. All other parameters are the 
same as in Section \ref{sub:2Dpatch_homogen}.
Figure \ref{Fig:2Dpatch_barriers_sat} exhibits the saturation profile under limited and unlimited DG 
at three different time steps. Limited DG, unlike its unlimited version, generates saturation that 
remains bounded and neither undershoots (blue-colored cells) nor overshoots (red-colored cells) are detected during 
the simulation. Nonetheless, the saturation front, under both unlimited and limited DG, propagates with the same 
speed and tends to avoids the barrier as expected.
\begin{figure}
    \subfigure[DG with no limiter; $t=250$ \si{\second}~\label{Fig:}]{
        \includegraphics[clip,scale=0.15,trim=0 0cm 0cm
    0]{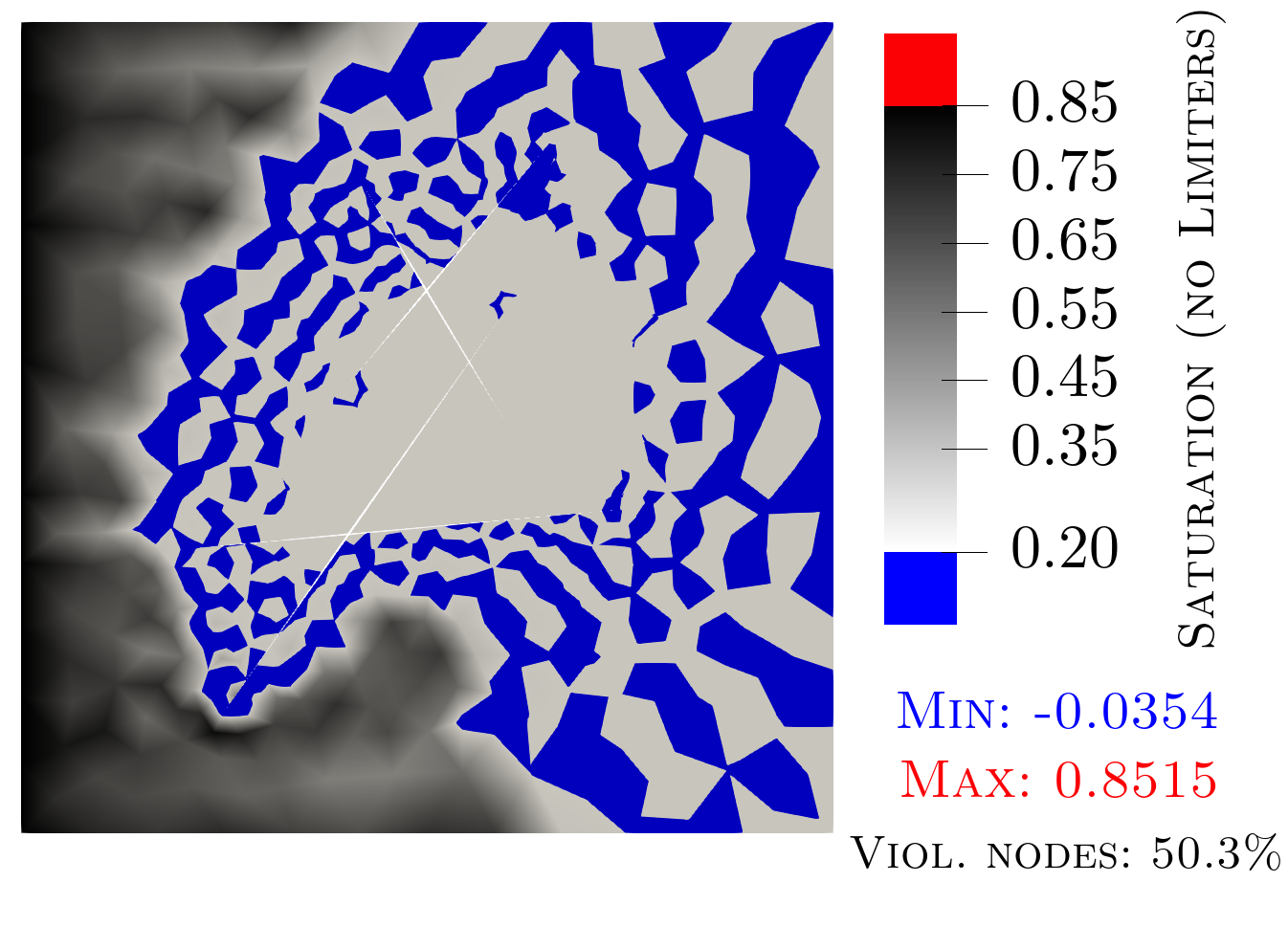}}
        \hspace{.25cm}
        \subfigure[DG+FL+SL; $250$ \si{\second}~\label{fig:}]{
        \includegraphics[clip,scale=0.15,trim=0 0cm 0cm 0]{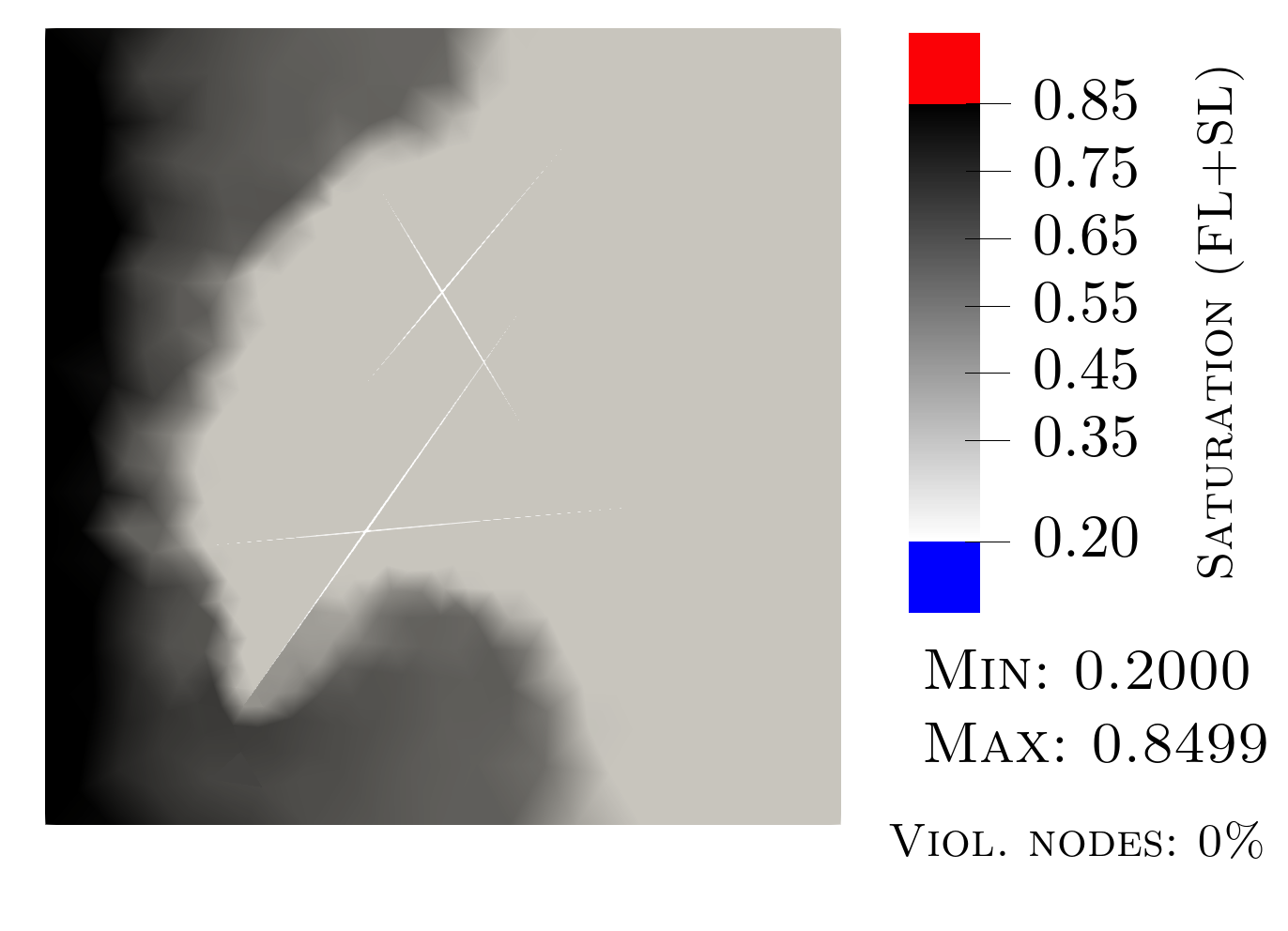}} \\
        \subfigure[DG with no limiters; $t=1000$ \si{\second}~\label{fig:}]{
        \includegraphics[clip,scale=0.15,trim=0 0cm 0cm 0]{Figures/Fig8c.png}} 
        \hspace{.25cm}
        \subfigure[DG+FL+SL; $t=1000$ \si{\second}~\label{fig:}]{
        \includegraphics[clip,scale=0.15,trim=0 0cm 0cm 0]{Figures/Fig8d.png}} 
        \subfigure[DG with no limiters; $t=4500$ \si{\second}~\label{fig:}]{
        \includegraphics[clip,scale=0.15,trim=0 0cm 0cm 0]{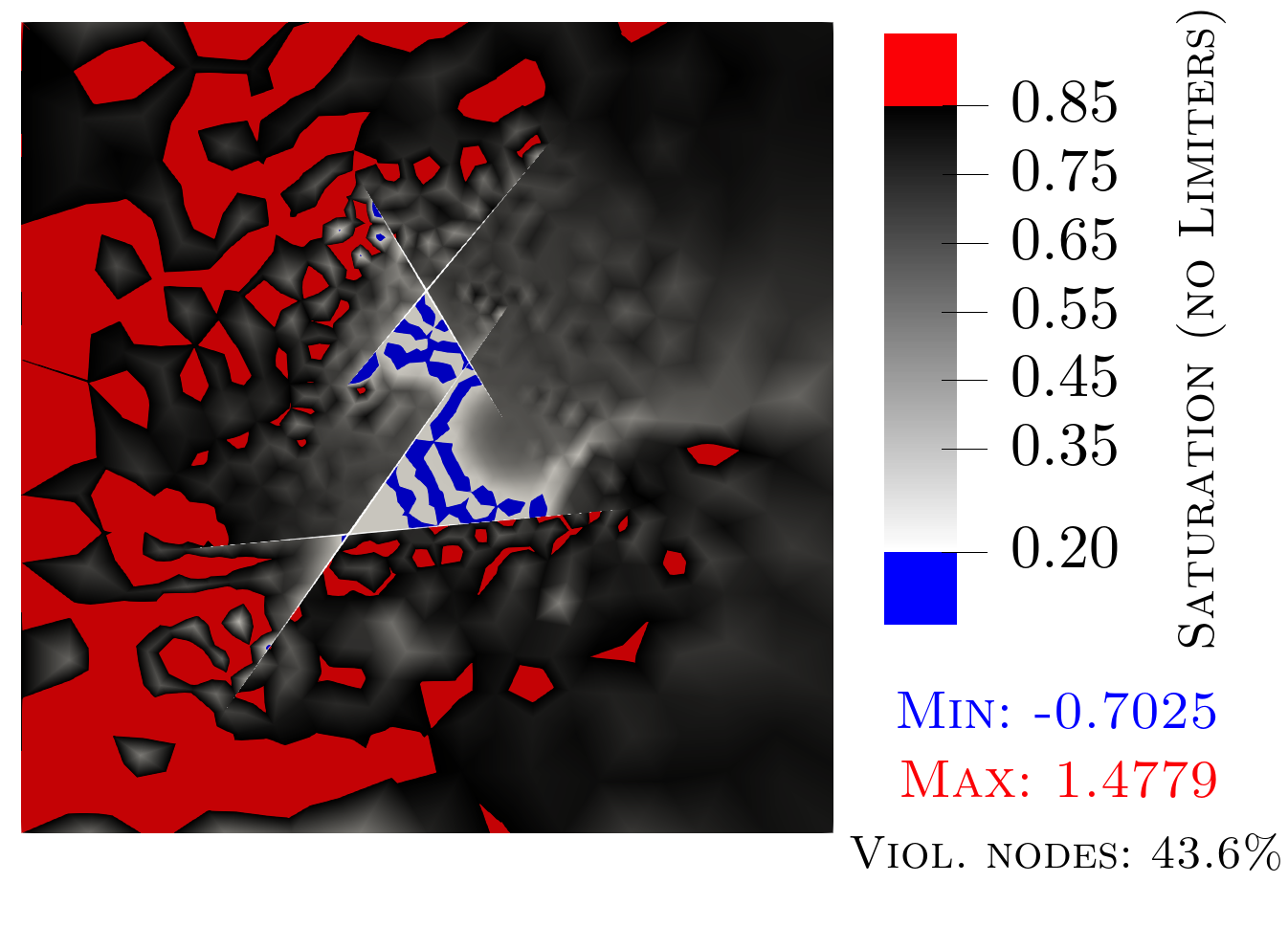}} 
        \hspace{.25cm}
        \subfigure[DG+FL+SL; $t=4500$ \si{\second}~\label{fig:}]{
        \includegraphics[clip,scale=0.15,trim=0 0cm 0cm 0]{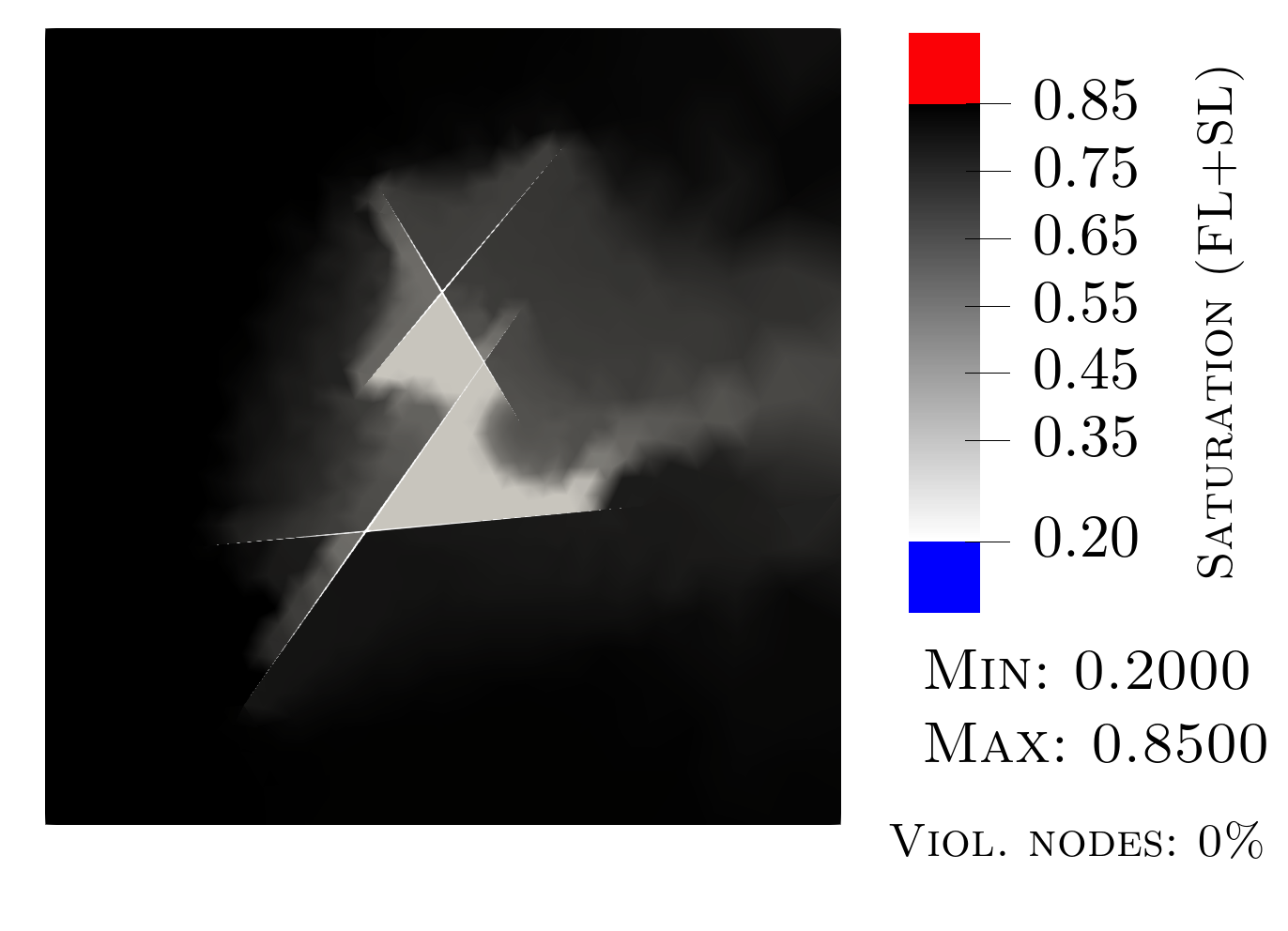}} 
        \caption{\textsf{Homogeneous domain with thin barrier:~}
            This figure shows the evolution of saturation profile using DG scheme without limiter (left) 
            and with the proposed limiters (right). The color mapping for $S$ in $[0.2,0.85]$ is grayscale, 
            while values below and above bounds are colored blue and red, respectively.
            As expected, DG approximation with no limiter yields noticeable violations,
            while limited DG scheme is capable of providing maximum-principle satisfying results. 
            In spite of this, the front under both unlimited and limited DG, propagates with the same 
            speed. 
        \label{Fig:2Dpatch_barriers_sat}}
\end{figure}
Figure \ref{Fig:2Dpatch_barriers_pres} and \ref{Fig:2Dpatch_barriers_vel} show the wetting phase pressure contour and 
velocity field at $t=4500$ \si{\second}. Velocities are computed at time $t_n$, 
using  the formula: $\mathbf{u}_w^n = -K \lambda_w(S_{n}) \nabla P_{n}$.
We can see that pressure drops linearly near the top and bottom edges, which confirms that fluid steers 
clear of the central barrier and flows around it. 
When no limiter is used, spurious oscillations and erroneous high-velocity regions are visible in velocity 
solutions. Limited DG, on the other hand, gives very smooth approximations. 
From these results we conclude that the proposed numerical scheme is bound-preserving on unstructured meshes. 
\begin{figure}
    \subfigure[DG with not limiter\label{Fig:}]{
        \includegraphics[clip,scale=0.13,trim=0 0cm 0cm
    0]{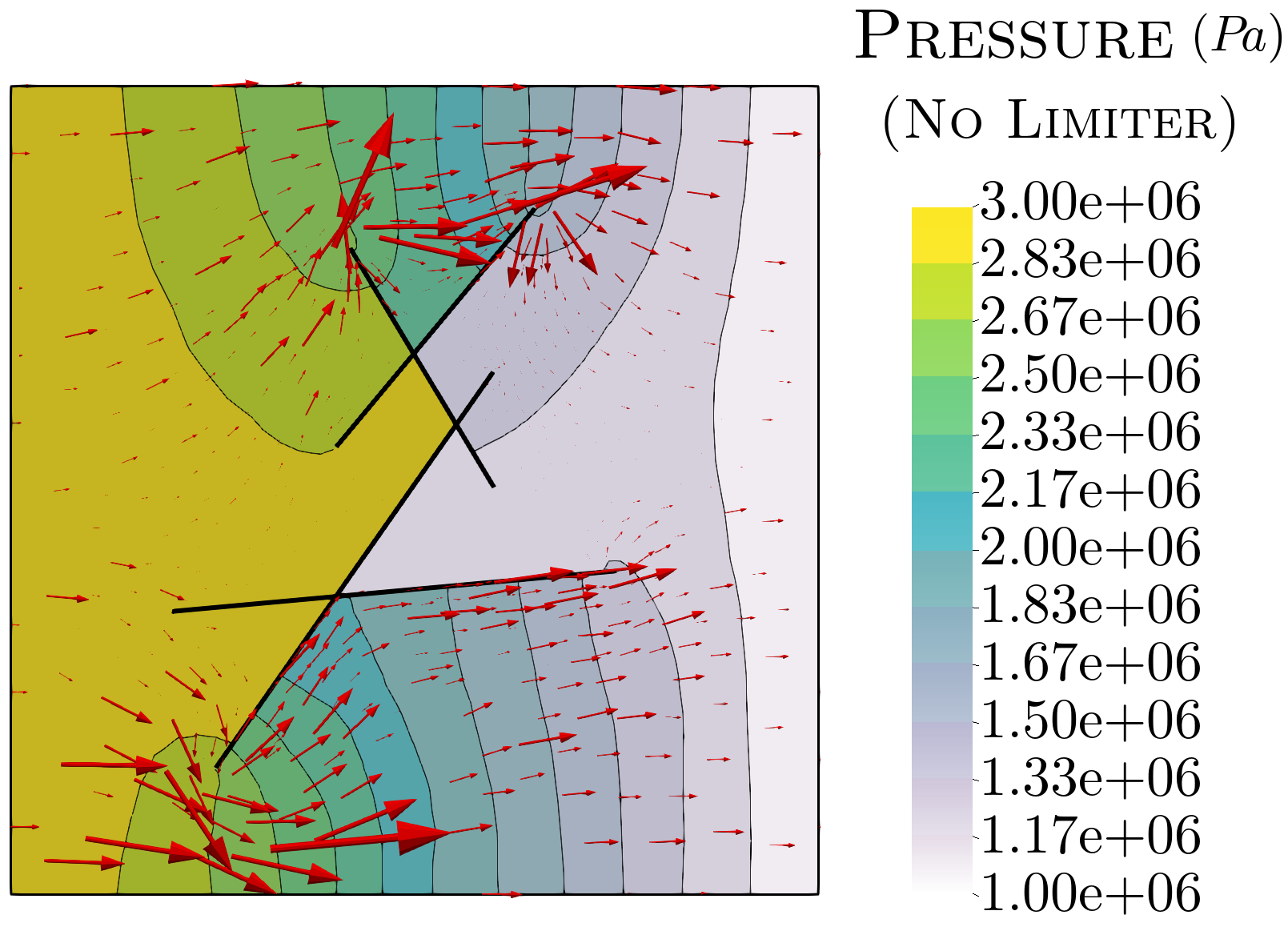}}
        \hspace{.25cm}
    \subfigure[DG+FL+SL\label{fig:}]{
        \includegraphics[clip,scale=0.13,trim=0 0cm 0cm 0]{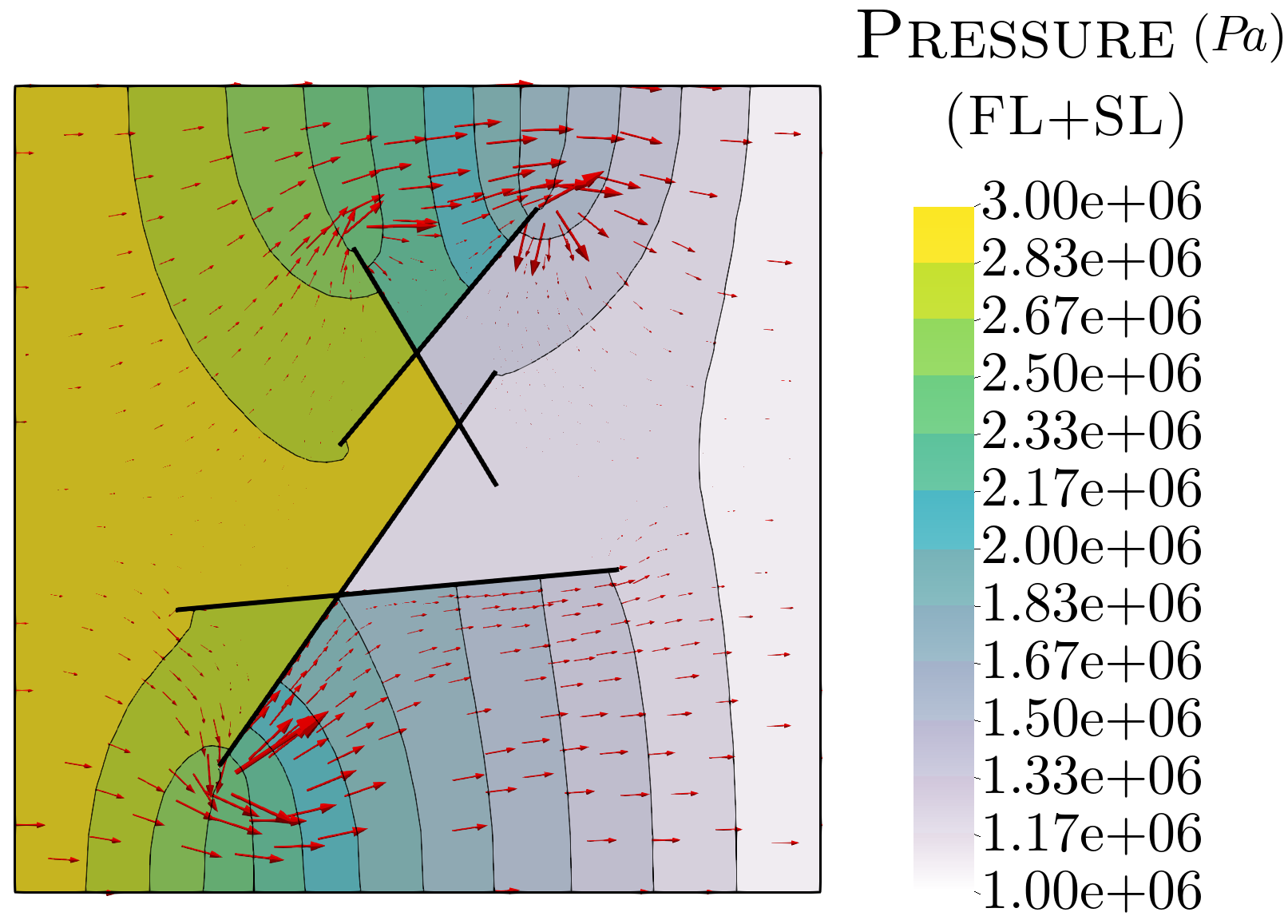}} \\
        \caption{\textsf{Homogeneous domain with thin barrier:}
            This figure depicts the pressure solutions at final time $t=4500$ \si{\second} using DG scheme (a) without 
            limiter and (b) with limiters. The color contours represent the wetting phase pressure 
            and the red arrows represent the velocity field. The length of the arrows scale with the magnitude of 
            velocity. 
            For both cases, flow goes around the impassible barrier and pressure linearly drops near top and 
            bottom channels.
        \label{Fig:2Dpatch_barriers_pres}}
\end{figure}
\begin{figure}
    \subfigure[DG with no limiter\label{Fig:}]{
        \includegraphics[clip,scale=0.13,trim=0 0cm 0cm
    0]{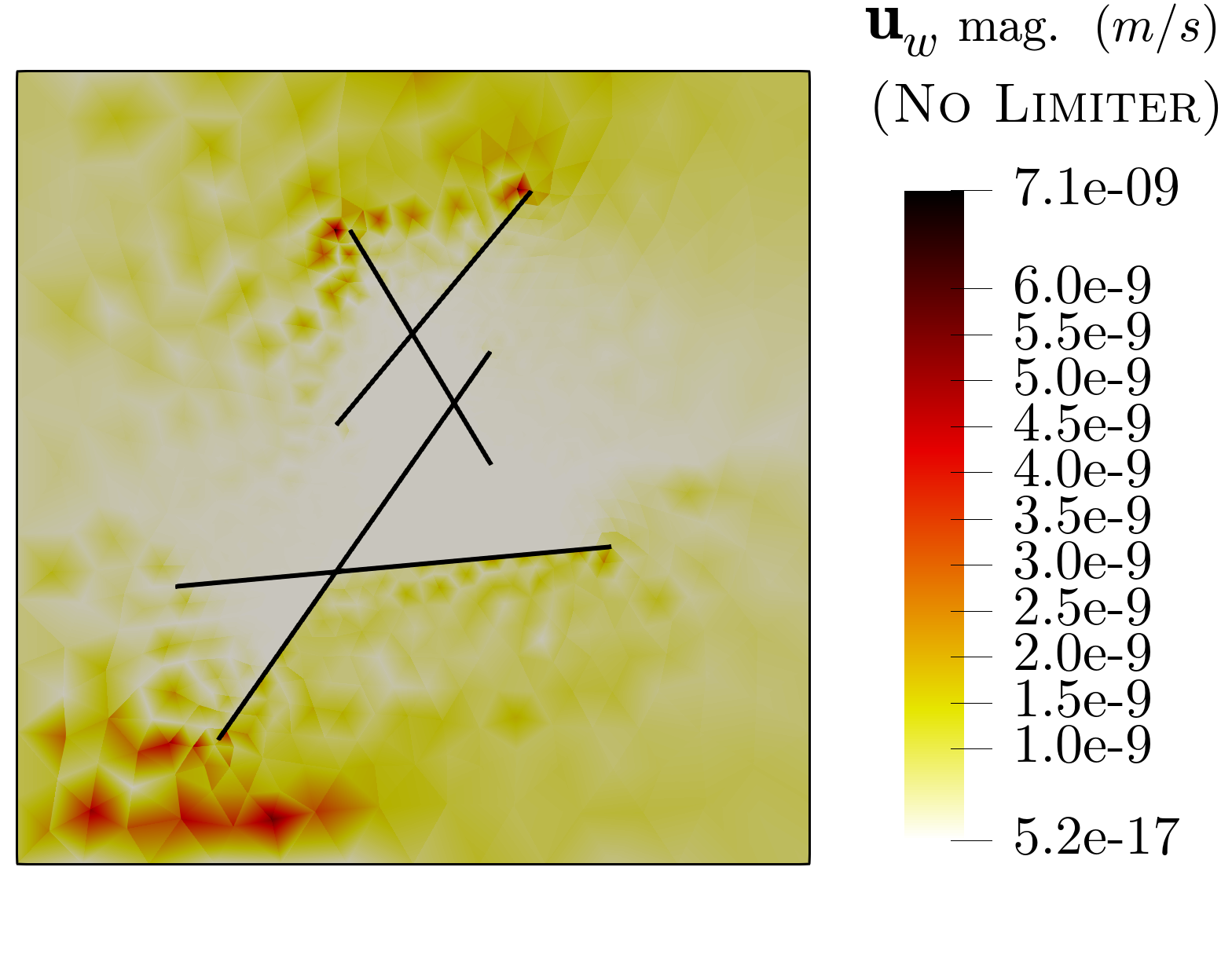}}
        \hspace{.25cm}
    \subfigure[DG+FL+SL\label{fig:}]{
        \includegraphics[clip,scale=0.13,trim=0 0cm 0cm 0]{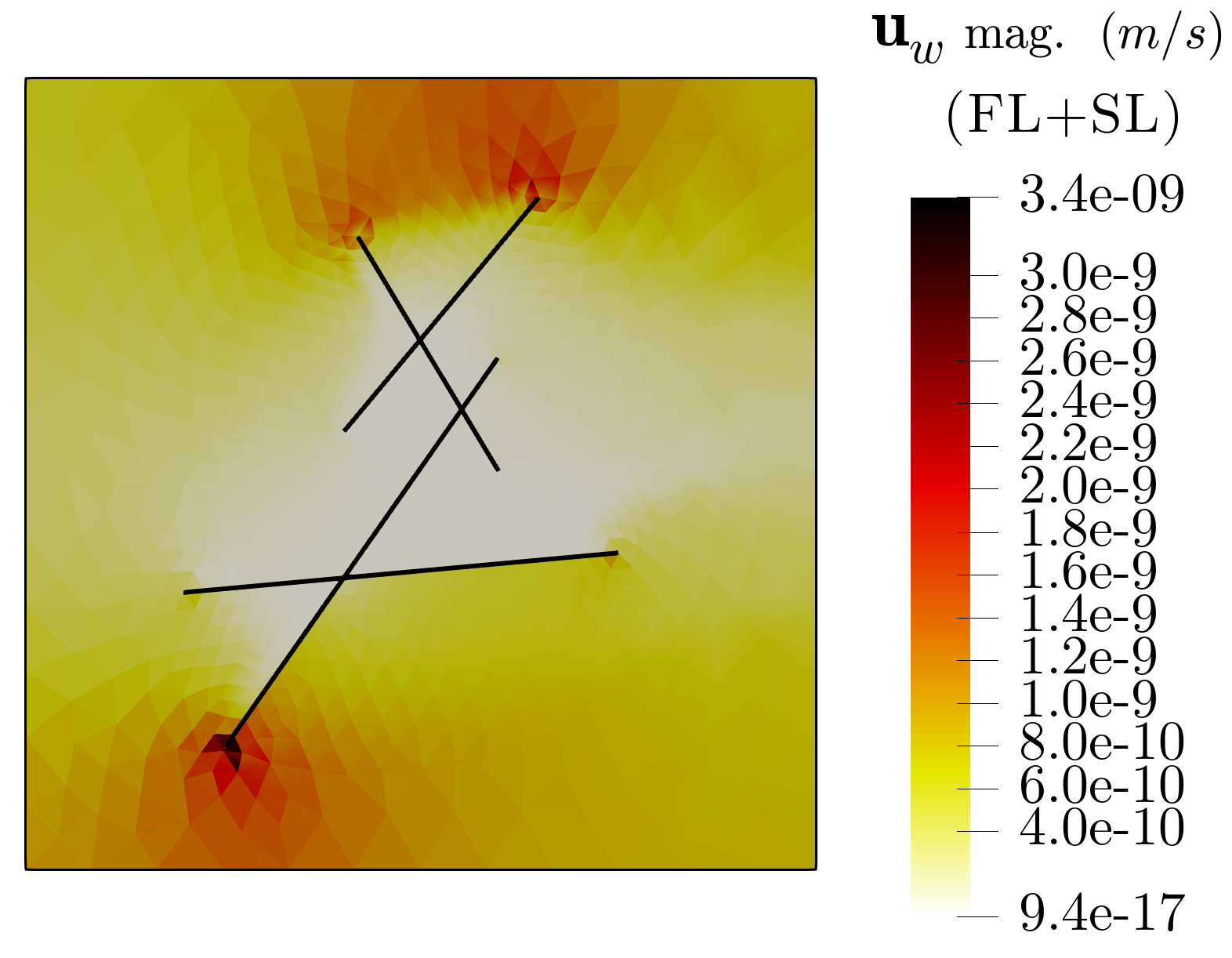}} 
        \caption{\textsf{Homogeneous domain with thin barrier:}
            This figure shows the magnitude  of wetting phase velocity at final time $t=4500$ 
            \si{\second} using DG scheme (a) without limiter and (b) with limiters. 
            The main inference from this figure is that when no limiter is used (a), DG approximation induces
            overestimation and spurious oscillations in velocity field.  The proposed limiting scheme mitigates this issue 
            and yields smooth solutions (b).
        \label{Fig:2Dpatch_barriers_vel}}
\end{figure}

\subsubsection{Non-homogeneous domain}
For this problem, permeability is $10^{-8}$ \si{\meter\squared} everywhere except inside a square 
inclusion of length $20$ \si{\meter} located at the center of the domain, where the permeability is 
$10^4$ times smaller.  The domain is discretized with a structured rectangular mesh of size
$h=1.25$ \si{\meter}. Time step is set to $\tau=0.5$ \si{\second} and the simulation advances up to $T=650$ \si{\second}. 
The remaining parameters are the same as in Section \ref{sub:2Dpatch_homogen}.
The discrete saturation at different snapshots of $t=350$ and $t=650$ \si{\second} are depicted in Figure 
\ref{Fig:2Dpatch_nonHomogen_sat}, where saturation values beyond the physical bounds 
(i.e., $S>0.85$ and  $S<0.2$) are clipped away. Evidently, no matter if limiters are used or not,
the injected wetting phase travels from left to right while avoiding the region of lower permeability. 
Both limited and unlimited schemes generate sharp and consistent saturation fronts. 
However, without limiter, the DG scheme presents strong oscillations behind and ahead of the 
inclusion. When limiters are activated, oscillations are suppressed and solutions are free of 
undershoots/overshoots.
\begin{figure}
    \subfigure[DG with no limiter; $t=350$ \si{\second}\label{Fig:}]{
        \includegraphics[clip,scale=0.135,trim=0 0cm 0cm
        0]{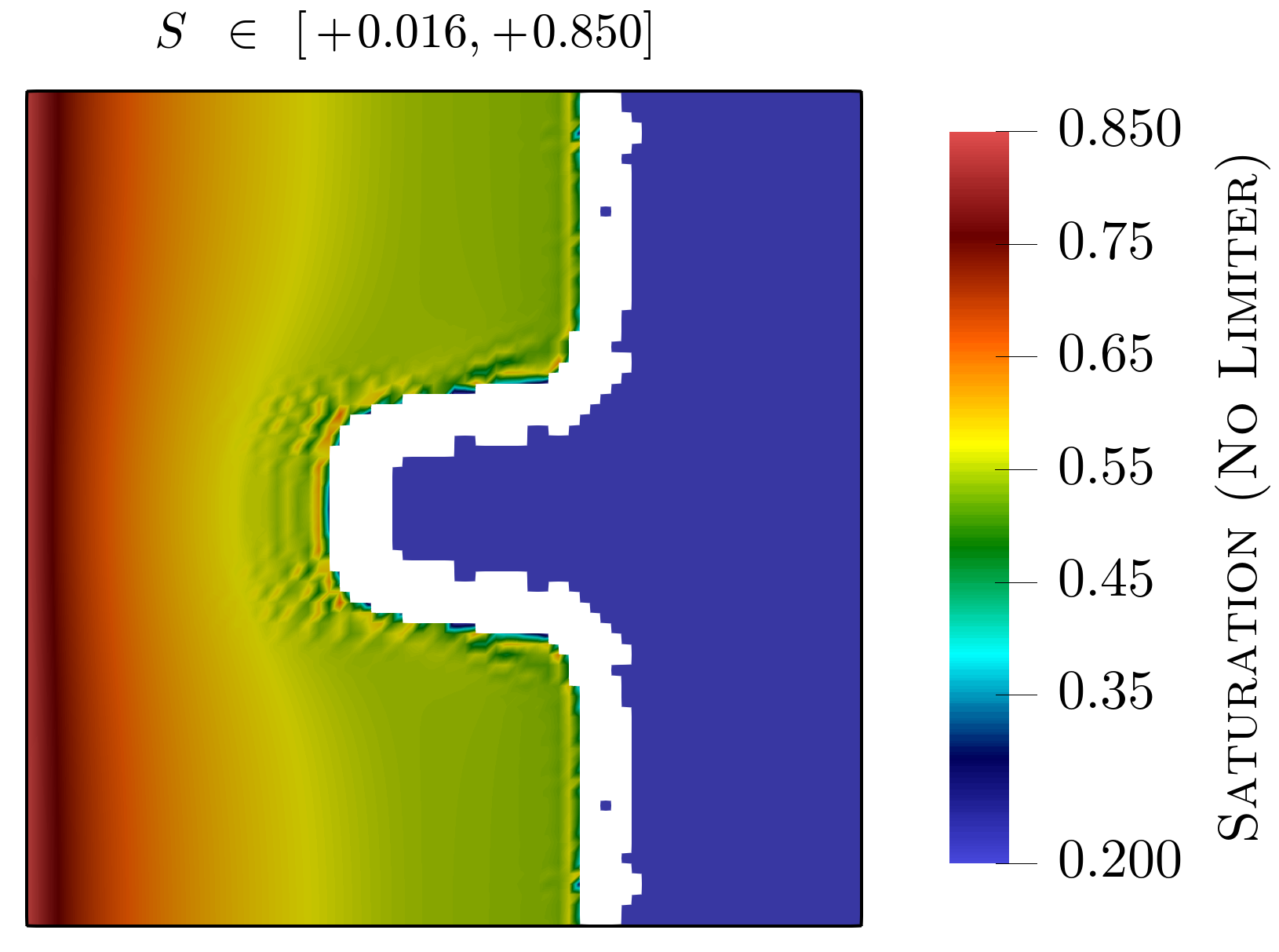}}
        \hspace{.05cm}
        \subfigure[DG with limiters; $t=350$ \si{\second}\label{fig:}]{
        \includegraphics[clip,scale=0.135,trim=0 0cm 0cm 0]{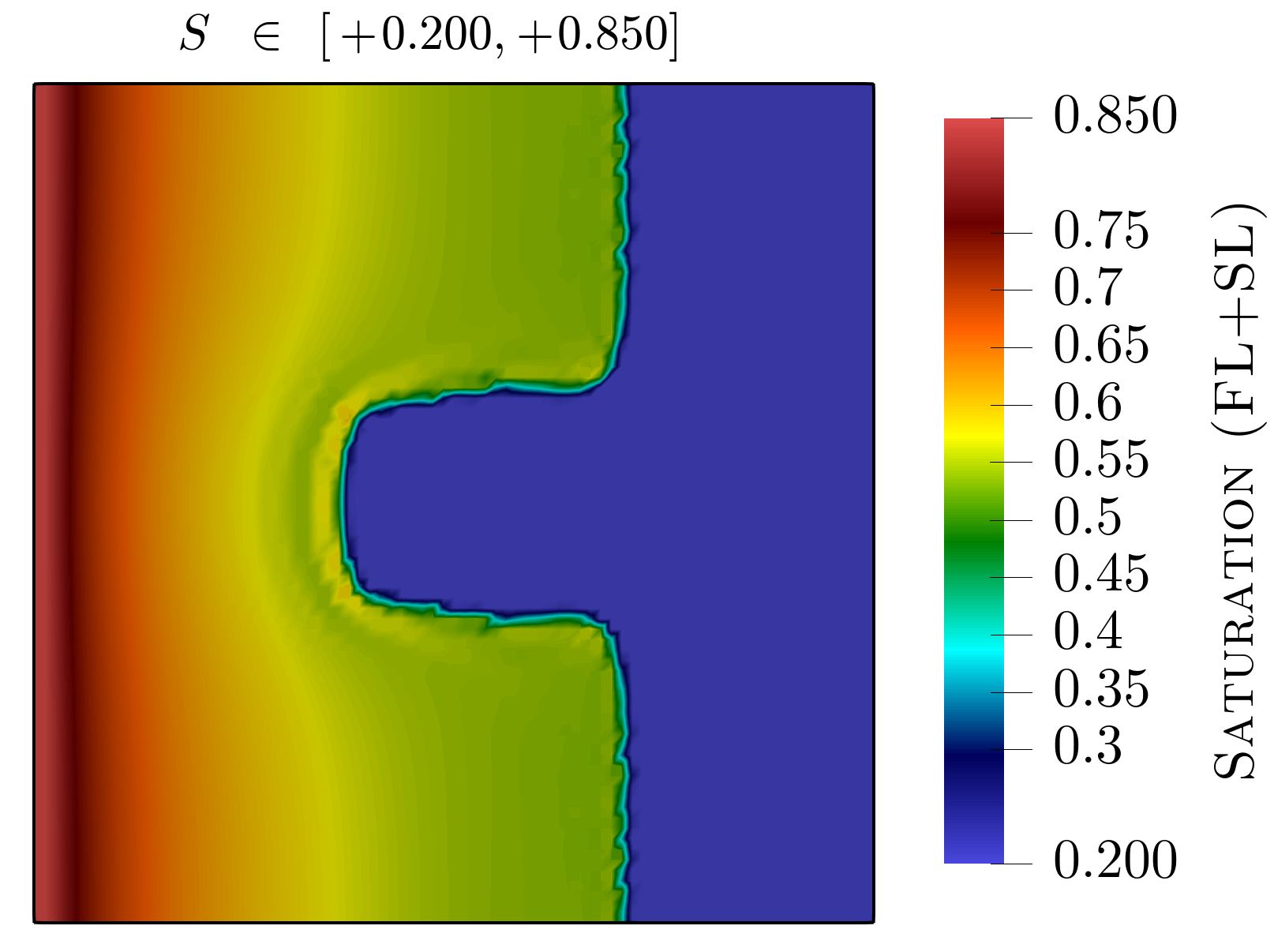}} \\
        \subfigure[DG with no limiter; $t=650$ \si{\second}\label{Fig:}]{
        \includegraphics[clip,scale=0.135,trim=0 0cm 0cm
        0]{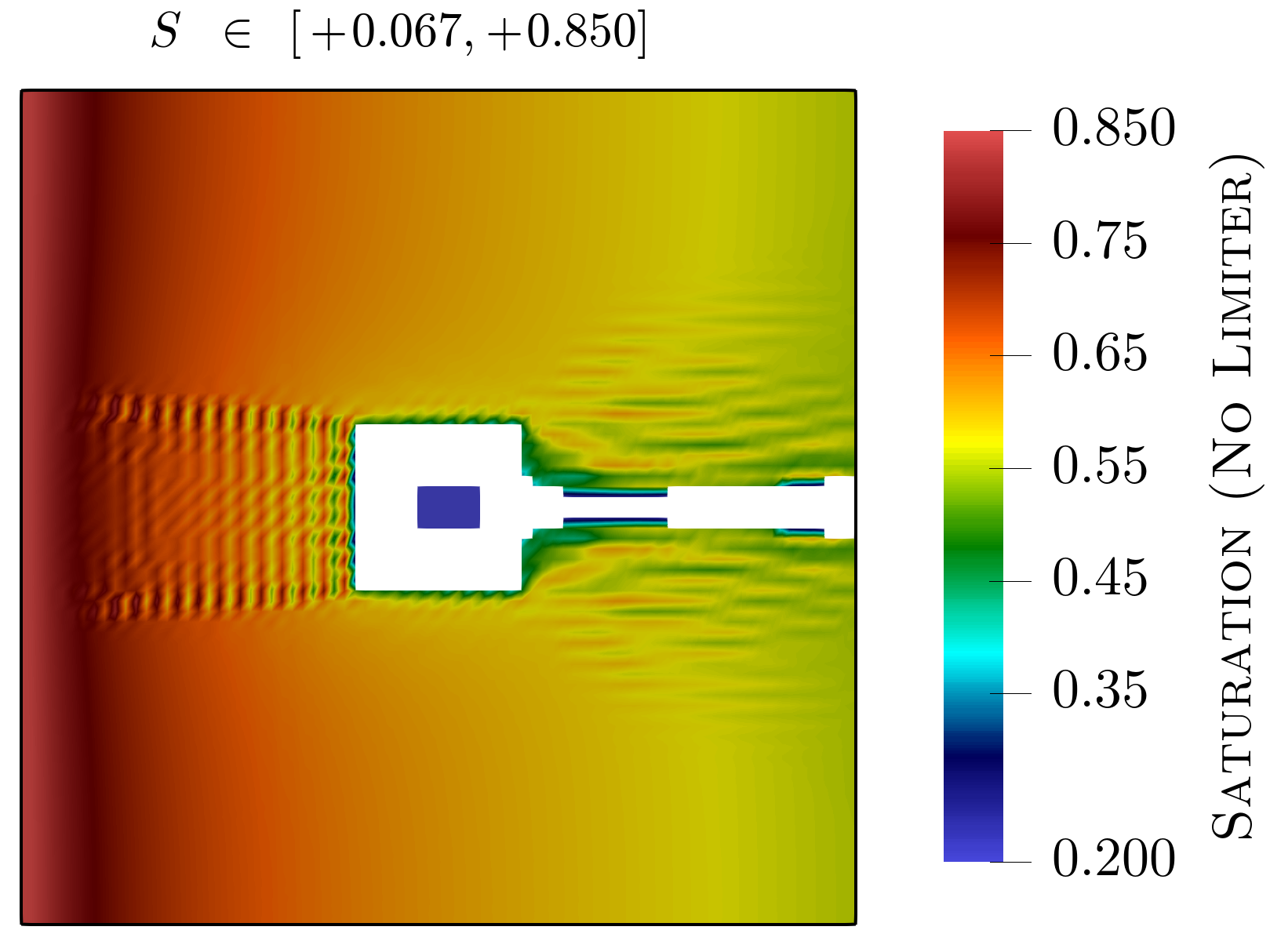}}
        \hspace{.05cm}
        \subfigure[DG with limiters; $t=650$ \si{\second}\label{fig:}]{
        \includegraphics[clip,scale=0.135,trim=0 0cm 0cm 0]{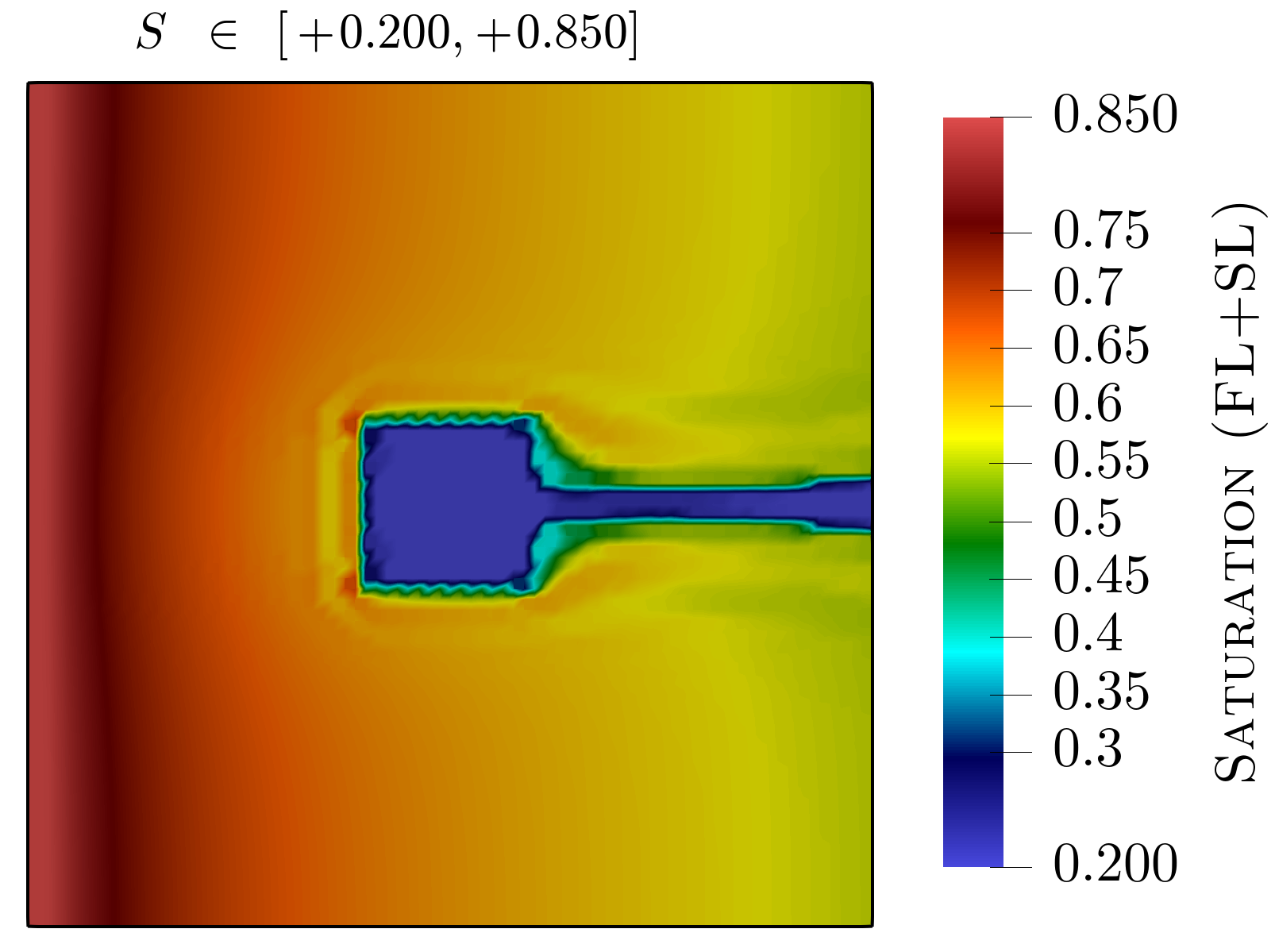}} \\
        \caption{\textsf{Non-homogeneous pressure-driven flow problem:}
            This figure shows saturation fields obtained with DG (left) and with DG+FL+SL (right). 
Values beyond the physical bounds (i.e., $S>0.85$ and  $S<0.2$) 
            are clipped away using tolerance $10^{-5}$. 
            Using either scheme, the wetting phase does not flood the inclusion and 
            saturation fronts remains sharp and propagate similarly.
            Notice that spurious oscillations and violations of the physical constraints 
            occur under the DG formulation but not under the limited DG.
        \label{Fig:2Dpatch_nonHomogen_sat}}
\end{figure}
Figures \ref{Fig:2Dpatch_nonHomogen_pres} and \ref{Fig:2Dpatch_nonHomogen_vel} depict the pressure and velocity
solutions, respectively, computed at $t=650$ \si{\second} by the DG formulation with limiter and without limiter.
Limiting scheme has minimal effect on the pressure but this is not the case for the velocity.
Velocities obtained under DG with no limiter exhibit spurious oscillations, which resemble those in saturation 
profile. On the other hand, the limiting scheme eliminates oscillations in the velocity field.
\begin{figure}
    \subfigure[DG; $t=650$ \label{Fig:}]{
        \includegraphics[clip,scale=0.135,trim=0 0cm 0cm
        0]{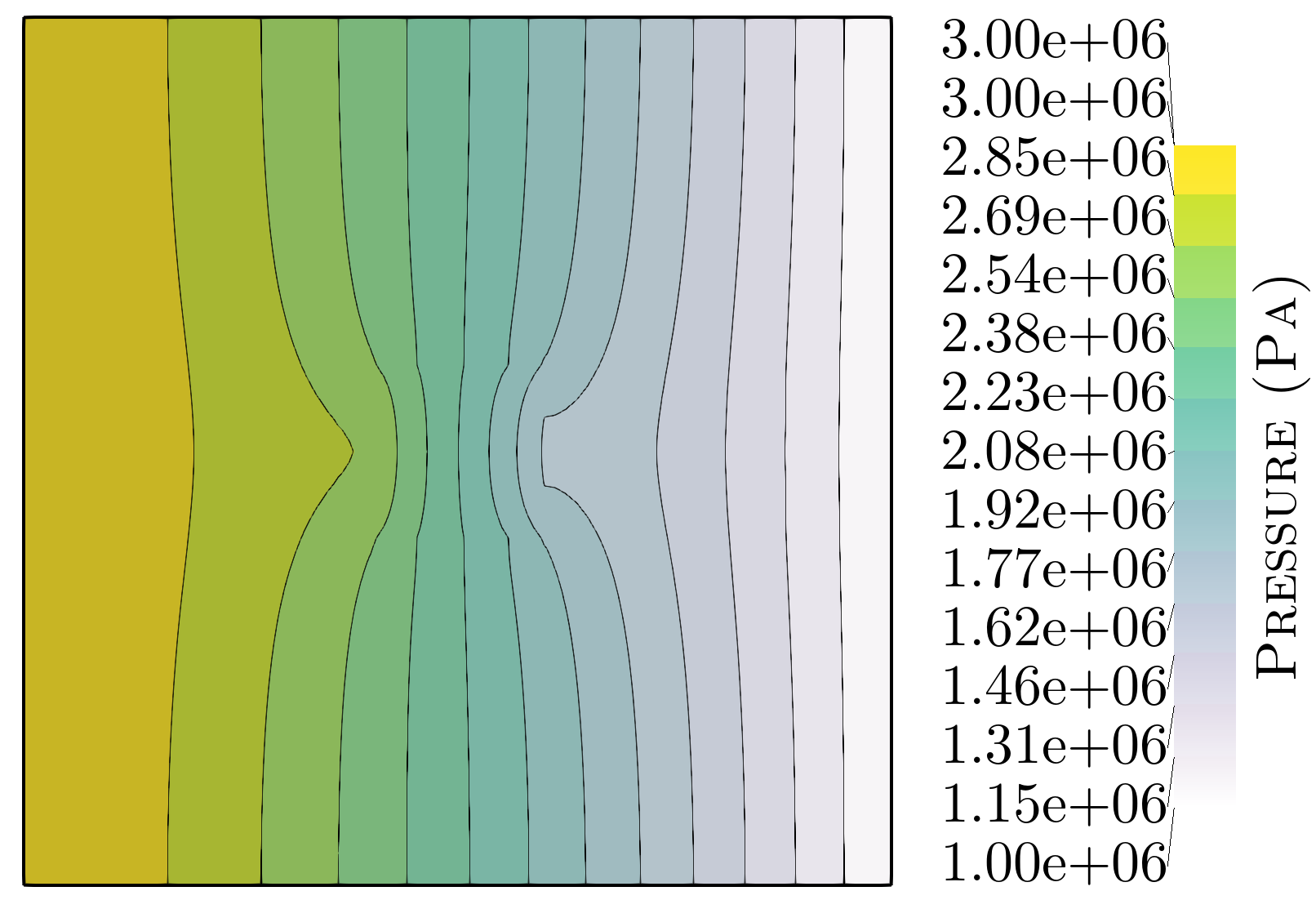}}
        \hspace{.05cm}
    \subfigure[DG+FL+SL; $t=650$ \label{fig:}]{
        \includegraphics[clip,scale=0.135,trim=0 0cm 0cm 0]{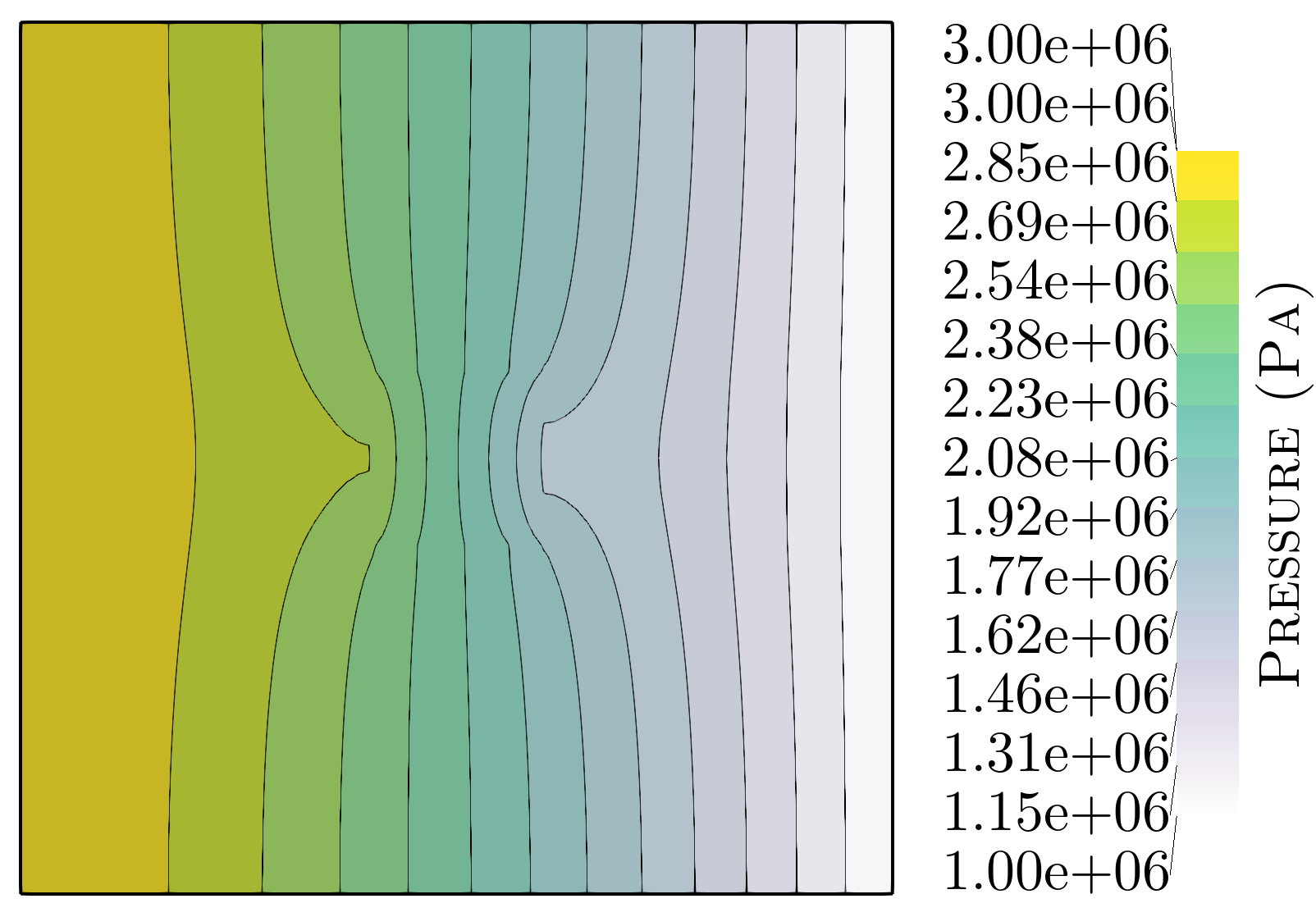}} \\
        \caption{\textsf{Non-homogeneous pressure-driven flow problem:}
            This figure shows pressure at final time $t=650$ \si{\second} using DG scheme (a) without 
            limiter and (b) with limiter. 
        \label{Fig:2Dpatch_nonHomogen_pres}}
\end{figure}
\begin{figure}
    \subfigure[DG; $t=650$ \label{Fig:}]{
        \includegraphics[clip,scale=0.13,trim=0 0cm 0cm
        0]{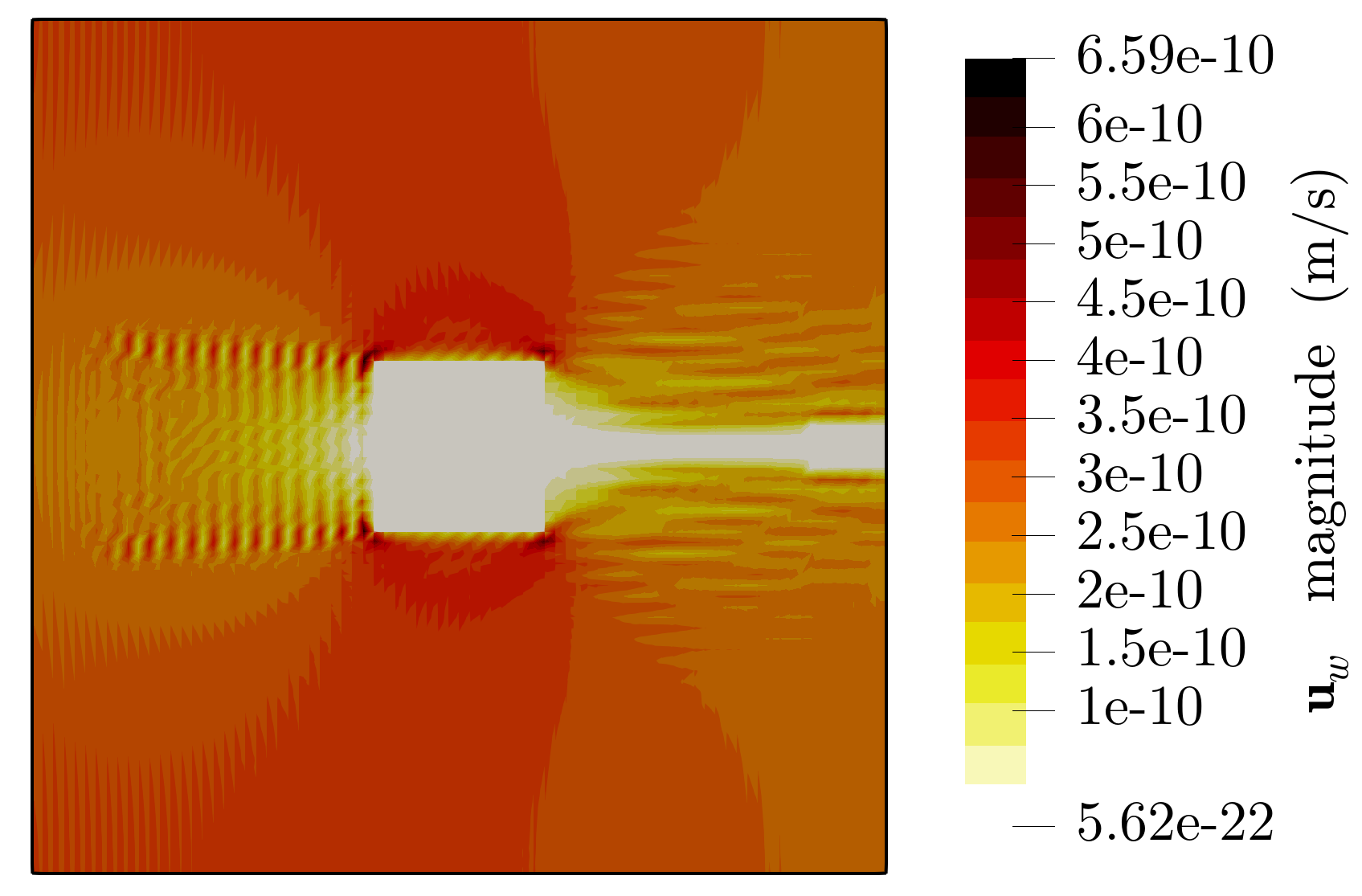}}
        \hspace{.05cm}
    \subfigure[DG+FL+SL; $t=650$ \label{fig:}]{
        \includegraphics[clip,scale=0.13,trim=0 0cm 0cm 0]{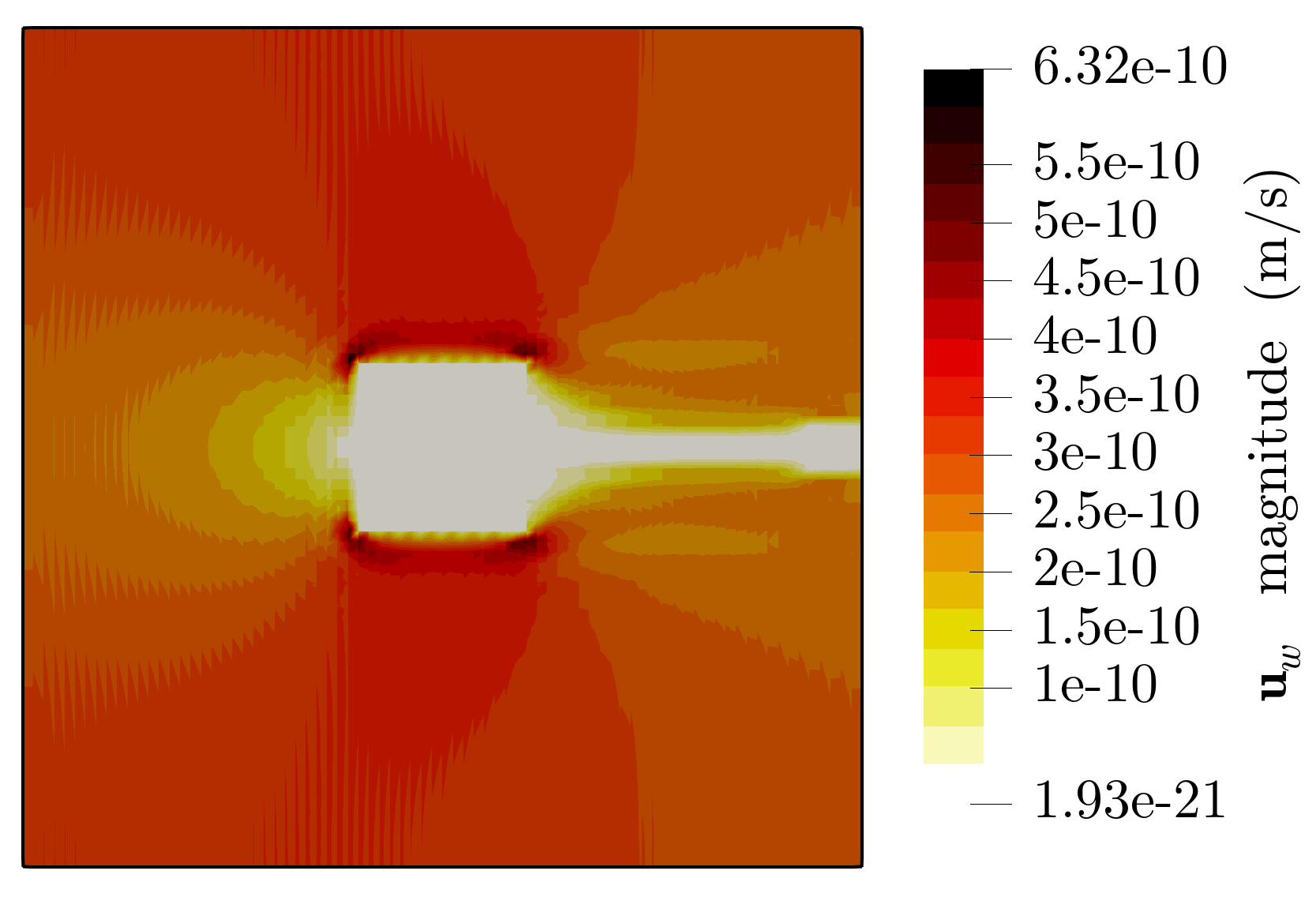}} \\
        \caption{\textsf{Two-dimensional pressure-driven flow problem:}
            This figure shows the magnitude of wetting phase velocity at final time $t=650$ \si{\second} using DG scheme 
            (a) without limiter and (b) with limiter. 
            In the  unlimited case, oscillations earlier observed in saturation are reflected in the velocity field.
            However, with limiters, oscillations are eliminated.
        \label{Fig:2Dpatch_nonHomogen_vel}}
\end{figure}

\subsection{Quarter five-spot problem}
\label{sub:Q5_homogen}
In this section, the performance and robustness of the limiters are assessed in the presence of 
wells, for both homogeneous and heterogeneous permeabilities.
We employ no flow boundary condition on the entire boundary, as shown in Figure \ref{Fig:Q5_schematic};
and assume zero capillary pressure. The flow is driven from an injection well at the bottom left corner 
to a production well at the top right corner. 
The wells are defined by source/sink terms, which are piecewise constant with compact support. 
That is, $\bar{q}$ is nonzero at injection well and $\underline{q}$ is nonzero at production well. 
The DG penalty parameters for test problems are taken as $\sigma=10$. 

\subsubsection{Homogeneous domain}
\label{sub:Q5_homogen}
The domain $\Omega=[0,100]^2$ \si{\meter\squared} is  partitioned into a crossed structured mesh 
of size $h=2.5$ \si{\meter}, as depicted in Figure \ref{Fig:Crossed_mesh}. The medium is homogeneous with 
$K=10^{-13}$ \si{\meter\squared} everywhere. We choose Brooks-Corey relative permeabilities as follows:
\begin{align}
    \label{Eqn:Q5_homogen_BrookCorey}
    k_{rw}(s_e) = s_e^2,\quad
    k_{r\ell}(s_e) = (1-s_e)^2,\quad
    s_e = \frac{S-s_{rw}}{1-s_{rw}-s_{r\ell}}.
\end{align}
The injection and production flow rates of wells are determined by the following constraint:
\begin{align}
    \int_{\Omega}\bar{q} = \int_{\Omega}\underline{q} = 7.03125\times10^{-4},
\end{align}
where $\bar{q}$ is piecewise constant on $[2.5,10]^2$ \si{\meter\squared} and $\bar{q}=0$ elsewhere and
$\underline{q}$ is piecewise constant on $[90,97.5]^2$ \si{\meter\squared} and  $\underline{q}=0$ elsewhere.
The final time is $T=21$ days and time step is $\tau=0.057$ days. 

Figure \ref{Fig:Q5_sat} shows the wetting phase saturations at two different times ($t=10$ and $t=21$ days), 
for three schemes: DG, DG+SL, DG+FL+SL. 
The figure shows that violations of the maximum principle for 
the unlimited DG solution occur in the neighborhood of the injection well and after the front; 
in addition the DG solution is not monotone before the front. Adding a slope limiter helps with the 
monotonicity of the solution and with decreasing the number of elements where the maximum principle is 
not satisfied. The proposed numerical scheme, DG+FL+SL, completely eliminates violation of maximum principle: the solution is monotone and bound-preserving. 
Figure~\ref{Fig:Q5_pres} and \ref{Fig:Q5_vel} show the wetting phase pressure contours and velocity fields 
at $t=10$ days for all three cases.  Differences are minimal for the pressure and velocity fields.
Finally we display the local mass balance error in Figure~\ref{Fig:Q5_LMB} for all three cases; the local mass
balance error is a piecewise constant field $\mathcal{M}$ defined by \eqref{Eqn:incomp_BoM}.  We observe that the local mass balance is negligible (of the order of $10^{-11}$) for the DG scheme with or without limiters.
\begin{figure}[]
    \centering
    \includegraphics[width=0.52\linewidth]{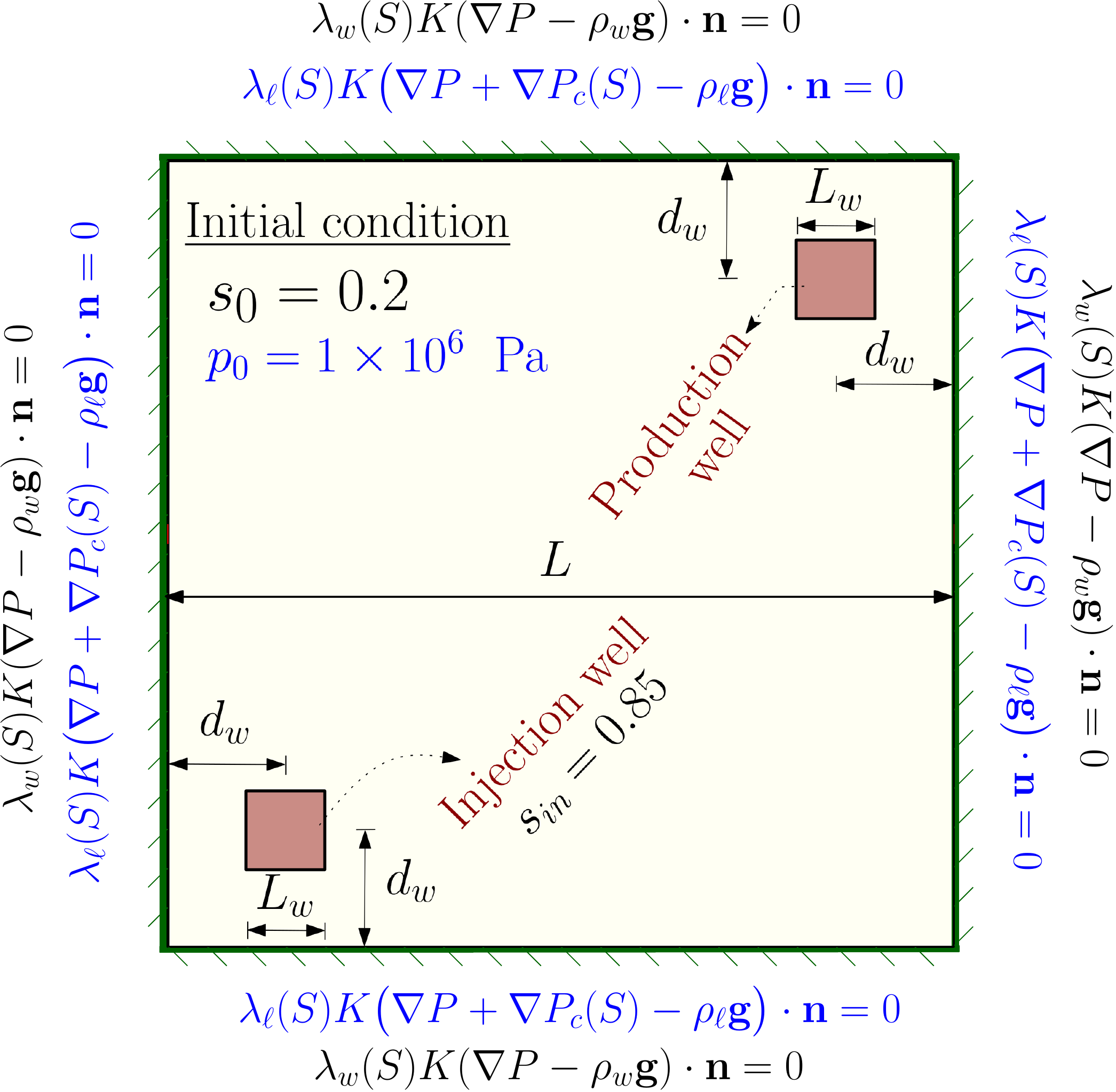}
    \caption{\textsf{Quarter five-spot problem:}
    This figure provides a pictorial description and the boundary value problem. No flow boundary conditions
    are prescribed on the entire boundary.
    }%
    \label{Fig:Q5_schematic}
\end{figure}
\begin{figure}
    \subfigure[DG; $t=10$  days\label{Fig:}]{
        \includegraphics[clip,scale=0.09,trim=0 0cm 0cm
        0]{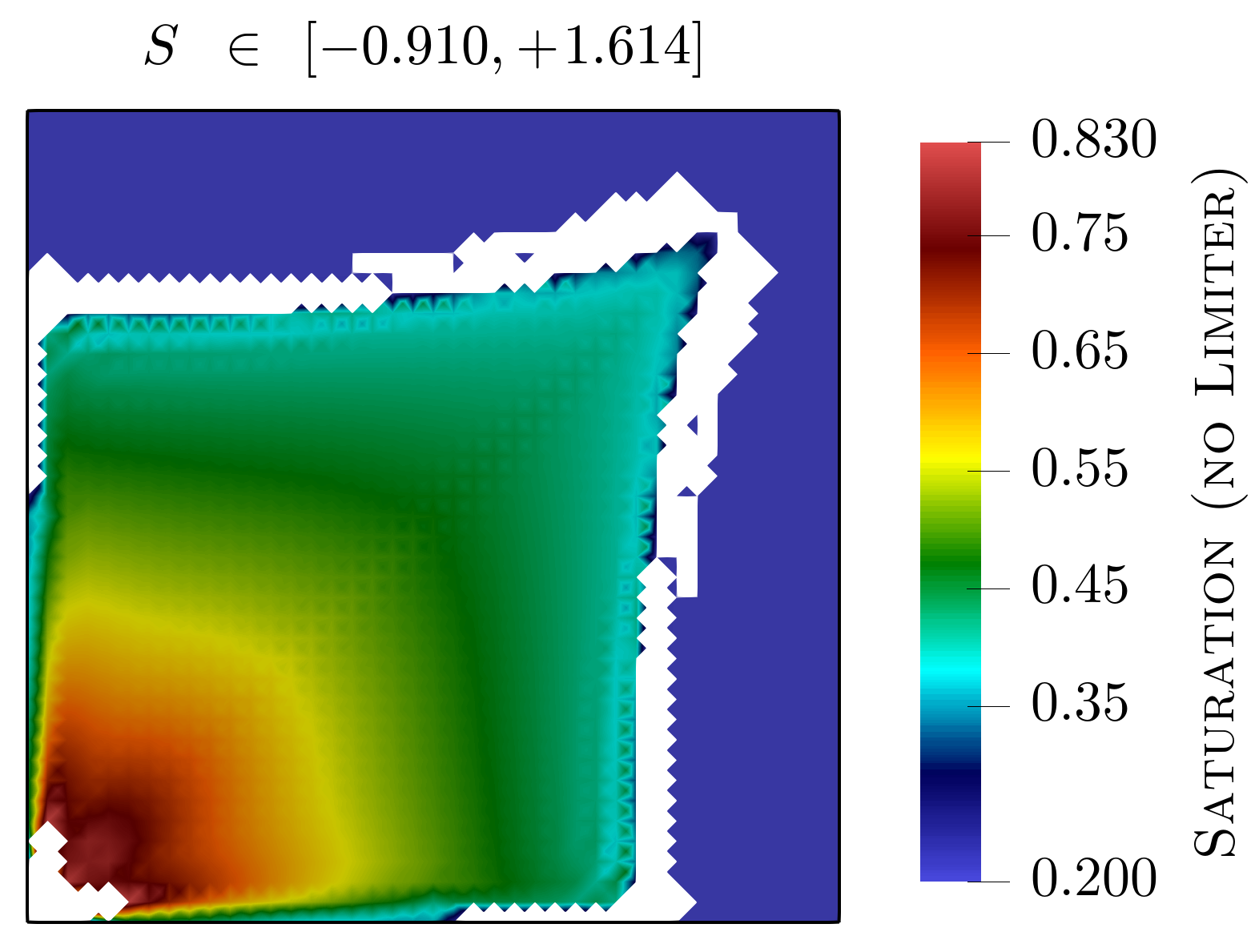}}
        \hspace{.05cm}
    \subfigure[DG+SL; $t=10$  days\label{fig:}]{
        \includegraphics[clip,scale=0.09,trim=0 0cm 0cm 0]{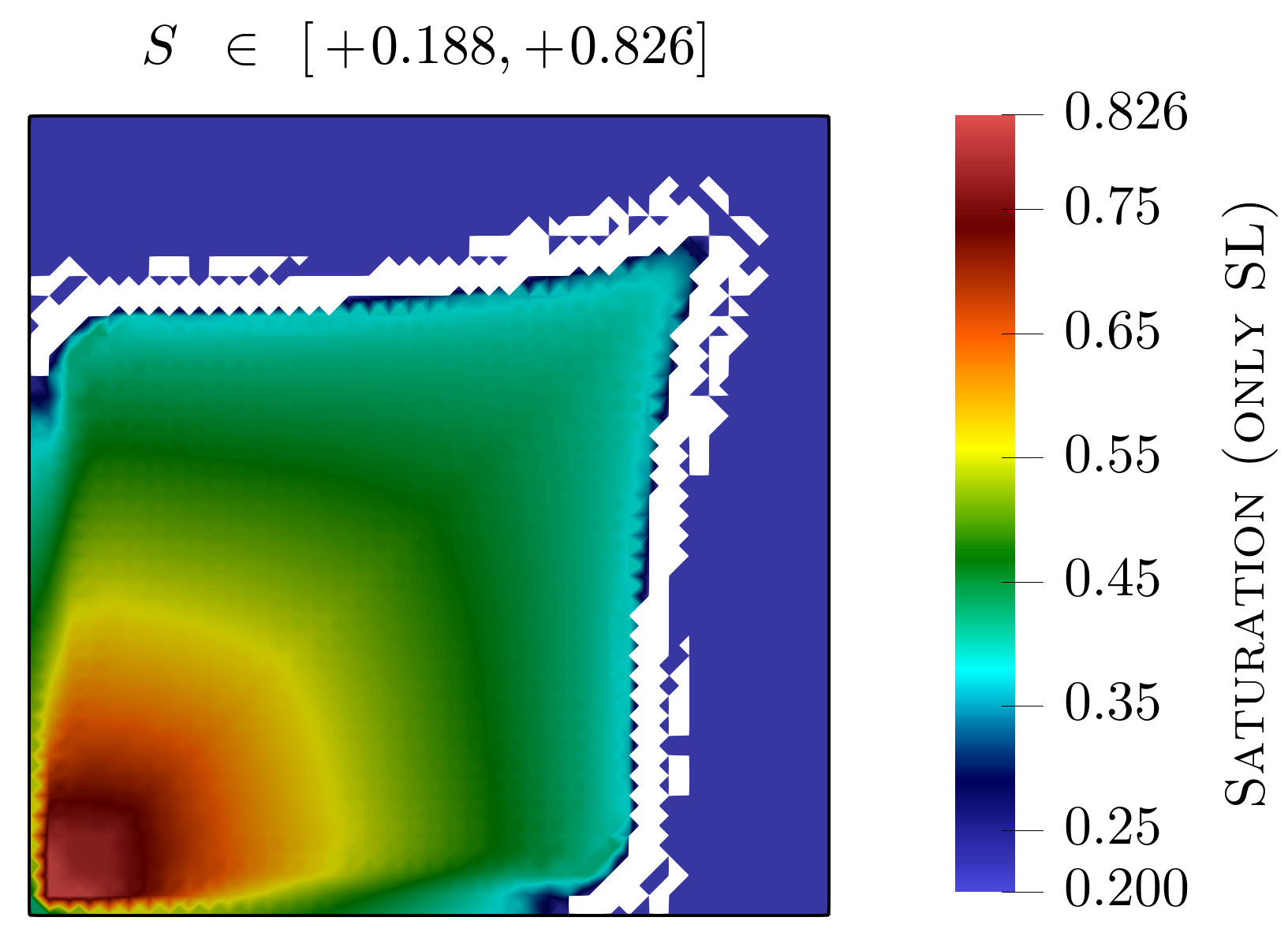}} 
        \hspace{.05cm}
    \subfigure[DG+FL+SL; $t=10$ days \label{fig:}]{
        \includegraphics[clip,scale=0.09,trim=0 0cm 0cm 0]{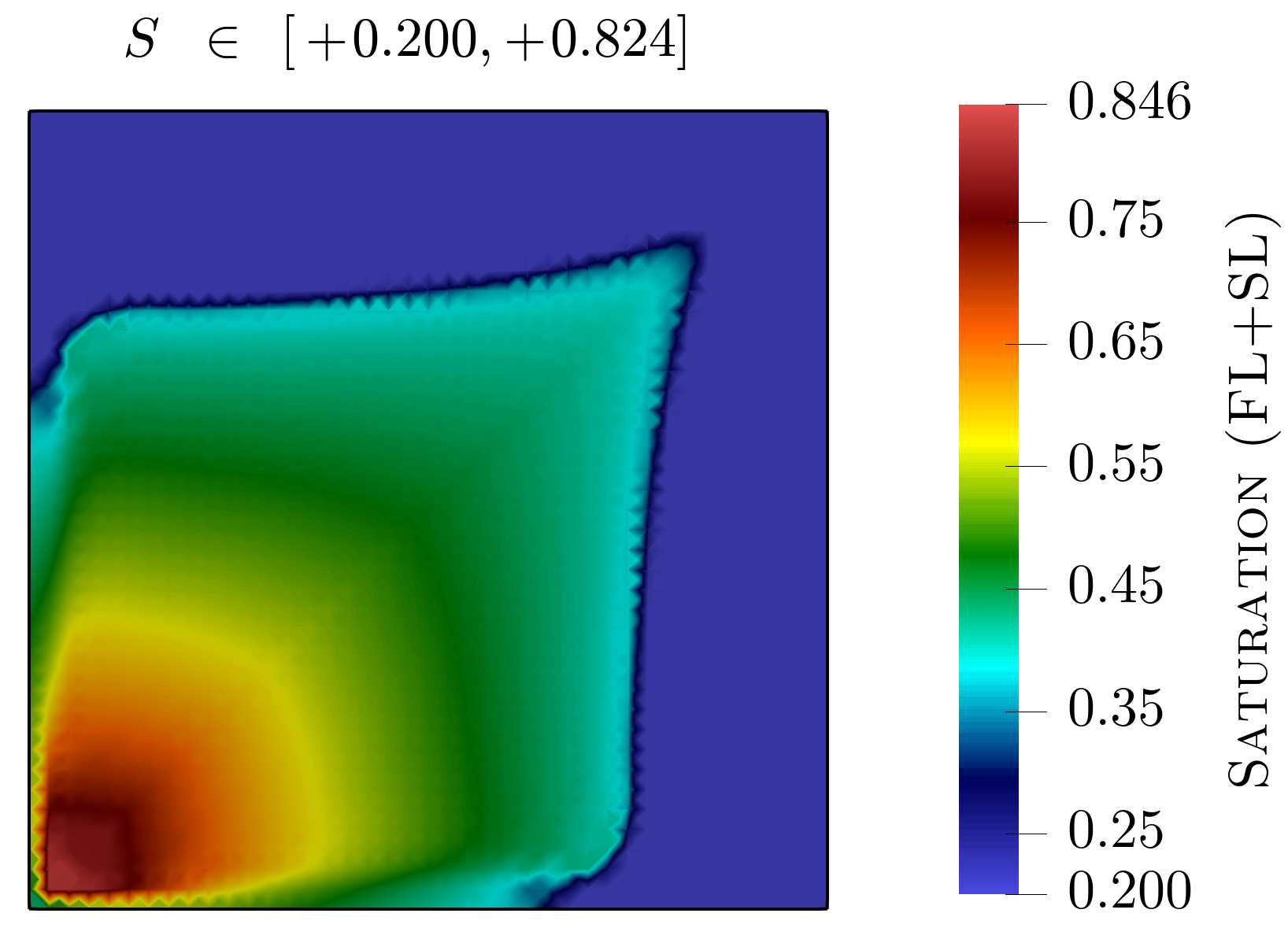}} \\
    \subfigure[DG; $t=21$ days \label{Fig:}]{
        \includegraphics[clip,scale=0.09,trim=0 0cm 0cm
        0]{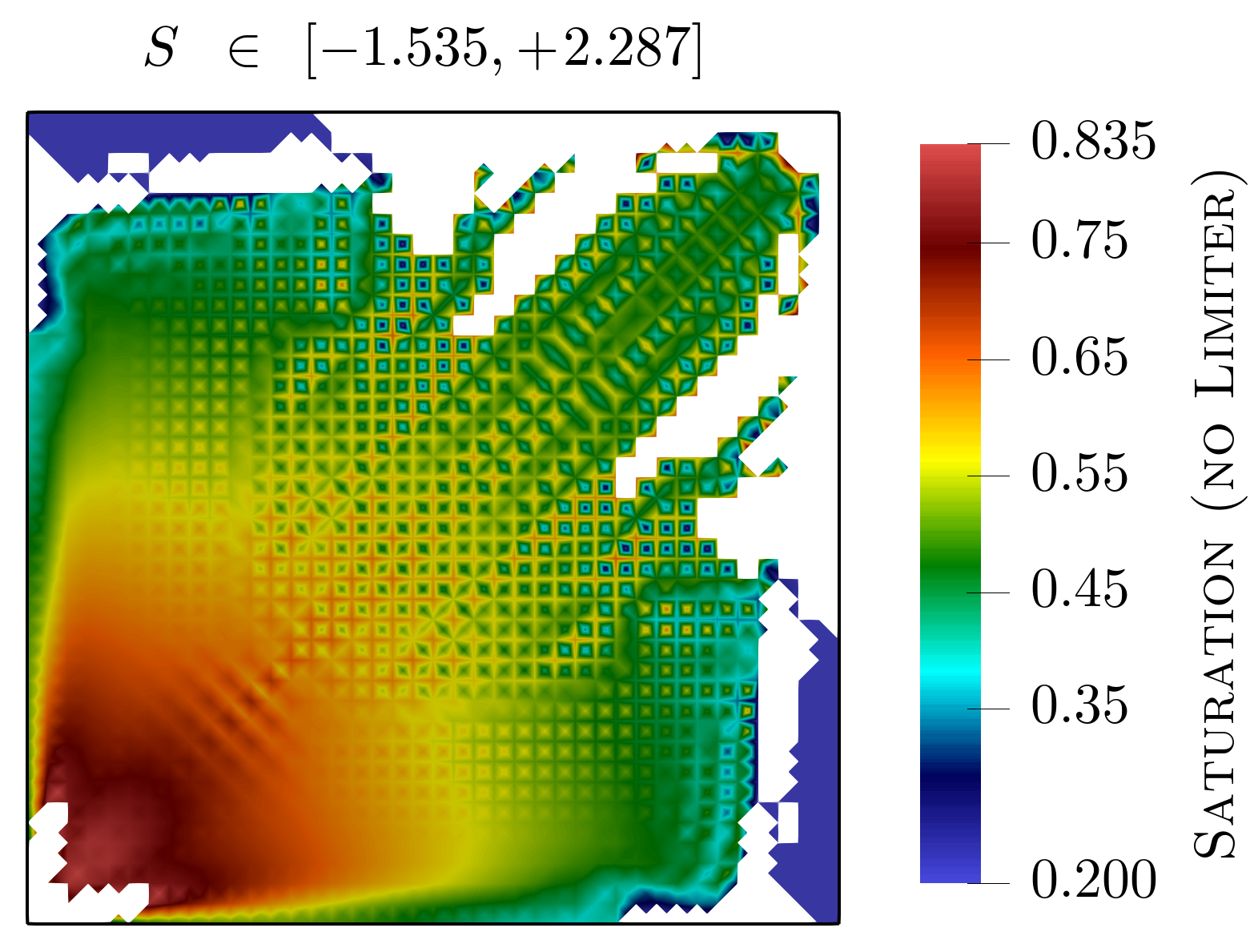}}
        \hspace{.05cm}
    \subfigure[DG+SL; $t=21$  days\label{fig:}]{
        \includegraphics[clip,scale=0.09,trim=0 0cm 0cm 0]{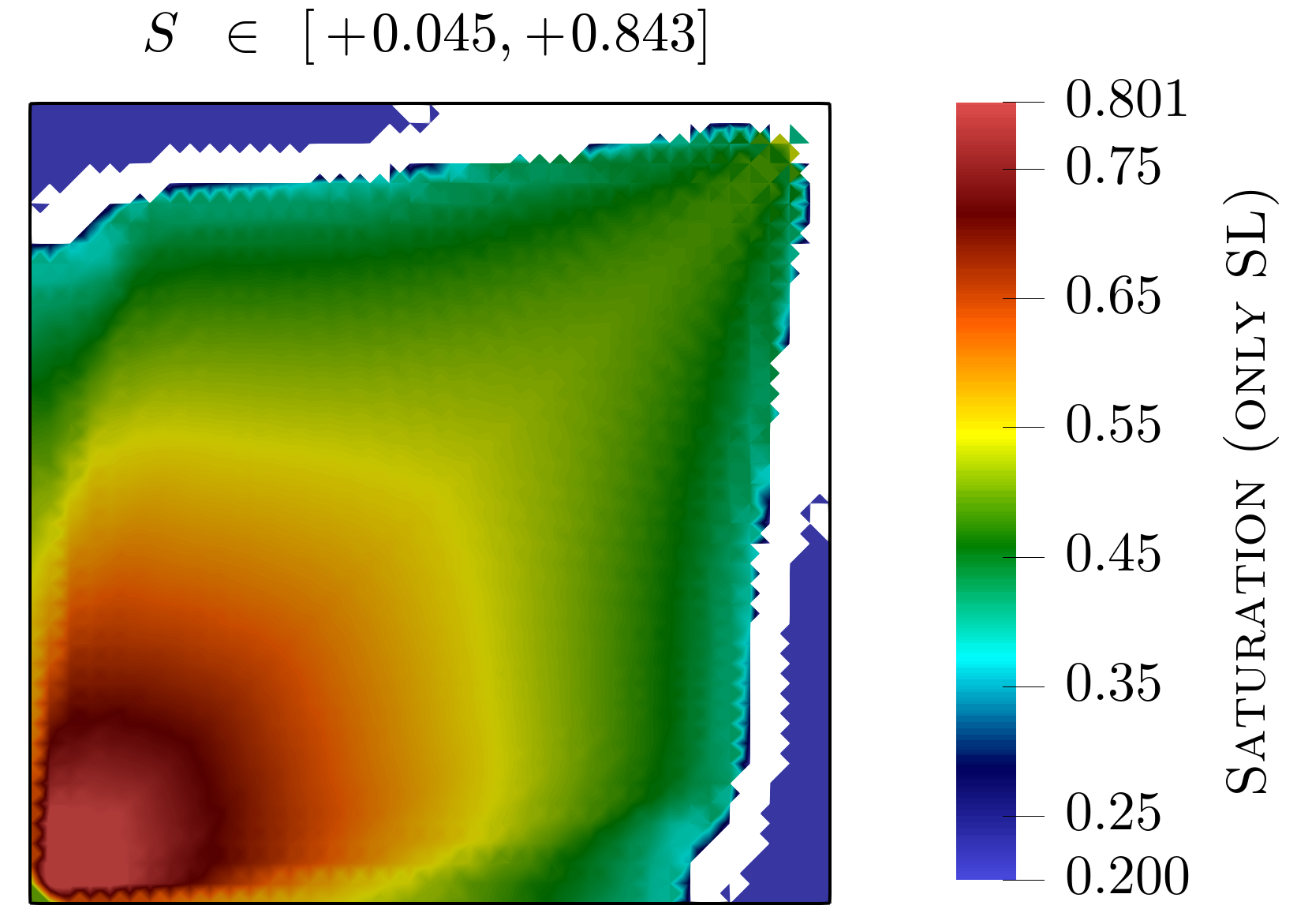}} 
        \hspace{.05cm}
    \subfigure[DG+FL+SL; $t=21$  days\label{fig:}]{
        \includegraphics[clip,scale=0.09,trim=0 0cm 0cm 0]{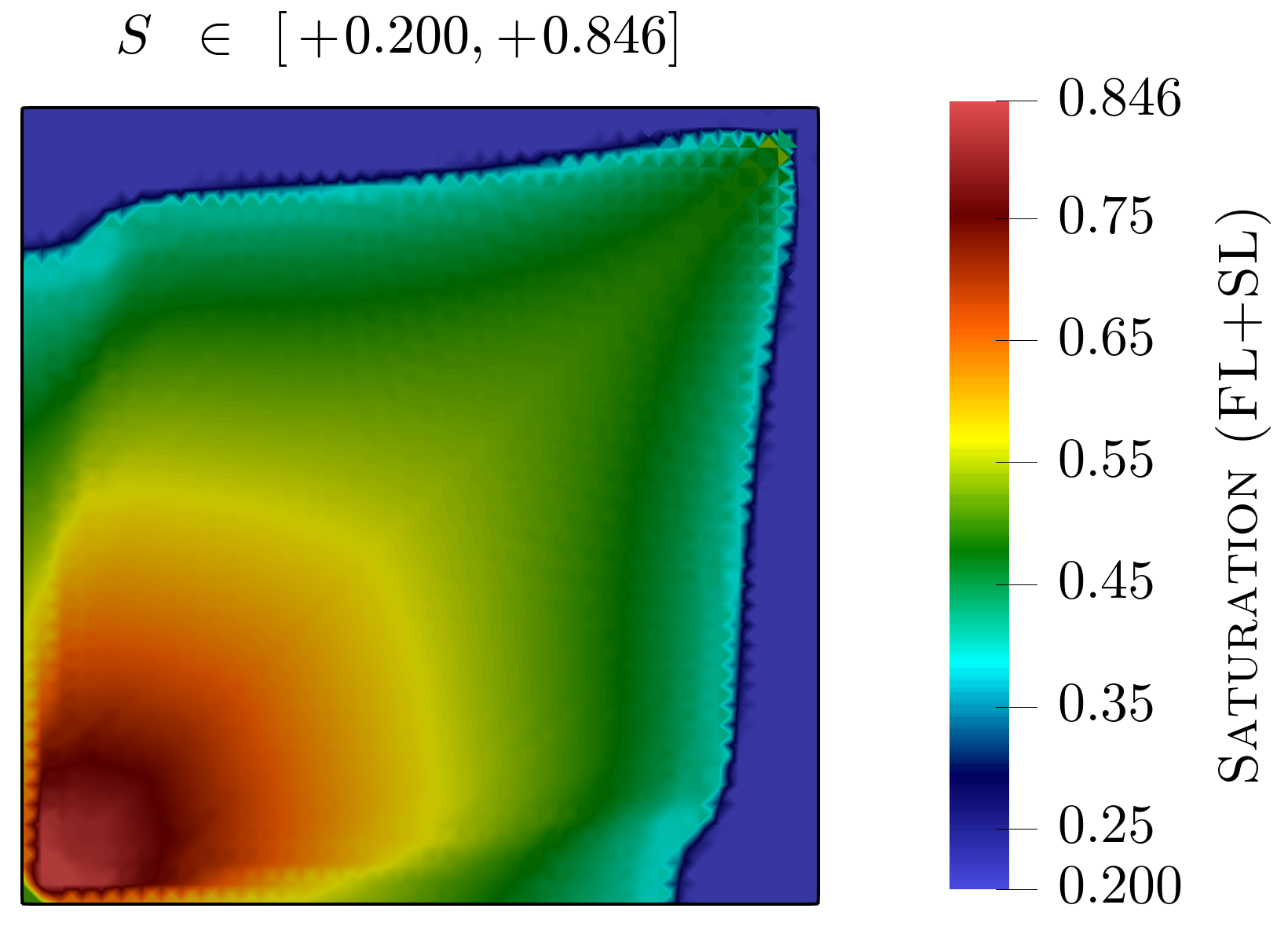}} \\
        \caption{\textsf{Quarter five-spot problem with homogeneous permeability:} 
                This figure shows the saturation solutions obtained with DG (left), DG+SL (middle), and 
                DG+FL+SL (right) at two different time steps. Values beyond the physical bounds 
                (i.e., $S>0.85$ and $S<0.2$) are clipped away using tolerance $10^{-5}$. 
                This figure suggests that DG+FL+SL, unlike the two other schemes, provides maximum-principle
                satisfying results at all time steps.
        \label{Fig:Q5_sat}}
\end{figure}
\begin{figure}
    \subfigure[DG \label{Fig:q5spres1}]{
        \includegraphics[clip,scale=0.095,trim=0 0cm 0cm
        0]{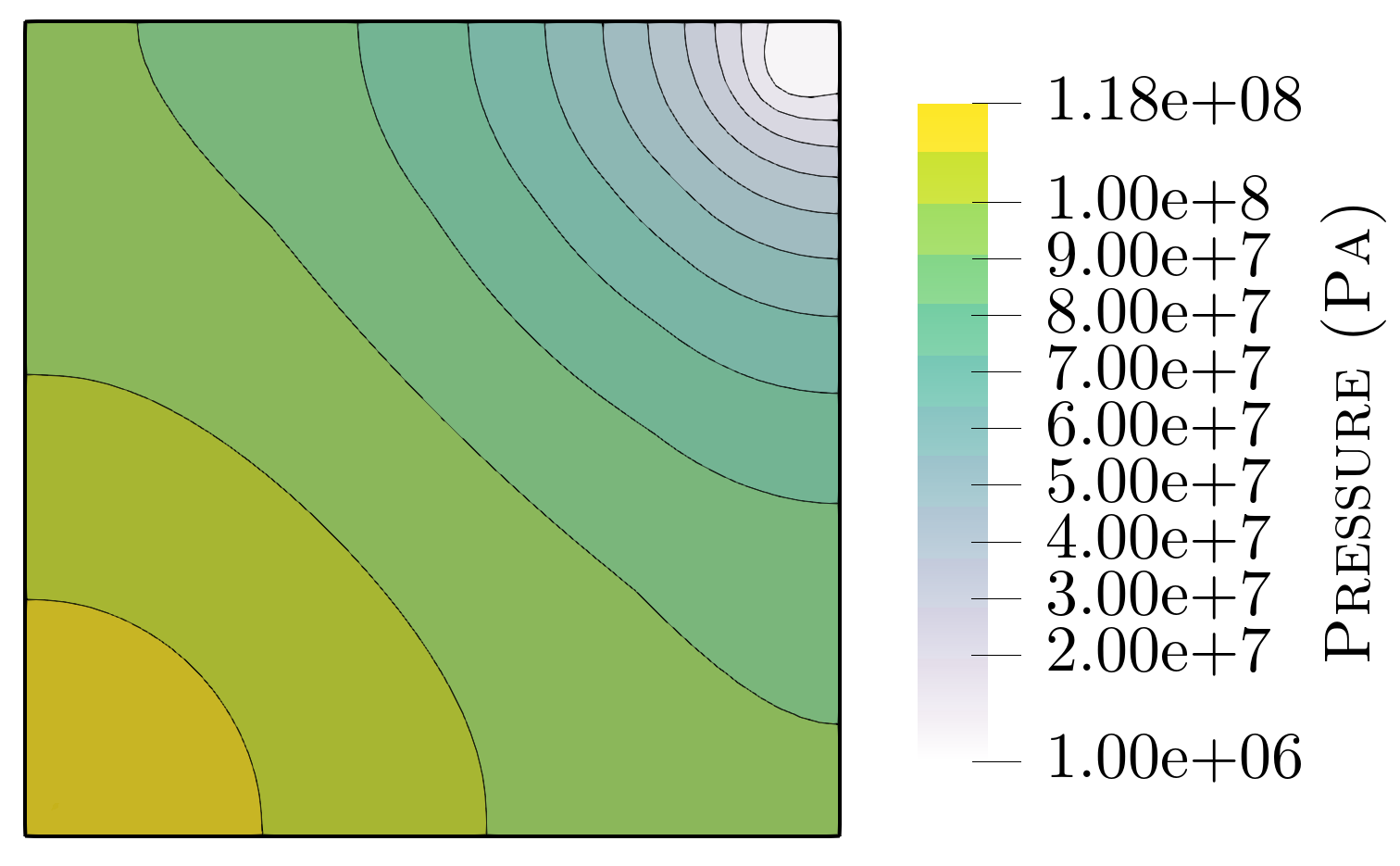}}
        \hspace{.05cm}
    \subfigure[DG+SL \label{fig:q5spres2}]{
        \includegraphics[clip,scale=0.095,trim=0 0cm 0cm 0]{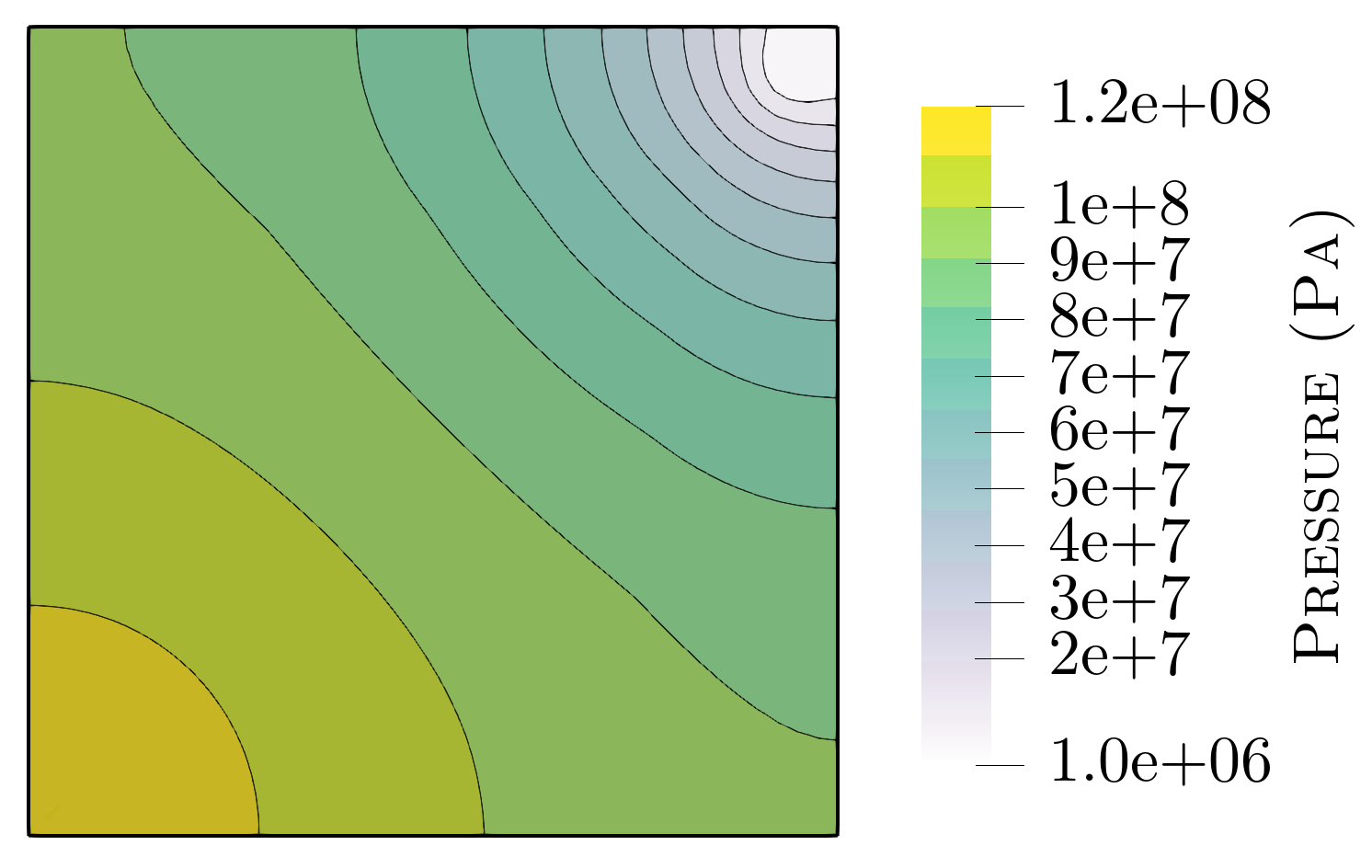}} 
        \hspace{.05cm}
    \subfigure[DG+FL+SL \label{fig:q5spres3}]{
        \includegraphics[clip,scale=0.095,trim=0 0cm 0cm 0]{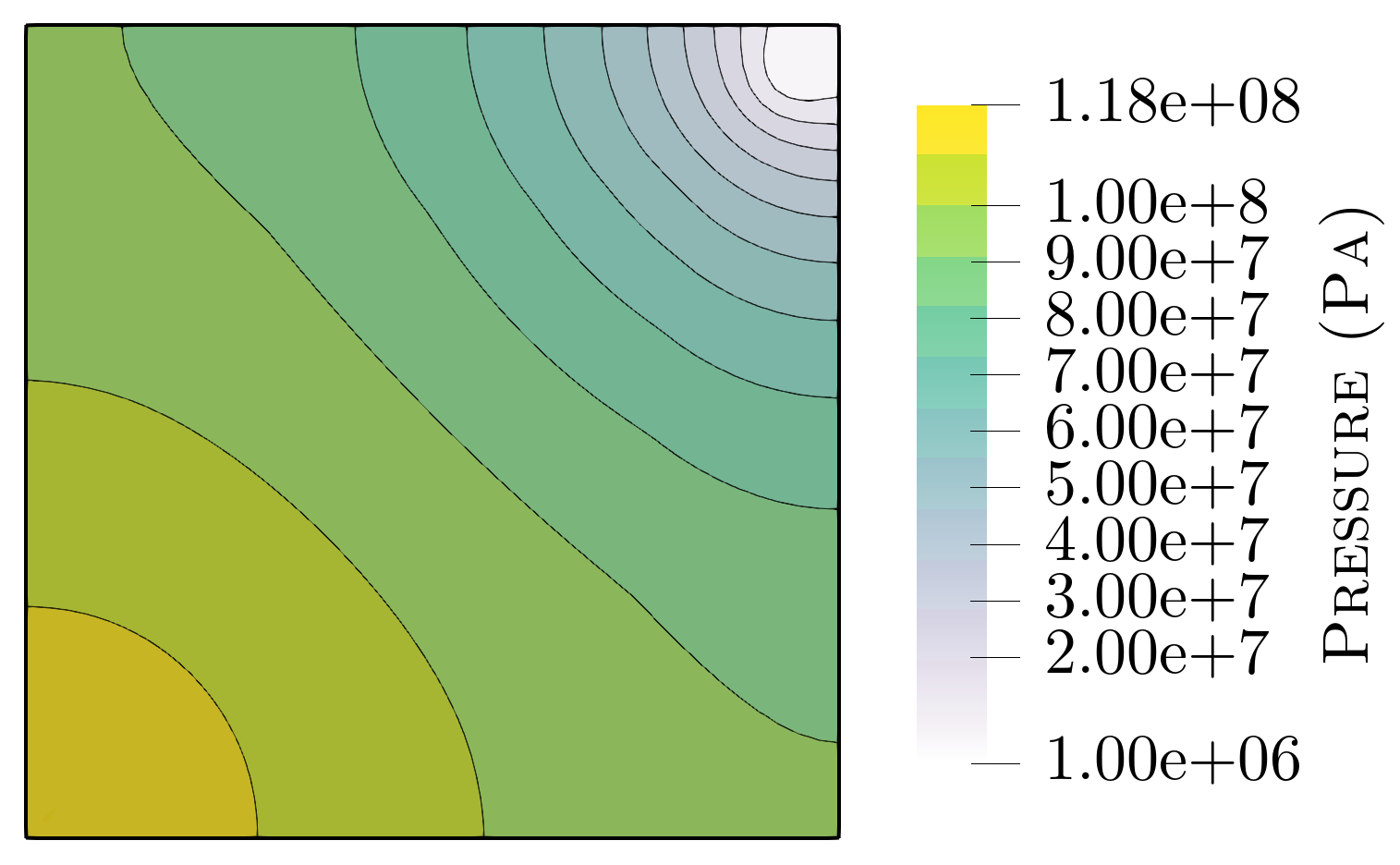}} \\
        \caption{\textsf{Quarter five-spot problem with homogeneous permeability:}
            This figure shows the wetting phase pressure at final time $t=10$ days using DG, DG+SL, and
            DG+FL+SL schemes. 
            All three cases yield similar approximations.
        \label{Fig:Q5_pres}}
\end{figure}
\begin{figure}
    \subfigure[DG \label{Fig:q5svel1}]{
        \includegraphics[clip,scale=0.095,trim=0 0cm 0cm
        0]{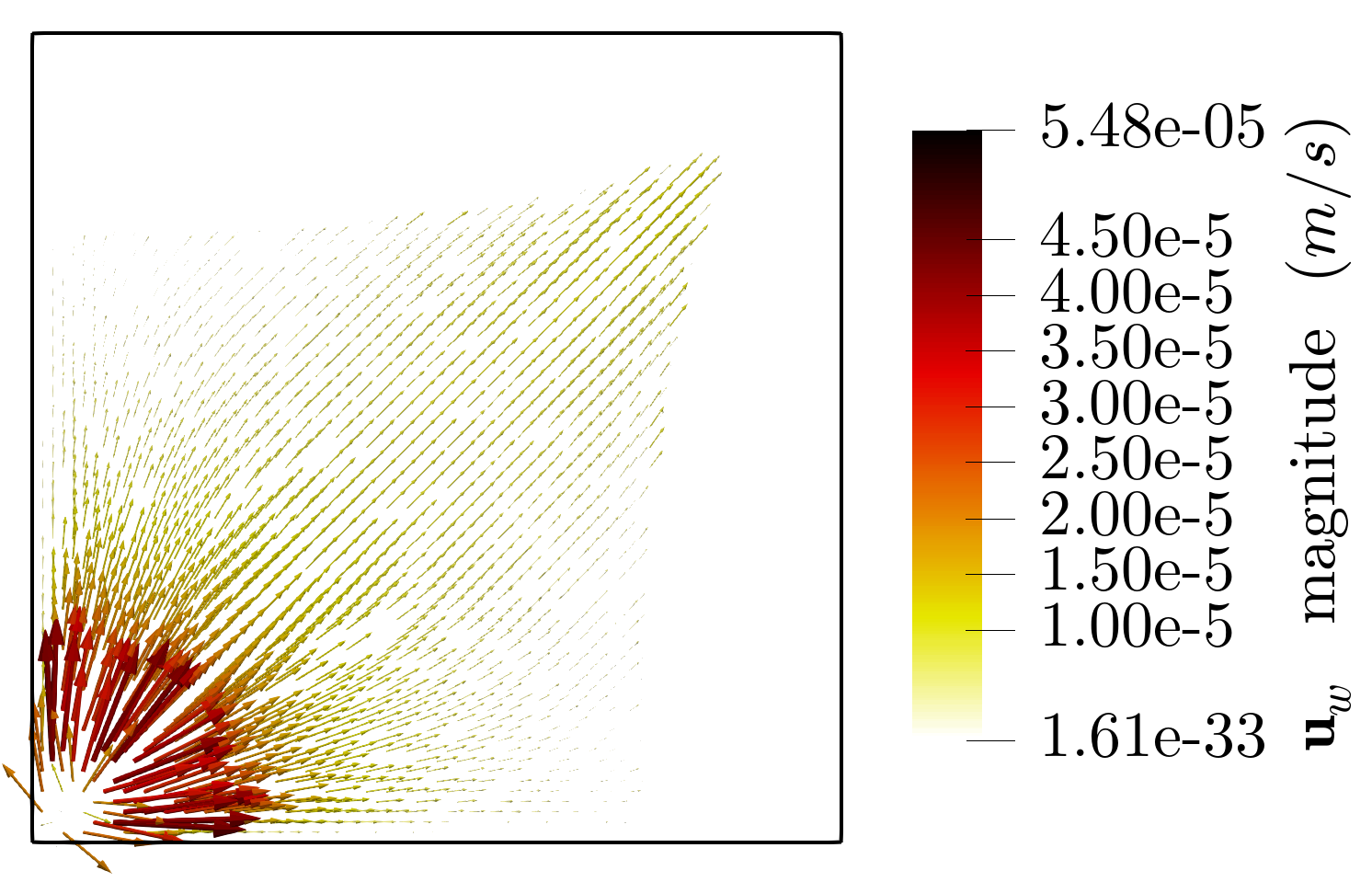}}
        \hspace{.05cm}
    \subfigure[DG+SL \label{fig:q5svel2}]{
        \includegraphics[clip,scale=0.095,trim=0 0cm 0cm 0]{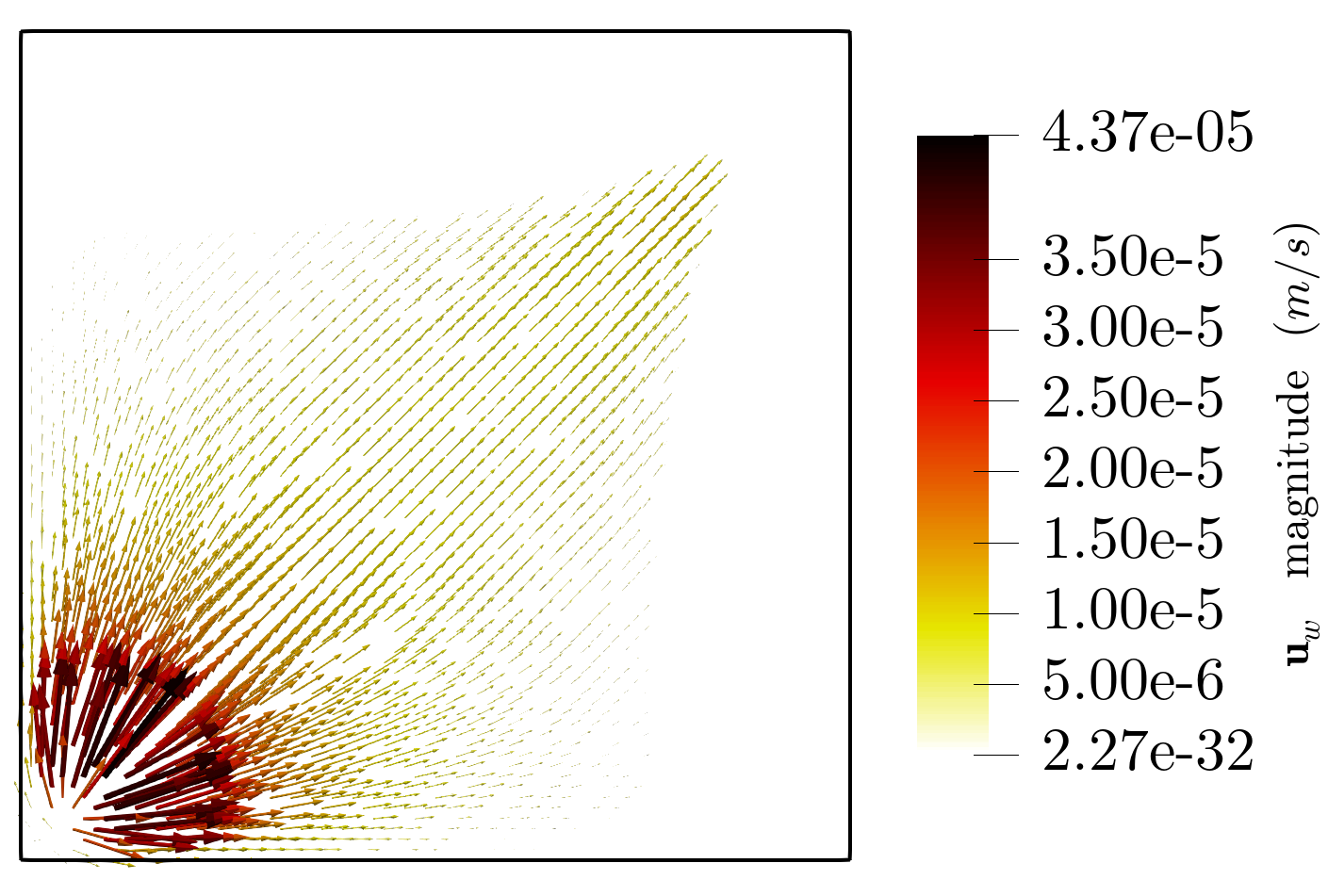}} 
        \hspace{.05cm}
    \subfigure[DG+FL+SL \label{fig:q5svel3}]{
        \includegraphics[clip,scale=0.095,trim=0 0cm 0cm 0]{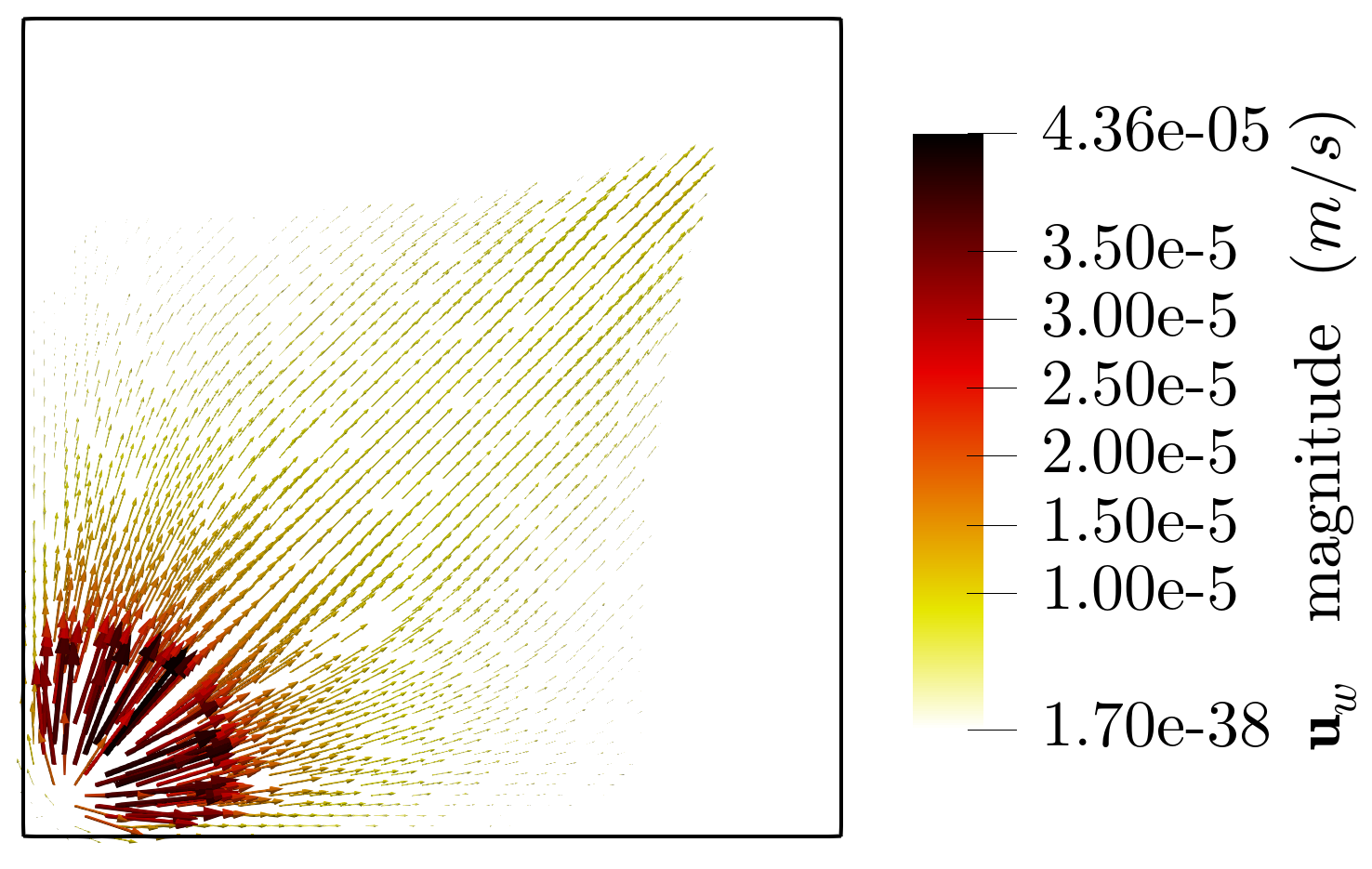}} \\
        \caption{\textsf{Quarter five-spot problem with homogeneous permeability:}
            This figure depicts the wetting phase velocity at time $t=10$ days using DG, DG+SL, and
            DG+FL+SL schemes. 
            All three cases yield similar approximations.
        \label{Fig:Q5_vel}}
\end{figure}
\begin{figure}
    \subfigure[DG \label{fig:q5sloc1}]{
        \includegraphics[clip,scale=0.105,trim=0 0.65cm 0cm 0]{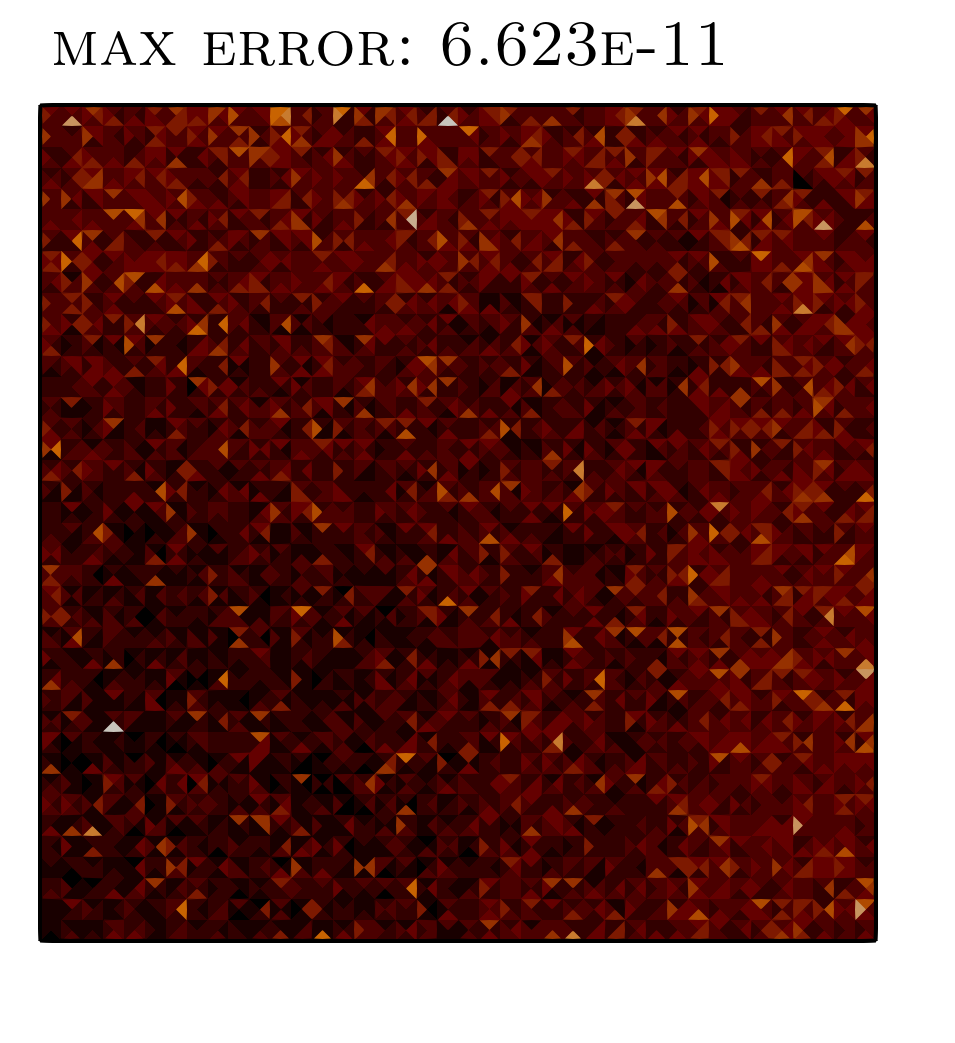}}
        \hspace{.05cm}
    \subfigure[DG+SL\label{fig:q5sloc2}]{
        \includegraphics[clip,scale=0.105,trim=0 0cm 0 0]{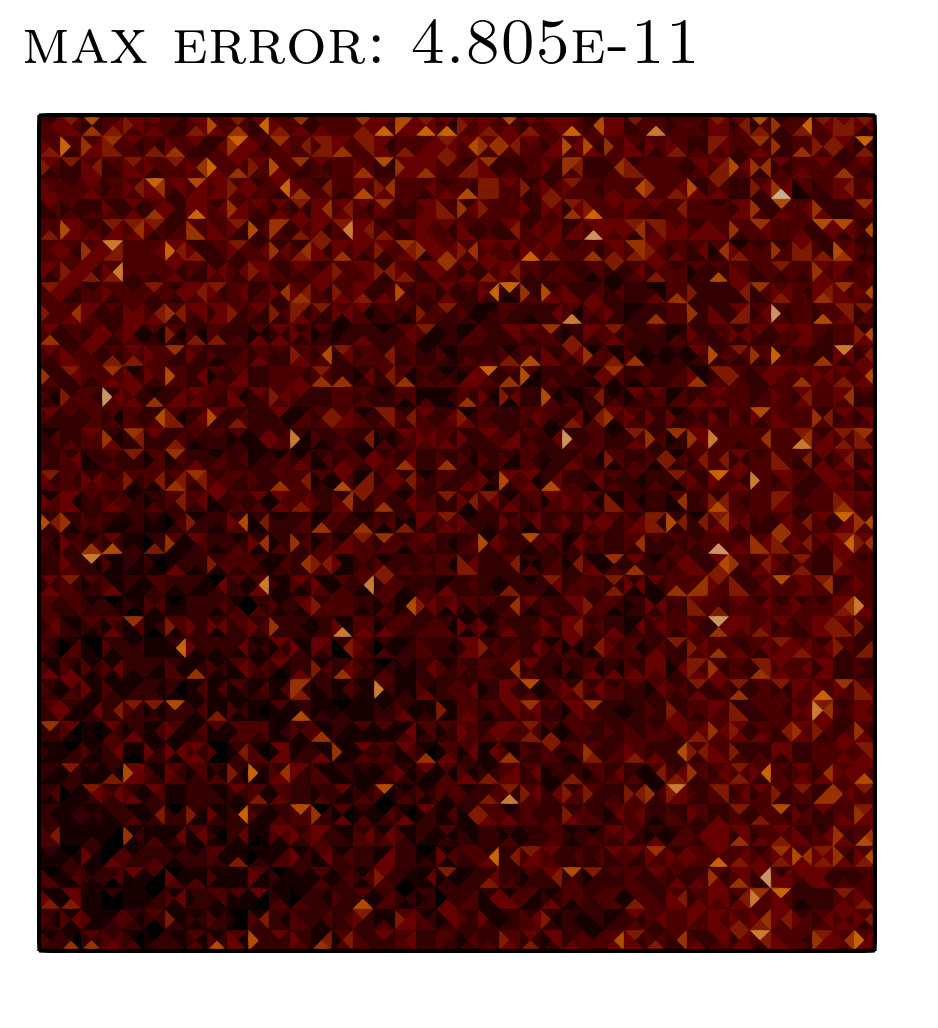}} 
        \hspace{.05cm}
    \subfigure[DG+FL+SL \label{fig:q5sloc3}]{
        \includegraphics[clip,scale=0.105,trim=0 0.2cm 0cm 0]{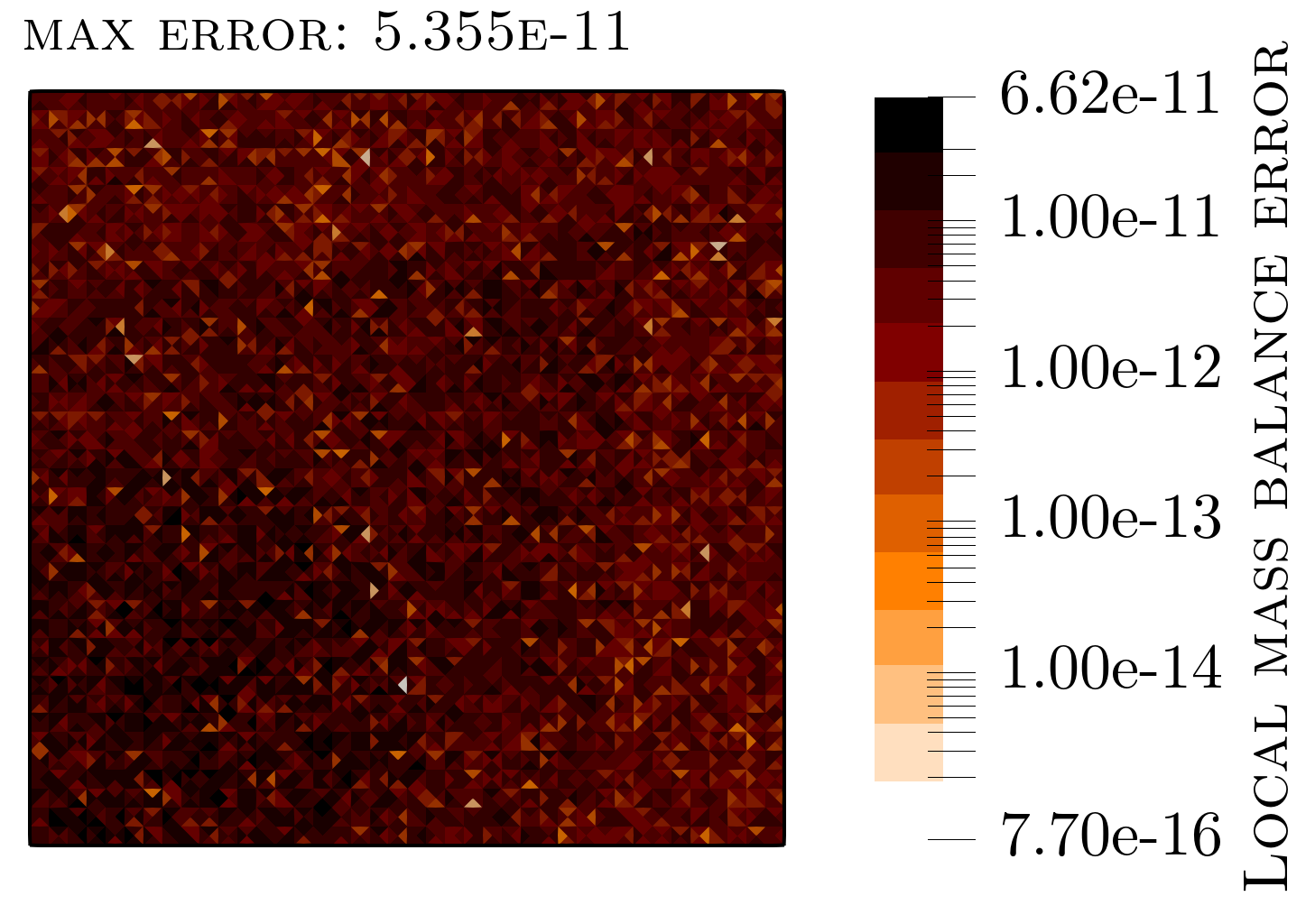}} \\
        \caption{\textsf{Local mass balance conservation for quarter five-spot problem:}                 
            This figure illustrates the local mass balance error at time $t=10$ days.
            No matter what scheme is used the errors always remain small (in the order of $10^{-11}$).  
        \label{Fig:Q5_LMB}}
\end{figure}

\subsubsection{Quarter five-spot problem with heterogeneous domain}
\label{sub:Q5_hetero}
We repeat the experiments in Section~\ref{sub:Q5_homogen} with heterogeneous medium of 
$\Omega=[0,1000]^2$ \si{\meter\squared}. 
The permeability fields are discontinuous and values vary over seven orders of magnitude. 
The permeability data is taken from two layers of the SPE 10 data-set 
\citep{christie2001tenth}; and are scaled to a crossed structured mesh of size $h=20$ \si{\meter} 
(see permeability fields in log-scale in Figure~\ref{Fig:SPE10_K}).  
We note that layer $13$ varies relatively smoothly, whereas layer $73$ contains well-defined 
channels, which form an additional challenge for any numerical method. 
We set viscosities to $\mu_w=5\times10^{-4}$ \si{\pascal\cdot\second} and 
$\mu_{\ell}=2\times10^{-3}$ \si{\pascal\cdot\second} and
invoke Brooks-Corey relative permeabilities as follows:
\begin{align}
    k_{rw}(s_e) = s_e^5,\quad
    k_{r\ell}(s_e) = (1-s_e)^2(1-s_e^5),\quad
    s_e = \frac{S-s_{rw}}{1-s_{rw}-s_{r\ell}}.
\end{align}
The production and injection wells of size $L_w=100$ \si{\meter} with $\bar{q}=\underline{q}=2.8\times10^{-5}$
are positioned at opposite corners such that $d_w=70$ \si{\meter} (see Figure \ref{Fig:Q5_schematic}). 
The time step is $\tau = 4.17\times10^{-3}$ days and the final time is $T=1.375$ days.

We apply our proposed DG scheme with both flux and slope limiters to these porous media.
Figure~\ref{Fig:SPE10_sat} displays the wetting phase saturation contours at different times 
($t=0.417$, $0.83$, $1.375$ days) for both layers. As expected the wetting phase floods the domain from the injection 
well to the production well while avoiding low permeable regions. Because of the location of channels in 
layer $73$, the wetting phase has reached the production well at time $t=1.375$ days whereas this is not the 
case for layer $13$. We also observe that the saturation satisfies the maximum principle. 
Figure~\ref{Fig:SPE10_vel} shows the magnitude of the wetting phase velocity at the same times.  
The effect of the heterogeneities can be seen in the velocity fields. 
\begin{figure}
    \subfigure[layer 13 \label{fig:q5shetperm13}]{
        \includegraphics[clip,scale=0.12,trim=0 0cm 0cm
        0]{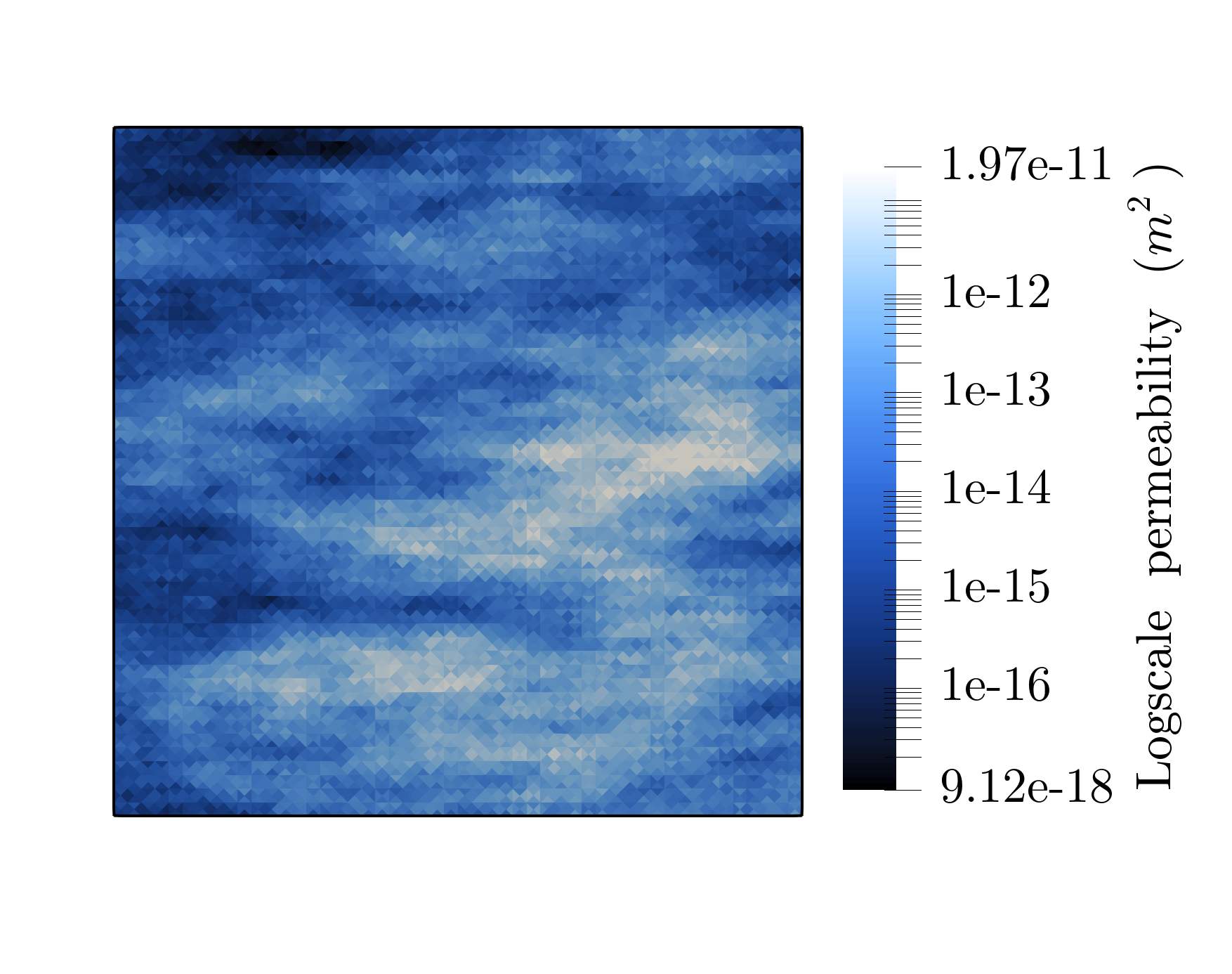}}
        \hspace{.25cm}
    \subfigure[layer 73 \label{fig:q5shetperm73}]{
        \includegraphics[clip,scale=0.12,trim=0 0cm 0cm 0]{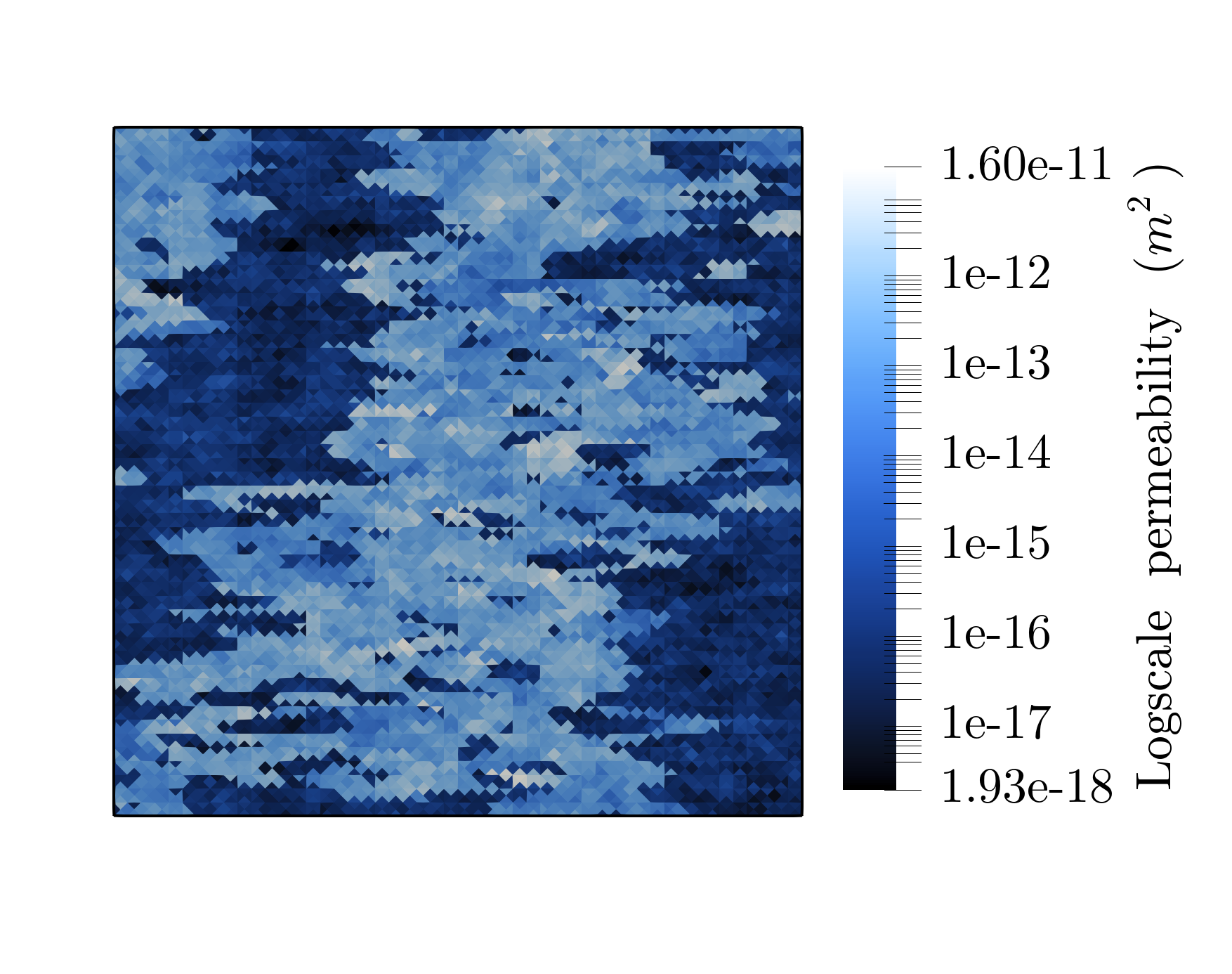}} \\
        \caption{\textsf{Quarter five-spot problem with heterogeneous permeability:}
            This figure illustrates the permeability fields adopted from two horizontal layers of SPE10 
            model 2 data-set. Layer $13$ is taken from relatively smooth Tarbert formation, whereas layer $73$ 
            is taken from a highly varying Upper-Ness formation. Values are presented in logarithmic scale. 
        \label{Fig:SPE10_K}}
\end{figure}
\begin{figure}
    \subfigure[Layer 13; $t=0.417$ days \label{fig:q5shet1}]{
        \includegraphics[clip,scale=0.15,trim=0 0cm 0cm
        0]{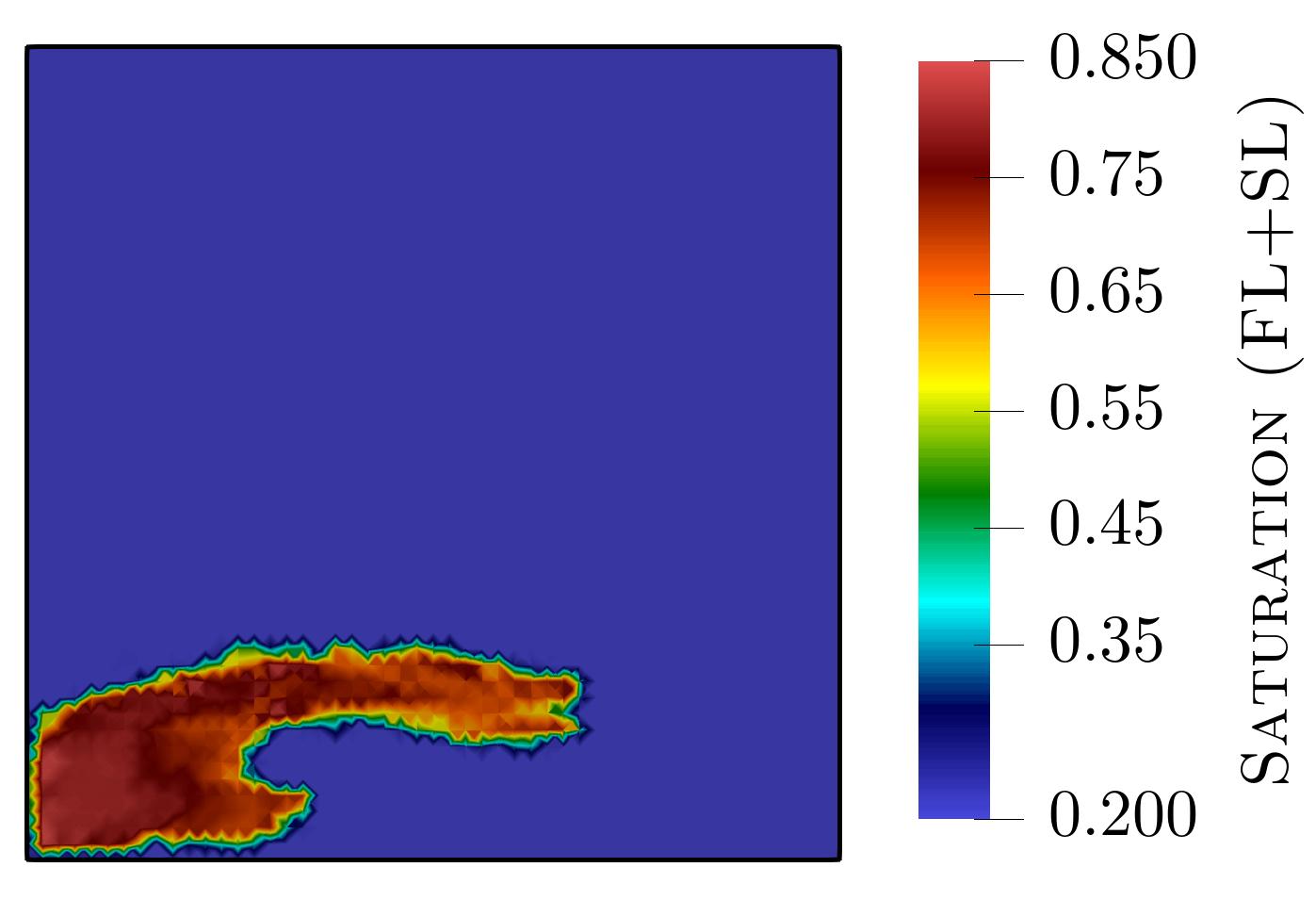}}
        \hspace{.1cm}
    \subfigure[Layer 73; $t=0.417$ days \label{fig:q5shet2}]{
        \includegraphics[clip,scale=0.15,trim=0 0cm 0cm 0]{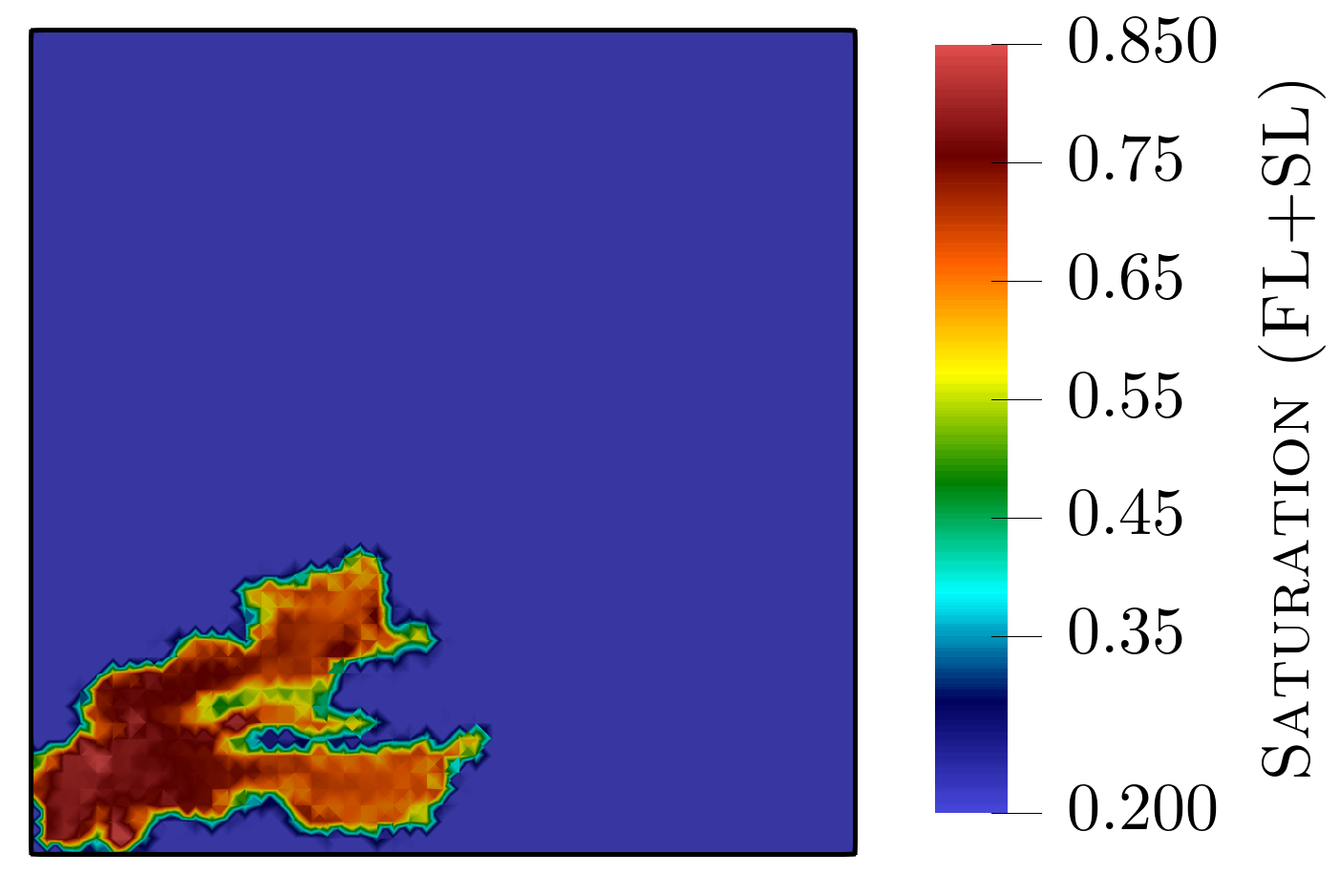}} \\
    \subfigure[Layer 13; $t=0.83$ days \label{fig:q5shet3}]{
        \includegraphics[clip,scale=0.15,trim=0 0cm 0cm
        0]{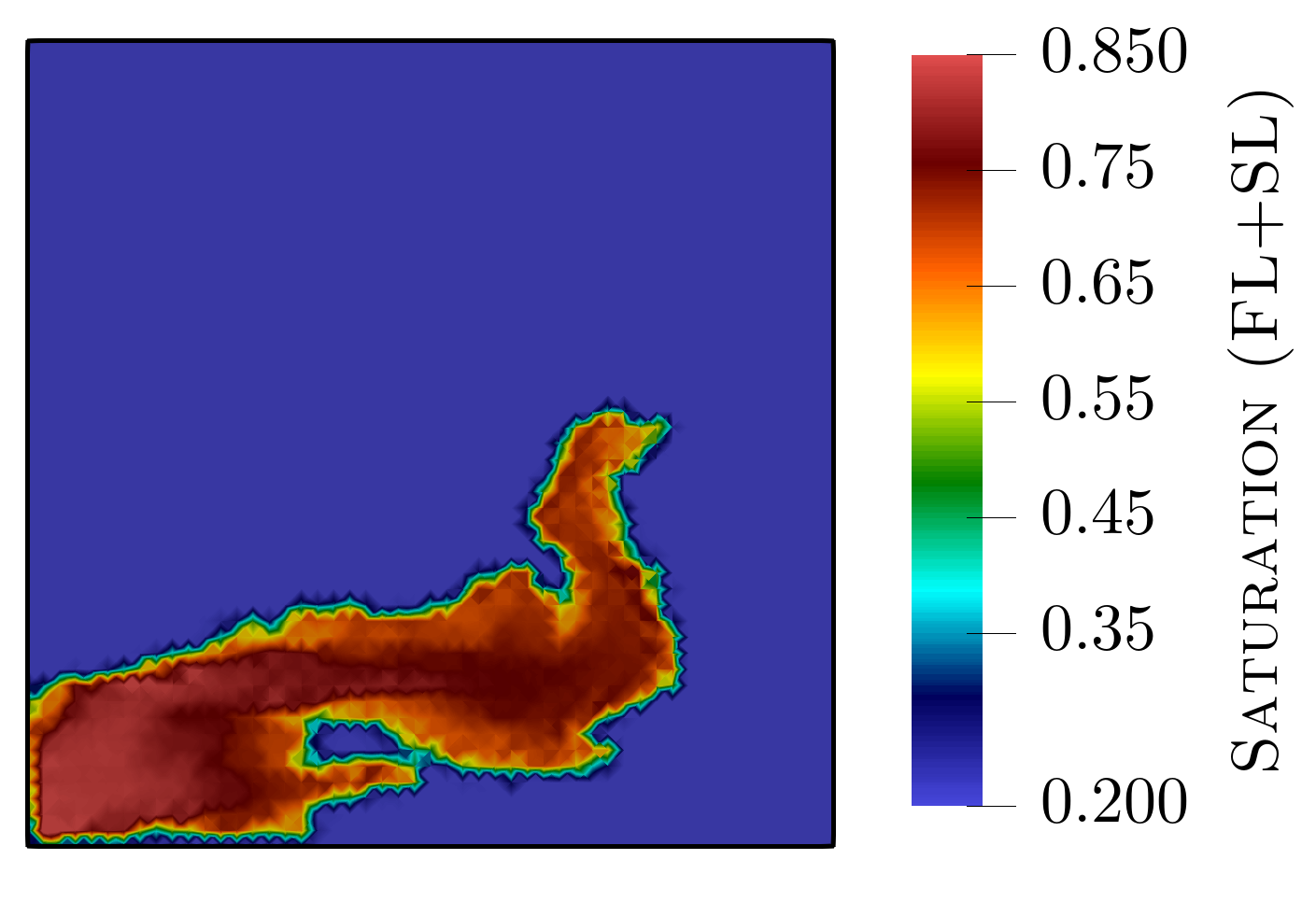}}
        \hspace{.1cm}
    \subfigure[Layer 73; $t=0.83$ days \label{fig:q5shet4}]{
        \includegraphics[clip,scale=0.15,trim=0 0cm 0cm 0]{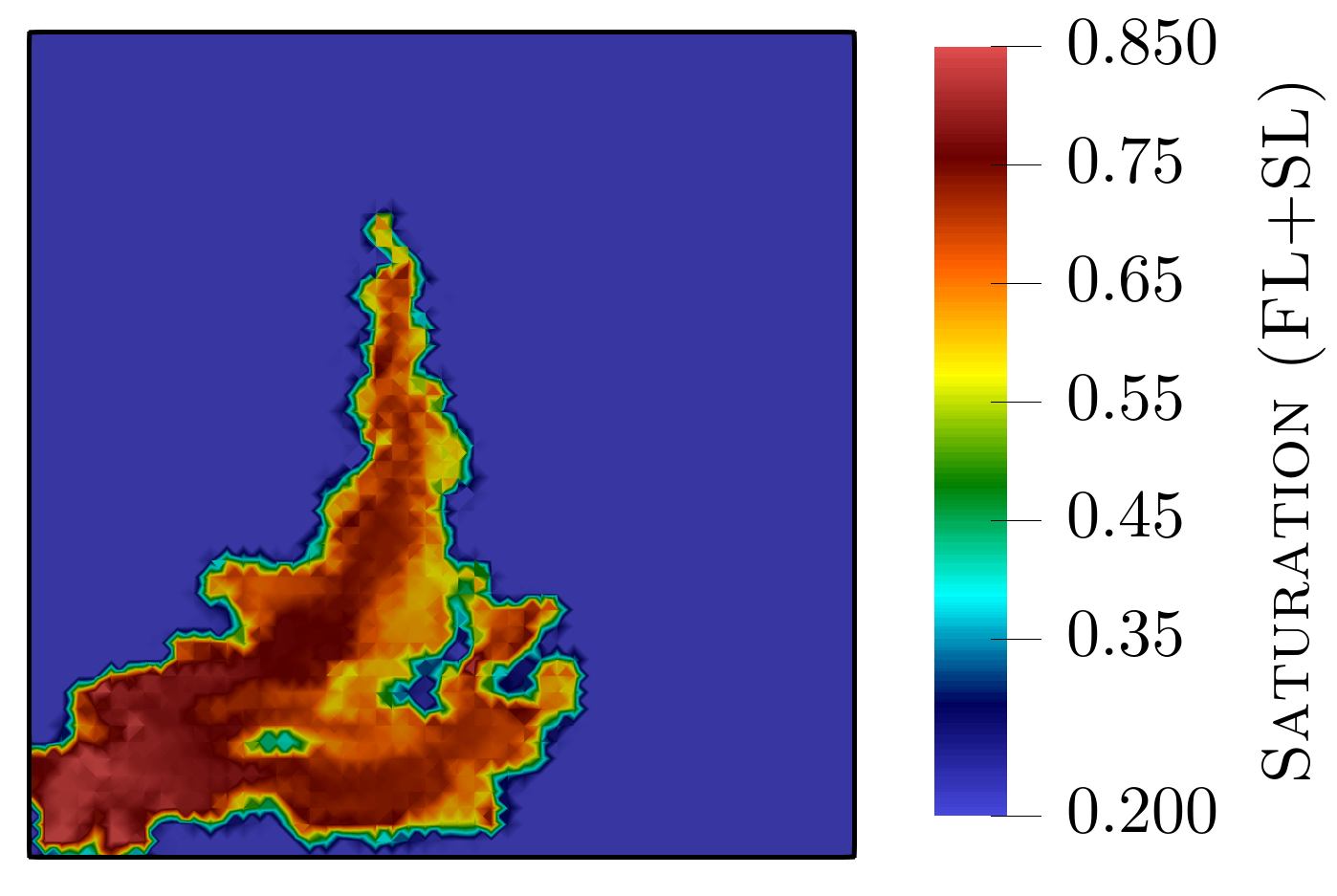}} \\
    \subfigure[Layer 13; $t=1.375$ days \label{Fig:q5shet5}]{
        \includegraphics[clip,scale=0.15,trim=0 0cm 0cm
        0]{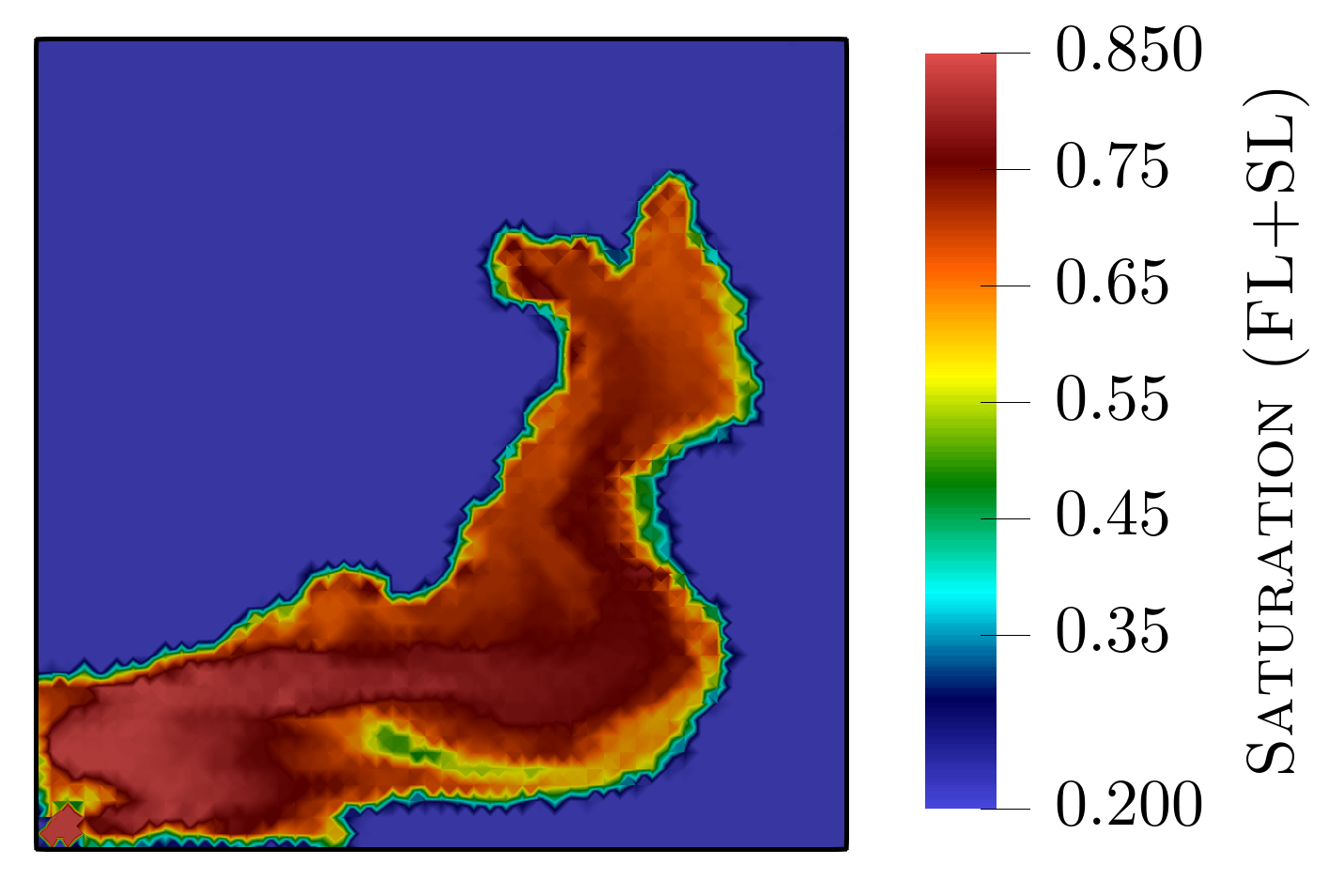}}
        \hspace{.1cm}
    \subfigure[Layer 73; $t=1.375$ days \label{fig:q5shet6}]{
        \includegraphics[clip,scale=0.15,trim=0 0cm 0cm 0]{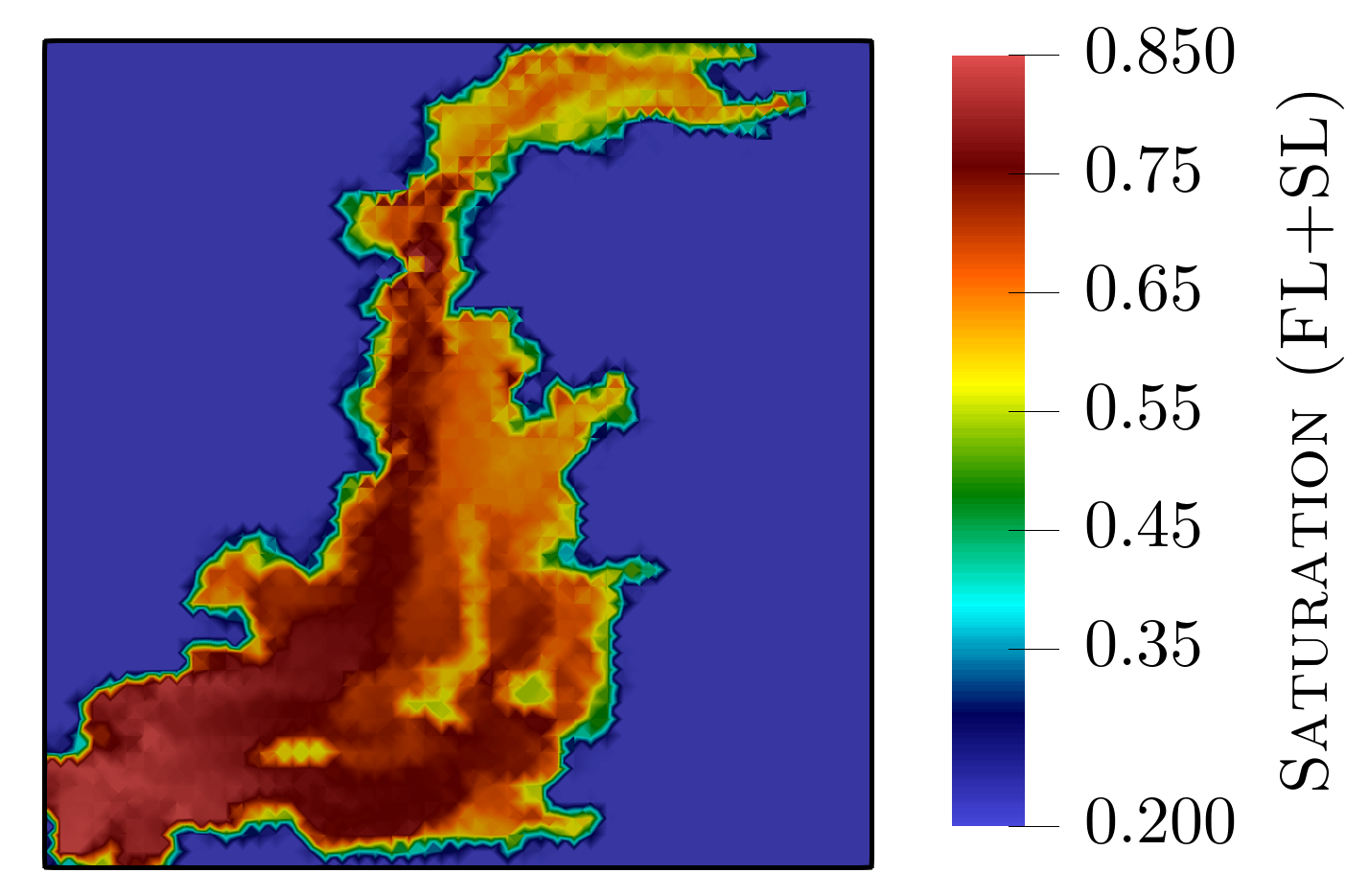}} \\
        \caption{\textsf{Quarter five-spot problem with heterogeneous permeability:} 
            This figure shows the evolution of the saturation obtained using DG+FL+SL scheme for 
            layer $13$ (left) and layer $73$ (right). 
            For both cases, the wetting phase moves toward the production well by sweeping the regions with 
            highest permeability values. Another inference is that proposed limiters yield physical
            values of saturation, without any overshoots and undershoots, even for domains with
            permeabilities that vary over several orders of magnitudes.
        \label{Fig:SPE10_sat}}
\end{figure}
\begin{figure}
    \subfigure[Layer 13; $t=0.417$ days \label{fig:q5shetvel1}]{
        \includegraphics[clip,scale=0.13,trim=0 0cm 0cm
        0]{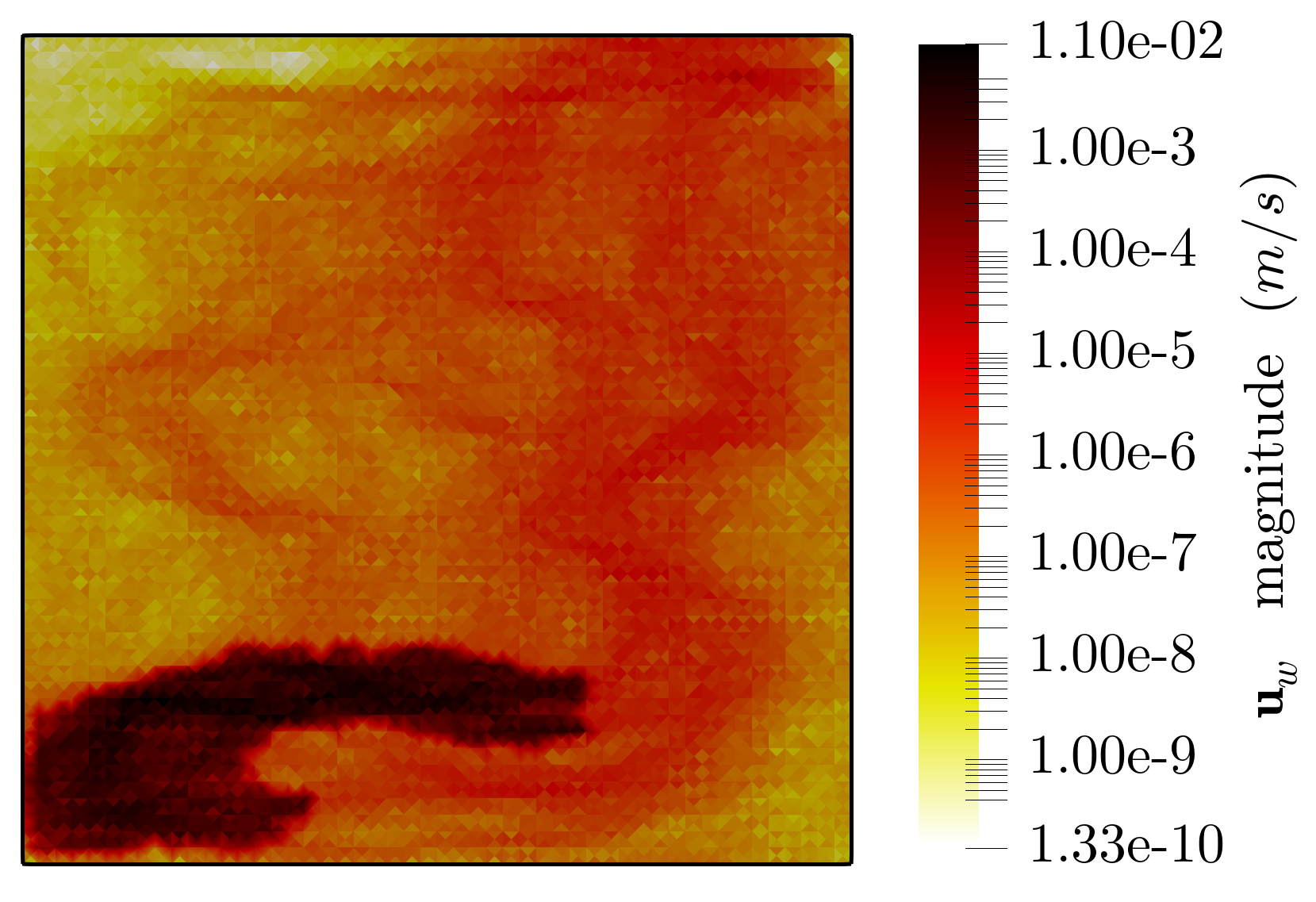}}
        \hspace{.1cm}
    \subfigure[Layer 73; $t=0.417$ days \label{fig:q5shetvel2}]{
        \includegraphics[clip,scale=0.135,trim=0 0cm 0cm 0]{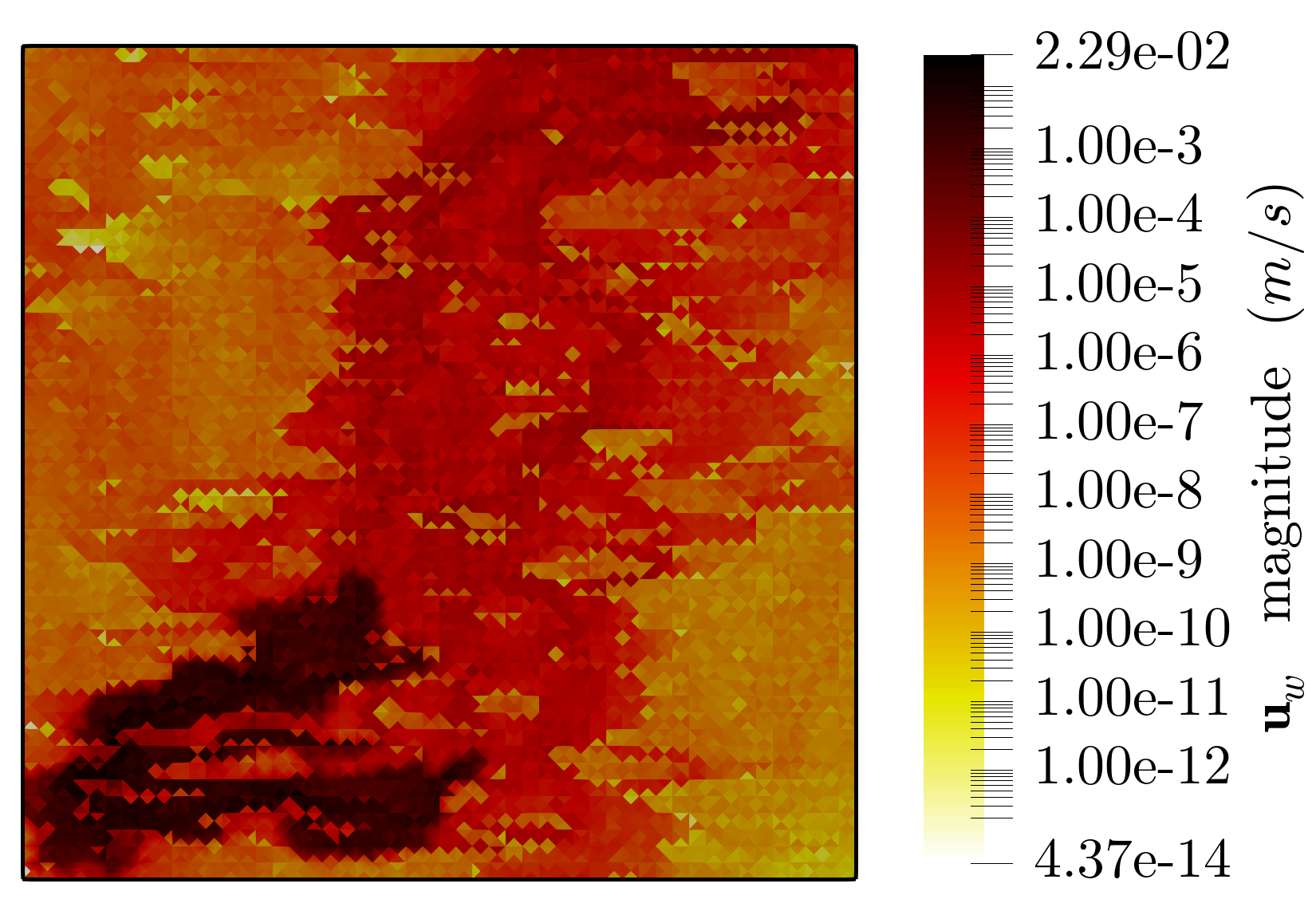}} \\
    \subfigure[Layer 13; $t=0.83$ days \label{fig:q5shetvel3}]{
        \includegraphics[clip,scale=0.135,trim=0 0cm 0cm
        0]{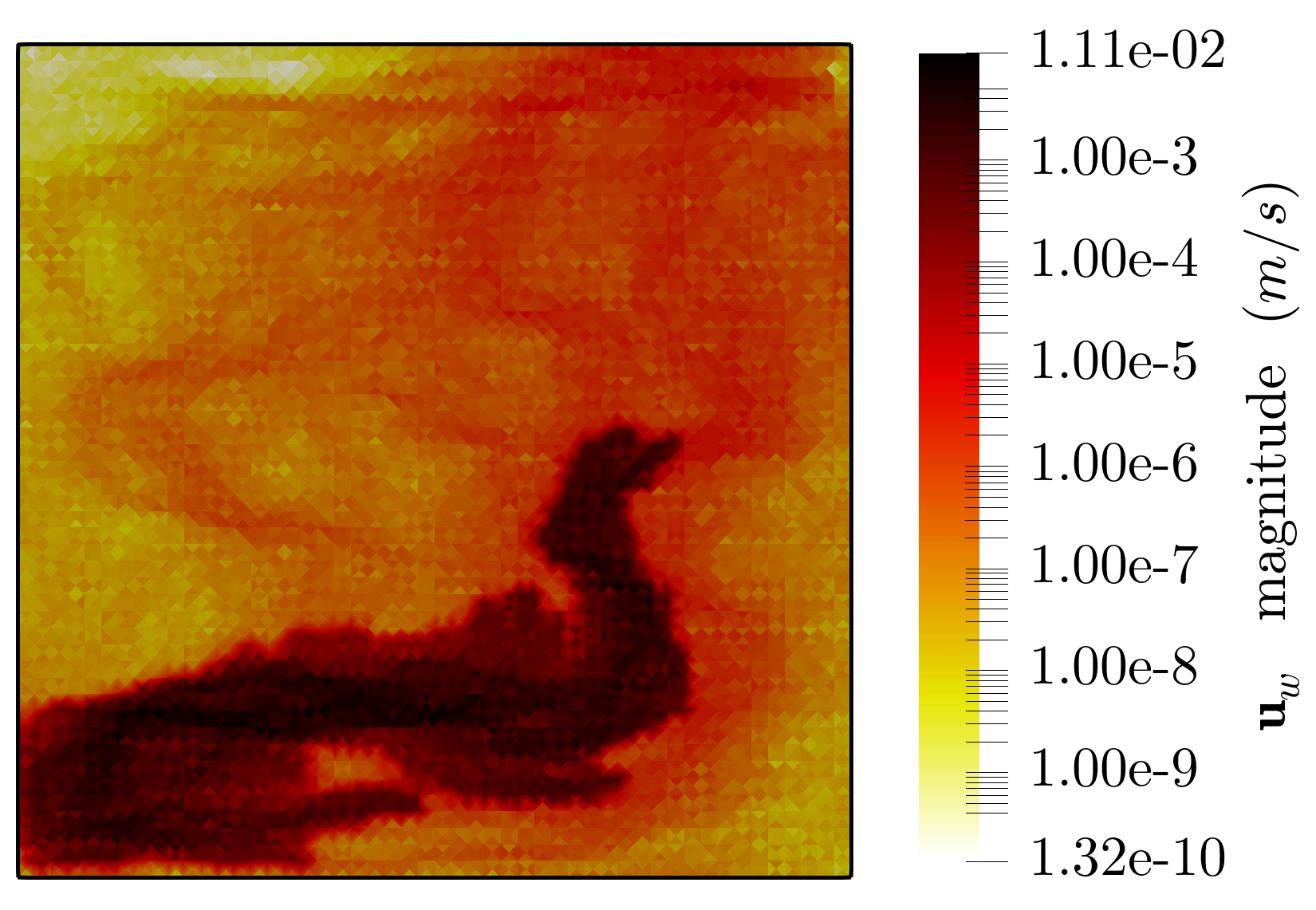}}
        \hspace{.1cm}
    \subfigure[Layer 73; $t=0.83$ days \label{fig:q5shetvel4}]{
        \includegraphics[clip,scale=0.135,trim=0 0cm 0cm 0]{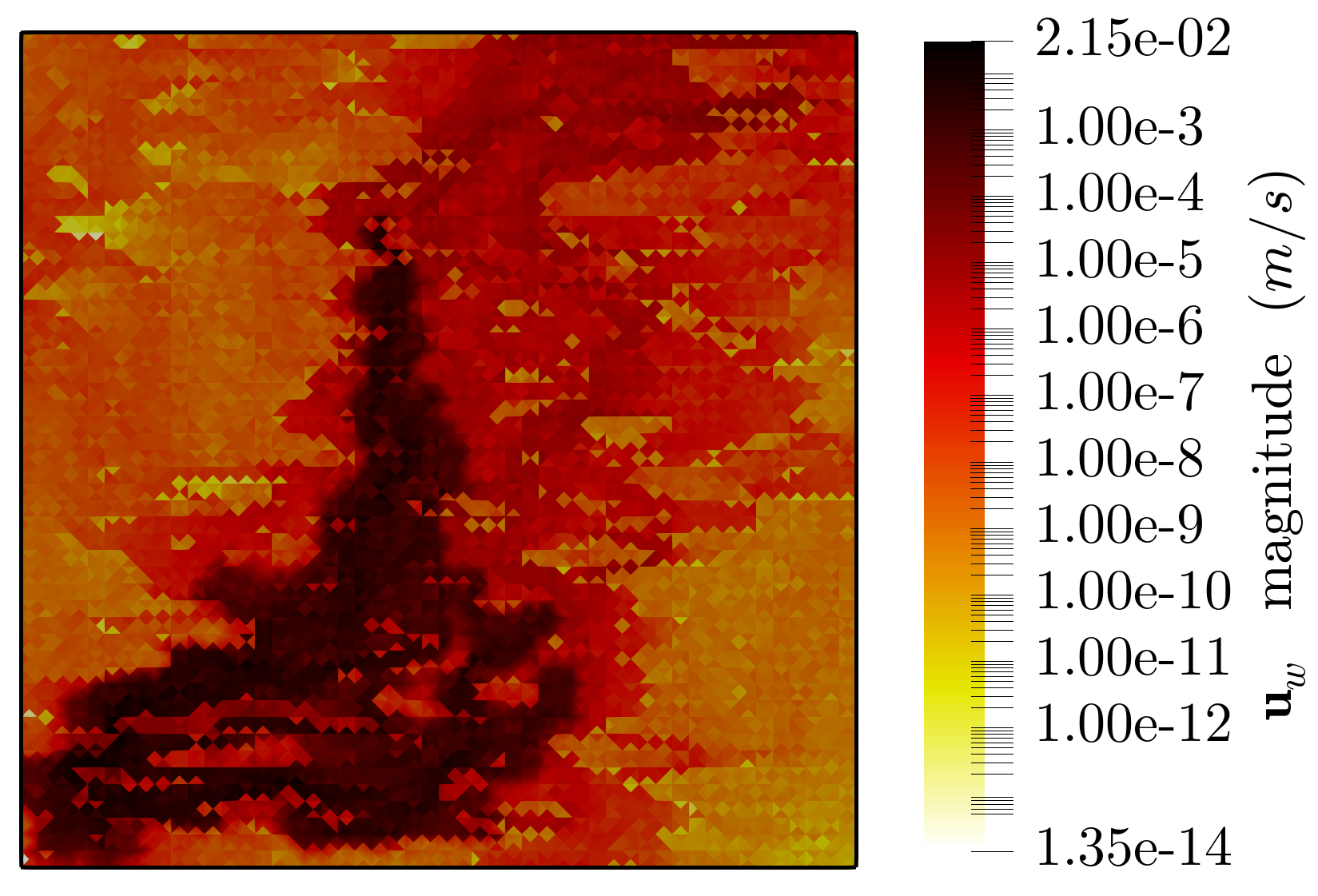}} \\
    \subfigure[Layer 13; $t=1.375$ days \label{fig:q5shetvel5}]{
        \includegraphics[clip,scale=0.135,trim=0 0cm 0cm
        0]{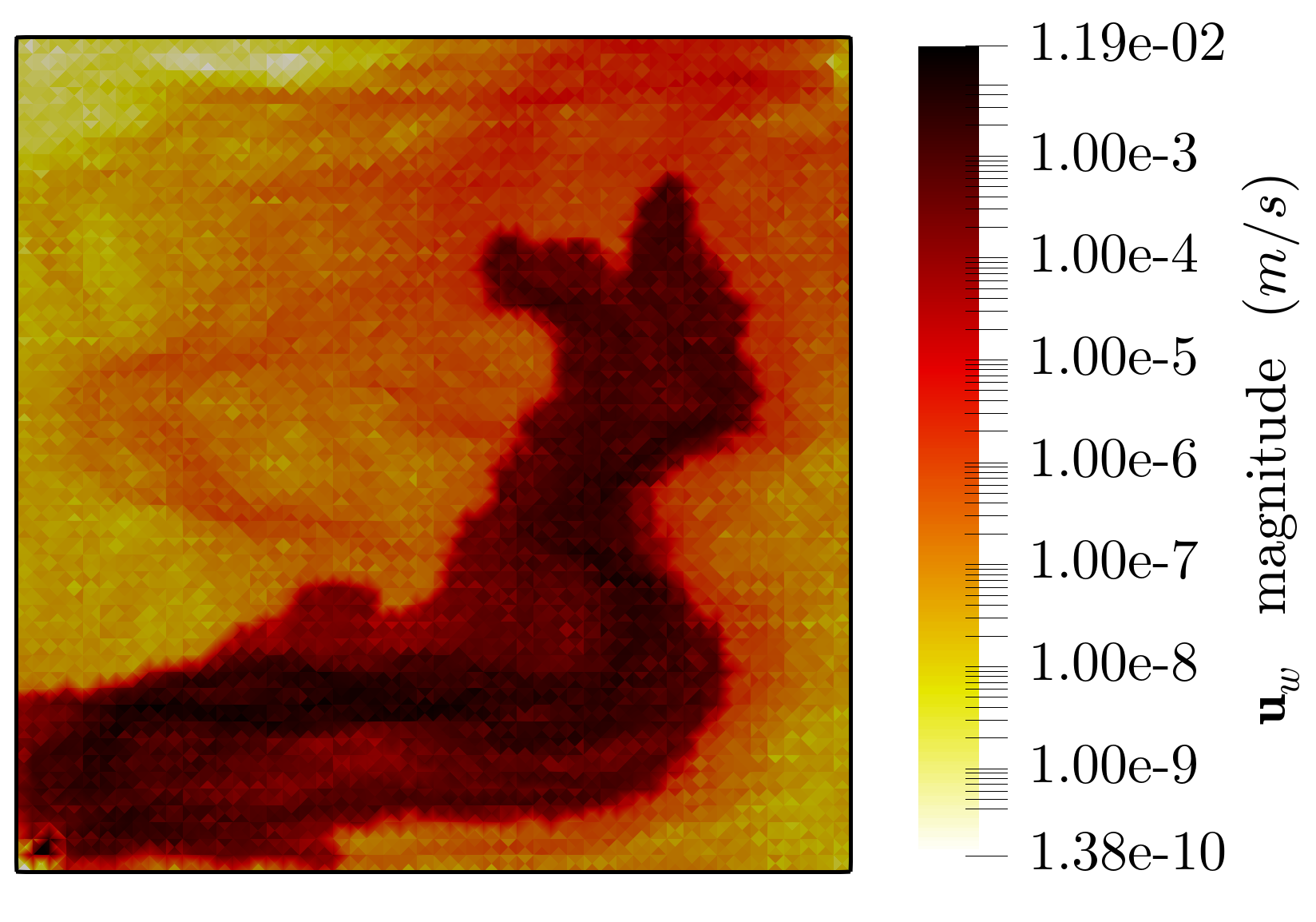}}
        \hspace{.1cm}
    \subfigure[Layer 73; $t=1.375$ days \label{fig:q5shetvel6}]{
        \includegraphics[clip,scale=0.135,trim=0 0cm 0cm 0]{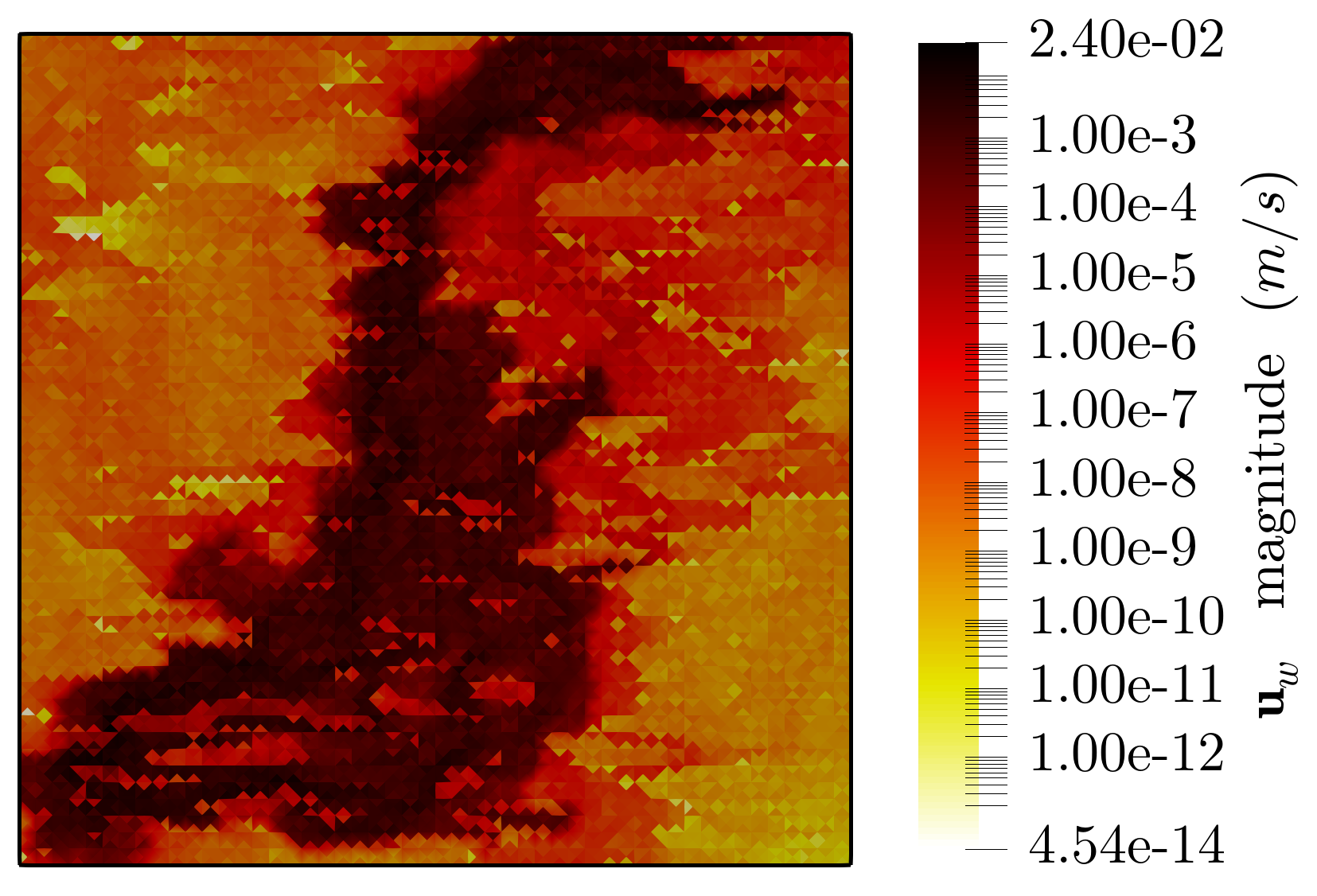}} \\
        \caption{\textsf{Quarter five-spot problem with heterogeneous permeability:}
        This figure shows the magnitude of wetting phase velocities obtained using DG+FL+SL for layer $13$
        (left) and layer $73$ (right).
        The effect of heterogeneities is reflected in the velocity fields.
        \label{Fig:SPE10_vel}}
\end{figure}

\subsection{Effect of gravity}
\label{Sub:gravity}
In this section, we examine the success of our limiting scheme in the presence of gravity field
and then study the impact of gravity on the pressure-driven flows and quarter five-spot problems.
The ratio of gravitational to viscous forces can be represented as a gravity number, Gr.
This dimensionless parameter depends on the difference between phase densities; and
following the work of \cite{riaz2006numerical,hassanizadeh2005upscaling,tchelepi2006numerical},
can be defined as follows:
\begin{align}
    \mbox{Gr} = \frac{K(\rho_w-\rho_{\ell})g}{\mu_w U},
\end{align}
where $U$ is the characteristic magnitude of velocity. 

\subsubsection{Pressure-driven flows}
The domain $\Omega = [0,200]\times[0,100]$ \si{\meter\squared} is partitioned into a crossed mesh with 
$7200$ triangular elements. The viscosities are $\mu_w=2.5\times10^{-4}$ \si{\pascal\cdot\second} and 
$\mu_{\ell}=5\times10^{-3}$ \si{\pascal\cdot\second}. Here, the characteristic velocity is estimated 
to be $U \approx 0.1$ \si{\meter\per\second} (using $U\approx -K(P\vert_{x=200}-P\vert_{x=0})/
\mu_w L$).
The wetting phase density is $\rho_w = 1000$~kg/m\textsuperscript{3} and the non-wetting phase density 
takes three different values $\rho_{\ell} = 925, 850, 600$~kg/m\textsuperscript{3}, which yields three values 
for the gravity number Gr$= 0.3, 0.6$ and $1.6$ respectively. Other parameters and Dirichlet boundary 
conditions are the same as in Section~\ref{sub:2Dpatch_homogen}. 
The time step is $\tau = 0.6$ \si{\second} and the final time is $T=600$ \si{\second}.
The proposed DG scheme with flux and slope limiters is applied and the penalty parameter is set to 
$\sigma=1000$.
Figure~\ref{Fig:2Dpatch_gravity_sat} shows the saturation contours at the time $t=600$ \si{\second}. 
As the gravity number increases, the wetting phase saturation, which is the heaviest, deposits more 
and more at the bottom of the domain; and the narrow gravity tongue along the bottom edge becomes more 
pronounced.
It should be also noted that similar to earlier problems, the limiting scheme exhibits satisfactory results 
with respect the maximum principle. This means that for all three cases, solutions always remain between $0.2$ 
and $0.85$. 
Pressure contours and velocity fields are displayed in Figure~\ref{Fig:2Dpatch_gravity_pres} and \ref{Fig:2Dpatch_gravity_vel}. Both show the impact of gravity on the solutions.
\begin{figure}
    \subfigure[Gr $= 0.3$\label{fig:pdfg1}]{
        \includegraphics[clip,scale=0.095,trim=0 0cm 0cm
        0]{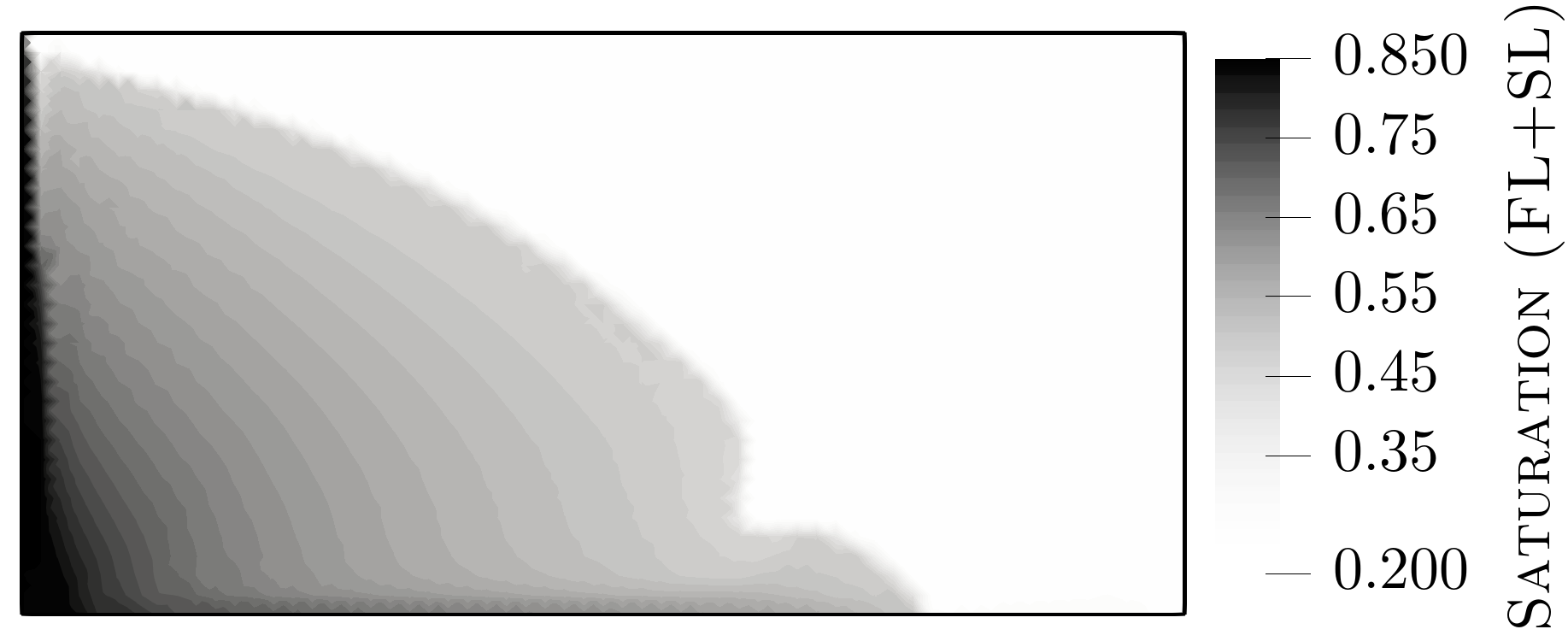}}
    \subfigure[Gr $= 0.6$ \label{fig:pdfg2}]{
        \includegraphics[clip,scale=0.095,trim=0 0cm 0cm 0]{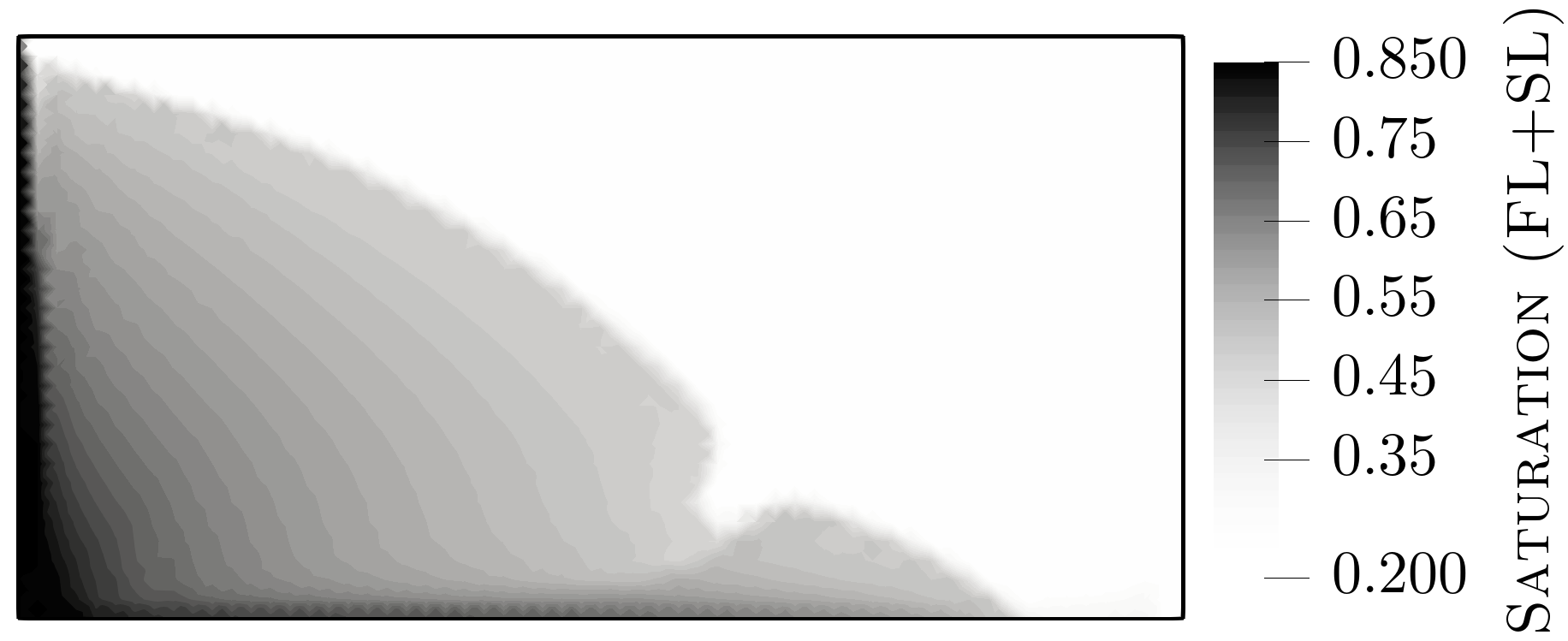}} 
    \subfigure[Gr $= 1.6$ \label{fig:pdfg3}]{
        \includegraphics[clip,scale=0.095,trim=0 0cm 0cm 0]{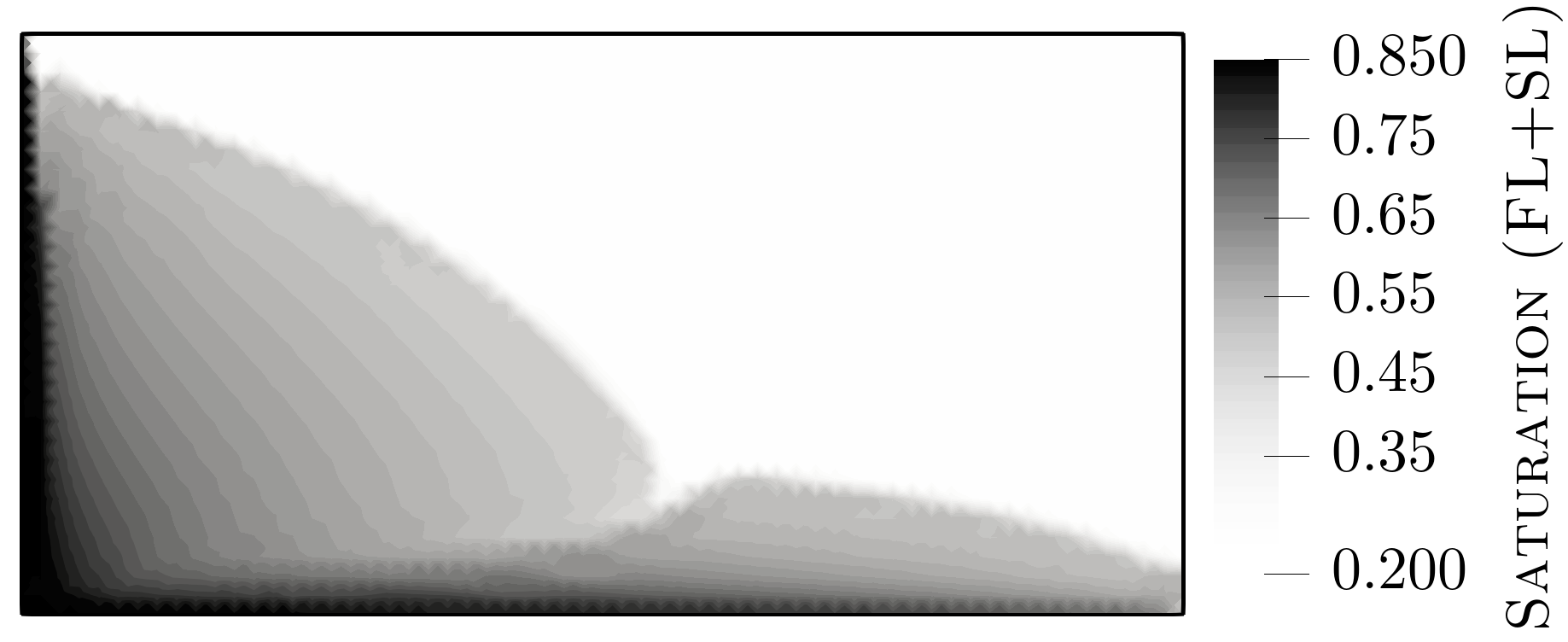}} 
        \caption{\textsf{Two-dimensional pressure-driven flow problem with gravity field:}
            This figure shows the wetting phase saturation solutions at $t=600 \si{\second}$ for different 
            gravity numbers. Both flux and slope limiters are used. 
            As the gravity number increases, more wetting phase accumulates at the bottom of the domain.
            For all three cases, no violation of maximum principle is observed.
        \label{Fig:2Dpatch_gravity_sat}}
\end{figure}

\begin{figure}
    \subfigure[Gr $= 0.3 $\ \label{fig:pdfg1p}]{
        \includegraphics[clip,scale=0.085,trim=0 0cm 0cm
        0]{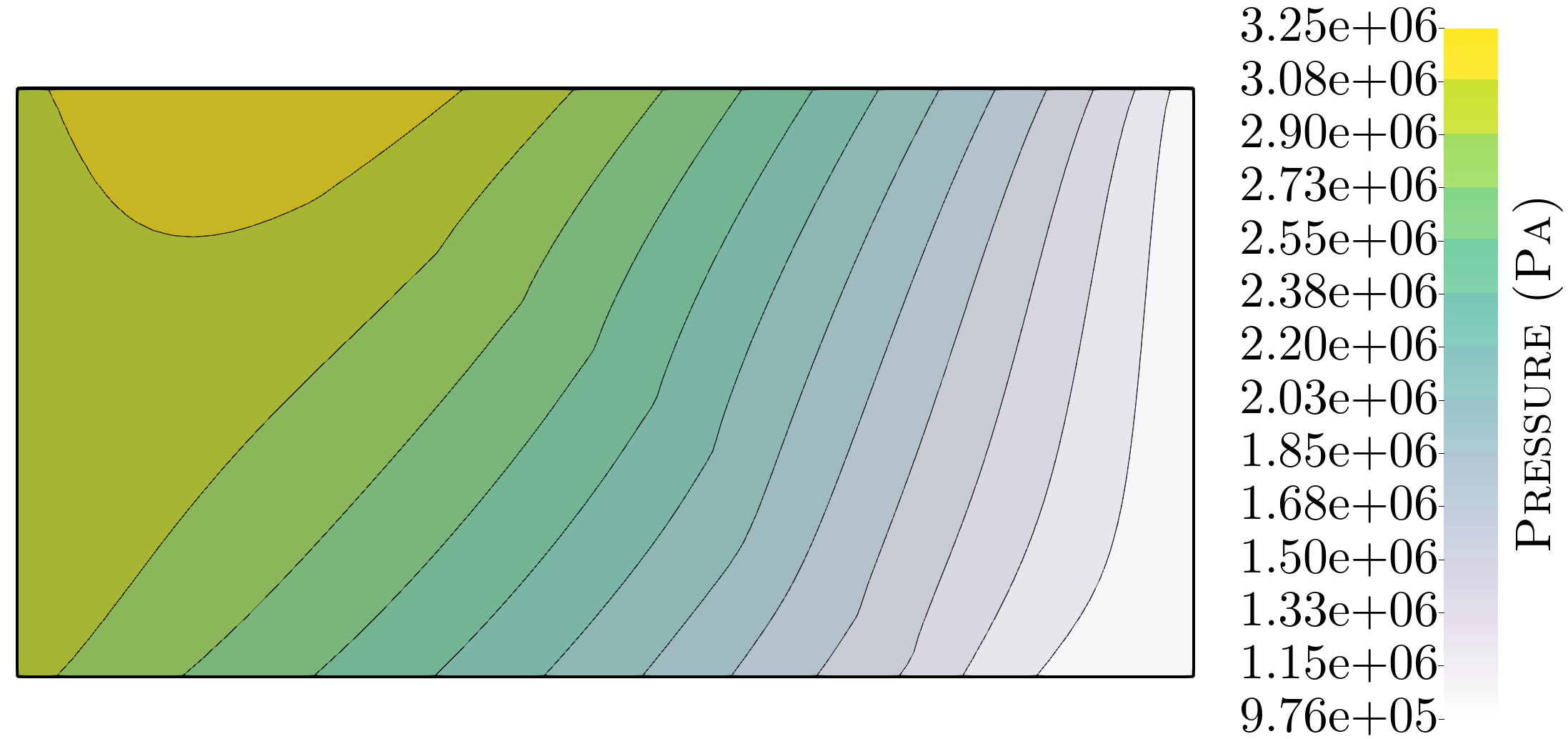}}
    \subfigure[Gr $= 0.6 $ \label{fig:pdfg2p}]{
        \includegraphics[clip,scale=0.085,trim=0 0cm 0cm 0]{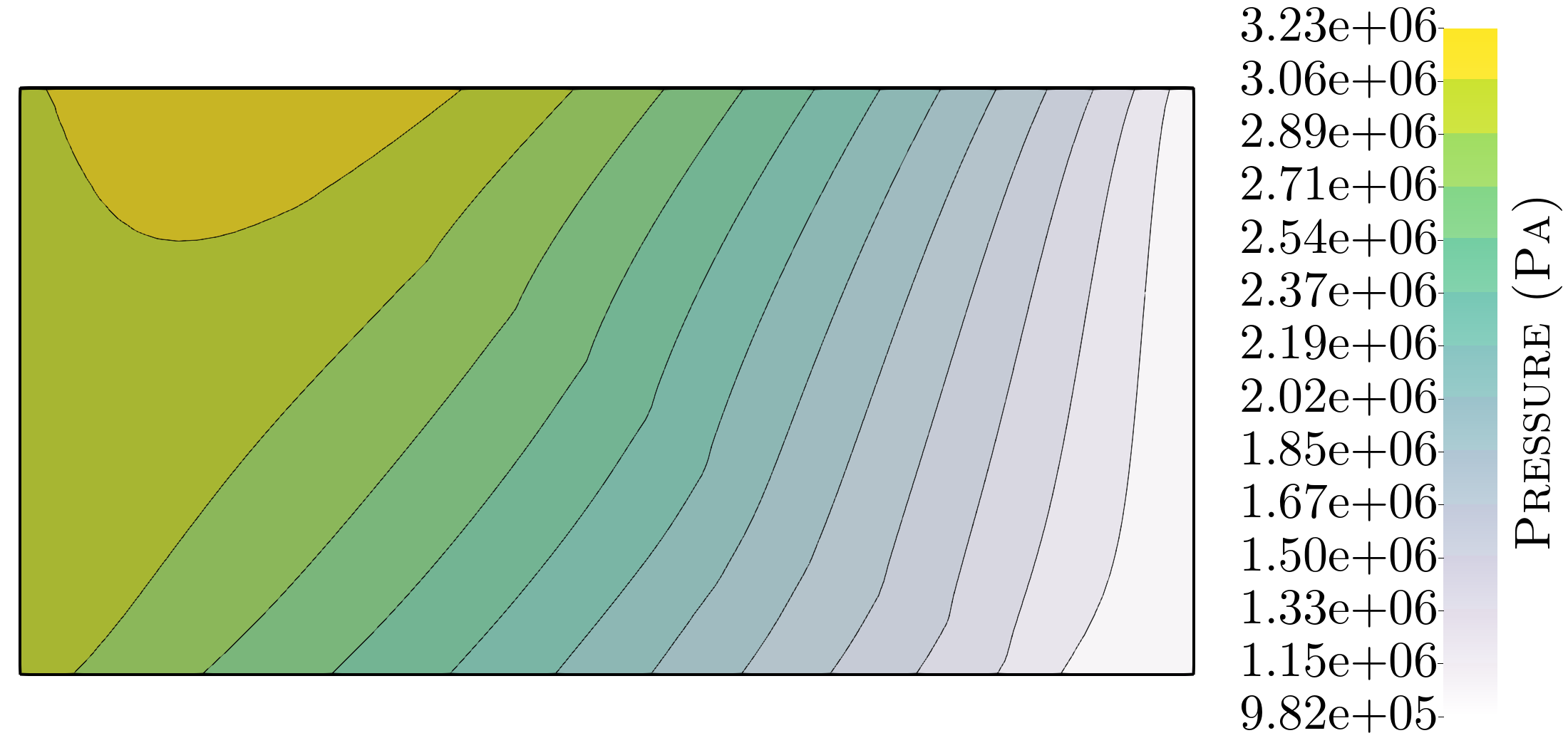}} 
    \subfigure[Gr $= 1.6 $ \label{fig:pdfg3p}]{
        \includegraphics[clip,scale=0.085,trim=0 0cm 0cm 0]{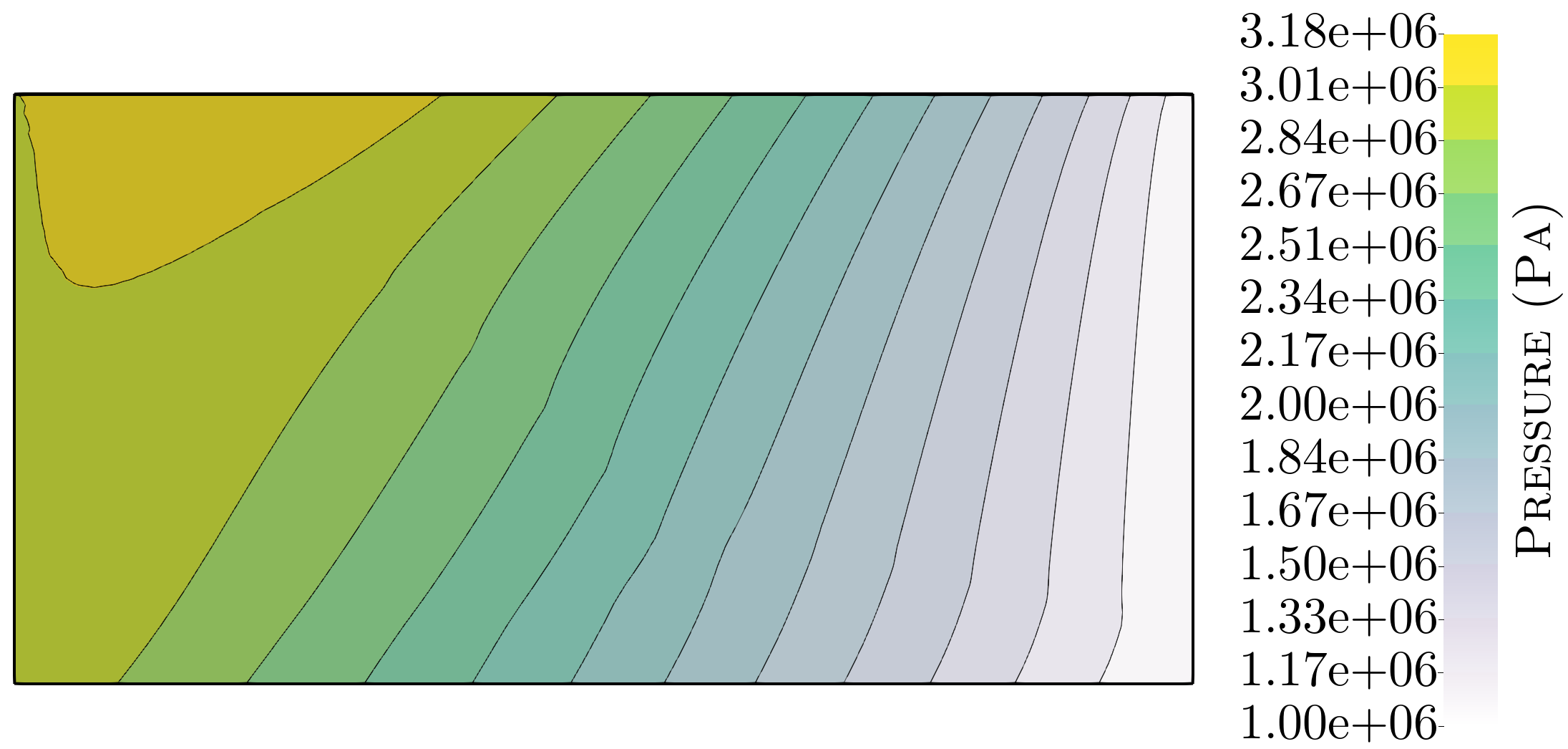}} 
        \caption{\textsf{Two-dimensional pressure-driven flow problem with gravity field:}
            This figure shows the wetting phase pressure solutions at $t=600 \si{\second}$ for different 
            gravity numbers.  
        \label{Fig:2Dpatch_gravity_pres}}
\end{figure}

\begin{figure}
    \subfigure[Gr$=0.3 $ \label{fig:pdfg1v}]{
        \includegraphics[clip,scale=0.095,trim=0 0cm 0cm
        0]{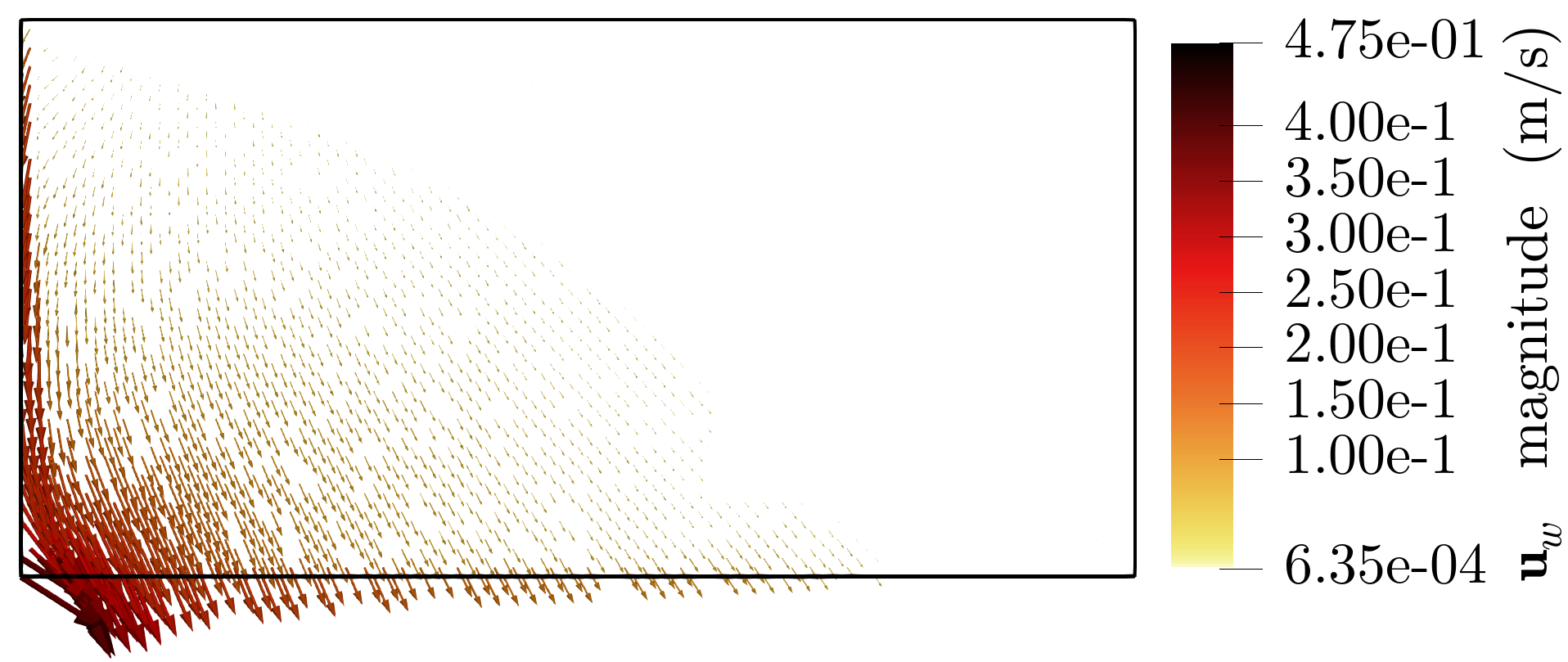}}
    \subfigure[Gr$=0.6 $ \label{fig:pdfg2v}]{
        \includegraphics[clip,scale=0.095,trim=0 0cm 0cm 0]{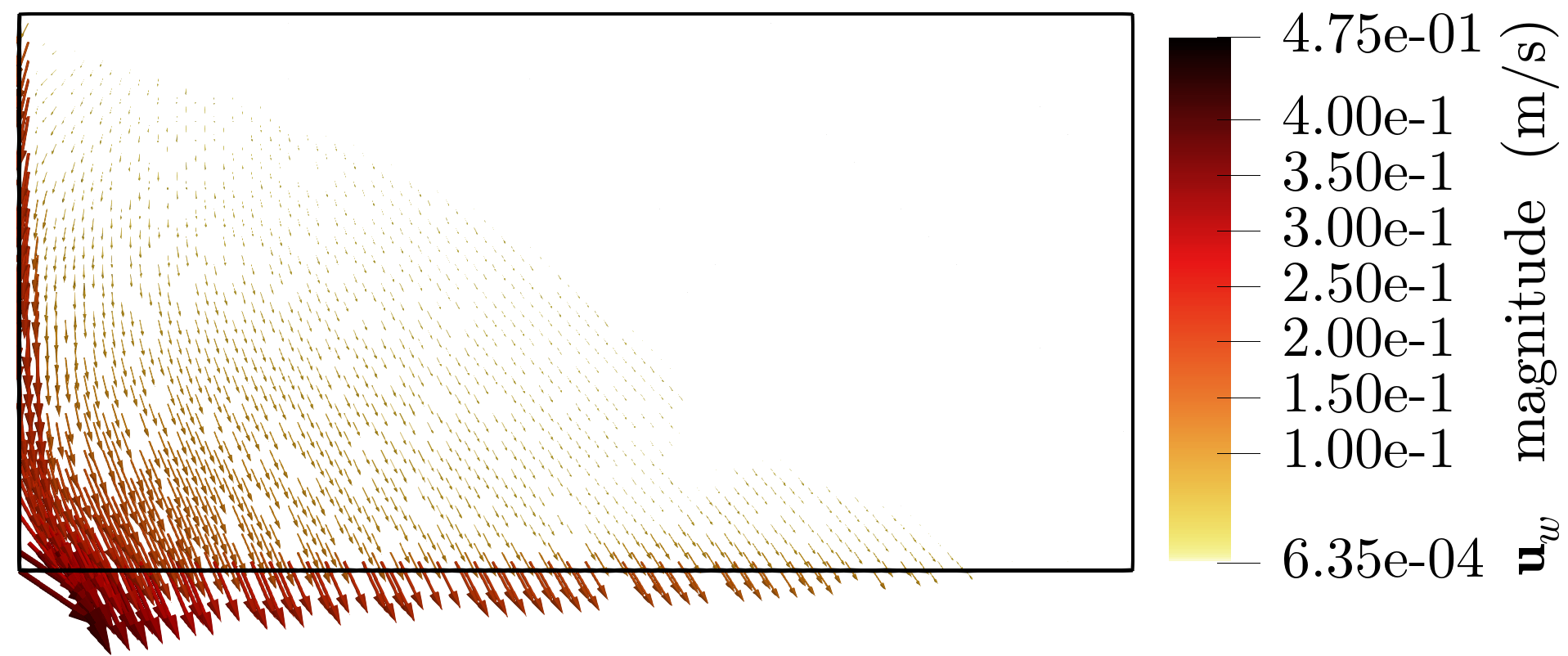}} 
    \subfigure[Gr$=1.6 $ \label{fig:pdfg3v}]{
        \includegraphics[clip,scale=0.095,trim=0 0cm 0cm 0]{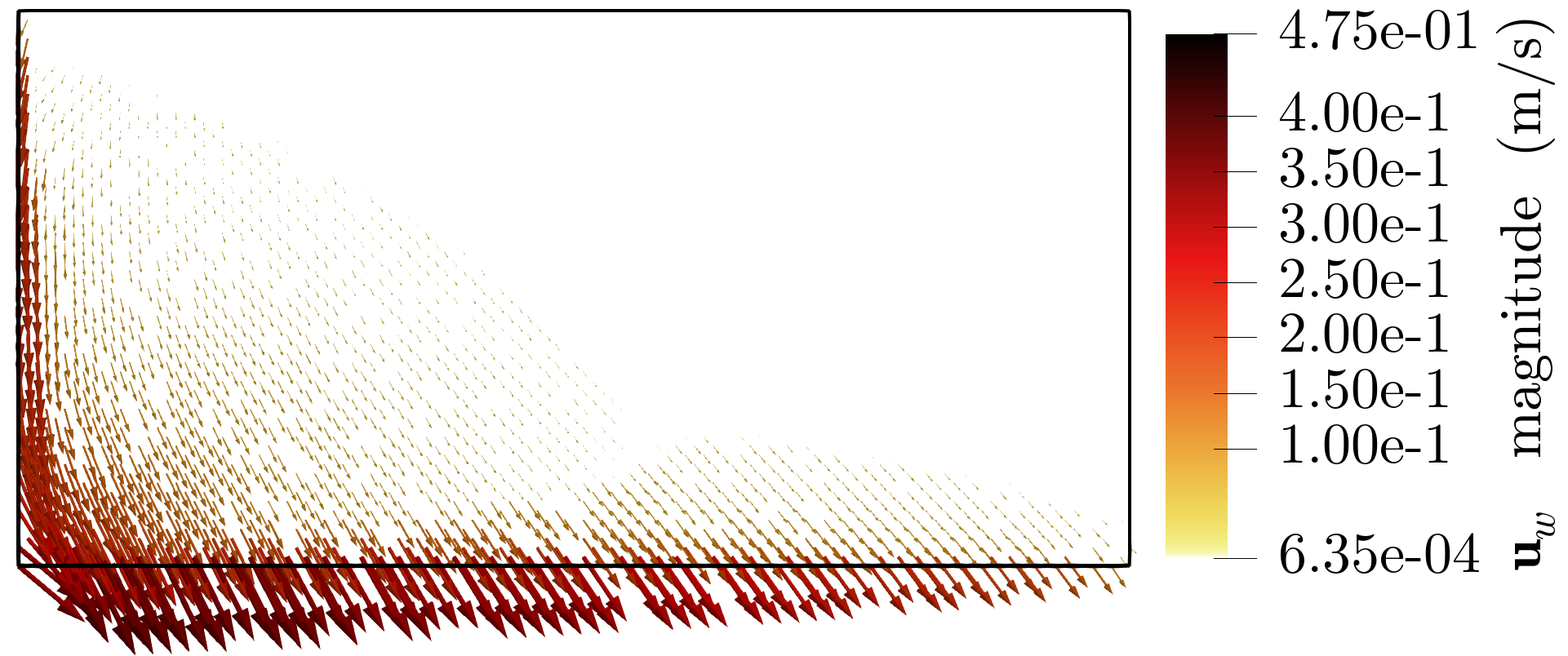}} 
        \caption{\textsf{Two-dimensional pressure-driven flow problem with gravity field:} 
            This figure depicts the magnitude and direction of the wetting phase velocity at $t=600$ 
            \si{\second} for different gravity numbers.
        \label{Fig:2Dpatch_gravity_vel}}
\end{figure}


\subsubsection{Quarter five-spot problem}
The domain is $\Omega=[0,1000]^2$ \si{\meter\squared} with permeability of $K=3\times10^{-11}$ everywhere.  Capillary pressure and relative permeabilities
are defined in equation \eqref{Eqn:CapillaryPres} and \eqref{Eqn:Q5_homogen_BrookCorey}, respectively, with 
entry pressure $P_d=1000$ \si{\pascal}, $\theta=2$ and $R=0.05$. 
To address wells, we fix the following parameters:
$L_w=80$ \si{\meter}, $d_w=80$ \si{\meter}, $\bar{q}=\underline{q}=9.33\times10^{-6}$. 
The wetting phase density is set to $\rho_w = 1000$~kg/m\textsuperscript{3} and the non-wetting phase density 
takes three different values $\rho_{\ell} = 925, 850, 600$~kg/m\textsuperscript{3}, which yields three values 
for the gravity number Gr$= 0.8, 1.6$ and $4.3$ respectively. The characteristic velocity in Gr
estimation is taken as $U\approx5.5\times10^{-5}$ \si{\meter\per\second} (or $4.8$ \si{\meter\per{day}}).
Other parameters are the same as in Section~\ref{sub:Q5_hetero}.
The simulation runs to $T=11$ days with $750$ time steps.
Wetting phase saturation contours, wetting phase pressure contours and wetting phase velocity fields are 
shown in Figure \ref{Fig:Q5_gravity_sat}, \ref{Fig:Q5_gravity_pres}, and \ref{Fig:Q5_gravity_vel} respectively.
We observe that as the gravity number increases, the inertial forces prevent the saturation to reach the 
production well.  As in the previous section, the discrete solution satisfies the maximum principle. 
The numerical examples in this section confirm that our proposed numerical method is accurate and robust when 
gravity dominates.
\begin{figure}
    \subfigure[Gr$=0.8 $ \label{fig:q5sgs1}]{
        \includegraphics[clip,scale=0.095,trim=0 0cm 0cm
        0]{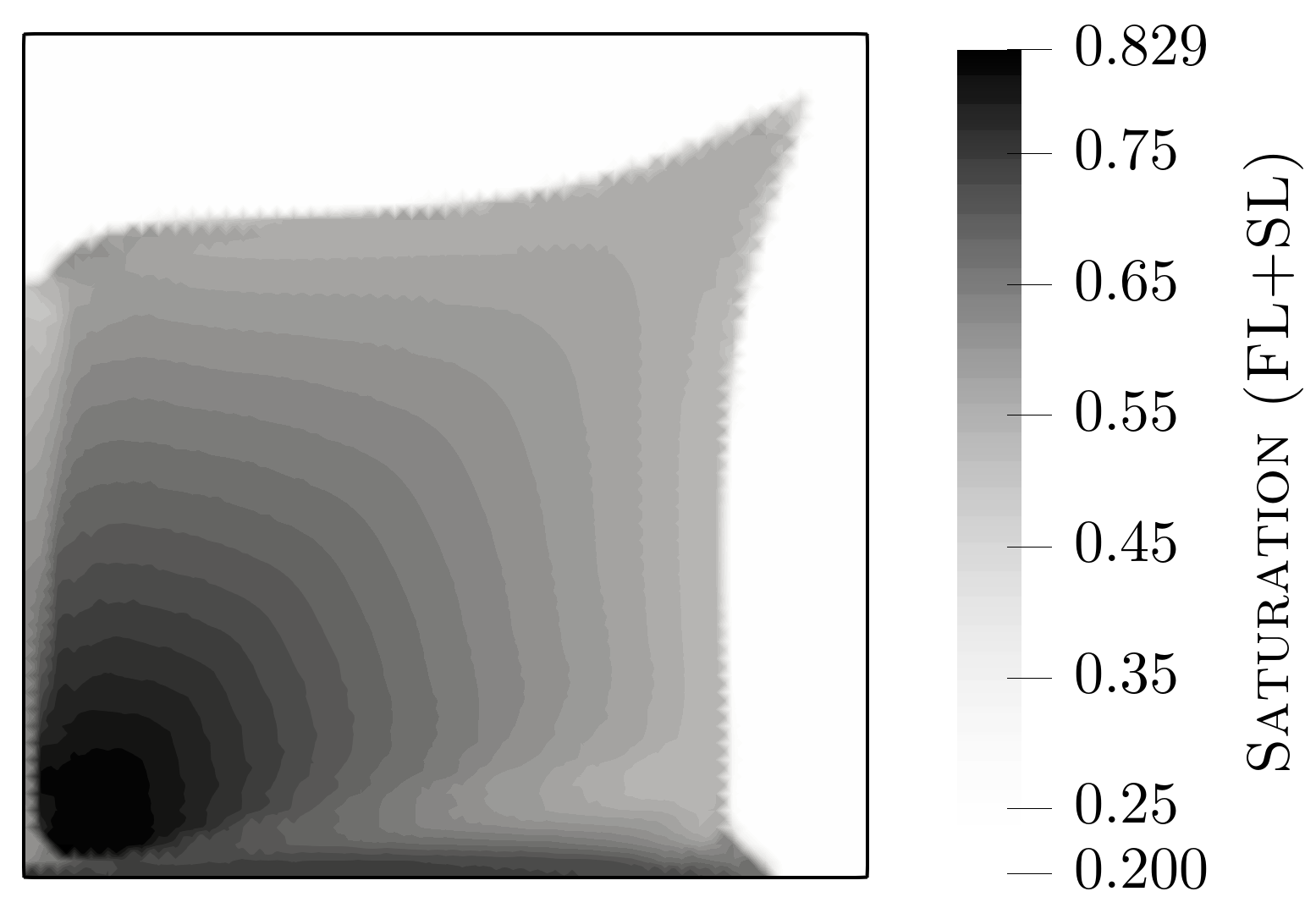}}
    \subfigure[Gr$ = 1.6$ \label{fig:q5sgs2}]{
        \includegraphics[clip,scale=0.095,trim=0 0cm 0cm 0]{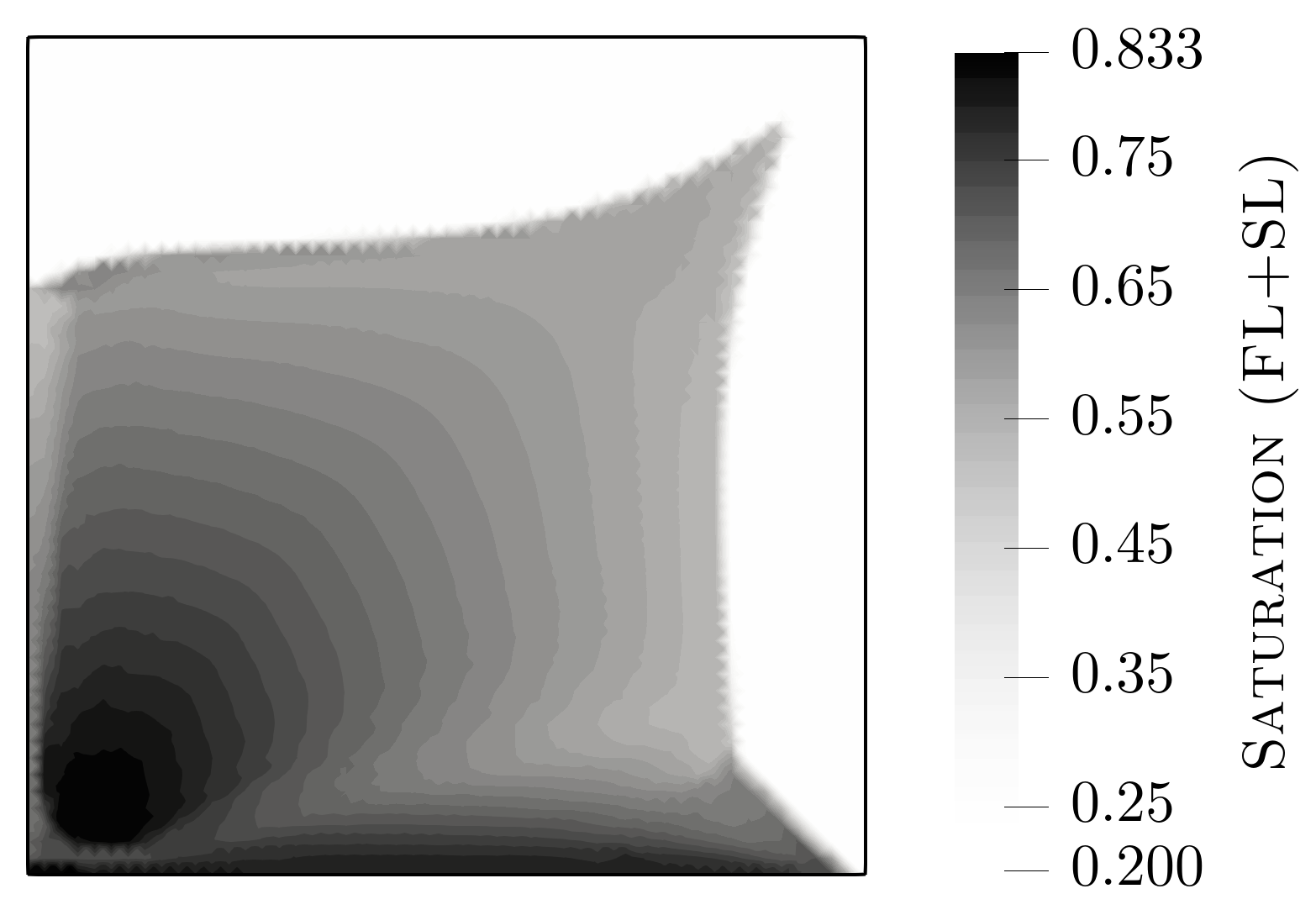}} 
    \subfigure[Gr$=4.3$ \label{fig:q5sgs3}]{
        \includegraphics[clip,scale=0.095,trim=0 0cm 0cm 0]{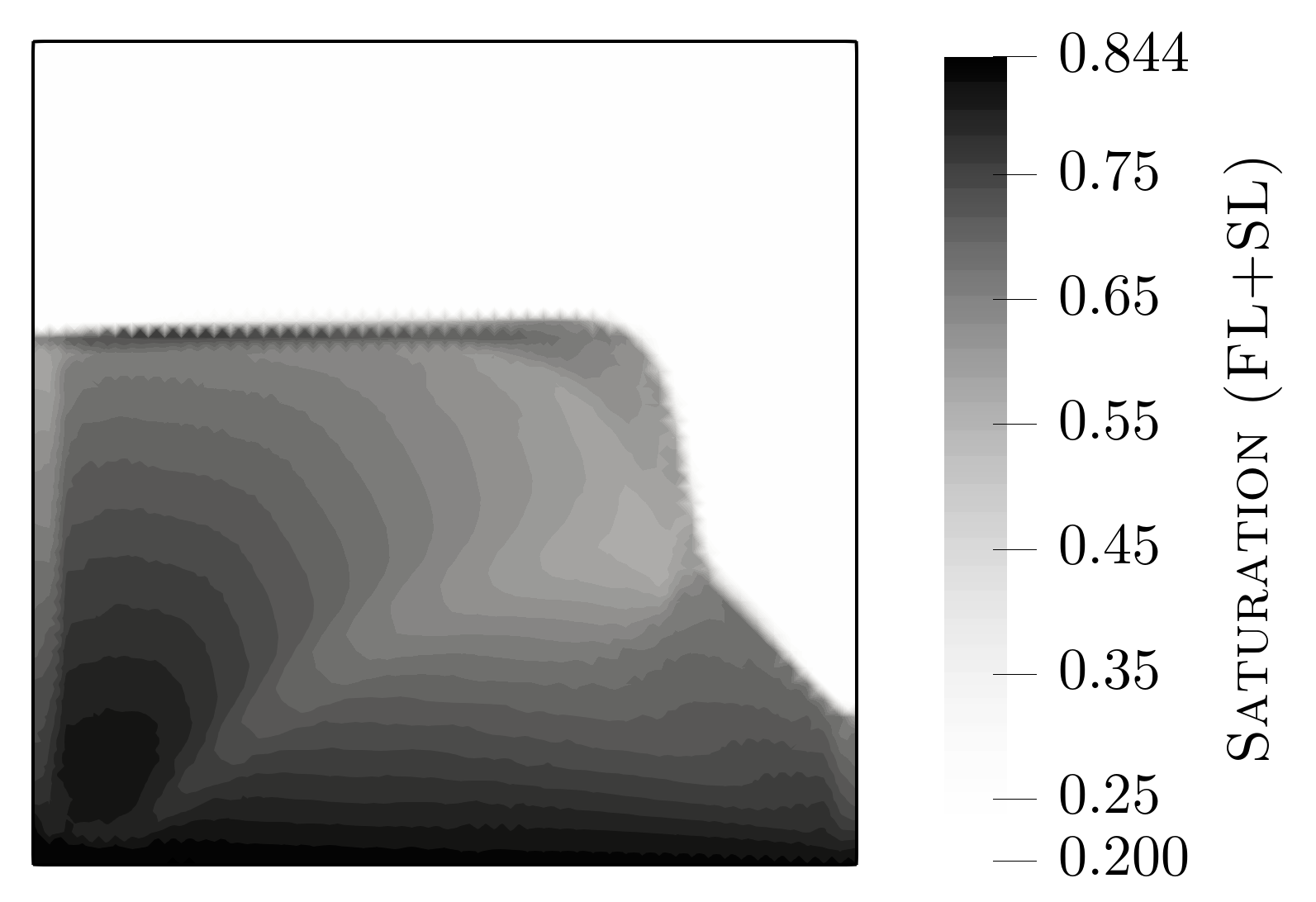}} 
        \caption{\textsf{Quarter five-spot problem with gravity field:} 
            This figure depicts saturation contours at $t=11$ days for different gravity numbers.
            DG+FL+SL scheme is applied that leads to satisfactory results with respect to maximum principle.
            By increasing the difference in phases density, gravitational force dominates the viscous force 
            (from left to right). This results in more wetting phase saturation to be deposited at the bottom 
            of domain and hence less non-wetting phase is recovered at the production well. 
        \label{Fig:Q5_gravity_sat}
}
\end{figure}

\begin{figure}
    \subfigure[G$r=0.8 $ \label{fig:q5sgp1}]{
        \includegraphics[clip,scale=0.095,trim=0 0cm 0cm
        0]{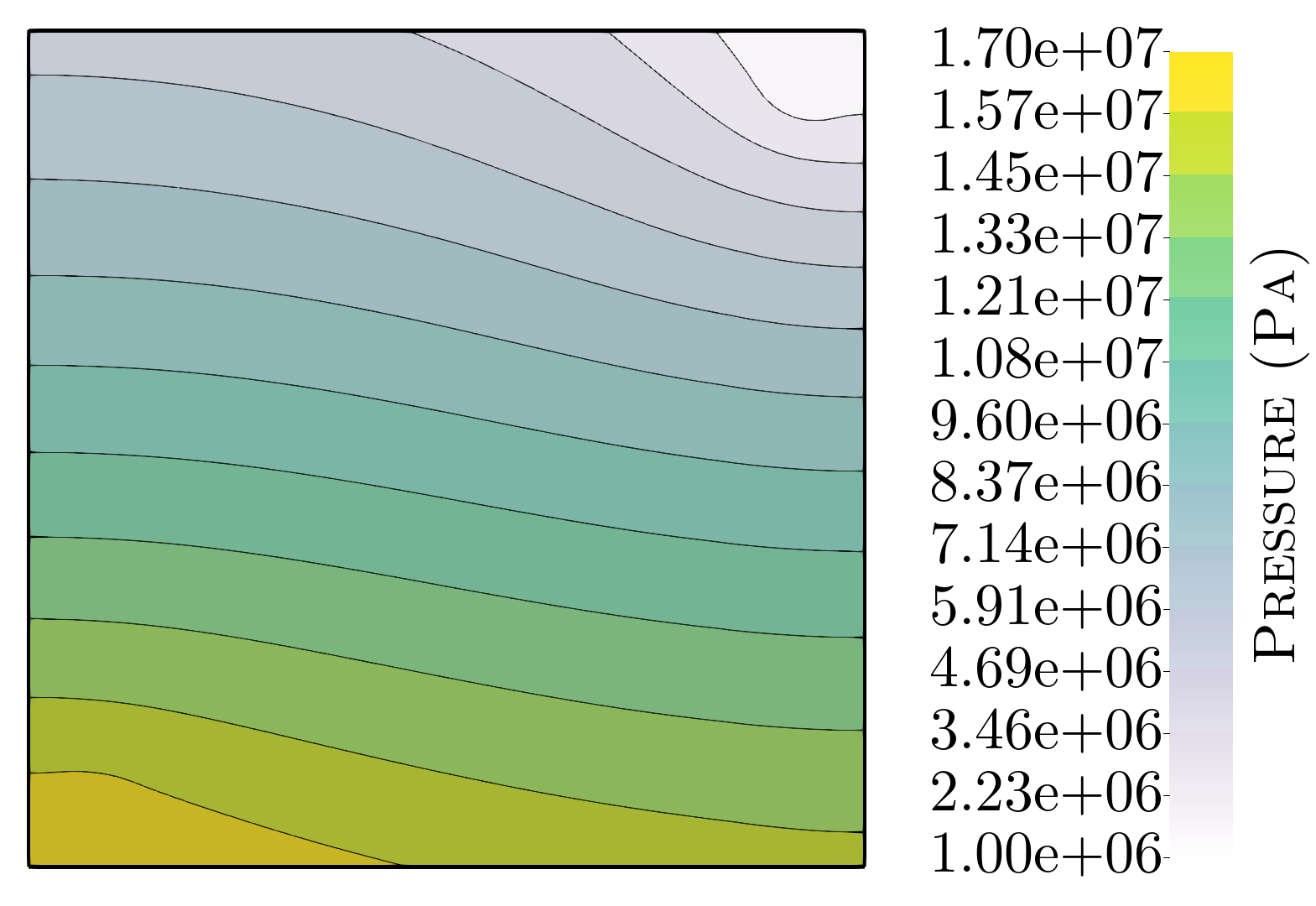}}
    \subfigure[Gr$=1.6 $ \label{fig:q5sgp2}]{
        \includegraphics[clip,scale=0.095,trim=0 0cm 0cm 0]{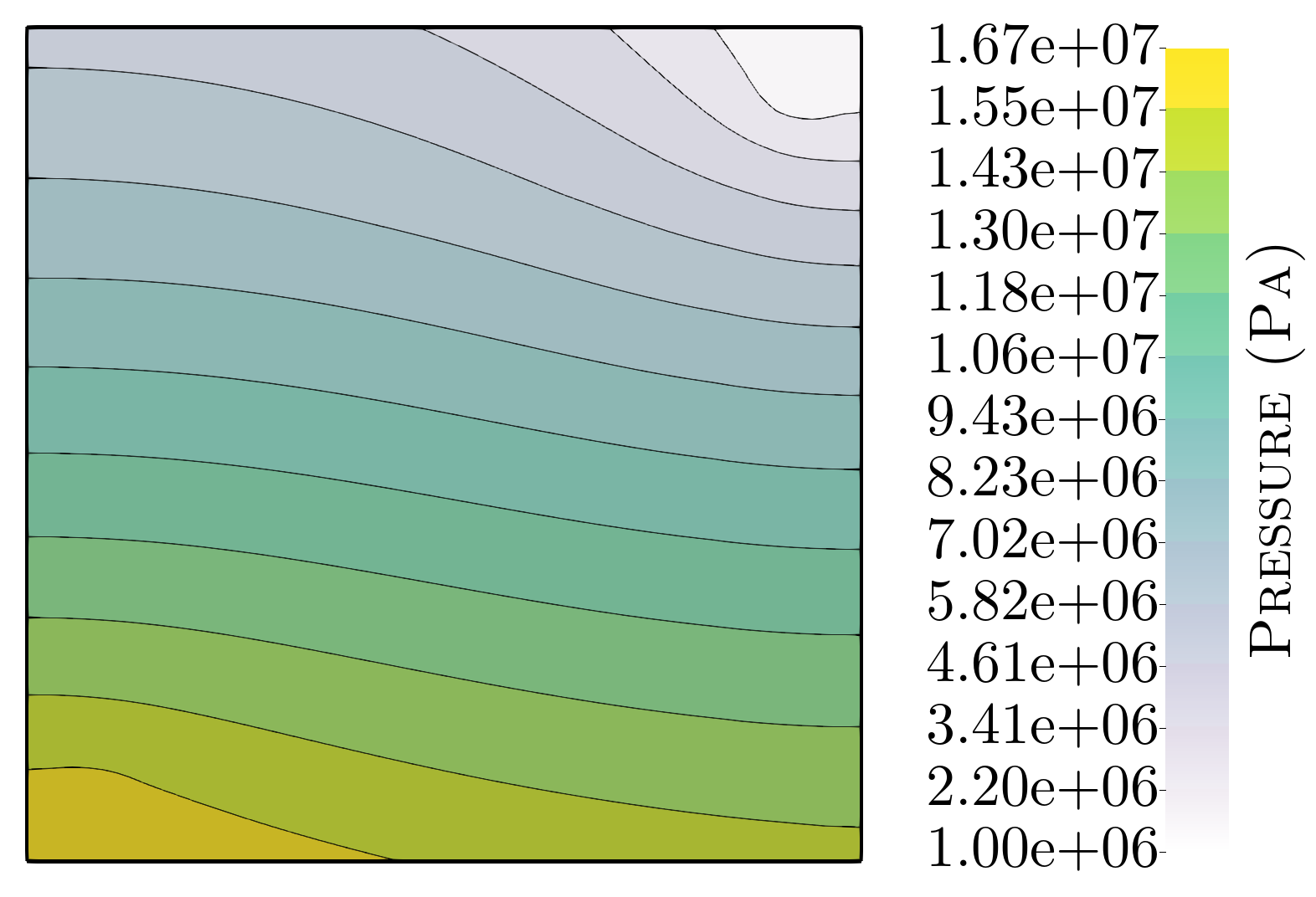}} 
    \subfigure[Gr $= 4.3 $ \label{fig:q5sgp3}]{
        \includegraphics[clip,scale=0.095,trim=0 0cm 0cm 0]{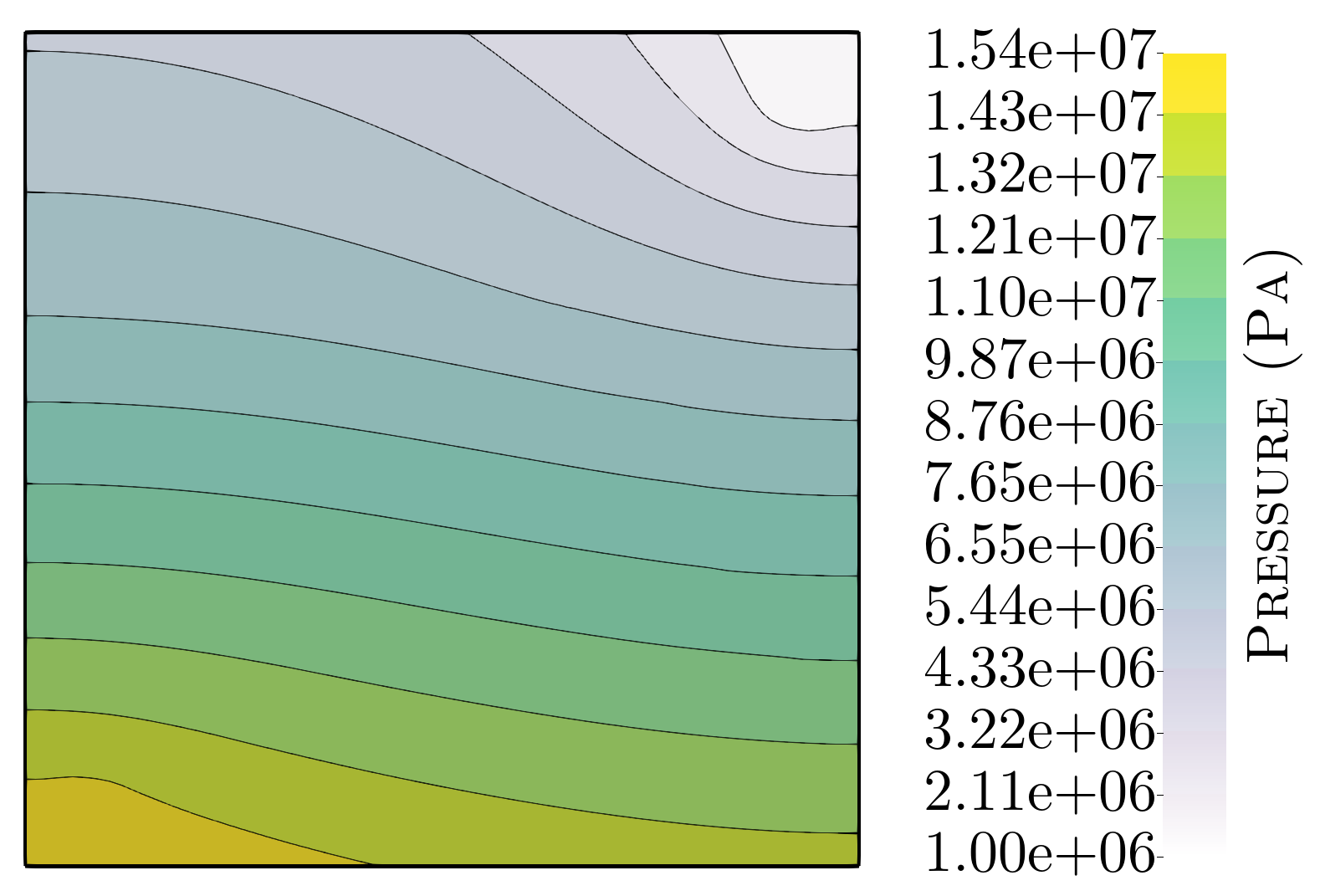}} 
        \caption{\textsf{Quarter five-spot problem with gravity field:} 
            This figure shows pressure contours at $t=11$ days for different gravity numbers.
            As gravity number increases (from left to right), pressure difference between the injection and 
            the production wells reduces. This is because as the gravitational force dominates, it hinders 
            the wetting phase flow from reaching the production well.
        \label{Fig:Q5_gravity_pres}}
\end{figure}

\begin{figure}
    \subfigure[Gr $= 0.8$ \label{Fig:q5sgv1}]{
        \includegraphics[clip,scale=0.09,trim=0 0cm 0cm
        0]{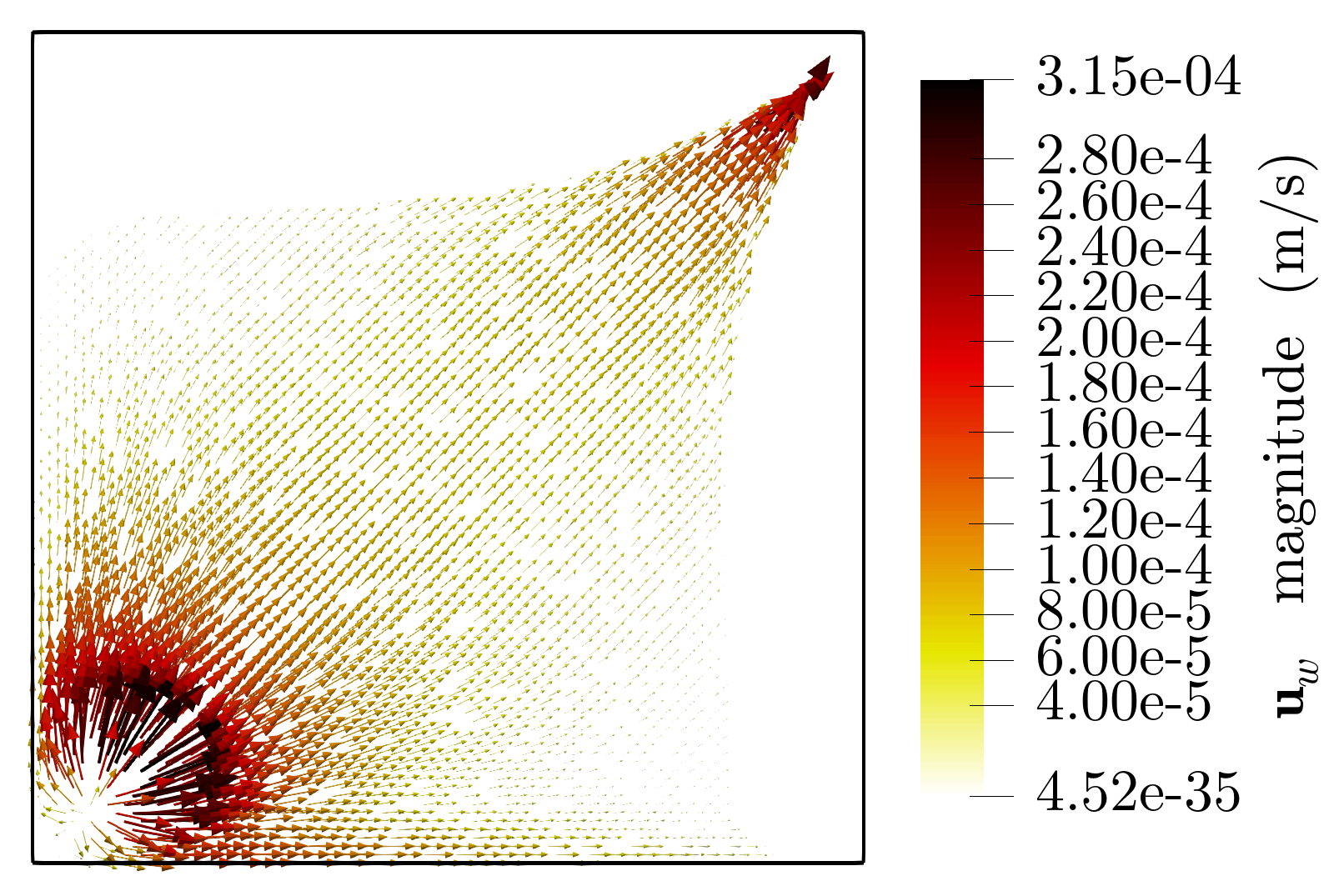}}
    \subfigure[Gr $= 1.6$ \label{fig:q5sgv2}]{
        \includegraphics[clip,scale=0.09,trim=0 0cm 0cm 0]{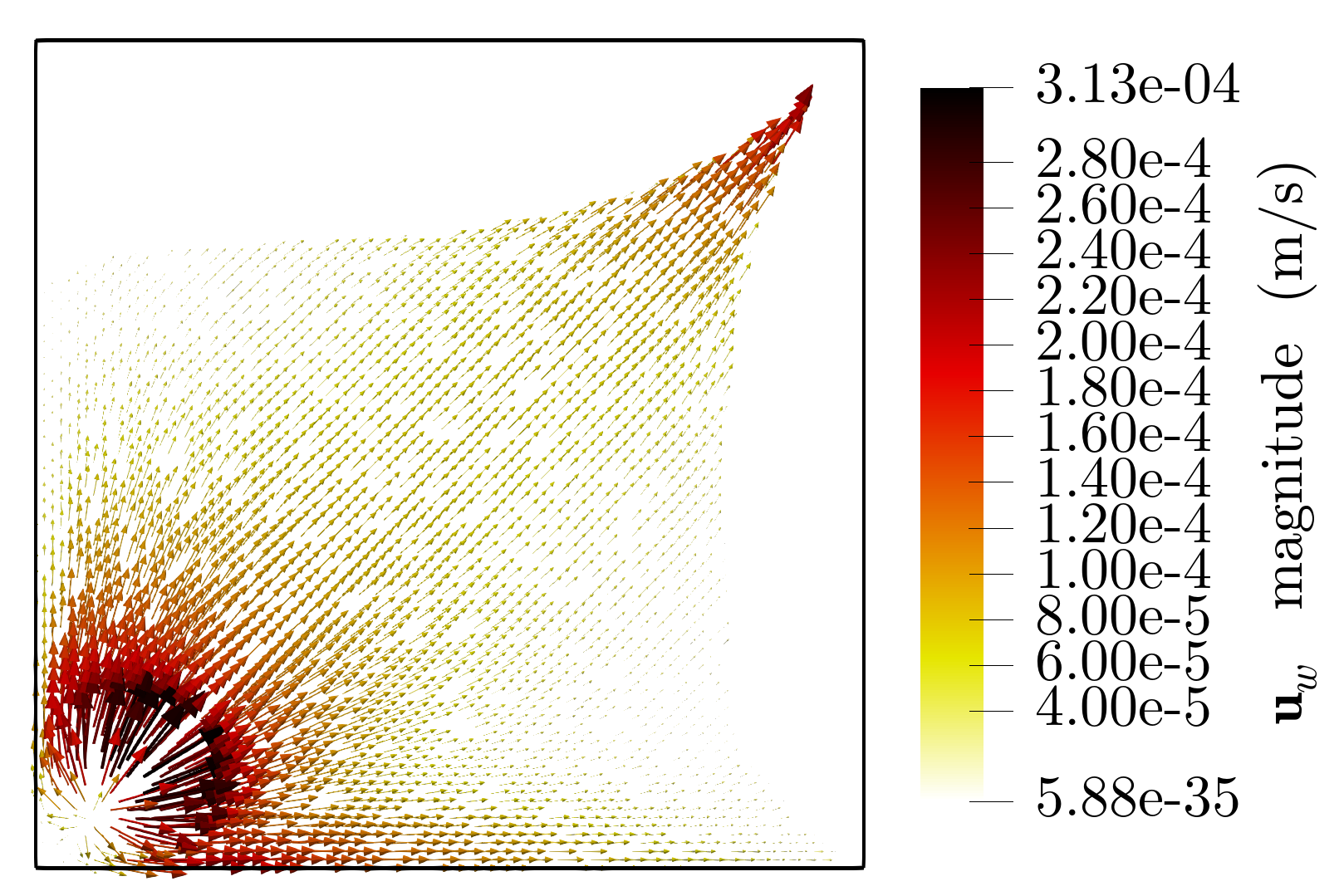}} 
    \subfigure[Gr=$4.3 $ \label{fig:q5sgv3}]{
        \includegraphics[clip,scale=0.09,trim=0 0cm 0cm 0]{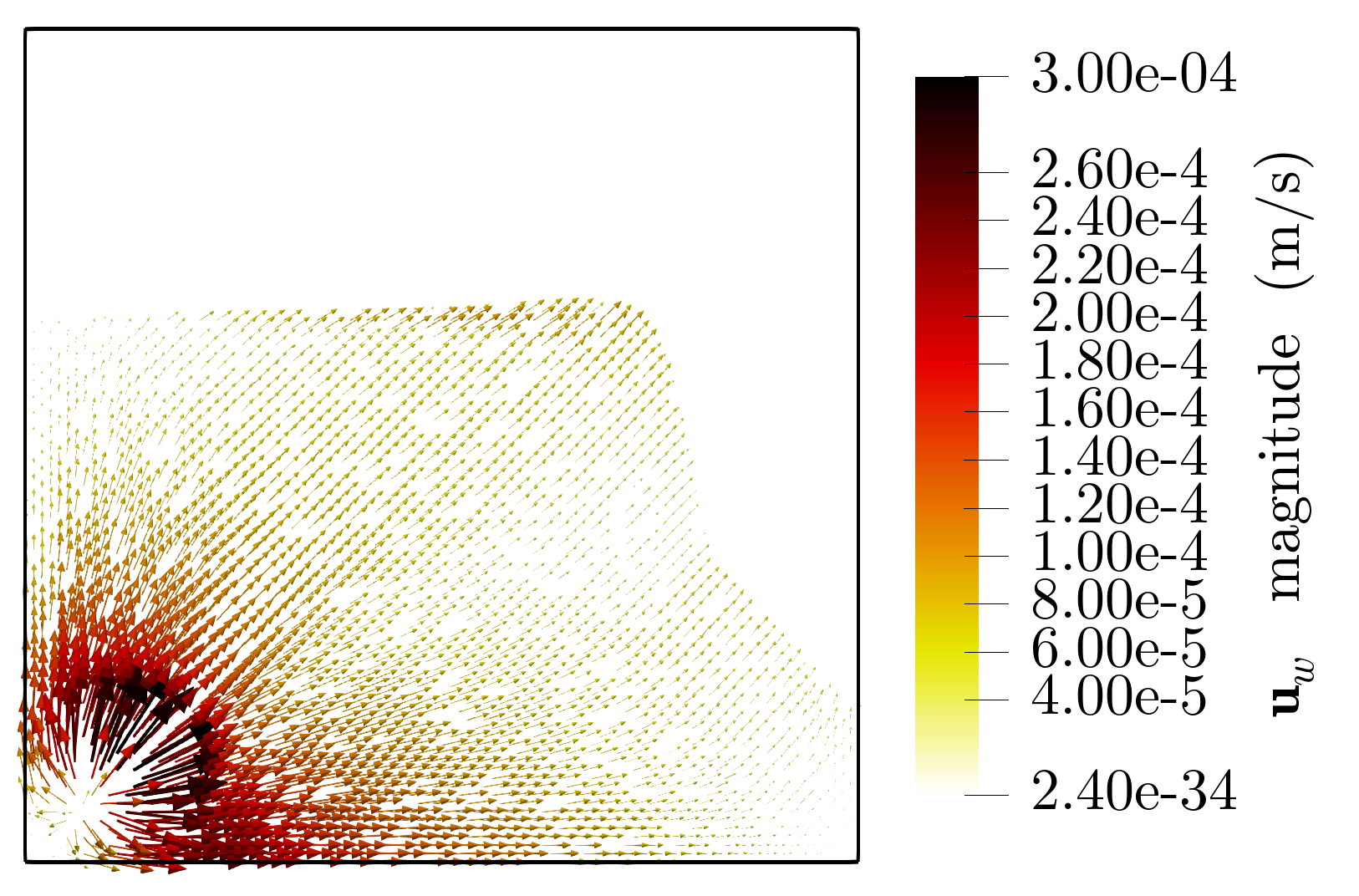}} 
        \caption{\textsf{Quarter five-spot problem with gravity field:} 
            This figure shows velocity field at $t=11$ days for different gravity numbers.
            Increase in gravity number pushes the wetting phase toward the bottom edge and 
            less recovery at the production well, which is also reflected in the decrease 
            in the magnitude of velocities.
        \label{Fig:Q5_gravity_vel}}
\end{figure}

\section{Conclusions}

A fully implicit discontinuous Galerkin method is formulated for solving the incompressible two-phase flow equations in porous media. Primary unknowns are the wetting phase pressure and saturation.  Nonlinear sytems are solved by Newton's method.
Post-processing flux are developed and combined with slope limiters to ensure a bound-preserving saturation at each time step. 
The numerical method is validated on several benchmark problems and it is applied to problems where permeability fields are highly varying or where gravitational forces are significant.  
Flooding of the medium is driven by either pressure boundary conditions or by injection and production wells.  The various numerical examples show that the scheme is robust and locally mass conservative. 
The approximation of the saturation is shown to satisfy the maximum principle both theoretically and computationally.

\section*{Acknowledgments}
The authors gratefully acknowledge Rustem Zaydullin and Romain De-Loubens for their valuable suggestions and discussions.        
This work is partially supported by the National Science Foundation (NSF-DMS 1913291).

%

\bibliographystyle{plainnat}
\bibliography{./Master_References/Master_References,./Master_References/Books}

\begin{thebibliography}{65}
\providecommand{\natexlab}[1]{#1}
\providecommand{\url}[1]{\texttt{#1}}
\expandafter\ifx\csname urlstyle\endcsname\relax
  \providecommand{\doi}[1]{doi: #1}\else
  \providecommand{\doi}{doi: \begingroup \urlstyle{rm}\Url}\fi

\bibitem[Amestoy et~al.(2001)Amestoy, Duff, Koster, and L'Excellent]{MUMPS:1}
P.~R. Amestoy, I.~S. Duff, J.~Koster, and J.~Y. L'Excellent.
\newblock A fully asynchronous multifrontal solver using distributed dynamic
  scheduling.
\newblock \emph{SIAM Journal on Matrix Analysis and Applications}, 23\penalty0
  (1):\penalty0 15--41, 2001.

\bibitem[Amestoy et~al.(2019)Amestoy, Buttari, L'Excellent, and Mary]{MUMPS:2}
P.~R. Amestoy, A.~Buttari, J.~Y. L'Excellent, and T.~Mary.
\newblock {Performance and Scalability of the Block Low-Rank Multifrontal
  Factorization on Multicore Architectures}.
\newblock \emph{ACM Transactions on Mathematical Software}, 45:\penalty0
  2:1--2:26, 2019.

\bibitem[Arbogast et~al.(2013)Arbogast, Juntunen, Pool, and
  Wheeler]{arbogast2013discontinuous}
T.~Arbogast, M.~Juntunen, J.~Pool, and M.F Wheeler.
\newblock A discontinuous {G}alerkin method for two-phase flow in a porous
  medium enforcing {H}(div) velocity and continuous capillary pressure.
\newblock \emph{Computational Geosciences}, 17\penalty0 (6):\penalty0
  1055--1078, 2013.

\bibitem[Aziz and Settari(1979)]{azizpetroleum}
Khalid Aziz and Antonin Settari.
\newblock Petroleum reservoir simulation. 1979.
\newblock \emph{Applied Science Publ. Ltd., London, UK}, 1979.

\bibitem[Balay et~al.(2017)Balay, Abhyankar, M, Brown, Brune, Buschelman,
  Dalcin, Eijkhout, Gropp, Kaushik, Knepley, McInnes, Rupp, Smith, Zampini,
  Zhang, and Zhang]{petsc-user-ref}
S.~Balay, S.~Abhyankar, F.~Adams M, J.~Brown, P.~Brune, K.~Buschelman,
  L.~Dalcin, V.~Eijkhout, W.~D. Gropp, D.~Kaushik, M.~G. Knepley, L.~C.
  McInnes, K.~Rupp, B.~F. Smith, S.~Zampini, H.~Zhang, and H.~Zhang.
\newblock {PETS}c users manual.
\newblock Technical Report ANL-95/11 - Revision 3.8, Argonne National
  Laboratory, 2017.

\bibitem[Balay et~al.(2018)Balay, Abhyankar, Adams, Brown, Brune, Buschelman,
  Dalcin, Eijkhout, Gropp, Kaushik, Knepley, May, McInnes, Mills, Munson, Rupp,
  Sanan, Smith, Zampini, Zhang, and Zhang]{petsc-web-page}
S.~Balay, S.~Abhyankar, M.~F. Adams, J.~Brown, P.~Brune, K.~Buschelman,
  L.~Dalcin, V.~Eijkhout, W.~D. Gropp, D.~Kaushik, M.~G. Knepley, D.~A. May,
  L.~C. McInnes, R.~T. Mills, T.~Munson, K.~Rupp, P.~Sanan, B.~F. Smith,
  S.~Zampini, H.~Zhang, and H.~Zhang.
\newblock {PETS}c {W}eb page, 2018.

\bibitem[Bastian(2014)]{Bastian2014}
P.~Bastian.
\newblock A fully-coupled discontinuous {G}alerkin method for two-phase flow in
  porous media with discontinuous capillary pressure.
\newblock \emph{Computational Geosciences}, 18\penalty0 (5):\penalty0 779--796,
  2014.

\bibitem[Brooks and Corey(1964)]{brooks1964hydraulic}
R.~H. Brooks and A.~T. Corey.
\newblock \emph{Hydraulic properties of porous media}.
\newblock PhD thesis, Colorado State University. Libraries, 1964.

\bibitem[Buckley et~al.(1942)Buckley, Leverett, et~al.]{buckley1942mechanism}
S.~E. Buckley, M.~Leverett, et~al.
\newblock Mechanism of fluid displacement in sands.
\newblock \emph{Transactions of the AIME}, 146\penalty0 (01):\penalty0
  107--116, 1942.

\bibitem[Burbeau et~al.(2001)Burbeau, Sagaut, and Bruneau]{Burbeau2001}
A.~Burbeau, P.~Sagaut, and C.-H. Bruneau.
\newblock A problem-independent limiter for high-order {Runge-Kutta}
  discontinuous {G}alerkin methods.
\newblock \emph{J. Comput. Phys.}, 169:\penalty0 111--150, 2001.

\bibitem[Chen et~al.(2006)Chen, Huan, and Ma]{chen2006computational}
Z.~Chen, G.~Huan, and Y.~Ma.
\newblock \emph{Computational methods for multiphase flows in porous media},
  volume~2.
\newblock Siam, 2006.

\bibitem[Christie et~al.(2001)Christie, Andrew, and Blunt]{christie2001tenth}
M.~Christie, M.~Andrew, and M.~J. Blunt.
\newblock Tenth spe comparative solution project: A comparison of upscaling
  techniques.
\newblock In \emph{{SPE} Reservoir Simulation Symposium}. Society of Petroleum
  Engineers, 2001.

\bibitem[Christov and Popov(2008)]{christov2008new}
I.~Christov and B.~Popov.
\newblock New non-oscillatory central schemes on unstructured triangulations
  for hyperbolic systems of conservation laws.
\newblock \emph{Journal of Computational Physics}, 227\penalty0 (11):\penalty0
  5736--5757, 2008.

\bibitem[Crisfield(1979)]{crisfield1979faster}
M.~A. Crisfield.
\newblock A faster modified newton-raphson iteration.
\newblock \emph{Computer Methods in Applied Mechanics and Engineering},
  20\penalty0 (3):\penalty0 267--278, 1979.

\bibitem[Dalcin et~al.(2011)Dalcin, Paz, Kler, and Cosimo]{Dalcin2011}
L.~D. Dalcin, R.~R. Paz, P.~A. Kler, and A.~Cosimo.
\newblock Parallel distributed computing using {P}ython.
\newblock \emph{Advances in Water Resources}, 34\penalty0 (9):\penalty0
  1124--1139, 2011.

\bibitem[Dawson et~al.(2004)Dawson, Sun, and Wheeler]{dawson2004compatible}
C.~Dawson, S.~Sun, and M.~F. Wheeler.
\newblock Compatible algorithms for coupled flow and transport.
\newblock \emph{Computer Methods in Applied Mechanics and Engineering},
  193\penalty0 (23-26):\penalty0 2565--2580, 2004.

\bibitem[de~Loubens(2007)]{de2007construction}
R.~de~Loubens.
\newblock \emph{Construction of high-order adaptive implicit methods for
  reservoir simulation}.
\newblock PhD thesis, Department of Energy Resources Engineering Stanford
  University, 2007.

\bibitem[Droniou(2014)]{Droniou2014}
J.~Droniou.
\newblock Finite volume schemes for diffusion equations: introduction to and
  review of modern methods.
\newblock \emph{Mathematical Models and Methods in Applied Sciences},
  24\penalty0 (08):\penalty0 1575--1619, 2014.

\bibitem[Epshteyn and Riviere(2006)]{EpstheynRiviere2006}
Y.~Epshteyn and B.~Riviere.
\newblock On the solution of incompressible two-phase flow by a p-version
  discontinuous {G}alerkin method.
\newblock \emph{Communications in Numerical Methods in Engineering},
  22:\penalty0 741--751, 2006.

\bibitem[Epshteyn and Riviere(2007)]{epshteyn2007fully}
Y.~Epshteyn and B.~Riviere.
\newblock Fully implicit discontinuous finite element methods for two-phase
  flow.
\newblock \emph{Applied Numerical Mathematics}, 57\penalty0 (4):\penalty0
  383--401, 2007.

\bibitem[Ern et~al.(2010)Ern, Mozolevski, and Schuh]{ErnMozolevskiSchuh2010}
A.~Ern, I.~Mozolevski, and L.~Schuh.
\newblock Discontinuous {G}alerkin approximation of two-phase flows in
  heterogeneous porous media with discontinuous capillary pressures.
\newblock \emph{Computer methods in applied mechanics and engineering},
  199\penalty0 (23-24):\penalty0 1491--1501, 2010.

\bibitem[Fambri(2020)]{fambri2020discontinuous}
F.~Fambri.
\newblock Discontinuous galerkin methods for compressible and incompressible
  flows on space--time adaptive meshes: toward a novel family of efficient
  numerical methods for fluid dynamics.
\newblock \emph{Archives of Computational Methods in Engineering}, 27\penalty0
  (1):\penalty0 199--283, 2020.

\bibitem[Frank et~al.(2019)Frank, Rupp, and Kuzmin]{frank2019bound}
F.~Frank, A.~Rupp, and D.~Kuzmin.
\newblock Bound-preserving flux limiting schemes for {DG} discretizations of
  conservation laws with applications to the {C}ahn--{H}illiard equation.
\newblock \emph{Computer Methods in Applied Mechanics and Engineering},
  359:\penalty0 112665, 2019.
\newblock \doi{10.1016/j.cma.2019.112665}.

\bibitem[Geuzaine and Remacle(2009)]{geuzaine2009gmsh}
C.~Geuzaine and J.~F. Remacle.
\newblock Gmsh: A 3-d finite element mesh generator with built-in pre-and
  post-processing facilities.
\newblock \emph{International Journal for Numerical Methods in Engineering},
  79\penalty0 (11):\penalty0 1309--1331, 2009.

\bibitem[Giuliani and Krivodonova(2018)]{giuliani2018analysis}
A.~Giuliani and L.~Krivodonova.
\newblock Analysis of slope limiters on unstructured triangular meshes.
\newblock \emph{Journal of Computational Physics}, 374:\penalty0 1--26, 2018.

\bibitem[Griffiths(1997)]{griffiths1997no}
D.~F. Griffiths.
\newblock The ‘no boundary condition’outflow boundary condition.
\newblock \emph{International Journal for Numerical Methods in Fluids},
  24\penalty0 (4):\penalty0 393--411, 1997.

\bibitem[Hassanizadeh and Das(2005)]{hassanizadeh2005upscaling}
S.~M. Hassanizadeh and D.~B. Das.
\newblock \emph{Upscaling multiphase flow in porous media: from pore to core
  and beyond}.
\newblock Springer Berlin, 2005.

\bibitem[Homolya and Ham(2016)]{Homolya2016}
M.~Homolya and D.~A. Ham.
\newblock A parallel edge orientation algorithm for quadrilateral meshes.
\newblock \emph{SIAM Journal on Scientific Computing}, 38\penalty0
  (5):\penalty0 48--61, 2016.

\bibitem[Homolya et~al.(2017)Homolya, Kirby, and Ham]{homolya2017a}
M.~Homolya, R.~C. Kirby, and D.~A. Ham.
\newblock Exposing and exploiting structure: optimal code generation for
  high-order finite element methods.
\newblock \emph{Available on arXiv:~1711.02473}, 2017.

\bibitem[Homolya et~al.(2018)Homolya, Mitchell, Luporini, and Ham]{Homolya2017}
M.~Homolya, L.~Mitchell, F.~Luporini, and D.~A. Ham.
\newblock Tsfc: a structure-preserving form compiler.
\newblock \emph{SIAM Journal on Scientific Computing}, 40\penalty0
  (3):\penalty0 C401--C428, 2018.

\bibitem[Hoteit and Firoozabadi(2008)]{HoteitFiroozabadi2008}
H.~Hoteit and A.~Firoozabadi.
\newblock Numerical modeling of two-phase flow in heterogeneous permeable media
  with different capillarity pressures.
\newblock \emph{Advances in Water Resources}, 31\penalty0 (1):\penalty0 56--73,
  2008.

\bibitem[Hoteit et~al.(2004)Hoteit, Ackerer, Mose, Erhel, and
  Philippe]{Hoteit2004}
H.~Hoteit, Ph. Ackerer, R.~Mose, J.~Erhel, and B.~Philippe.
\newblock New two-dimensional slope limiters for discontinuous {G}alerkin
  methods on arbitrary meshes.
\newblock \emph{J. Numer. Meth. Engrg.}, 61:\penalty0 2566--2593, 2004.

\bibitem[Hou et~al.(2016)Hou, Chen, Sun, and Chen]{HouChenSunChen2016}
J.~Hou, J.~Chen, S.~Sun, and Z.~Chen.
\newblock Adaptive mixed-hybrid and penalty discontinuous {G}alerkin method for
  two-phase flow in heterogeneous media.
\newblock \emph{J. Comput. Appl. Math.}, 307:\penalty0 262--263, 2016.

\bibitem[Jamei and Ghafouri(2016)]{JameiGhafouri2016}
M.~Jamei and H.~Ghafouri.
\newblock A novel discontinuous {G}alerkin model for two-phase flow in porous
  media using an improved {IMPES} method.
\newblock \emph{Int. J. Numer. Methods Heat Fluid Flow}, 26:\penalty0 284--306,
  2016.

\bibitem[Joshaghani et~al.(2019)Joshaghani, Joodat, and
  Nakshatrala]{joshaghani2019stabilized}
M.~S. Joshaghani, S.~H.~S. Joodat, and K.~B. Nakshatrala.
\newblock A stabilized mixed discontinuous galerkin formulation for double
  porosity/permeability model.
\newblock \emph{Computer Methods in Applied Mechanics and Engineering},
  352:\penalty0 508--560, 2019.

\bibitem[Karlsen et~al.(1998)Karlsen, Brusdal, Dahle, and
  Lie]{karlsen1998corrected}
K.~H. Karlsen, K.~Brusdal, H.~K. Dahle, and S.~Evje K.~A. Lie.
\newblock The corrected operator splitting approach applied to a nonlinear
  advection-diffusion problem.
\newblock \emph{Computer Methods in Applied Mechanics and Engineering},
  167\penalty0 (3-4):\penalty0 239--260, 1998.

\bibitem[Klieber and Riviere(2006)]{KlieberRiviere2006}
W.~Klieber and B.~Riviere.
\newblock Adaptive simulations of two-phase flow by discontinuous {G}alerkin
  methods.
\newblock \emph{Computer Methods in Applied Mechanics and Engineering},
  196:\penalty0 404--419, 2006.

\bibitem[Knepley and Karpeev(2009)]{KnepleyKarpeev09}
M.~G. Knepley and D.~A. Karpeev.
\newblock Mesh algorithms for {PDE} with {Sieve} {I}: {Mesh} distribution.
\newblock \emph{Scientific Programming}, 17\penalty0 (3):\penalty0 215--230,
  2009.

\bibitem[Krivodonova(2007)]{Krivodonova2007}
L.~Krivodonova.
\newblock Limiters for high-order discontinuous {G}alerkin methods.
\newblock \emph{J. Comput. Phys.}, 226:\penalty0 879--896, 2007.

\bibitem[Krivodonova et~al.(2004)Krivodonova, Xin, Remacle, Chevaugeon, and
  Flaherty]{Krivodonova2004}
L.~Krivodonova, J.~Xin, J.-F. Remacle, N.~Chevaugeon, and J.E. Flaherty.
\newblock Shock detection and limiting with discontinuous {G}alerkin methods
  for hyperbolic conservation laws.
\newblock \emph{Appl. Numer. Math.}, 48:\penalty0 323--338, 2004.

\bibitem[Kuzmin(2010)]{kuzmin2010vertex}
D.~Kuzmin.
\newblock A vertex-based hierarchical slope limiter for p-adaptive
  discontinuous galerkin methods.
\newblock \emph{Journal of Computational and Applied Mathematics}, 233\penalty0
  (12):\penalty0 3077--3085, 2010.

\bibitem[Kuzmin(2013)]{kuzmin2013}
D.~Kuzmin.
\newblock Slope limiting for discontinuous {G}alerkin approximations with a
  possibly non-orthogonal {T}aylor basis.
\newblock \emph{Int. J. Numer. Methods Fluids}, 71:\penalty0 1178--1190, 2013.

\bibitem[Kuzmin and Gorb(2012)]{kuzmin2012flux}
D.~Kuzmin and Y.~Gorb.
\newblock A flux-corrected transport algorithm for handling the close-packing
  limit in dense suspensions.
\newblock \emph{Journal of Computational and Applied Mathematics}, 236\penalty0
  (18):\penalty0 4944--4951, 2012.
\newblock \doi{https://doi.org/10.1016/j.cam.2011.10.019}.

\bibitem[May and Berger(2013)]{may2013two}
S.~May and M.~Berger.
\newblock Two-dimensional slope limiters for finite volume schemes on
  non-coordinate-aligned meshes.
\newblock \emph{SIAM Journal on Scientific Computing}, 35\penalty0
  (5):\penalty0 A2163--A2187, 2013.

\bibitem[McRae et~al.(2016)McRae, Bercea, Mitchell, Ham, and Cotter]{McRae2016}
A.~T.~T. McRae, G.~T. Bercea, L.~Mitchell, D.~A. Ham, and C.~J. Cotter.
\newblock Automated generation and symbolic manipulation of tensor product
  finite elements.
\newblock \emph{SIAM Journal on Scientific Computing}, 38\penalty0
  (5):\penalty0 25--47, 2016.

\bibitem[Michel(2003)]{Michel03}
A.~Michel.
\newblock A finite volume scheme for the simulation of two-phase incompressible
  flow in porous media.
\newblock \emph{SIAM J. Numer. Anal.}, 41:\penalty0 1301--1317, 2003.

\bibitem[msarrafj/LimitedDG(2021)]{Sarraf2021Github}
msarrafj/LimitedDG.
\newblock Codes for a bound-preserving discontinuous galerkin solver for
  incompressible two-phase flow problem implemented in firedrake project.
\newblock \url{https://github.com/msarrafj/LimitedDG}, 2021.

\bibitem[Papanastasiou et~al.(1992)Papanastasiou, Malamataris, and
  Ellwood]{papanastasiou1992new}
T.~C. Papanastasiou, N.~Malamataris, and K.~Ellwood.
\newblock A new outflow boundary condition.
\newblock \emph{International Journal for Numerical Methods in Fluids},
  14\penalty0 (5):\penalty0 587--608, 1992.

\bibitem[Peaceman(2000)]{peaceman2000fundamentals}
D.W. Peaceman.
\newblock \emph{Fundamentals of numerical reservoir simulation}, volume~6.
\newblock Elsevier, 2000.

\bibitem[Rathgeber et~al.(2016)Rathgeber, Ham, Mitchell, Lange, Luporini,
  McRae, Bercea, Markall, and Kelly]{rathgeber2016firedrake}
F.~Rathgeber, D.~A. Ham, L.~Mitchell, M.~Lange, F.~Luporini, A.~T.~T. McRae,
  G.~T. Bercea, G.~R. Markall, and P.~H.~J. Kelly.
\newblock Firedrake: automating the finite element method by composing
  abstractions.
\newblock \emph{ACM Transactions on Mathematical Software (TOMS)}, 43\penalty0
  (3):\penalty0 24, 2016.

\bibitem[Riaz and Tchelepi(2006)]{riaz2006numerical}
A.~Riaz and H.~A. Tchelepi.
\newblock Numerical simulation of immiscible two-phase flow in porous media.
\newblock \emph{Physics of Fluids}, 18\penalty0 (1):\penalty0 014104, 2006.

\bibitem[Riviere(2008)]{riviere2008discontinuous}
B.~Riviere.
\newblock \emph{Discontinuous Galerkin Methods for Solving Elliptic and
  Parabolic Equations: Theory and Implementation}.
\newblock SIAM, 2008.

\bibitem[Tchelepi et~al.(2006)Tchelepi, Durlofsky, and
  Khalid]{tchelepi2006numerical}
H.~Tchelepi, L.~Durlofsky, and A.~Khalid.
\newblock A numerical simulation framework for the design, management and
  optimization of co2 sequestration in subsurface formations. global climate
  and energy project (gcep) report, 2006.

\bibitem[Welge et~al.(1952)]{welge1952simplified}
H.~H. Welge et~al.
\newblock A simplified method for computing oil recovery by gas or water drive.
\newblock \emph{Journal of Petroleum Technology}, 4\penalty0 (04):\penalty0
  91--98, 1952.

\bibitem[{Zenodo/COFFEE}(2020)]{zenodo/COFFEE:2020}
{Zenodo/COFFEE}.
\newblock {COFFEE:} a compiler for fast expression evaluation, june 2020.
\newblock URL \url{https://doi.org/10.5281/zenodo.1064647}.

\bibitem[{Zenodo/FIAT}(2021)]{zenodo/fiat:2021}
{Zenodo/FIAT}.
\newblock {FIAT}: the finite element automated tabulator, apr 2021.
\newblock URL \url{https://doi.org/10.5281/zenodo.1217550}.

\bibitem[{Zenodo/FInAT}(2021)]{zenodo/FInAT:2021}
{Zenodo/FInAT}.
\newblock {FInAT:} a smarter library of finite elements, apr 2021.
\newblock URL \url{https://doi.org/10.5281/zenodo.1135106}.

\bibitem[{Zenodo/firedrake}(2021)]{zenodo/firedrake:2021}
{Zenodo/firedrake}.
\newblock {F}iredrake: an automated finite element system, apr 2021.
\newblock URL \url{https://doi.org/10.5281/zenodo.1251940}.

\bibitem[Zenodo/PETSc(2021)]{zenodo/petsc:2021}
Zenodo/PETSc.
\newblock {PETSc}: Portable, extensible toolkit for scientific computation, apr
  2021.
\newblock URL \url{https://doi.org/10.5281/zenodo.1217551}.

\bibitem[{Zenodo/PyOP2}(2021)]{zenodo/PyOP2:2021}
{Zenodo/PyOP2}.
\newblock {PyOP2:} framework for performance-portable parallel computations on
  unstructured meshes, apr 2021.
\newblock URL \url{https://doi.org/10.5281/zenodo.1251936}.

\bibitem[{Zenodo/TSFC}(2018)]{zenodo/tsfc:2021}
{Zenodo/TSFC}.
\newblock {TSFC: the two stage form compiler}, may 2018.
\newblock URL \url{https://doi.org/10.5281/zenodo.1251934}.

\bibitem[{Zenodo/UFL}(2021)]{zenodo/ufl:2021}
{Zenodo/UFL}.
\newblock {UFL}: the unified form language, apr 2021.
\newblock URL \url{https://doi.org/10.5281/zenodo.1217548}.

\bibitem[Zhang et~al.(2018)Zhang, Guo, Li, and Zegeling]{zhang2018runge}
H.~Zhang, Y.~Guo, W.~Li, and P.~A. Zegeling.
\newblock Runge-kutta symmetric interior penalty discontinuous galerkin methods
  for modified buckley-leverett equations.
\newblock \emph{arXiv preprint arXiv:1801.07182}, 2018.

\bibitem[Zhang and Shu(2011)]{zhang2011maximum}
X.~Zhang and C.~W. Shu.
\newblock Maximum-principle-satisfying and positivity-preserving high-order
  schemes for conservation laws: survey and new developments.
\newblock \emph{Proceedings of the Royal Society A: Mathematical, Physical and
  Engineering Sciences}, 467\penalty0 (2134):\penalty0 2752--2776, 2011.

\bibitem[Zhang and Tang(2002)]{zhang2002adaptive}
Z.~R. Zhang and T.~Tang.
\newblock An adaptive mesh redistribution algorithm for convection-dominated
  problems.
\newblock \emph{Communications on Pure \& Applied Analysis}, 1\penalty0
  (3):\penalty0 341, 2002.

\end{thebibliography}


\end{document}